\def\blfootnote{\xdef\@thefnmark{}\@footnotetext}
\newcommand{\gitshash}{NA}
\newcommand{\gitauthsdate}{NA}
\newcommand{\dimension}{n}
\newcommand{\dist}{\operatorname{dist}}
\newcommand{\interior}{\operatorname{int}}
\newcommand{\trace}{\operatorname{tr}}
\newcommand{\esssup}{\operatorname*{ess\,sup}}
\newcommand{\essinf}{\operatorname*{ess\,inf}}
\newcommand{\argmin}{\operatorname*{arg\,min}}
\newcommand{\argmax}{\operatorname*{arg\,max}}
\newcommand{\divo}{\operatorname{div}}
\newcommand{\dx}{\;dx}
\newcommand{\abs}[1]{\left|#1\right|}
\newcommand{\pth}[1]{\left(#1\right)}
\newcommand{\bra}[1]{\left[#1\right]}
\newcommand{\set}[1]{{\left\{#1\right\}}}
\newcommand{\norm}[1]{\left\|#1\right\|}
\newcommand{\cl}[1]{\overline{#1}}	
\newcommand{\e}{\ensuremath{\varepsilon}}
\newcommand{\R}{\ensuremath{\mathbb{R}}}
\newcommand{\Rd}{\ensuremath{{\mathbb{R}^{\dimension}}}}
\newcommand{\Rn}{\Rd}
\newcommand{\Z}{\ensuremath{\mathbb{Z}}}
\newcommand{\T}{\ensuremath{\mathbb{T}}}
\newcommand{\Tn}{{\T^\dimension}}
\definecolor{grey}{rgb}{0.6,0.6,0.6}
\numberwithin{equation}{section}
\newtheorem{theorem}{Theorem}[section]
\newtheorem{lemma}[theorem]{Lemma}
\newtheorem{proposition}[theorem]{Proposition}
\newtheorem{corollary}[theorem]{Corollary}
\newtheorem{definition}[theorem]{Definition}
\theoremstyle{definition}
\newtheoremstyle{remarkstyle}
  {3pt}
  {3pt}
  {}
  {}
  {\bfseries}
  {.}
  { }
  {}
\theoremstyle{remarkstyle}
\newtheorem{remark}[theorem]{Remark}
\newtheorem{example}[theorem]{Example}
\newcommand{\TT}{\mathcal T}
\title[Motion by crystalline-like mean curvature: a survey]{Motion by crystalline-like mean curvature: \\ a survey}
\author[Y.\ Giga]{Yoshikazu Giga}
\address{Graduate School of Mathematical Sciences\\
The University of Tokyo\\
3-8-1 Komaba, Meguro-ku\\
Tokyo 153-8914, Japan}
\email{labgiga@ms.u-tokyo.ac.jp}
\author[N.\ Po\v{z}\'ar]{Norbert Po\v{z}\'ar}
\address{Faculty of Mathematics and Physics\\
Institute of Science and Engineering\\
Kanazawa University\\
Kakuma\\
Kanazawa 920-1192, Japan}
\email{npozar@se.kanazawa-u.ac.jp}
\thanks{YG is supported by JSPS through grants No. 19H00639 (Kiban A), No.\ 18H05323 (Kaitaku), No.\ 17H01091.
 NP is supported by JSPS KAKENHI Wakate Grant (No. 18K13440).}
\date{\today\ (git: \gitauthsdate, \gitshash)}
\subjclass[2010]{Primary 35K93; Secondary 35B51}
\keywords{crystalline mean curvature, comparison principle, total variation}
\newcommand{\sign}{\operatorname{sign}}
\newcommand{\E}{\mathcal{E}}
\newcommand{\one}{\mathbf{1}}
\newcommand{\compl}{{\mathsf{c}}}
\begin{document}
\baselineskip=17pt

\begin{abstract}
We consider a class of anisotropic curvature flows called a crystalline curvature flow.
 We present a survey on this class of flows with special emphasis on the well-posedness of its initial value problem.
\end{abstract}

\maketitle

\tableofcontents

\section{Introduction} \label{Int}

The famous mean curvature flow was introduced by W.\ W.\ Mullins \cite{Mu56} to model the motion of an antiphase grain boundary in annealing metals.
 Its governing equation is called the mean curvature equation and it is an equation for one-parameter family of hypersurfaces $\{\Gamma_t\}$ (an evolving hypersurface) in $\mathbb{R}^n$ which imposes that the normal velocity $V$ equals the mean curvature $\kappa$, i.e.,
\[
	V = \kappa \quad \text{on} \quad \Gamma_t;
\]
here, the curvature and the velocity is taken in the direction of the normal vector field $\nu$ of $\Gamma_t$.
 This equation can be interpreted as a steepest descent flow of the surface area.
 In materials science the surface area is considered as an interfacial energy of the grain boundary.
 It is quite natural to consider anisotropic effects.
 For this purpose, one considers the anisotropic interfacial energy
\[
	I(\Gamma) = \int_\Gamma \sigma (\nu)\; d\mathcal{H}^{n-1},
\]
where $\sigma$ is a given positive function called the interfacial energy density;
 here, $d\mathcal{H}^{n-1}$ is the surface area element of a hypersurface $\Gamma$.
 Its first variation is called the anisotropic mean curvature denoted by $\kappa_\sigma$;
 this is often called the weighted mean curvature.
 If one replaces the mean curvature by the anisotropic mean curvature in the mean curvature flow equation, the resulting equation is of the form
\begin{equation} \label{AM1}
	V = \kappa_\sigma \quad\text{on}\quad \Gamma_t.
\end{equation}
 In general, this equation may not be parabolic even if $\sigma$ is smooth.
 We consider the one-homogeneous extension of $\sigma$ in $\mathbb{R}^n$ and still denote it by $\sigma$, i.e.,
\begin{equation} \label{H}
	\sigma(p) = |p| \sigma ( p/|p| ), \quad
	p \in \mathbb{R}^n \setminus \set0.
\end{equation}
If $\sigma$ is convex, the equation \eqref{AM1} is at least degenerate parabolic.
 Although the problem when $\sigma$ is not convex is interesting, we do not touch this problem in this paper.
 The reader is referred to \cite{BGeN} for such an ill-posed problem.

The anisotropic mean curvature flow can be considered as the mean curvature flow in a Minkowski metric or a Finsler metric.
 In this case, $V$ should be replaced by the Minkowski normal velocity.
  If one uses the Euclidean normal velocity, it is of the form
\[
	V = \sigma \kappa_\sigma;
\]
see \cite{BP96} for this perspective.

The curvature flow is not restricted to the form \eqref{AM1}.
 For second-order model, a general form of the flow is
\begin{equation} \label{GE}
	V = g (\nu, \kappa_\sigma)
\end{equation}
with $g$ non-decreasing in the second variable.
 A typical example in themodynamics is
\[
	V = M (\nu) (\kappa_\sigma + C)
\]
with mobility $M(\nu)>0$ and a driving force $C$, where $C$ is a constant \cite{Gu}, \cite{AG}.
 There are several other examples when $g$ is nonlinear in $\kappa_\sigma$.
 For example,
\[
	V = |\kappa_\sigma|^{\alpha-1} \kappa_\sigma
\]
with some positive number $\alpha$.
 We shall discuss these examples in Section \ref{SM}. 

For later convenience, we say that $\sigma: \mathbb{R}^n \to [0,\infty)$ is an \emph{anisotropy} if $\sigma$ is positively one-homogeneous, convex and $\sigma>0$ outside the origin.
 By definition, $\sigma$ satisfies \eqref{H} and the \emph{Frank diagram}
\[
	F_\sigma = \left\{ p \in \mathbb{R}^n \mid
	\sigma(p) \leq 1 \right\}
\]
is bounded, convex and contains the origin as an interior point.

For many applications, especially in low temperature physics, it is often considered the case that $\sigma$ is not $C^1$.
 An extreme case is that the anisotropy $\sigma$ is (purely) \emph{crystalline}, i.e., $\sigma$ is piecewise linear so that $F_\sigma$ is a convex polytope.
 A crystalline mean curvature flow is formally \eqref{GE} when anisotropy $\sigma$ is crystalline.
 In mathematical community, it was introduced by J.\ E.\ Taylor \cite{T1} and independently by S.\ B.\ Angenent and M.\ E.\ Gurtin \cite{AG} around 1990.

One might be curious on the value of $\kappa_\sigma$ when $\sigma$ is crystalline.
 To motivate it we consider an anisotropic isoperimetric problem of the form \\ 
``Find a shape $D$ in $\mathbb{R}^n$ with fixed volume which minimizes the surface energy $I(\Gamma)$ with $\Gamma=\partial D$."\\
This problem was first studied by Wulff \cite{W} and it turns out that the minimizer is the \emph{Wulff shape}
\[
	W_\sigma = \bigcap_{|m|=1} \left\{ x \in \mathbb{R}^n \mid
	x \cdot m \leq \sigma(m) \right\},
\]
which is the polar of $F_\sigma$.
 This has been proved in quite general setting; see e.g.\ \cite{T78}, \cite{FM}.
 For recent progress related to optimal transport theory, see \cite{FiMP}.
 Note that if $\sigma$ is crystalline so that $F_\sigma$ is a polytope, then $W_\sigma$ is also a polytope.
 For smooth anisotropy, one observes that the anisotropic $\kappa_\sigma$ on the surface of $W_\sigma$ is a non-zero constant, and so $W_\sigma$ plays the same role as a ball for the usual curvature.
 More precisely, if one takes $\nu$ inward $\kappa_\sigma = n-1$.
 If $\sigma$ is crystalline, then $W_\sigma$ is a polytope.
 Nevertheless, $\kappa_\sigma$ should not be zero.
 This simple observation shows that the value $\kappa_\sigma$ cannot be determined by infinitesimal quantities like tangent and second fundamental form of the surface.
 We say that \eqref{GE} is a \emph{crystalline (mean) curvature flow (equation)} if $\sigma$ is crystalline.

We now consider a simple example of a crystalline curvature flow for a graph-like curve.
 For later convenience, we write the equation \eqref{AM1} when $\Gamma_t$ is given as the graph of a function $w=w(x',t)$, i.e., $x_n=w(x',t)$ for $x=(x',x_n)\in\mathbb{R}^n$, $x'\in\mathbb{R}^{n-1}$.
 The upward normal velocity is given as
\[
	V = \frac{w_t}{\left( 1+|\nabla'w|^2 \right)^{1/2}},
\]
where $w_t=\partial w/\partial t$, $\nabla'w=(\partial_{x_1}w,\ldots,\partial_{x_{n-1}}w)$, $\partial_{x_j} = \partial/\partial x_j$, $w_{x_j}=\partial_{x_j}w$.
 The anisotropic mean curvature is formally of the form
\[
	\kappa_\sigma = -\operatorname{div}_{\Gamma_t} \zeta(\nu) \quad\text{with}\quad
	\zeta(\nu) = (\nabla_p \sigma)(\nu),
\]
where $\nabla_p \sigma$ denotes the gradient of $\sigma$, i.e., $\nabla_p\sigma=(\partial_{p_1}\sigma,\ldots,\partial_{p_n}\sigma)$ for anisotropy $\sigma=\sigma(p_1,\ldots,p_n)$.
 The divergence $\operatorname{div}_{\Gamma_t}$ denotes the surface divergence, i.e.,
\[
	\operatorname{div}_{\Gamma_t} X = \operatorname{trace} (I-\nu\otimes\nu) \nabla X;
\]
here, we extend $X$ in a tubular neighborhood of $\Gamma_t$ in a suitable way and $\nabla X$ denotes its Jacobi matrix.
 This value is independent of the way of extension; see e.g.\ \cite{G06}.
 In our setting,
\[
	\operatorname{div}_{\Gamma_t} \zeta(\nu)
	= \sum^{n-1}_{\ell=1} \frac{\partial}{\partial x_\ell} \left(\frac{\partial\sigma}{\partial p_\ell}(\nu) \right)
\]
where $\nu=(-\nabla'w, 1)/\left(1+|\nabla'w|^2\right)^{1/2}$.
 Indeed,
\[
	\operatorname{trace}(\nu\otimes\nu\nabla\zeta)
	= \sum^n_{i,j=1} \nu_i \nu_j \frac{\partial}{\partial x_i} \left((\partial_{p_j}\sigma)(\nu)\right)
	= \sum^n_{i,j,\ell=1} \nu_i \nu_j (\partial_{p_j} \partial_{p_\ell}\sigma)(\nu)\partial_{x_j}\nu^\ell = 0
\]
since $\sum^n_{j=1}\nu_j \partial_{p_j}\left((\partial_{p_\ell}\sigma)(\nu)\right)=0$ by positively zero-homogeneity\footnote{Let $s$ be a real number. A function $f$ allowing values $\pm\infty$ defined in a vector space $V$ is called \emph{positively $s$-homogeneous} if $f(\lambda v)=\lambda^s f(v)$ holds for all $\lambda>0$ and $v\in V$.} of $\partial_{p_\ell}\sigma$.
 Moreover, since $\partial_{p_\ell}\sigma(\nu)$ is independent of $x_n$, we have the desired identity.
 If $\Gamma_t$ is a curve in $\mathbb{R}^2$, then
\[
	-\operatorname{div}_{\Gamma_t} \zeta
	= -\partial_{x_1} \left(\frac{\partial\sigma}{\partial p_1}(-w_x,1) \right),
\]
since $\nabla_p \sigma$ is positively zero-homogeneous.

 We now observe that \eqref{AM1} is formally of the form
\begin{equation} \label{AMG}
	\frac{w_t}{(1+w^2_{x_1})^{1/2}} = -\partial_{x_1} \left( \frac{\partial\sigma}{\partial p_1}(-w_{x_1},1) \right).
\end{equation}
If $\sigma(p)=|p|$, then
\[
	\frac{\partial\sigma}{\partial p_1}(p) = \frac{p_1}{|p|} \quad\text{so that}\quad
	\frac{\partial\sigma}{\partial p_1}(-w_{x_1},1)
	= -\frac{w_{x_1}}{(1 + w^2_{x_1})^{1/2}},
\]
which yields a curve-shortening equation for a graph-like curve $\Gamma_t: x_2 = w(x_1,t)$, i.e.,
\[
	\frac{w_t}{(1+w^2_{x_1})^{1/2}}  = \partial_{x_1} \left( \frac{w_{x_1}}{(1 + w^2_{x_1})^{1/2}} \right) \quad\text{or}\quad
	w_t = \frac{w_{x_1x_1}}{1 + w^2_{x_1}}.
\]
We are interested in the case when $\sigma$ is crystalline.
 Let us consider
\[
	\sigma(p) = |p_1| + |p_2|
\]
so that the Frank diagram $F_\sigma$ is a square whose vertices are $(\pm1,0)$ and $(0, \pm1)$; see Figure \ref{figure1} for $F_\sigma$ and the corresponding Wulff shape $W_\sigma$.
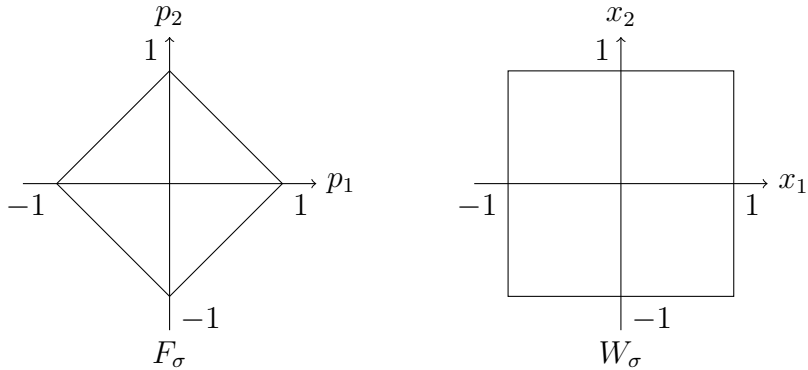
\begin{figure}[hbtp] 
 \centering
\begin{tikzpicture}[scale=1.5]
\draw[->] (-1.3,0) -- (1.3,0) node[right] {$x_1$};
\draw[->] (0,-1.3) node[below] {$W_\sigma$} -- (0,1.3) node[above] {$x_2$};
\draw (1,1) -- (-1,1) -- (-1,-1) -- (1,-1) -- cycle;
\draw (0,-1) node[below right] {$-1$};
\draw (-1,0) node[below left] {$-1$};
\draw (1,0) node[below right] {$1$};
\draw (0,1) node[above left] {$1$};
\begin{scope}[xshift=-4cm]
\draw[->] (-1.3,0) -- (1.3,0) node[right] {$p_1$};
\draw[->] (0,-1.3) node[below] {$F_\sigma$} -- (0,1.3) node[above] {$p_2$};
\draw (1,0) -- (0,1) -- (-1,0) -- (0,-1) -- cycle;
\draw (0,-1) node[below right] {$-1$};
\draw (-1,0) node[below left] {$-1$};
\draw (1,0) node[below right] {$1$};
\draw (0,1) node[above left] {$1$};
\end{scope}
\end{tikzpicture}
 \caption{The Frank diagram and the Wulff shape for $\sigma(p) = |p_1| + |p_2|$.}
 \label{figure1}
\end{figure}
 Then \eqref{AMG} becomes $w_t=(1+w^2_{x_1})^{1/2} \partial_{x_1}(\operatorname{sgn}w_{x_1})$, which is formally equivalent to
\begin{equation} \label{TOT1}
	w_t = \partial_{x_1} ( \operatorname{sgn} w_{x_1}),
\end{equation}
where $\operatorname{sgn} p_1 = p_1/|p_1|$.
 This equation is a total variation flow equation in one-dimensional setting.
 If one calculates the right-hand side formally, then \eqref{TOT1} is
\[
	w_t = 2 \delta(w_{x_1}) w_{x_1 x_1},
\]
where $\delta$ denotes Dirac's delta. This shows 
\[
\partial_{x_1}(\operatorname{sgn} w_{x_1})=(1+w^2_{x_1})^{1/2} \partial_{x_1}(\operatorname{sgn} w_{x_1}).
\]
However, the quantity $\delta(w_{x_1})$ is undefined because it is a pull-back of the delta measure although it suggests the diffusion coefficient equals zero if $w_{x_1}$ is not equal to zero.
 In other words, the place where $w_{x_1}$ is not zero does not move.
 To see the speed where $w_{x_1}$ is zero, let us consider a special (Lipschitz) profile $x_2=w_0(x_1)$ which takes the minimum value on $[a,b]$ and $w_{0x_1}>0$ (resp.\ $w_{0x_1}<0$) in $x_1>b$ (resp.\ $x_1<a$), where $a<b$ (Figure \ref{figure2}).
\begin{figure}[hbtp] 
 \centering
\begin{tikzpicture}[scale=3]
\draw[->] (-1.2,0) -- (1.7,0) node[right] {$x_1$};
\draw[->] (-1,-0.5) -- (-1,0.5) node[left] {$x_2$};
\draw[dashed] (0,0 + 0.6pt) node[above] {$a$} -- (0,-0.5);
\draw[dashed] (1,0 + 0.6pt) node[above] {$b$} -- (1,-0.5);
\draw[thick] (-0.9,0.3) .. controls (-0.6,-0.25) .. (0, -0.5) -- (1,-0.5)  .. controls (1.4,-0.25) .. node[below right] {$w_0$} (1.6,0.3);
\end{tikzpicture}
\caption{The graph of $w_0$.}
\label{figure2}
\end{figure}
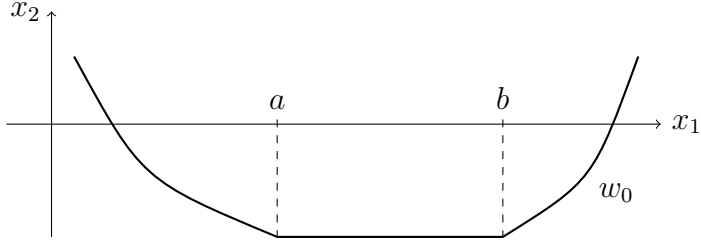
 We try to move this function by \eqref{TOT1}.
 Since it is natural to assume that the speed equals zero outside $[a,b]$, the important thing is to calculate the speed on $[a,b]$.
 Here we put ansatz: \\
``The speed $w_t$ on $[a,b]$ is spatially constant.'' \\
In other words, a flat part (called facet) stays as a facet and no bending nor facet splitting occurs.

We integrate \eqref{TOT1} in a neighborhood of $[a,b]$, i.e., $(a-\varepsilon,b+\varepsilon)$ with small $\varepsilon>0$ and obtain at $t=0$
\begin{align*} 
	\int^{b+\varepsilon}_{a+\varepsilon} w_t dx
	= \int^{b+\varepsilon}_{a+\varepsilon} \partial_{x_1} (\operatorname{sgn}w_{0x_1})dx
	&= \operatorname{sgn}w_{0x_1}(b+\varepsilon)
	- \operatorname{sgn}w_{0x_1}(a-\varepsilon) \\
	&= 1 - (-1) = 2.
\end{align*}
By our ansatz, the left-hand side is of the form
\[
	w_t (b-a)
\]
as $\varepsilon\to 0$.
 Thus, we obtain
\[
	w_t = 2/(b-a).
\]
The right-hand side is a nonlocal quantity and this is a one-dimensional version of the \emph{Cheeger ratio} $\mathcal{H}^{n-1}(\partial\Omega)/\mathcal{L}^n(\Omega)$ defined for a domain $\Omega$ in $\mathbb{R}^n$, where $\mathcal{L}^n(\Omega)$ denotes the Lebesgue measure of $\Omega$ while $\mathcal{H}^{n-1}(\partial\Omega)$ denotes the $(n-1)$-dimensional Hausdorff measure of the boundary $\partial\Omega$ of $\Omega$.
 We now observe that the crystalline curvature should be determined by a semilocal quantity like Cheeger ratio if one assumes the ansatz.

In one-dimensional setting, this ansatz is justified in the sense that such a profile is approximated by a solution of uniformly parabolic equations which approximates the equation \eqref{TOT1}.
 For example, order-preserving property called comparison principle is expected to hold.
 However, in higher dimensional setting, as we see later this ansatz is no longer appropriate.
 For example, this ansatz violates the comparison principle.

For curve evolutions, using this ansatz J.\ E.\ Taylor \cite{T1} and independently S.\ B.\ Angenent and M.\ E.\ Gurtin \cite{AG} introduced a special class of polygonal curves called admissible.
 We say that an oriented polygon is \emph{admissible} if the orientation (normal $\nu$) of each facet (edge) is one of that in $\partial W_\sigma$ and the orientation of adjacent facets should be adjacent in $\partial W_\sigma$.
 Here $W_\sigma$ is the Wulff shape associated with anisotropy $\sigma$ and it is a convex polygon if $\sigma$ is crystalline.
 If the second condition (called adjacency condition) is not required, one expects that new facets may be created because of a strong curvature effect.
 We shall discuss this point in Section \ref{PF}.
 Let $\{\Gamma_t\}$ be a smooth family of admissible polygons.
 In other words, vertices of $\Gamma_t$ are assumed to move $C^1$ in time $t$.
 The motion of vertices is completely determined by the crystalline flow equation \eqref{GE}.
 Here, $\kappa_\sigma$ of each facet with normal $\nu$ is assumed to be equal to $\chi\Delta/L$, where $L$ is the length of the facet and $\Delta$ is the length of the facet of $W_\sigma$ with normal $\nu$; $\chi$ takes $+1$, $-1$, $0$ depending upon convexity near the facet.
 Since $L$ depends upon vertices, combining these equations, a system of ordinary differential equations (ODEs) for vertices or lengths is obtained.
 Its initial value problem is uniquely solvable at least when $g$ is (locally) Lipschitz continuous.
 For later convenience, we say that $\{\Gamma_t\}$ is a crystalline flow if $\Gamma_t$ is a smooth family of admissible polygons satisfying the system of these ODEs. 
 However, there is a chance that in finite time a facet disappears.
 Fortunately, in many cases at the time when a facet disappears, $\Gamma_t$ is still admissible so one is able to continue to solve the system of ODEs with fewer facets.
 This approach is very simple and it is easy to compute the crystalline flow \cite{T1}, \cite{T3}, \cite{T0}. 
 Moreover, it satisfies the desired property like comparison principle which says that if one admissible polygon encloses another, then the corresponding crystalline flow starting from these polygons keeps this order; see \cite{T3}, \cite{GGu}.

There is another approach based on the theory of maximal monotone operators initiated by Y.\ K\=omura \cite{Ko} and developed by H.\ Brezis \cite{Br73} and others in late 1960s and 1970s.
 A basic theory asserts the unique global-in-time solvability of the initial value problem for the gradient flow equation whose ``energy'' $\mathcal{E}$ is a convex, lower semicontinuous functional in a Hilbert space $H$ equipped with an inner product $\langle\ ,\ \rangle$ so that $\|f\|^2_H=\langle f,f \rangle$.
 More precisely, it is a solvability for the system $w_t\in-\partial\mathcal{E}(w)$ where $\partial\mathcal{E}(w)$ is the subdifferential of $\mathcal{E}$ at $w$, which is an extended notion of a differential of $\mathcal{E}$.
 It is defined as
\[
	\partial\mathcal{E}(w) = \left\{ f \in H \mid
	\mathcal{E}(w+h) - \mathcal{E}(w) \geq \langle f,h \rangle\ \text{for all}\ h \in H \right\}.
\]
 Note that $\mathcal{E}$ may not be differentiable so that $\partial\mathcal{E}(w)$ may not be a singleton.
 However, the solution is unique and it ``knows'' how to grow even though the evolution law looks ambiguous.
 Actually, the solution is right differentiable in time and its speed equals to the minimal section (canonical restriction) $\partial^0\mathcal{E}(w)$ of $\partial\mathcal{E}(w)$, i.e.,
\[
	\partial^0\mathcal{E}(w) 
	= \mathrm{argmin} \bigl\{ \|f\|_H \bigm| f \in \partial\mathcal{E}(w) \bigr\},
\]
which is uniquely determined.
 In \cite{FG}, it is shown that if $\{\Gamma_t\}$ is given as the graph of a periodic function of one variable, then the equation $V=M(\nu)\kappa_\sigma$ can be written as the gradient flow system.
Moreover, the speed given by the general theory is the same as the one given in the ansatz on a facet.
 This suggests the approach by \cite{T1}, \cite{AG} is quite natural.
 In fact, it is shown in \cite{FG} that the crystalline flow is obtained as a limit of approximate solutions solving a usual uniformly parabolic problem approximating the original problem.
 This justifies the ansatz for curve evolution.
 The proof is based on a general convergence theory for the gradient system developed by \cite{BP} and \cite{Wa}.
 To apply the theory, it suffices to prove that the approximating energy $\mathcal{E}^\varepsilon$ converges to $\mathcal{E}$ in the sense of Mosco, i.e., it satisfies
\begin{subequations}
\label{mosco}
\begin{enumerate}
\item[(i)] lower semicontinuity under weak topology:
\begin{align}
\label{mosco-i}
	\mathcal{E}(w) \leq \varliminf_{\varepsilon\downarrow 0}\mathcal{E}^\varepsilon(w_\varepsilon)
	\quad\text{for}\quad w_\varepsilon \rightharpoonup w \ (\text{as }\varepsilon \to 0);
\end{align}
\item[(ii)] existence of strong recovery sequence: for any $v\in H$, there is $v_\varepsilon\to v$ as $\varepsilon\to 0$ such that
\begin{align}
\label{mosco-ii}
	\mathcal{E}(v) = \lim_{\varepsilon\downarrow 0}\mathcal{E}^\varepsilon(v_\varepsilon).
\end{align}
\end{enumerate}
\end{subequations}
The nonlocal property of the speed related to a total-variation-type singular energy was also observed in \cite{HZ}.

If the flow equation is written as a gradient flow of a convex, lower semicontinuous functional in a Hilbert space, one is able to calculate the speed by calculating the minimal section.
 It is a kind of an obstacle problem as we will see later.
 Reflecting this idea, G.\ Bellettini, M.\ Novaga and M.\ Paolini \cite{BNP99} gave an example that the speed of a facet may not be a constant on a facet.
 In other words, the quantity $\kappa_\sigma$ may not be a constant on a facet since otherwise it would contradict a comparison principle.
 Later, they gave a characterization of non-constancy of $\kappa_\sigma$ on a facet depending on shape.
 To illustrate the problem, let us consider a closely related problem: the total variation flow equation
\begin{equation} \label{TOT2}
	w_t = \operatorname{div} \left( \nabla w/|\nabla w| \right)
\end{equation}
on an $n$-dimensional torus $\mathbb{T}^n = \Pi^n_{i=1}(\mathbb{R}/\omega_i \mathbb{Z})$, $\omega_i>0\ (i=1,\ldots,n)$.
 Except Section~\ref{FO}, we shall assume $\omega_i=1$ for simplicity.
 It can be interpreted as a gradient flow of the total variation energy
\[
	E[w] = \int_{\mathbb{T}^n} |\nabla w| 
	:= \sup \left\{\int_{\mathbb{T}^n} w \operatorname{div}z\; dx \Bigm|
	\left| z(x) \right|\leq 1,\ z \in C^1(\mathbb{T}^n, \mathbb{R}^n) \right\}
\]
for an $L^2$ function $w$.
 We set the energy $\mathcal{E}$ in the Hilbert space $H=L^2(\mathbb{T}^n)$ such that $\mathcal{E}=E$.
 Then, it is not difficult to see that $\mathcal{E}$ is convex and lower semicontinuous in $H = L^2(\mathbb{T}^n)$.
 The problem \eqref{TOT2} should be interpreted as
\[
	w_t \in  -\partial\mathcal{E}(w)
\]
and there is a unique solution starting from $w_0 \in H=L^2(\mathbb{T}^n)$.
 The speed is given as the minimal section and we are interested in the value.
 We restrict ourselves to a facet where $w$ is ``convex'' in its neighborhood.
 We fix $t>0$ and let $w$ take its minimum on a facet, i.e.,
\[
	F = \left\{ x \in \mathbb{T}^n \Bigm| w(x,t) = \min_{y\in\mathbb{T}^n} w(y,t) \right\}.
\]
Assume that the boundary of $F$ is smooth.
 Then it turns out that
\begin{align*}
	\left. -\partial^0 \mathcal{E}(w) \right|_F &= \operatorname{div} z, \\
	z &= \operatorname{argmin} \bigg\{ \int_F |\operatorname{div} \zeta|^2 \Bigm| \zeta \cdot \nu_F =1\ \text{on}\ \partial F,\ 
	|\zeta| \leq 1\ \text{in}\ F \bigg\}. 
\end{align*}
Here $\nu_F$ is the exterior unit normal of $F$.
 This is a convex minimization problem but it is of obstacle type because of the constraint $|\zeta|\leq 1$.
 Although the minimizer is not unique, $\operatorname{div}z$ is uniquely determined.
 The characterization of the minimal section is nontrivial but it can be done for the total variation flow equation.
 For a detailed explanation, the reader is referred to a very nice book by F.\ Andreu-Vaillo, V.\ Caselles and J.\ M.\ Maz\'on \cite{ACM}.
 If $\operatorname{div}z$ is constant, we say that $F$ is \emph{calibrable}.
 There are several necessary and sufficient conditions; see e.g.\ \cite{BNP01c} for the curvature flow.
 The reader is referred to \cite{ACM}.
 We shall discuss this topic in Section \ref{AO}.
 If it is calibrable, then $\operatorname{div}z$ must be the Cheeger ratio, i.e., $\operatorname{div}z=\mathcal{H}^{n-1}(\partial F)/\mathcal{L}^n(F)$.
 Indeed, integration by parts yields
\[
	(\operatorname{div}z) \mathcal{L}^n(F) 
	= \int_F \operatorname{div}z\; dx
	= \int_{\partial F} z \cdot \nu_F\; d \mathcal{H}^{n-1}
	= \mathcal{H}^{n-1} (\partial F).
\]
In general, $\operatorname{div}z \in L^\infty \cap BV$ but may be discontinuous as shown in \cite{BNP01a}, \cite{BNP01b}.
 Since there may exist non-calibrable facets, it took a long time to construct a solution in a general setting.
 G.\ Bellettini and M.\ Novaga \cite{BN} introduced a notion of a solution based on distance function reflecting the variational structure and proved its uniqueness.
 However, its existence is only proved for convex initial data \cite{BCCN}.
 It is quite recent that the well-posedness problem is settled by two groups through level-set method, which is the main topic of this survey.

Although there are several approaches to solve the problems by now, they are roughly classified into three main ones.
 The first approach is to consider a special class of evolving polygons by reducing the problem to a system of ODEs we discussed before.
 This approach is valid only for curve evolution.
 The second approach is a variational approach.
 A simple way is to apply the theory of maximal monotone operators which is restricted for the graph case but it has an advantage to apply to a higher order crystalline flow for example crystalline surface diffusion equation for a graph-like surface.
 The reader is referred to \cite{GG10} for this topic as well as Section~\ref{FO2}.
 A variant of this variational approach involving a distance function yields a global well-posedness for convex sets as mentioned before \cite{BCCN}.

The third approach is a viscosity approach.
 This is based on the theory of viscosity solutions, which was originally introduced to characterize the value function of a control problem as a solution of a Hamilton-Jacobi equations; see \cite{CIL}.
 The notion of a viscosity solution is based on a comparison principle for the second-order elliptic or parabolic equations which can be degenerate.
 It does not depend on a variational structure.
 However, since the crystalline flow is non-local, one needs to adjust the theory.
 This is not trivial even for an evolution of a curve.
 In the case of graph-like curves, i.e., $\Gamma_t$ is given as a graph $w=w(x_1,t)$, the notion of a viscosity solution was adjusted for general crystalline flow when $w$ is periodic in $x_1$ \cite{GG}, \cite{GG1}.
 It can be approximated by a smoother problem as proved in \cite{GG2}.

This viscosity approach was later extended to a closed curve by adjusting the level-set method \cite{GG4}, \cite{GG3}.
 The original level-set method based on the theory of viscosity solution was introduced by \cite{ES}, \cite{CGG} for the mean curvature flow equations.
 The idea of the original level-set method for the mean curvature flow $V=\kappa$ is to consider its level set flow equation
\[
	u_t - |\nabla u| \operatorname{div} \left( \frac{\nabla u}{|\nabla u|} \right) = 0
\]
which requires that \emph{each} level set moves by $V=\kappa$.
 For a given initial hypersurface $\Gamma_0$, one constructs a continuous function $u_0$ such that $\Gamma_0$ is the zero level set of $u_0$ and solves the level-set flow equation globally-in-time and sets $\Gamma_t$ as the zero level set of the solution.
 A unique solvability is guaranteed by the theory of viscosity solutions.
 Moreover, $\Gamma_t$ is uniquely determined by $\Gamma_0$.
 However, as already pointed out in \cite{ES}, $\Gamma_t$ may have interior even if $\Gamma_0$ has no interior, Figure~\ref{fig:fattening}; see also \cite{G06}.
\begin{figure}
\centering
\begin{tikzpicture}
\fill[even odd rule, fill = black!30!white]
(0,0) rectangle (1.549, 1.549) (0.5528, 0.5528) rectangle (1.447, 1.447)
(0,0) rectangle (-1.549, -1.549) (-0.5528, -0.5528) rectangle (-1.447, -1.447)
;
\draw[thick] (0,0) rectangle (2, 2) (0,0) rectangle (-2, -2);
\end{tikzpicture}
\caption{Example of a fattening in the crystalline flow for $\sigma(p) = |p_1| + |p_2|$ with initial curve $\Gamma_0$ given by the figure-8-shaped solid line. The set $\Gamma_t$ immediately ($t > 0$) fattens. The gray area denotes $\Gamma_t$ at $t = 0.8 t^*$ where $t^*$ is the extinction time of the individual squares.}
\label{fig:fattening}
\end{figure}
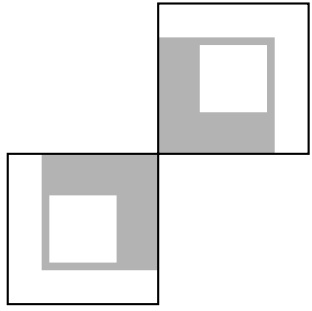
 This phenomenon is called fattening and from the point of an evolution of hypersurfaces this is considered a non-uniqueness phenomenon.
 A basic merit of this approach is to handle a topological change.
 The generalized solution $\Gamma_t$ of course agrees with a smooth solution if the latter exists though the proof is less trivial \cite{ES}, \cite{GGo}.
 For a general theory of the level-set method for smooth anisotropy, see \cite{CGG} or a book \cite{G06}.
 The level-set method itself was introduced by \cite{OS} for numerical study and independently by \cite{OJK} to explain a scaling law of $V=\kappa$.
 For the development of the numerical approach, see \cite{Se} and \cite{OF}.

It took quite a long time to extend this theory to evolution of a hypersurface mainly because the crystalline curvature $\kappa_\sigma$ may not be a constant on a facet.
 A first breakthrough is done by \cite{MGP1}, where the viscosity theory was extended to a total-variation-flow-like equation; see also \cite{MGP2}.
 Later it was extended to level-set flow equations, \cite{GP1}, \cite{GP2}, and
 to the case when there is a spatially inhomogeneous driving force term \cite{GP3}.
 In the meanwhile, another approach to construct a level-set flow based on distance functions which goes back to \cite{So} was developed independently.
 In fact, A.\ Chambolle, M.\ Morini and M.\ Ponsiglione \cite{CMP} constructed a level-set flow for $V=\sigma\kappa_\sigma$ for very general $\sigma$ containing crystalline $\sigma$ as a special case.
 With M.\ Novaga they even extended their approach in \cite{CMNP} for more general equations  with mobility and spatially inhomogeneous driving force term.
 In both theories, the theory of maximal monotone operators is reflected in some sense.
 In the purely viscosity approach by \cite{GP1}, \cite{GP2}, \cite{GP3}, the value $\kappa_\sigma$ is defined as the minimal section of the crystalline interfacial energy.
 In the approach by \cite{CMP}, \cite{CMNP}, the distance function from the zero level-set of a solution is interpreted as a supersolution of the original gradient flow of the form $u_t\in-\partial E(u)$, where $E$ is an anisotropic total variation energy with density $\sigma$.

We warn the reader that the value $\kappa_\sigma$ is not determined completely by the facet $F$ if the problem is spatially inhomogeneous as pointed out by \cite{GP3}.
 If there is a non-constant driving force $C=C(x)$, then $\kappa_\sigma+C$ is not just the sum of the two quantities. See \cite{GP3} for more details and futher references. 

We do not intend to cover all topics related to well-posedness for a crystalline flow.
 Several interesting topics like a crystalline multi-phase curvature flow are missing in this paper.
 For a multi-phase crystalline flow, see \cite{BCherN}.

This paper is organized as follows.
 In Section \ref{SM}, we give several model equations for curvature flow equations involving a crystalline curvature.
 In Section \ref{PF}, an evolution of a polygon is discussed.
 In Section \ref{ES}, some explicit solutions such as self-similar solutions are discussed.
 In Section \ref{AO}, we give an approach by the theory of maximal monotone operators.
 In Section \ref{AV}, we give an approach based on viscosity solutions.
 In Section \ref{AD}, we give an approach based on distance functions.
 In Section \ref{SN}, some numerics are given.
 In Section \ref{FO}, examples of a fourth-order problem and a volume-preserving flow are discussed.

\section{Some models} \label{SM}

We begin with second-order models in materials sciences.
 There is an axiomatic derivation of evolution laws of phase-interfaces involving bulk energy and surface energy with constitutive relation compatible with thermodynamical laws in \cite{AG}, \cite{Gu}.
 Its explicit form is
\[ 
	b(\nu,V)V = \kappa_\sigma -f
	\quad\text{with}\quad b(\nu,V) \geq 0
\]
where $f$ is a driving force term coming from bulk interface difference which is assumed to be a constant in \cite{AG}, \cite{Gu}.
 The function $b$ is called a kinetic coefficient.
 If $b(\nu,V)$ is independent of $V$ and positive, then it is reduced to
\[
	V = M(\nu)(\kappa_\sigma + C)
\] 
with $C=-f$, $M(\nu)=b(\nu)^{-1}$.
 If $b(\nu, V)$ is taken so that
\[
	b(\nu, V)V = \mathrm{log}(1+V)
\]
with $f=0$, this is nothing but the model of thermal grooving of a surface due to evaporation-condensation proposed by W.\ W.\ Mullins \cite{Mu57}. Here is a way of derivation. The Gibbs-Thomson law reads
\[
	\operatorname{log}(p/p_0) = \beta(-\kappa_\sigma)
\]
with positive constant $\beta>0$. Here $p$ is the pressure and $p_0$ is the atmospheric pressure. The evolution law is
\[
	V = M(\nu)(p_0-p).
\]
If $M(\nu)\equiv 1$, $p_0=1$, then one gets
\begin{equation} \label{2EXP}
	V = 1 -\operatorname{exp}(-\beta\kappa_\sigma).
\end{equation}
If  the right-hand side is linearized around $\kappa_\sigma=0$, we get $V=\beta\kappa_\sigma$.
 See the discussion by N.\ Hamamuki \cite{H}.
As we will see later in this section, a model similar to $V=\kappa_\sigma$ was introduced by H.\ Spohn \cite{Sp} when $\sigma$ is a kind of crystalline anisotropy to model evaporation-condensation below the roughening temperature. 

Another source of equations stems from an image processing.
 An axiomatic derivation is provided by \cite{AGLM}.
 For curve evolution, equation
\begin{equation} \label{2POW}
	V = |\kappa_\sigma|^{\alpha-1} \kappa_\sigma \quad
	\alpha>0
\end{equation}
is important especially with $\alpha=1/3$, where the evolution law is invariant under affine transform (not only under rotation, dilation and translation) when $\sigma$ is isotropic.
 In higher dimensional case, the corresponding equation should be $V=K^{1/(n+1)}$ where $K$ is the Gauss curvature not the mean curvature. 
 A crystalline Gaussian curvature flow $V=K_\sigma$ has been studied to approximate the Gaussian curvature flow; see e.g.\ \cite{UY}.
 However, we do not touch this topic in this paper.
 There are many examples of curvature flows (see e.g.\ \cite[Chapter 1]{G06}).
 In the case that the mean curvature is involved like the inverse mean curvature flow equation, it is easy to generalize
\[
	V = -1/\kappa_\sigma.
\]
If $\sigma$ is isotropic, then the equation was used to prove the positive mass conjecture \cite{HI} since the Geroch mass is monotone under this flow.

We note that the total variation flow
\[
	w_t = \operatorname{div}' \left( \nabla'w/|\nabla'w| \right)
\]
can be understood as a particular case of $V=M(\nu)\kappa_\sigma$ as discussed in the introduction for evolution of graph-like curves.
 If an evolving surface $\Gamma_t$ is given as the graph of $w=w(x',t)$, $x'\in\R^{n-1}$, the total variation flow for $w$ can be written as
\[
	V = M(\nu) \kappa_\sigma
\]
with
\begin{align*}
	\sigma(p) &= |p'| + |p_n| \quad\text{with}\quad
	p = (p', p_n) \\
	M(\nu) &= \nu_n \quad\text{with}\quad
	\nu= (\nu', \nu_n)
\end{align*}
provided that the slope of $w$ is less than $1$.
 Here $\nu'=-\nabla'w/\left(1+|\nabla'w|^2\right)^{1/2}$ and $\nu_n=1/\left(1+|\nabla'w|^2\right)^{1/2}$.

The model proposed by H.\ Spohn \cite{Sp} is almost the same.
 Here $w$ denotes the height of the crystal surface at $x'$ and at time $t$.
 It is of the form
\[
	w_t = \operatorname{div}' \left( \nabla'w/|\nabla'w| \right)
	+ \beta\operatorname{div}' \left( |\nabla'w| \nabla'w \right),
\]
where $\beta>0$ is a constant.
 If one writes it in the form of a surface evolution, it is
\[
	V = M(\nu) \kappa_\sigma
\]
with $\sigma(p)=|p'|+\beta|p'|^3/3+|p_n|$ under the same slope restriction; without slope restriction, we may take $\sigma(p)=|p'|+\beta|p'|^3/3$.

There are several fourth-order models.
 For relaxation of crystal surface, a fourth-order total variation type equation is proposed by \cite{Sp}.
 Its explicit form is
\[
	w_t = -\Delta' \left( \operatorname{div} \left( \nabla'w/|\nabla'w| \right)
	+ \beta\operatorname{div} \left( |\nabla'w| \nabla'w \right) \right),
\]
where $\Delta'$ denotes the Laplacian in $x'$ variable, i.e., $\Delta'=\operatorname{div}' \operatorname{grad}' =\nabla'\cdot\nabla'$.
 This equation is derived as a continuum limit of models describing motion of steps on crystal surface as discussed in \cite{Od}, where a numerical simulation is given.
 This model describing step-motion is microscopic in the direction of height but macroscopic in the horizontal direction.
 We refer the reader to a nice review article by R.\ V.\ Kohn \cite{Koh} on this issue.
 Of course, if $\beta=0$, this is nothing but the fourth-order total variation flow.
 This is popular for image processing.
 For example, Osher-Sol\'e-Vese \cite{OSV} model gives the fourth-order total variation flow of the form
\[
	u_t = -\Delta \operatorname{div} \left( \nabla u/|\nabla u| \right)+ \lambda(f-u)
\]
for $\lambda>0$, and given $f$.
 See also \cite{ElS} for such a flow, where the well-posedness of the equation is proved by using the Galerkin method.
 For relaxation phenomena, W.\ W.\ Mullins \cite{Mu57} introduced a surface diffusion flow equation; see also \cite{CT94} for derivation.
 It is of the form
\[
	V = - \operatorname{div}_{\Gamma_t} j, \quad
	j = - \operatorname{grad}_{\Gamma_t} \rho 
\]
\[
	\operatorname{log} (\rho/\rho_0) = \frac{k\mu}{T}, \quad
	\mu = \kappa_\sigma,
\]
where $T$ is a given temperature and $\rho_0$ is an equilibrium density; $k$ is a positive constant.
 The quantity $j$ is the mass flux and $\mu$ is the chemical potential.
 The resulting equation is
\begin{equation} \label{4EXP}
	V = \Delta_{\Gamma_t} \operatorname{exp} (-k \kappa_\sigma/T), \quad
	\Delta_\Gamma = \operatorname{div}_\Gamma \operatorname{grad}_\Gamma;
\end{equation}
here, $\Delta_\Gamma$ denotes the Laplace-Beltrami operator on the surface $\Gamma$.
 We shall set $k=1$, $T=1$ for simplicity of presentation to get
\[
	V = \Delta_{\Gamma_t} \operatorname{exp} (-\kappa_\sigma).
\]
If one linearizes around $\kappa_\sigma=0$, the resulting equation is
\[
	V = - \Delta_{\Gamma_t} \kappa_\sigma.
\]
If $V$ is replaced by an upward velocity and $\Delta_{\Gamma_t}$ is replaced by $\Delta'$ for the graph of $w$, then the equation becomes the fourth-order total variation flow if $\sigma(p)=|p'|$, i.e.,
\[
	w_t = - \Delta' \left( \operatorname{div}' \left( \nabla'w/|\nabla'w| \right) \right).
\]

One significant property of the surface diffusion flow is the preserving property of the volume (area) enclosed by $\Gamma_t$.
 This is not the case for the second-order problem.
 However, one is able to consider a volume-preserving crystalline curvature flow, which is a nonlocal equation.
 For example, the volume-preserving version of \eqref{AM1} is of the form
\[
	V = \kappa_\sigma - \frac{1}{\mathcal{H}^{n-1}(\Gamma_t)} \int_{\Gamma_t} \kappa_\sigma d\mathcal{H}^{n-1}
\]
so that $\int_{\Gamma_t} Vd\mathcal{H}^{n-1}=0$. See Section~\ref{FO1} for more discussion of the volume-preserving problem.

\section{Polygonal flow} \label{PF}
In this section, we consider a special class of a polygonal flow called admissible introduced by J.\ Taylor \cite{T1} and S.\ B.\ Angenent and M.\ E.\ Gurtin \cite{AG} for a planar purely crystalline curvature flow equation. \\
\textbf{Admissible polygonal flow.}
 We first introduce a special class of a polygonal flow associated to a purely crystalline anisotropy $\sigma$.
 Let $W_\sigma$ denote the Wulff shape corresponding to $\sigma$.
 Since the anisotropy $\sigma$ is purely crystalline, $W_\sigma$ is a bounded, convex polygon containing the origin as an interior point.
 Let $\mathcal{N}$ be a finite subset of the unit circle so that it is the set of all orientations (exterior normals) of edges on the boundary $\partial W_\sigma$ of $W_\sigma$.
 We call $\mathcal{N}$ the set of \emph{admissible directions}.
 This set can be written as
\[
	\mathcal{N} = \{ \mathbf{n}_k \}^m_{k=1}
	\quad\text{with}\quad \mathbf{n}_k = (\cos\theta_k, \sin\theta_k)
\]
with $0\leq\theta_1<\cdots<\theta_m<2\pi$.
 The set $\Theta=\{ \theta_k \}^m_{k=1}$ is called the set of \emph{admissible angles}, which is considered as a subset in $\mathbb{T}=\mathbb{R}/2\pi\mathbb{Z}$.
 For example, $\theta_1$, $\theta_{m-1}$ are adjacent to $\theta_m$.
 We say that an (oriented) polygon is \emph{admissible} if
\begin{enumerate}
\item[(i)] (direction condition) the orientation of each facet (edge) is in $\mathcal{N}$; 
\item[(ii)] (adjacence condition) the angles of orientations of adjacent facets should be adjacent.
\end{enumerate}
An evolving polygon $\{\Gamma_t\}_{t\in I}$ is an \emph{admissible polygonal flow} if $\Gamma_t$ is an admissible polygon for $t\in I$ and the motion of all vertices is $C^1$ in time $t\in I$, where $I$ is a time interval. \\
\textbf{Crystalline curvature.}
 Since the Wulff shape is a substitute of the unit disk, it is natural to postulate that $\kappa_\sigma=-1$ on $\partial W_\sigma$.
 Let $\Delta(\mathbf{n})$ denote the length of a facet (edge) of $\partial W_\sigma$ whose orientation equals $\mathbf{n}$.
 For a general admissible polygon $\Gamma$, let $S$ denote one of its facets.
 By the ansatz for curve evolution, $\kappa_\sigma$ on $S$ must be a constant and its value must be a kind of Cheeger ratio.
 In our setting on $S$ with orientation $\mathbf{n}_S$, it is natural to assign
\[
	\kappa_\sigma = \chi \Delta(\mathbf{n}_S)/L,
\]
where $L$ is the length of the facet $S$ and $\chi$ is a transition number, i.e., $\chi=+1$ (resp.\ $-1$) if $\Gamma$ is convex (concave) in the direction of $\mathbf{n}_S$ near $S$, and otherwise $\chi=0$; see Figure~\ref{fig:chi-sign}.
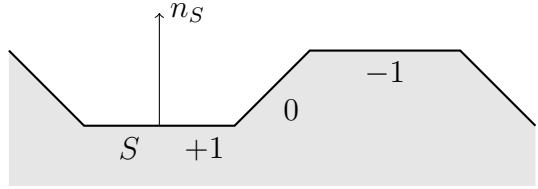
\begin{figure}
\centering
\begin{tikzpicture}
\fill[black!10!white] (0,1) -- (1,0) --   (3,0) --  (4,1) --  (6,1) -- (7,0) -- (7,-0.8) -- (0,-0.8) -- cycle;
\draw[thick] (0,1) -- (1,0) -- node[pos=0.3,below] {$S$} node[pos=0.8,below] {$+1$} (3,0) -- node[below right] {$0$} (4,1) -- node[below] {$-1$} (6,1) -- (7,0);
\draw[->] (2,0) -- (2, 1.5) node[right] {$n_S$};
\end{tikzpicture}
\caption{Value of $\chi$ based on the convexity/concavity of the facets.}
\label{fig:chi-sign}
\end{figure}
 By this definition, $\kappa_\sigma=-1$ on $\partial W_\sigma$ since $\mathbf{n}_S$ is taken outward from $W_\sigma$; this is the outward curvature.
 We measure the curvature by comparing with the Wulff shape, which is consistent with the definition of the usual curvature by the inverse of the radius of the osculating circle called a circle of curvature.
 This quantity $\kappa_\sigma$ is often called a \emph{crystalline curvature}. \\
\textbf{Derivation of a system of ODEs.}
 Let $\{\Gamma_t\}_{t\in I}$ be an admissible polygonal flow such that for $t\in I$, $\Gamma_t$ is an $\ell$-polygon consisting of facets $\left\{S_j(t)\right\}^\ell_{j=1}$ numbered counterclockwise and vertices of $S_j(t)$ whose motion is $C^1$ in time. 
 Let $V_j(t)$ denote the normal speed of $S_j(t)$ in the direction of the orientation $\mathbf{n}_j$ of $S_j(t)$.
 We consider a general form of the equation
\begin{equation} \label{CrG}
	V = g (\nu, \kappa_\sigma)
\end{equation}
with $g$ non-decreasing in the second variable so that the problem is at least degenerate parabolic.
 For an admissible polygonal flow, this equation is formally reduced to
\begin{equation} \label{CrG1}
	V_j(t) = g \left( \mathbf{n}_j, \chi_j\Delta(\mathbf{n}_j)/L_j(t) \right),\quad
	j = 1, \ldots, \ell,
\end{equation}
where $L_j(t)$ is the length of $S_j(t)$ and $\chi_j$ is the transition number of $S_j(t)$.
 By an elementary geometry Fig.~\ref{fig:page1}, we observe that
\begin{figure}
\centering
\includegraphics[width=3.5in]{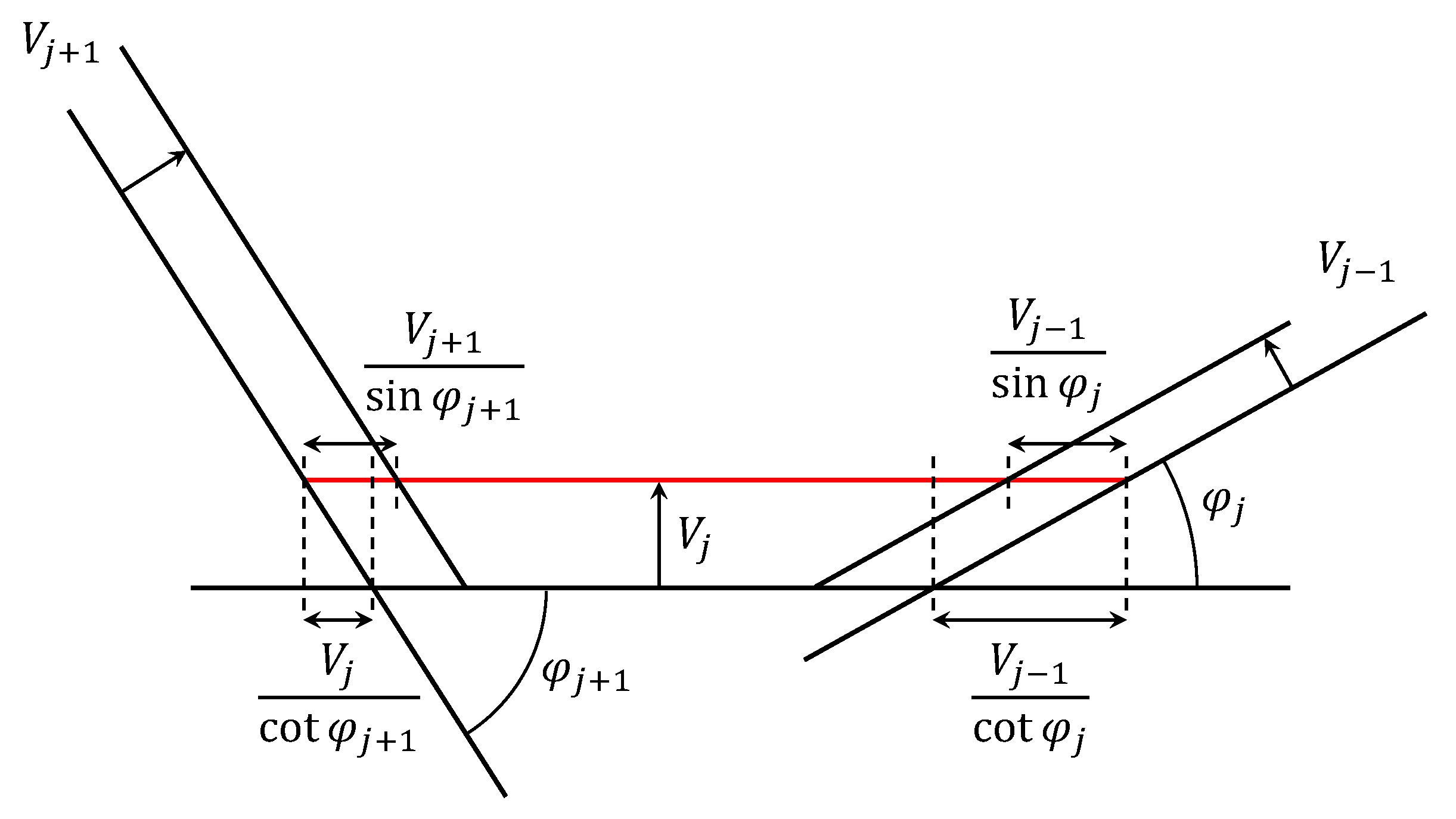}
\caption{}
\label{fig:page1}
\end{figure}
\begin{align}
&\begin{aligned}\label{Tr} 
	\frac{dL_j(t)}{dt} = & -\frac{1}{\sin\varphi_j} V_{j-1}(t)
	+ (\cot\varphi_j + \cot\varphi_{j+1}) V_j(t) \\
	& -\frac{1}{\sin\varphi_{j+1}} V_{j+1}(t), \quad j = 1, \ldots, \ell,
\end{aligned}
\end{align}
where $\varphi_j=\theta_j-\theta_{j-1}$ and $\theta_j$ is the angle of $\mathbf{n}_j$, i.e.,
\[
	\mathbf{n}_j = (\cos\theta_j, \sin\theta_j).
\]
We use the convention that the indices are considered modulo $\ell$, i.e., we identify $\theta_{\ell+j}=\theta_j$.
 We conclude \eqref{CrG1} and \eqref{Tr} to get a system of $\ell$ ODEs for $L_j$'s.
 The initial value problem of this system is locally-in-time solvable for example when $g$ is $C^1$ in the second variable.
 The resulting admissible polygonal flow is called a \emph{crystalline flow}.
 This idea is introduced by J.\ Taylor \cite{T1} for $V=\sigma\kappa_\sigma$ and S.\ B.\ Angenent and M.\ E.\ Gurtin \cite{AG} for $V=M(\nu)(\kappa_\sigma+C)$; both examples are introduced in Section \ref{SM}. \\
\textbf{Starting from a general polygon.}
 If one considers a polygon whose orientation belongs to $\mathcal{N}$ but violates the adjacence condition, it is expected that new facets with ``missing directions" are created from a corner.
 To be more precise, let us consider the equation
\[
	V = \kappa_\sigma.
\]
We consider adjacent facets $S_A$, $S_B$ of a polygon $\Gamma$ whose angles $\theta_A$, $\theta_B$ of orientation $\mathbf{n}_A$, $\mathbf{n}_B$ are not adjacent; see Figure~\ref{fig:page2}.
\begin{figure}
\centering
\includegraphics[width=3in]{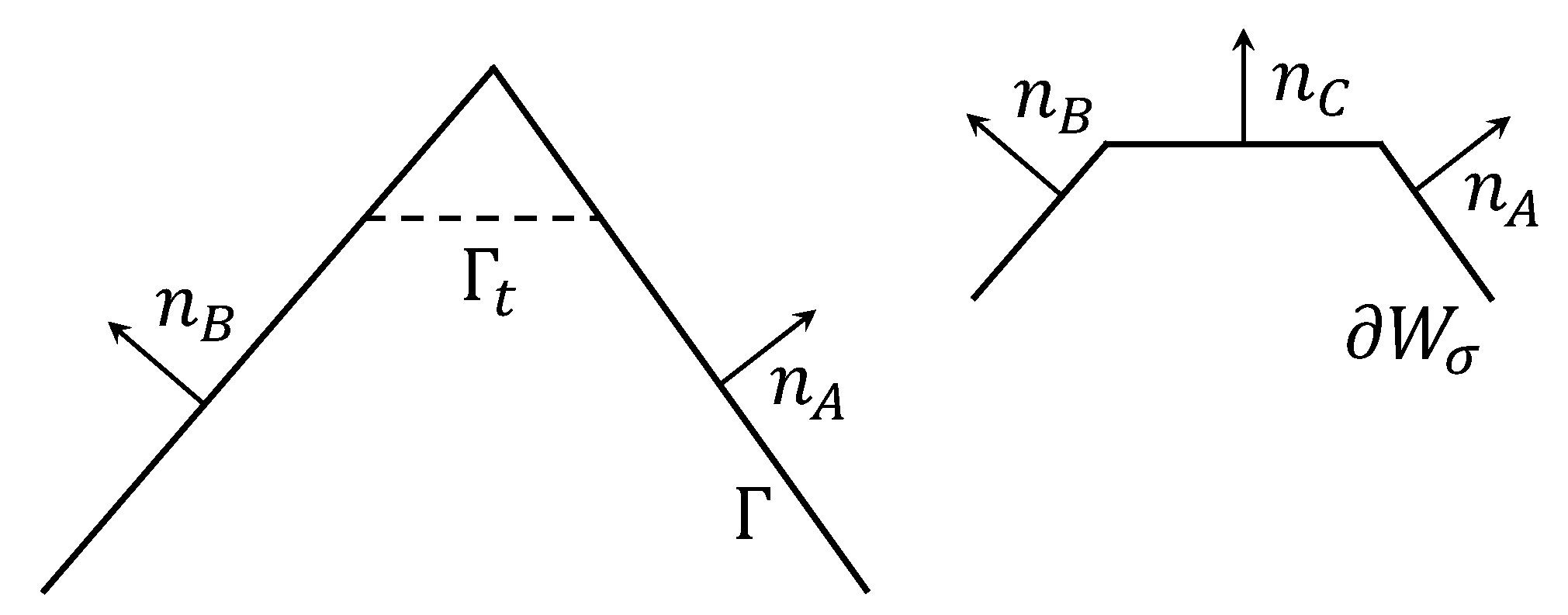}
\caption{}
\label{fig:page2}
\end{figure}
 In other words, there are missing admissible angles between $\theta_A$ and $\theta_B$.
 If $S_A$ and $S_B$ do not move, i.e., they are stationary, it is expected that there is a unique self-similar expanding crystalline flow which converges to $S_A \cup S_B$ as the time tends to zero.
 The unique existence of such a self-similar expanding crystalline flow has been claimed in a pioneering work by J.\ Taylor \cite[Proposition 2.2 (1)]{T3}.
 However, unfortunately, the proof skips over important details.
 Here, by self-similar we mean that the flow $\Gamma_t$ is of the form $\Gamma_t=t^{1/2}\Gamma_*$ with some admissible $\Gamma_*$; 
 we here assume that the vertex connecting $S_A$ and $S_B$ is the origin by translation.
 Note that $\Gamma_*$ may not be a part of the Wulff shape as observed in a numerical calculation \cite{HGGD}.
 The unique existence of such a self-similar expanding solution is proved in D.\ Campbell \cite{Ca} in the case that $W_\sigma$ is a regular polygon and in \cite{GGH} for general $W_\sigma$.
 This problem is reduced to solving a system of algebraic equations and methods presented in \cite{Ca} and \cite{GGH} are quite different.
 Approximating by such a self-similar expanding solution, one is able to construct an expanding solution even if $S_A$ and $S_B$ are moving.
 This is carried out by Y.\ Ochiai \cite{O} for $V=\kappa_\sigma$ and is extended to an equation including $V=M(\nu)(\kappa_\sigma+C)$ by R.\ Kuroda \cite{K};
 see also \cite{GGKO} for a complete proof for more general equations.

Although there is a large number of articles studying crystalline flows, this type of facet creation problems are not discussed frequently.
 A facet creation problem was observed in \cite{GG1} and further developed in \cite{Mu}, \cite{MuR1}, \cite{MuR2} mostly for graph-like solutions.
 However, the number of newly created facets in one point is just one.
 This aspect is quite different from works by \cite{T3}, \cite{Ca}, \cite{GGH}, \cite{O}, \cite{K}, \cite{GGKO}, where several facets are created from one point (corner).  

On the other hand, it is not difficult to handle the case when the direction condition is violated.
 In this case, we just regard $\Delta(\mathbf{n})=0$ for such directions.
 Such a facet is preserved at least for a short time, so we may call such a polygonal flow satisfying ``adjacence condition'' a \emph{weakly admissible} polygonal flow \cite{GG96}. \\
\textbf{Behavior of convex crystalline flow.}
 If the initial polygon is a convex (admissible) polygon, the behavior of a solution (crystalline flow) has been well-studied for $V=M(\nu)\kappa_\sigma$.
 It is easy to see that the convexity is preserved.
\begin{enumerate}
\item[(i)]
The case when $M$ is parallel to $\sigma$, i.e., $M(\nu)=c\sigma(\nu)$ with some $c>0$.
 It is easy to see that there always exists a self-similar solution shrinking to a point whose profile is the Wulff shape $W_\sigma$.
 By a spatial translation, this solution can be written as $\Gamma_t=(2c)^{1/2}(T-t)^{1/2}\partial W_\sigma$, where $T$ is the extinction time. 
 The uniqueness of a self-similar solution is proved when the Wulff shape $W_\sigma$ is symmetric with respect to the origin and the number of its vertices is more than four in \cite{S1};
 in the case $W_\sigma$ is a parallelogram, all parallelograms shrink self-similarly; see the next section for an explicit solution.
 Moreover, it is shown in \cite{S1} that all convex solutions shrink asymptotically similarly to the self-similar solution.
 These results are parallel to those for conventional curve shortening flow as established in \cite{Ga93}, \cite{GaL94}, \cite{DGM}, \cite{DG}.  
\item[(ii)] The case where $M$ is unrelated to $\sigma$.
 In this case, the situation is complicated as discussed in \cite{S2}, \cite{A}.
 In \cite{A} a rather complete picture is given.
 We first consider the case of orientation-free i.e., $M(\mathbf{n})\Delta(\mathbf{n})=M(-\mathbf{n})\Delta(-\mathbf{n})$ for $n\in\mathcal{N}$ where $\mathcal{N}=-\mathcal{N}$.
 In this case, there are two possibilities.
 Either phenomenon similar to (i) occurs or there is no self-similar shrinking solution and the isoperimetric ratio of a solution may tend infinity \cite{A}.
 Moreover, in the second case it is shown in \cite{A} that the minimal length of facets at time $t$ behaves like $\left\{(T-t)/\log(t-t)\right\}^{1/2}$ or $(T-t)^\beta$, $1/2<\beta<1$ as $t$ tends to $T$, where $T$ is the extinction time.
 For a self-similar solution, the length should behave like $(T-t)^{1/2}$ so it is shorter than that of a self-similar solution.
 This has a strong contrast compared to the conventional orientation-free anisotropic curvature flow, where all flows shrink in a self-similar way.
 This indicates that a qualitative property of a solution may differ from the conventional curve shorting equation depending upon the Wulff shape.  
 If the motion is not orientation-free, it is shown in \cite{IUYY} that a crystalline flow may not become convex.
 There also exists a non-convex self-similar shrinking solution when the Wulff shape is a square or a regular triangle for $V=M\kappa_\sigma$ with $M$ unrelated to $\sigma$ which is not orientation-free \cite{IUYY}.
\end{enumerate}

We next consider the equation $V=|\kappa_\sigma|^{\alpha-1}\kappa_\sigma$ for $\alpha>0$.
 The situation depends on the value of $\alpha$.
 We have discussed the case $\alpha=1$.
 In the case $\alpha\geq 1$, it is shown in \cite{GG3} that there is no degenerate pinching at the extinction time $T$.
 By degenerate pinching we mean that two parallel facets touch with positive length at the extinction time.
 For $\alpha>1$, all (convex) solutions shrink to a point in a self-similar way like (i) \cite{A}.
 If $\alpha<1$, a degenerate pinching may happen and there is a solution whose enclosed area tends to zero but the limit of the length remains positive \cite{A}.
 For $\alpha<1$, there also exists a non-convex self-similar solution for $V=M|\kappa_\sigma|^{\alpha-1}\kappa_\sigma$ even if the equation is orientation-free \cite{IUYY}.

We now consider the case when the initial polygon does not fulfill the direction condition but satisfies the adjacency condition with interpretation that $\Delta(\mathbf{n}_i)=0$ for a non-admissible direction of the initial polygon and that $\mathbf{n}_i$ belongs to $\mathcal{N}$.
 In \cite{Ya} a quite general results are established.
 The equation considered there is $V=g(\nu,\kappa_\sigma)$ with $g(\nu,0)=0$ which is non-decreasing and locally Lipschitz in the second variable.
 By solving the system of ODEs, we see that the number of facets is unchanged during a short time.
 At some time either at least one of the facets with a non-admissible direction disappears or the whole evolution shrinks to a point \cite{Ya}. \\
\textbf{Behavior of a general admissible polygon.}
 If the initial polygon is admissible but not convex, it must have an inflection facet, i.e., a facet with $\chi=0$.
 There is a crystalline flow with such initial data until the length of some facet tends to zero.
 It is already proved in \cite[Theorem 3.2]{T3} that for the equation $V=\sigma\kappa_\sigma$, at such occasion only at most two adjacent inflection facets disappear unless the flow shrinks to a point.
 However, the proof there is rather sketchy.
 In \cite{IS} a full proof is given when $W_\sigma$ is a regular polygon with even number of facets.
 The resulting polygon at the time when infection facets disappear stays admissible, so one can extend a solution as a crystalline flow until it loses another facet.
 We are able to complete this procedure until it shrinks to a point.
 Such an extended flow is called an extended crystalline flow.

For the curve shortening equation $V=\kappa$, it is shown that the solution (flow) becomes convex in finite time \cite{Gr}.
 It seems that the corresponding result is not established even when $W_\sigma$ is symmetric with respect to the origin and the equation is $V=\sigma\kappa_\sigma$.
 To the best of our knowledge, the (extended) crystalline flow (after losing several inflection facets) becomes almost convex in the sense that all facets have positive crystalline curvature possibly except two adjacent inflection facets for $V=\sigma\kappa_\sigma$ with symmetric $W_\sigma$ as shown in \cite{I08}. \\
\textbf{Equations with a driving force term.}
 We next consider the equation $V=M(\nu)(\kappa_\sigma+C)$, where $C$ is a constant.
 This equation is sometimes called the eikonal curvature flow equation.
 There are several new phenomena in this equation compared to the case $C=0$.
 For example, this motion certainly depends on the orientation.
 If $C$ is taken positive and $\nu$ is taken outwards, it grows to the whole plane in infinite time provided that the polygon is sufficiently large.
 The large time behavior of an (extended) crystalline flow is studied in \cite{GG13} with special emphasis on the anisotropic effect of mobility $M$ and $\sigma$.
 For $V$-shaped initial data, its evolution was studied in \cite{I11a}, \cite{I11b}.
 A crystalline flow is also applied to the study of a growth of spirals since the work of \cite{I14}, which is further developed in \cite{IO1}.
 Various methods for the numerical computation of the crystalline flow are compared in \cite{IO2}. \\
\textbf{A few remarks on consistency and stability.}
 If the initial data is given as the graph of a periodic piecewise linear function, for $V=M(\nu)\kappa_\sigma$ the theory of maximal monotone operators applies to construct a solution \cite{FG}.
 This notion of a solution is consistent with the (extended) crystalline flow;
 see also \cite{EGS} where a numerical scheme based on a variational inequality is given.

Note that the crystalline flow satisfies a comparison principle or an order preserving property.
 It reads that if an admissible polygon $\Gamma^a$ is enclosed by another admissible polygon $\Gamma^b$, then the corresponding crystalline flows $\{\Gamma^a_t\}$ and $\{\Gamma^b_t\}$ starting from $\Gamma^a$ and $\Gamma^b$, respectively, have the same property, i.e., $\{\Gamma^b_t\}$ encloses $\{\Gamma^a_t\}$ as far as both exist;
 see \cite{T3} and \cite{GGu}.
 This is easily seen by comparing their crystalline curvatures.
 Based on this property, one is able to establish a notion of viscosity solutions.
 This was first introduced in the case where $\Gamma_t$ is given as the graph of a periodic function \cite{GG1};
 see also \cite{GG} and its consistency with an (extended) crystalline flow already discussed in \cite{GG96}.
 Moreover, their solution can be obtained as a limit of a smoother problem, i.e., the problems where $\sigma$ is smooth \cite{GG2}.
 This stability property applies for a variational solution \cite{FG}.
 Note also that in both frameworks solutions with a smooth $\sigma$ can be approximated by a crystalline flow \cite{FG}, \cite{GG2}.
 This gives a numerical algorithm to solve a smooth anisotropic curvature flow or even the heat equation by approximating it by crystalline flows.
 This topic is studied in \cite{FG}, \cite{GirK}, \cite{GG2} for a graph-like solution.
 In \cite{GirK} a convergence rate is also given.
 The approach by viscosity solution is extended to closed curves through a level-set method \cite{GG4} and its consistency is discussed in \cite{GG3}.
 The stability is also discussed in \cite{GG4}.
 Among other results, a solution with a smooth $\sigma$ can be approximated by extended crystalline flows.
 It is proved for $V=\kappa$ in \cite{Gir} for convex curves with convergence rate and in \cite{IS} for a general curve.
 In \cite{GG4} such stability is discussed for a general equation $V=g(\nu,\kappa_\sigma)$.
 Note that it is also shown in \cite{GG4} that an extended crystalline flow is a limit of flows of problems with smooth anisotropy.
 More precisely, if $W_\sigma$ is close in the sense of the Hausdorff distance, the solution must be close (up to fattening).

When one discusses consistency for equations with driving force term like the eikonal-curvature flow $V=M(\nu)(\kappa_\sigma+C)$, one should be careful to handle corners.
 If we consider just the eikonal equation $V=C>0$ for a bounded convex polygon, it is expected that the solution will be rounded following the Huygens principle.
 To preserve corners, one has to restrict the mobility $M(\nu)$.
 We consider a general equation $V=g(\nu,\kappa_\sigma)$.
 Let us explain the corner preserving condition explicitly stated in \cite[Lemma 4.1, Lemma 4.2]{GG13}.
 We say that $g$ satisfies the \emph{corner preserving condition} if for each $\mathbf{n}_k\in\mathcal{N}$
\[
	g(\mathbf{m},0) = \frac{1}{\sin\varphi_{k+1}}
	\left( g(\mathbf{n}_k,0) \sin\psi_{k+1} + g(\mathbf{n}_{k+1}) \sin\psi_k \right)
\]
for all $\mathbf{m}=(\cos\theta,\sin\theta)$ with $\theta_k<\theta<\theta_{k+1}$, where $\varphi_{k+1}=\theta_{k+1}-\theta_k$ and $\psi_k$ (resp.\ $\psi_{k+1}$) is the angle between $\mathbf{n}_k$ ($\mathbf{n}_{k+1}$) and $\mathbf{m}$ so that $\varphi_{k+1}=\psi_{k+1}+\psi_k$.
 Geometrically speaking, this condition can be written as
\[
	A_k \subset \left\{ x \in \mathbb{R}^2 \mid
	x\cdot\mathbf{m} \leq g(\mathbf{m},0),\ \mathbf{m}=(\cos\theta,\sin\theta),\
	\theta_k<\theta<\theta_{k+1} \right\} \subset B_k
\]
with
\[
	A_k = H_k \cap H_{k+1}, \ 
	B_k = H_k \cup H_{k+1}, \ 
	H_{k+j} = \left\{ x \in \mathbb{R}^2 \mid
	x\cdot\mathbf{n}_{k+j} \leq g(\mathbf{n}_{k+j},0) \right\}.
\]
If $\Gamma$ is convex with outward orientation, we only need the inclusion of $A_k$. In other words, in the above identity the equality should be replaced by $\geq$ so that $g(\mathbf{m},0)$ is always larger than the right-hand side.
This condition says that in the corner all segments whose orientation is between that of facets forming the corner move faster than corner facets for $V=g(\nu,0)$.
 This condition is first pointed out explicitly by \cite{GHK} and independently by \cite{GSS}.
 It is stated in a different from in \cite{GG96}.
 The geometric version is found in \cite{GG3};
 however, unfortunately, the definition of $B_k$ was mistyped.

We shall postpone the definition of viscosity solutions to Section~\ref{AV}.
 We note that the theory covers a wide range of $\sigma$ not necessarily purely crystalline in planar case for general equation $V=g(\nu, \kappa_\sigma)$ including \eqref{2EXP}, \eqref{2POW}, while in higher dimension, for such setting it is limited for purely crystalline $\sigma$ for general equations;
 see Section \ref{AV}.
 In \cite{GG4} it is only assumed that $F_\sigma$ is convex, $C^2$ except finitely many vertices and the curvature is bounded.
 
Although the approach by admissible polygonal flow is convenient to study planar curvature flow equations, it is limited because it implicitly requires that the speed of a facet is spatially constant.
 For example, even in $\mathbb{R}^2$ if one considers the equation with spatially inhomogeneous driving force like
\[
	V = M(\nu) \left(\kappa_\sigma + f(x) \right),
\]
then it is not appropriate to assign the speed of a facet as a spatially constant to obtain a comparison principle.
For a graph-like solution with special $M$, as a variational solution several facet-breaking solutions are given in \cite{GG98}.

\section{Explicit solutions} \label{ES}

In this section we given examples of a few interesting explicit solutions to illustrate the behavior of the equations.

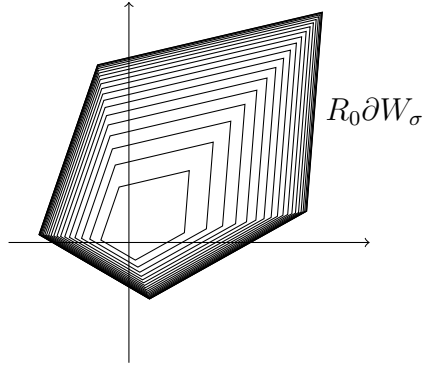
\begin{figure}
\centering
\begin{tikzpicture}[scale=0.8]
\foreach \t in {0, 0.05, ..., 1} {
\begin{scope}[scale=sqrt(1 - \t * \t)]
\draw (50:5) -- (100:3) -- (175:1.5) -- (290:1) -- (10:3) -- cycle;
\end{scope}
}
\draw[->] (-2,0)--(4,0);
\draw[->] (0,-2)--(0,4);
\path (10:3) -- node[right] {$R_0 \partial W_\sigma$} (50:5);
\end{tikzpicture}
\caption{Timesteps $\Gamma_{it^*/20}$, $i = 0, 1, \ldots$, of a homethetic Wulff shape solution of $V = \sigma \kappa_\sigma$. Note that even though $\kappa_\sigma$ is a constant on $\Gamma_t$, the edges further from the origin move faster due to the  mobility factor $\sigma$ and the solution is homothetic.}
\label{fig:wulff-shrink}
\end{figure}
The simplest solution of the crystalline mean curvature flow is the homothetic (self-similar) solution starting from the Wulff shape, Fig.~\ref{fig:wulff-shrink}, that can be translated and scaled. Rotations are of course not allowed. As noted in the introduction, the crystalline mean curvature on the surface of the Wulff shape $W_\sigma$ is the constant $n-1$; here, the orientation (normal) is taken inward. Therefore
\begin{align*}
\Omega_t = \sqrt{R_0^2 - 2(n-1)t} \ W_\sigma
\end{align*}
is a solution of the crystalline mean curvature flow $V = \sigma(\nu) \kappa_\sigma$ for any $R_0 > 0$ on the interval $t \in [0, t^*)$, where $t^* = \frac{R_0^2}{2(n-1)}$ is the \emph{extinction time}. Note the factor $\sigma(\nu)$ in the velocity law. The (inner) normal velocity of $R(t) W_\sigma$ at a boundary point $x$ with inner unit normal $\nu$ is $-R'(t) x \cdot \nu = -R'(t) \sigma(\nu) R(t)$.

 One might ask whether the above solutions are the only homothetic solutions of the flow. This is however not always the case as the following simple example illustrates. We consider $n = 2$ and the anisotropy $\sigma(p) = |p_1| + |p_2| = \norm{p}_1$. Let $\Omega_0 = (-a, a) \times (-b, b)$ be a rectangle for some $a > 0$, $b > 0$. Then $\Omega_t = R(t) \Omega_0$ for $R(t) = \sqrt{1 - \frac{2}{ab} t}$ is a solution of both $V = \sigma \kappa_\sigma$ and $V = \kappa_\sigma$. In $n = 2$, the uniqueness of the Wulff shape homothetic solution was proved by Stancu \cite{S1} for even anisotropies $\sigma$ when (so that the problem is orientation-free) the Wulff shape $W_\sigma$ is not a quadrilateral as mentioned in Section~\ref{PF} (i). 

A related question is whether a solution starting from an arbitrary convex initial data will asymptotically approach the homothetic Wulff shape solution as in the case of the usual mean curvature flow.
 As mentioned in Section \ref{PF} (i), this was shown again by Stancu \cite{S2} in $n = 2$ for even non-quadrilateral anisotropies. The situation is much more complex in $n > 2$ and is studied in \cite{NP_MMMAS}.

By an interpretation different from Section \ref{PF} (ii), we also mention that for $V=\sigma\kappa_\sigma$ examples of non-convex homothetic solutions in $n = 2$ given in \cite{IUYY} for non-even anisotropies $\sigma$, that is, $\sigma(p) \neq \sigma(-p)$ for some $p$. This shows that one cannot in general expect that a non-convex connected initial shape will become convex before extinction time.

In dimensions $n > 2$ the situation is more complex and nonzero genus explicit homothetic solutions are known. For example, for $\sigma(p) = \norm{p}_1$ a cube with a square-shaped hole along each axis is a homothetic solution, Figure~\ref{fig:sponge}. See \cite{Po} for more details.
\begin{figure}
\centering
\includegraphics[width=2in]{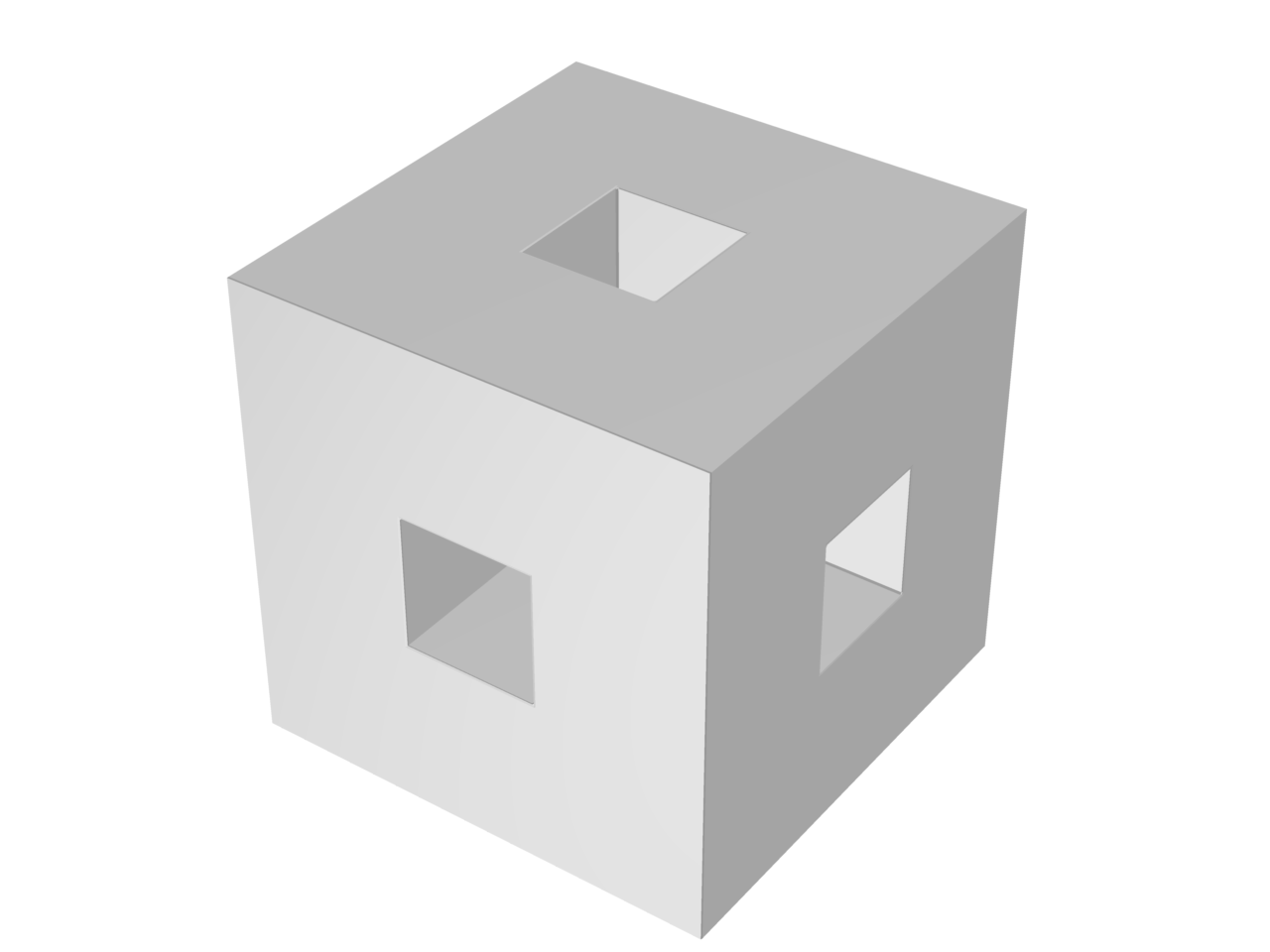}
\caption{Sponge-like homothetic solution of the crystalline mean curvature flow in dimension 3 with $\sigma(p) = \|p\|_1$.}
\label{fig:sponge}
\end{figure}

For examples of solutions of the related total variation flow see for example \cite[Sec.~5]{Moll05}.

\section{Approach by the theory of maximal monotone operators}
\label{AO}
\label{se:max-monotone}

\subsection{Abstract theory} \label{AO1}

In this section we introduce the crystalline mean curvature as a solution of a certain minimization problem. This interpretation is based on the theory of maximal monotone operators of K\=omura~\cite{Ko} and Brezis~\cite{Br73}. 

Let us give a motivation for this point of view.
It is natural to expect that the crystalline mean curvature flow with anisotropy $\sigma$ can be approximated by anisotropic mean curvature flow with smooth anisotropies $\sigma_m$ so that $\sigma_m \to \sigma$ in some sense.

If $\Gamma$ is a $C^2$ surface in $\Rn$ and $\sigma_m$ is a $C^2$ smooth anisotropy, the anisotropic mean curvature $\Gamma$ at $x \in \Gamma$ is given as $\divo \nabla \sigma_m(\nabla u)(x) = \trace [\nabla^2 \sigma_m(\nabla u(x)) \nabla^2 u(x)]$, where $u$ is any $C^2$ level set function of $\Gamma$ in the neighborhood of $x$ with $\nabla u(x) \neq 0$. 

Consider now the anisotropic total variation flow
\begin{align*}
u_t - \divo \nabla \sigma_m(\nabla u) = 0
\end{align*}
on $L^2(\Tn)$, $\Tn = \Rn / \Z^n$, or more rigorously,
\begin{align}
\label{atf}
u_t \in -\partial \mathcal E_m(u),
\end{align}
where 
\begin{align}
\label{tv-energy}
\mathcal E_m(v) := 
\begin{cases}
\int_\Tn \sigma_m(\nabla v) \;dx & v \in BV(\Tn) \cap L^2(\Tn),\\
+\infty & \text{otherwise},
\end{cases}
\end{align}
is the anisotropic total variation energy.

Since $\mathcal E_m$ is a convex lower semicontinuous functional on the Hilbert space $L^2(\Tn)$ with a dense domain, the operator $\partial \mathcal E_m$ is a maximal monotone operator and the anisotropic total variation flow \eqref{atf} has a unique solution for any initial data in $L^2(\Tn)$.

Suppose now that $(\sigma_m)_{m\geq 1}$ is a sequence of $C^2$ anisotropies that monotonically converge to the crystalline anisotropy $\sigma$. Then it is known \cite{Attouch} that $\mathcal E_m \to \mathcal E$ in the sense of Mosco convergence, see \eqref{mosco}, where
\begin{align*}
\mathcal E(v) :=
\begin{cases}
\int_\Tn \sigma(\nabla v) \;dx & v \in BV(\Tn) \cap L^2(\Tn),\\
+\infty & \text{otherwise}.
\end{cases}
\end{align*}
But Mosco convergence implies the convergence of the nonlinear semigroups \cite[Theorem~3.26]{Attouch} and \cite[Theorem~3.2]{BrezisPazy70}: the solutions of \eqref{atf} locally uniformly converge to the unique solution $u: [0, \infty) \to L^2(\Tn)$ of
\begin{align}
\label{crystallinetf}
\left\{
\begin{aligned}
\frac{du}{dt} &\in -\partial \mathcal E(u(t)),\qquad t > 0,\\
u(0) &= u_0
\end{aligned}
\right.
\end{align}
for any initial data $u_0 \in L^2(\Tn)$.

As we will see below, $\partial \mathcal E(v)$ is in general multivalued even if $\nabla v \neq 0$ for typical crystalline mean curvature evolutions. Nevertheless, the unique solution of \eqref{crystallinetf} is right-differentiable at every $t > 0$, $\partial E(u(t)) \neq \emptyset$ and $d^+u/dt(t) = - \partial^0 \mathcal E(u(t))$ for $t > 0$, where $\partial^0 \mathcal E(v)$ is the \emph{canonical restriction} or \emph{minimal section} of the subdifferential $\partial \mathcal E(v)$, i.e., the unique element of $\partial \mathcal E(v) \subset L^2(\Tn)$ with the minimal norm.

This strongly suggests that we should use $-\partial^0 \mathcal E(v)$ as the definition of the crystalline mean curvature to hope to obtain stability under approximation by anisotropic mean curvature flow.

Fortunately the characterization of $\partial \mathcal E$ is well understood even for rather general $\sigma = \sigma(x, p)$, see \cite{Moll05} for example.

We include the proof here for completeness for $\sigma= \sigma(p)$ and space $L^2(\Tn)$. We need to introduce a number of definitions. 

The functional $\E(u)$ can be defined in two equivalent ways. The first one is a generalization of the definition of the total variation,
\begin{align*}
\E(u) := \sup \set{-\int u\divo z \Bigm| \sigma^\circ(z) \leq 1,\ z \in C^1(\Tn, \Rn)}, \qquad u \in L^2(\Tn).
\end{align*}
Note the minus sign since $\sigma$ is not assumed even.
The function $\sigma^\circ$ is the support function of the Frank diagram $F_\sigma=\{\sigma \leq 1\}$, i.e.,
\[
	\sigma^\circ(x) = \sup \left\{ x \cdot p \mid p \in F_\sigma \right\}
\]
so that $W_\sigma=\{\sigma^\circ \leq 1\}$.
This $\mathcal{E}$ is clearly a convex, positively one-homogeneous, lower semi-continuous functional on $L^2(\Tn)$.
It is known \cite{AmarBellettini} that it is the \emph{relaxation} (\emph{closure} or lower semicontinuous envelope) of the functional
\begin{align*}
J(u) := 
\begin{cases}
\int \sigma(\nabla u), & u \in W^{1,1}(\Tn) \cap L^2(\Tn),\\
+\infty, &\text{otherwise}.
\end{cases}
\end{align*}
In fact $\E(u) = J(u)$ for any $u \in W^{1,1}(\Tn) \cap L^2(\Tn)$.

To characterize the subdifferential, we here present a simplification of the proof in \cite{Moll05}, which itself is based on the unpublished note of F.~Alter; see also \cite{ACM} for the proof when $\sigma(p) = |p|$ and more details. The idea is based on the characterization of the subdifferential using the polar of $\E$ defined as
\begin{align*}
\E^\circ(v) := \sup\set{(u, v) \mid u \in H, \ \E(u) \leq 1} = \sup \set{\frac{(u,v)}{\E(u)} \Big| u \in H},
\end{align*}
where we set $H := L^2(\Tn)$ the Hibert space with the $L^2$-inner product $(u, v) = \int u v\;dx$. In the formula we use the convention $\frac00 = 0$, $\frac{a}0 = +\infty$ for any $a > 0$ and $\frac{a}{+\infty} = 0$ for any $a \in \R$. Since $\E$ is positively one-homogeneous, convex and lower semicontinuous, we have the following standard characterization \cite[Lemma~1.7]{ACM}:
\begin{align}
\label{subdiff-polar-char}
v \in \partial \E(u) \qquad \Leftrightarrow \qquad \E^\circ(v) \leq 1 \text{ and } (v, u) = \E(u).
\end{align}
We will show that $\E^\circ$ coincides with the functional
\begin{align*}
\Psi(v) := \inf\set{\norm{\sigma^\circ(z)}_\infty \mid v = -\divo z,\ z \in L^\infty(\Tn)}, \qquad v \in H = L^2(\Tn).
\end{align*}
The equality $v = -\divo z$ is understood in the sense of distributions: the function $-v$ is the distributional divergence of $z$.
$\Psi$ is again a convex, positively one-homogeneous, lower semicontinuous functional. For any such functional we have $(\Psi^\circ)^\circ = \Psi$ \cite[Proposition~1.6]{ACM}. Moreover, if $\Psi(v) < \infty$ the infimum is attained by a vector field and hence it is a minimum.

\begin{theorem}
\label{th:polar-characterization}
The equality $\E^\circ \equiv \Psi$ holds.
\end{theorem}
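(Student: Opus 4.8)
The strategy is to prove the two inequalities $\E^\circ \leq \Psi$ and $\E^\circ \geq \Psi$ separately, exploiting that both functionals are convex, positively one-homogeneous and lower semicontinuous, together with the duality $(\Psi^\circ)^\circ = \Psi$ already quoted from \cite[Proposition~1.6]{ACM}. Because of this duality it suffices in fact to show that $\Psi^\circ \equiv \E$ on all of $H$: then taking polars once more gives $\Psi = (\Psi^\circ)^\circ = \E^\circ$. So I would reduce the theorem to the claim
\begin{align*}
\Psi^\circ(u) = \E(u) \qquad \text{for all } u \in H = L^2(\Tn).
\end{align*}

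The inequality $\Psi^\circ \leq \E$ (equivalently $\E^\circ \geq \Psi$ after one more polarization, or directly) is the ``easy'' direction and comes from a plain integration by parts. Fix $u \in H$ with $\E(u) < \infty$, so $u \in BV$ with anisotropic total variation $\E(u) = \int_\Tn \sigma(\nabla u)$. Let $z \in L^\infty(\Tn,\Rn)$ with $v := -\divo z$; I want to bound $(u,v)$. Formally $(u,v) = -\int u \,\divo z = \int \nabla u \cdot z$, and pointwise $\nabla u \cdot z \leq \sigma(\nabla u)\,\sigma^\circ(z) \leq \sigma(\nabla u)\,\norm{\sigma^\circ(z)}_\infty$ by the very definition of the polar (support function) $\sigma^\circ$ of $F_\sigma = \{\sigma \le 1\}$. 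Hence $(u,v) \leq \norm{\sigma^\circ(z)}_\infty\, \E(u)$, and taking the infimum over all admissible $z$ representing $v$ gives $(u,v) \le \Psi(v)\,\E(u)$ for every $v$; dividing by $\E(u)$ and taking the supremum over $v$ yields $\Psi^\circ(u) \le \E(u)$ — after checking the degenerate cases with the stated conventions $\tfrac{a}{0} = +\infty$ etc. The only care needed here is to make the integration by parts rigorous when $u \in BV$ and $z$ is merely $L^\infty$ with $\divo z \in L^2$: this is the Anzellotti-type pairing $(z, Du)$, and one should first prove the inequality for $u \in W^{1,1}\cap L^2$ (where $\E = J$ by the relaxation result of \cite{AmarBellettini}) and then pass to the relaxation, using lower semicontinuity of $\E$ and the fact that $\Psi^\circ$ is continuous along a recovery sequence converging in $L^2$ (or simply that $\Psi^\circ$ is itself l.s.c., being a polar).

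The reverse inequality $\Psi^\circ \geq \E$, i.e. $\E^\circ \leq \Psi$, is the substantive part. Fix $v \in H$ with $\Psi(v) < \infty$; I must produce, for every $u \in H$ with $\E(u) \le 1$, a vector field $z$ with $-\divo z = v$ and $\norm{\sigma^\circ(z)}_\infty$ controlled by $(u,v)/\E(u)$ — more precisely I want to show $\Psi(v) \ge \E^\circ(v)$, equivalently that the minimizing $z$ for $\Psi(v)$ (which exists, as stated) satisfies $\norm{\sigma^\circ(z)}_\infty \ge (u,v)$ whenever $\E(u) \le 1$; but that is exactly the easy direction again, so the real content is the opposite: $\Psi(v) \le \E^\circ(v)$. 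For this I would argue by duality/Hahn–Banach. Suppose $\Psi(v) > \lambda$ for some $\lambda \ge 0$; then $v$ does not lie in the closed convex set $K_\lambda := \{-\divo z : z \in L^\infty, \ \sigma^\circ(z) \le \lambda \text{ a.e.}\} \subset L^2(\Tn)$ (closedness uses weak-$*$ compactness of the ball $\{\sigma^\circ(z)\le\lambda\}$ in $L^\infty$ and that $\divo$ is weak-$*$-to-distributions continuous, plus $v \in L^2$). By Hahn–Banach separation there is $u \in L^2$ with $(u,v) > \sup_{w \in K_\lambda}(u,w) = \sup\{\int \nabla u \cdot z : \sigma^\circ(z)\le\lambda\} = \lambda \int \sigma(\nabla u) = \lambda \E(u)$, where the middle equality is the pointwise duality $\sup\{p\cdot z : \sigma^\circ(z)\le\lambda\} = \lambda\sigma(p)$ (the bipolar of $\sigma$ being $\sigma$ since $\sigma$ is convex one-homogeneous) and I must again justify the sup over $L^\infty$ vector fields realizes the pointwise sup — a measurable-selection argument. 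So $(u,v) > \lambda\E(u)$, whence $\E^\circ(v) \ge (u,v)/\E(u) > \lambda$ (handling $\E(u) = 0$: then $(u,v) > 0$ forces $\E^\circ(v) = +\infty$). Letting $\lambda \uparrow \Psi(v)$ gives $\E^\circ(v) \ge \Psi(v)$, i.e. $\Psi \le \E^\circ$.

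The main obstacle, as always in these BV/divergence duality arguments, is the rigorous treatment of the measurable-selection / relaxation steps: (a) showing the set $K_\lambda$ is strongly closed in $L^2$ (the weak-$*$ limit of $\sigma^\circ(z_k) \le \lambda$ stays $\le\lambda$, and $-\divo z_k \to v$ in $L^2$ forces $v = -\divo z$ for the limit $z$), and (b) identifying $\sup_{\sigma^\circ(z)\le\lambda}\int \nabla u\cdot z$ with $\lambda\int\sigma(\nabla u)$ when $u$ is only $BV$ — one picks, at a.e.\ point, $z(x)$ in the subdifferential direction realizing $p\cdot z = \sigma^\circ(z)\sigma(p)$, and must check this selection is measurable and bounded. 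Once these are in place the two separation arguments close the proof. I would also remark that \eqref{subdiff-polar-char} then immediately upgrades Theorem~\ref{th:polar-characterization} to the advertised characterization of $\partial\E$, namely $v \in \partial\E(u) \iff v = -\divo z$ for some $z \in L^\infty$ with $\sigma^\circ(z)\le 1$ and $\int\nabla u\cdot z = \int\sigma(\nabla u)$.
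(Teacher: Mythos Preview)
Your argument is essentially correct and both inequalities go through, but two remarks are in order.

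First, a bookkeeping slip: polarization reverses inequalities, so $\Psi^\circ \leq \E$ is equivalent to $\E^\circ \leq \Psi$, not $\E^\circ \geq \Psi$ as you wrote (and likewise for the reverse). Your actual arguments prove the right things --- the integration-by-parts step gives $\E^\circ \leq \Psi$, the separation argument gives $\E^\circ \geq \Psi$ --- so this is only a labeling error, but it makes the write-up confusing to read.

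Second, and more substantively, your Hahn--Banach route for the harder inequality $\E^\circ \geq \Psi$ is considerably more laborious than the paper's. The paper observes directly that for any $z \in C^1$ one has $\Psi(-\divo z) \leq \norm{\sigma^\circ(z)}_\infty$ by the very definition of $\Psi$ as an infimum, and hence
\[
\E(u) = \sup_{z \in C^1} \frac{(u,-\divo z)}{\norm{\sigma^\circ(z)}_\infty} \leq \sup_{z \in C^1} \frac{(u,-\divo z)}{\Psi(-\divo z)} \leq \Psi^\circ(u),
\]
so that $\E^\circ \geq (\Psi^\circ)^\circ = \Psi$ in one line. No separation, no closedness of $K_\lambda$, no measurable selection is needed. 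Your approach does work --- the closedness of $K_\lambda$ follows from weak-$*$ compactness in $L^\infty$ and weak lower semicontinuity of $\sigma^\circ$, and the support-functional identification reduces to the definition of $\E$ after mollifying $z$ --- but it is strictly more effort for the same conclusion. The paper's trick exploits the asymmetry in how $\E$ and $\Psi$ are defined (one as a sup over $C^1$ fields, the other as an inf over $L^\infty$ fields) to get the inequality for free.
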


\begin{proof}
\fbox{$\leq$}: Take $v \in H$ with $\Psi(v) < \infty$ and fix $z \in L^\infty(\Tn)$ with $\divo z = - v$. Since $\E$ is the lower semicontinuous envelope of $J$, for any $u \in H$ with $\E(u) \leq 1$ there is a sequence $(u_k)_k \subset W^{1,1}(\Tn) \cap H$ with $u_k \to u$ in $H$ and $J(u_k) = \E(u_k) \to \E(u)$. We have
\begin{align*}
(u_k, v) &= \int z \cdot \nabla u_k \leq \int \sigma^\circ(z) \sigma(\nabla u_k) \\&\leq \norm{\sigma^\circ(z)}_\infty \int \sigma(\nabla u_k)
= \norm{\sigma^\circ(z)}_\infty \E(u_k).
\end{align*}
In the limit $k \to \infty$ we obtain
\begin{align*}
(u, v) \leq \norm{\sigma^\circ(z)}_\infty\qquad \text{for all } u \in H \text{ with } \E(u) \leq 1.
\end{align*}
Thus by definition of $\Psi$ we deduce
$\E^\circ(v) \leq \Psi(v)$.

\fbox{$\geq$}: Fix $u \in H$. By definition we have
\begin{align*}
\E(u) &= \sup \set{-\int u\divo z \Bigm| \sigma^\circ(z) \leq 1,\ z \in C^1(\Tn, \Rn)}\\
&= \sup_{z \in C^1} \frac{(u, -\divo z)}{\norm{\sigma^\circ(z)}_\infty} \leq \sup_{z \in C^1} \frac{(u, -\divo z)}{\Psi(-\divo z)}\\
&\leq \Psi^\circ(u),
\end{align*}
where we again use $0/0 := 0$. We deduce $\E^\circ \geq (\Psi^\circ)^\circ = \Psi$.
\end{proof}

We now have the following characterization of the subdifferential for Lipschitz functions.

\begin{corollary}
\label{co:lip-subdiff-char}
Let $u \in Lip(\Tn)$ and $v \in L^2(\Tn)$. The following are equivalent:
\begin{itemize}
\item $v \in \partial \E(u)$
\item there exists $z \in L^\infty(\Tn)$ with $v = - \divo z$ such that $z \in \partial \sigma(\nabla u)$ a.e.
\end{itemize}
\end{corollary}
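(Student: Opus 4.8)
The plan is to deduce Corollary~\ref{co:lip-subdiff-char} from Theorem~\ref{th:polar-characterization} together with the polar characterization \eqref{subdiff-polar-char}. The two ingredients are: (1) $v \in \partial\E(u)$ iff $\E^\circ(v) \le 1$ and $(v,u) = \E(u)$; and (2) $\E^\circ = \Psi$, so $\E^\circ(v) \le 1$ is equivalent to the existence of $z \in L^\infty(\Tn)$ with $v = -\divo z$ and $\norm{\sigma^\circ(z)}_\infty \le 1$, i.e. $\sigma^\circ(z) \le 1$ a.e., with the infimum in $\Psi$ attained. Note also that for $u \in Lip(\Tn)$ we have $u \in W^{1,1}(\Tn) \cap L^2(\Tn)$, so $\E(u) = J(u) = \int_\Tn \sigma(\nabla u)\dx$ and $\nabla u \in L^\infty$.

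First I would prove the forward implication. Assume $v \in \partial\E(u)$. By \eqref{subdiff-polar-char} and Theorem~\ref{th:polar-characterization}, $\Psi(v) = \E^\circ(v) \le 1$, and since the infimum defining $\Psi$ is attained, there is $z \in L^\infty(\Tn)$ with $v = -\divo z$ and $\sigma^\circ(z) \le 1$ a.e. It remains to upgrade $\sigma^\circ(z) \le 1$ a.e.\ to $z \in \partial\sigma(\nabla u)$ a.e. Using $v = -\divo z$ and integration by parts (valid since $u$ is Lipschitz and $z \in L^\infty$ on the torus), the equality $(v,u) = \E(u)$ becomes
\begin{align*}
\int_\Tn z \cdot \nabla u \dx = \int_\Tn \sigma(\nabla u)\dx.
\end{align*}
On the other hand, the Cauchy--Schwarz-type inequality $z \cdot \nabla u \le \sigma^\circ(z)\,\sigma(\nabla u) \le \sigma(\nabla u)$ holds pointwise a.e.\ (this is Young's inequality for the pair $\sigma, \sigma^\circ$, since $\sigma^\circ$ is the support function of $F_\sigma = \{\sigma \le 1\}$). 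Comparing the integral identity with the pointwise inequality forces $z \cdot \nabla u = \sigma(\nabla u)$ a.e. Now I invoke the equality case in Young's inequality: $z \cdot \nabla u = \sigma(\nabla u)$ together with $\sigma^\circ(z) \le 1$ is precisely the condition $z \in \partial\sigma(\nabla u)$ (recalling $\partial\sigma(p) = \{q : \sigma^\circ(q) \le 1,\ q\cdot p = \sigma(p)\}$, the standard description of the subdifferential of a one-homogeneous convex function). This gives the second bullet.

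For the converse, suppose $z \in L^\infty(\Tn)$ with $v = -\divo z$ and $z \in \partial\sigma(\nabla u)$ a.e. Then $\sigma^\circ(z) \le 1$ a.e., so $\Psi(v) \le 1$, hence $\E^\circ(v) = \Psi(v) \le 1$ by Theorem~\ref{th:polar-characterization}. Also $z \in \partial\sigma(\nabla u)$ gives $z\cdot\nabla u = \sigma(\nabla u)$ a.e., so integrating and using integration by parts, $(v,u) = \int z\cdot\nabla u \dx = \int \sigma(\nabla u)\dx = \E(u)$. By \eqref{subdiff-polar-char} we conclude $v \in \partial\E(u)$.

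The main obstacle I anticipate is bookkeeping around the pointwise duality between $\sigma$ and $\sigma^\circ$: one must be careful that $\sigma^\circ$ as defined (support function of $F_\sigma$) is exactly the one-homogeneous convex conjugate making Young's inequality $a\cdot b \le \sigma(a)\sigma^\circ(b)$ and its equality case $z \in \partial\sigma(p) \Leftrightarrow \{\sigma^\circ(z)\le 1,\ z\cdot p = \sigma(p)\}$ valid, including at $p = \nabla u = 0$ where $\partial\sigma(0) = F_\sigma^\circ{}$-type set $\{\sigma^\circ \le 1\} = W_\sigma$ — there the condition $z\cdot 0 = \sigma(0) = 0$ is automatic and only $\sigma^\circ(z)\le 1$ matters, which is consistent. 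A secondary minor point is justifying the integration by parts $\int u\,(-\divo z) = \int z\cdot\nabla u$ for $u$ Lipschitz and $z \in L^\infty$ with distributional divergence in $L^2$ on $\Tn$; this is standard (approximate $u$ by mollification, or use that $u \in W^{1,\infty} \subset W^{1,1}$ and $\divo z \in L^2 \subset (W^{1,1})^*$ via the pairing on the torus).
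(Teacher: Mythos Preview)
Your proof is correct and follows essentially the same approach as the paper: both directions use the polar characterization \eqref{subdiff-polar-char} together with Theorem~\ref{th:polar-characterization}, then compare the pointwise inequality $z\cdot\nabla u \le \sigma^\circ(z)\sigma(\nabla u) \le \sigma(\nabla u)$ against the integrated equality $\int z\cdot\nabla u = \E(u)$ to force $z \in \partial\sigma(\nabla u)$ a.e. The paper's proof is terser (it simply says the reverse implication follows by ``reversing the above steps''), but the logic is the same.
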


\begin{proof}
\fbox{$\Rightarrow$}: $v \in \partial \E(u)$ implies that $\E^\circ(v) \leq 1$ and $\E(u) = (u, v)$. In particular there exists a vector field $z \in L^\infty(\Tn)$ with $v = -\divo z$ and $\norm{\sigma^\circ(z)} = \E^\circ(v) \leq 1$.
We have
\begin{align*}
\nabla u \cdot z \leq \sigma(\nabla u) \sigma^\circ(z) \leq \sigma(\nabla u) \qquad a.e.
\end{align*}
However, $\E(u) = (u, v)$ and therefore
\begin{align*}
\int \sigma(\nabla u) = \int u v = \int \nabla u \cdot z,
\end{align*}
and so we can deduce that $\nabla u \cdot z = \sigma(\nabla u)$ a.e., which with $\sigma^\circ(z) \leq 1$ a.e. implies $z \in \partial \sigma(\nabla u)$ a.e.

\fbox{$\Leftarrow$}: The opposite implication can be proved by reversing the above steps.
\end{proof}

The vector fields $z$ play a central role and we define
\begin{align*}
X^2(U) := \set{z \in L^\infty(U) \mid \divo z \in L^2(U)},
\end{align*}
for $U \subset \Rn$ open or $U = \Tn$, following \cite{Anzellotti}.
The vector fields that characterize the subdifferential are often called \emph{Cahn-Hoffman vector fields} and we define
\begin{align}
\label{CH}
CH(u; U) := \set{z \in X^2(U) \mid z \in \partial \sigma(\nabla u) \text{ a.e.}}
\end{align}
for any $u \in Lip(U)$. Note that if $U = \Tn$, by Corollary~\ref{co:lip-subdiff-char}
\begin{align*}
-\partial \E(u) = \divo CH(u; \Tn) := \set{\divo z \mid z \in CH(u; \Tn)}.
\end{align*}
Recall that this is a closed convex set, but it might be empty.

Since the set $\partial \E(u)$ is in general not a singleton, we need to determine how to select a value that gives a reasonable generalization of the anisotropic mean curvature to the crystalline case. The theory of maximal monotone operators suggests that we should choose the unique element of $-\partial \E(u)$ with the smallest $L^2$-norm. We will denote this element $-\partial^0 \E(u)$ if $\partial \E(u) \neq \emptyset$, since it is the projection of the origin $0$ on the convex closed set $-\partial \E(u)$.

\begin{example}
Suppose that $\sigma \in C^2(\Rn \setminus \set0)$ and $u \in C^2(\Tn)$. Let $x \in \Tn$ with $\nabla u(x) \neq 0$. Then $\partial \sigma(\nabla u) = \set{\nabla \sigma(\nabla u)}$ in the neighborhood of $x$ and therefore if $z \in CH(u; \Tn)$ we necessarily have $\divo z(x) = \divo \nabla \sigma(\nabla u)(x)$.
\end{example}

As was shown in the introduction, the element $-\partial^0 \E(u)$ is a solution of a minimization problem with an $n$-dimensional obstacle $z \in \partial \sigma(\nabla u)$. The value of the minimizer $\divo z_{\rm min}$ can depend nonlocally on $u$ whenever $\partial \sigma(\nabla u)$ is not a singleton, as is illustrated in the introduction. However this nonlocality is restricted to ``flat'' parts of $u$. Those correspond to facets and edges of the evolving crystal. The following technical ``patching'' lemma was proved in \cite[Lemma~2.8]{GP1}.
 Let $\mathbf{1}_E$ denote the characteristic function of $E$, i.e., $\mathbf{1}_E(x)=1$ for $x\in E$ and $\mathbf{1}_E(x)=0$ for $x\notin E$.

\begin{lemma} 
\label{le:patching}
Let $\sigma: \Rn \to \R$ be a positively one-homogeneous convex function. Let $U_1$, $U_2$ be two open subsets of $\Rn$ and $\psi_i \in Lip(U_i)$ two Lipschitz functions. Let $\delta > 0$ and set $G := \set{x \in U_1 \mid |\psi_1(x)| < \delta}$. Suppose that $\cl G \subset U_1 \cap U_2$ and $\psi_1 = \psi_2$ on $G$. If $z_i \in CH(\psi_i; U_i)$ are two Cahn--Hoffman vector fields, then
\begin{align*}
z := z_1 \one_{U_1 \setminus G} + z_2 \one_G
\end{align*}
is also a Cahn--Hoffman vector field $z \in CH(\psi_1; U_1)$, and
\begin{align*}
\divo z = \divo z_1 \one_{U_1 \setminus G} + \divo z_2 \one_G.
\end{align*}
\end{lemma}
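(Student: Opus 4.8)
The plan is to verify the two claimed properties---that $z$ is a Cahn--Hoffman vector field for $\psi_1$ on $U_1$, and that its distributional divergence has the asserted pointwise form---by exploiting that $\psi_1$ and $\psi_2$ agree on the \emph{open} set $G$ together with the fact that $G$ is ``insulated'' from its complement by the region where $|\psi_1|$ is close to $\delta$. First I would record the elementary pointwise facts. Since $G$ is open and $\psi_1 = \psi_2$ on $G$, we have $\nabla \psi_1 = \nabla \psi_2$ a.e.\ on $G$, hence $\partial \sigma(\nabla \psi_1) = \partial \sigma(\nabla \psi_2)$ a.e.\ on $G$; as $z_2 \in CH(\psi_2; U_2)$ this gives $z_2 \in \partial \sigma(\nabla \psi_1)$ a.e.\ on $G$, while $z_1 \in \partial \sigma(\nabla \psi_1)$ a.e.\ on $U_1 \setminus G$ by hypothesis. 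Therefore $z = z_1 \one_{U_1 \setminus G} + z_2 \one_G$ satisfies $z \in \partial \sigma(\nabla \psi_1)$ a.e.\ on $U_1$, and $z \in L^\infty(U_1)$ since both $z_1$ and $z_2$ are bounded. The only remaining point for $z \in CH(\psi_1; U_1)$ is that $\divo z \in L^2(U_1)$, which will follow from the divergence computation.

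The heart of the argument is the distributional identity $\divo z = \divo z_1 \one_{U_1 \setminus G} + \divo z_2 \one_G$ on $U_1$. I would test against an arbitrary $\varphi \in C_c^\infty(U_1)$ and write
\[
  -\int_{U_1} z \cdot \nabla \varphi
  = -\int_{U_1 \setminus G} z_1 \cdot \nabla \varphi
    - \int_{G} z_2 \cdot \nabla \varphi.
\]
The natural move is to introduce a cutoff: since $\cl G \subset U_1 \cap U_2$, choose $\eta \in C_c^\infty(U_1 \cap U_2)$ with $\eta \equiv 1$ on a neighborhood of $\cl G$. Then $\varphi = \eta \varphi + (1-\eta)\varphi$, where $(1-\eta)\varphi$ is supported in $U_1 \setminus \cl G$ (so it only sees $z_1$) and $\eta \varphi$ is supported in $U_1 \cap U_2$. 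On the piece $\eta\varphi$ I would integrate by parts separately over $G$ and over its complement: the boundary terms on $\partial G$ must cancel, and this is where the hypothesis really bites. The set $\partial G$ is contained in $\{|\psi_1| = \delta\}$, and since $\psi_1 = \psi_2$ on $G$, both $\psi_1$ and $\psi_2$ extend continuously across $\partial G$ with the same boundary values; one shows that the normal traces of $z_1$ and of $z_2$ on $\partial G$ agree. This is the standard fact (Anzellotti's trace theory for fields in $X^2$, or simply a regularization argument) that for $z_i \in X^2$ the pairing $z_i \cdot \nu_{\partial G}$ is well-defined as an $L^\infty$ function on $\partial G$ and that it is determined by the behavior of $z_i$ \emph{and} the interface, so that using $z_1$ from outside and $z_2$ from inside produces no jump in the normal component---precisely because $z_1, z_2 \in \partial\sigma(\nabla\psi_1)$ with the same $\psi_1$ near $\partial G$. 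Granting the trace matching, the two boundary integrals cancel and we are left with $-\int_G z_2 \cdot \nabla\varphi = \int_G (\divo z_2)\varphi$ and $-\int_{U_1 \setminus \cl G} z_1 \cdot \nabla\varphi = \int_{U_1 \setminus \cl G}(\divo z_1)\varphi$, which assembles to the claimed formula (note $G$ and $U_1 \setminus G$ differ only by the null-relevant boundary $\partial G$, which has measure zero since $\psi_1$ is Lipschitz and $\{|\psi_1| = \delta\}$ is a.e.\ a level set---more carefully one argues $|\partial G| = 0$ or absorbs it harmlessly). Since $\divo z_1 \in L^2(U_1)$ and $\divo z_2 \in L^2(U_2)$, the right-hand side is in $L^2(U_1)$, completing $z \in CH(\psi_1; U_1)$.

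I expect the main obstacle to be making the boundary-term cancellation on $\partial G$ rigorous without unwarranted regularity on $\partial G$ or on $\psi_1$: $\partial G$ need not be smooth, and a priori $z_1, z_2$ are only $L^\infty$ with $L^2$ divergence, so ordinary integration by parts is not available. The clean way around this is to avoid slicing along $\partial G$ altogether: instead of cutting at the level $\{|\psi_1| = \delta\}$, use a smooth approximation $\chi_\varepsilon$ of $\one_G$ obtained from a smooth function of $\psi_1$ (e.g.\ $\chi_\varepsilon = h_\varepsilon(\psi_1)$ where $h_\varepsilon$ is a smooth approximation of $\one_{(-\delta,\delta)}$ supported in $(-\delta,\delta)$), write $z = z_1 + (z_2 - z_1)\one_G$, and compute $\divo[(z_2 - z_1)\chi_\varepsilon]$ using the Leibniz rule in the distributional sense---legitimate because $\chi_\varepsilon$ is Lipschitz and $z_2 - z_1 \in X^2$. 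The cross term is $\nabla\chi_\varepsilon \cdot (z_2 - z_1) = h_\varepsilon'(\psi_1)\nabla\psi_1 \cdot(z_2 - z_1)$, which is supported in $\{|\psi_1| < \delta\} \subset G$; but on $G$ we have $\nabla\psi_1 = \nabla\psi_2$ and $z_1, z_2 \in \partial\sigma(\nabla\psi_1)$, so $\nabla\psi_1 \cdot z_1 = \sigma(\nabla\psi_1) = \nabla\psi_1 \cdot z_2$ (the defining equality case of the Fenchel inequality for the subdifferential of a $1$-homogeneous $\sigma$), hence the cross term vanishes identically. Letting $\varepsilon \to 0$ then yields the formula with no boundary analysis at all. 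This reduction is, I think, exactly the ``simplification'' the authors have in mind, and once it is in place the rest is bookkeeping.
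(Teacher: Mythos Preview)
The paper does not prove this lemma; it only cites \cite[Lemma~2.8]{GP1}. Your second approach is correct and is essentially the argument given there: write $z^\varepsilon = z_1 + (z_2-z_1)\chi_\varepsilon$ with $\chi_\varepsilon = h_\varepsilon(\psi_1)$, $h_\varepsilon \in C_c^\infty((-\delta,\delta))$, apply the Leibniz rule (valid since $z_2-z_1 \in X^2(U_1\cap U_2)$ and $\chi_\varepsilon\varphi$ has compact support there), and observe that the cross term $h_\varepsilon'(\psi_1)\,\nabla\psi_1\cdot(z_2-z_1)$ vanishes identically because on its support $G$ both $z_1$ and $z_2$ lie in $\partial\sigma(\nabla\psi_1)$, whence $z_i\cdot\nabla\psi_1 = \sigma(\nabla\psi_1)$ for $i=1,2$ by the characterization of the subdifferential of a positively one-homogeneous function. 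Passing to the limit $\varepsilon\to 0$ (dominated convergence, using $\chi_\varepsilon\to\one_G$ pointwise with uniform bounds) gives the claimed divergence formula.

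Your first sketch via Anzellotti's trace theory is less clean: the assertion that the normal traces of $z_1$ from outside and $z_2$ from inside agree on $\partial G$ is not immediate from $z_1,z_2\in\partial\sigma(\nabla\psi_1)$ without essentially redoing the computation you perform in the second approach, and $\partial G$ has no a~priori regularity. You were right to flag this as the obstacle and to bypass it; the cutoff $h_\varepsilon(\psi_1)$ is exactly the device that converts the would-be boundary cancellation into a bulk identity that holds pointwise.
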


We add the following simple observation that follows from $|\set{0 < |\psi_1| < \delta}| \to 0$ as $\delta \to 0$. Note that we still need $\psi_1 = \psi_2$ on a neighborhood of $\set{\psi_1 = 0}$.

\begin{corollary}
\label{co:patching-zero}
Under the assumptions of Lemma~\ref{le:patching},
\begin{align*}
z := z_1 \one_{U_1 \setminus \set{\psi_1 = 0}} + z_2 \one_{\set{\psi_1 = 0}}
\end{align*}
is also a Cahn--Hoffman vector field $z \in CH(\psi_1; U_1)$, and
\begin{align*}
\divo z = \divo z_1 \one_{U_1 \setminus \set{\psi_1 = 0}} + \divo z_2 \one_{\set{\psi_1 = 0}}.
\end{align*}

\end{corollary}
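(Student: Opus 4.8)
The plan is to obtain this as the $\delta \downarrow 0$ limit of Lemma~\ref{le:patching} applied with $G = G_\delta := \set{x \in U_1 \mid |\psi_1(x)| < \delta}$. First I would record that, being under the assumptions of Lemma~\ref{le:patching}, we are given a $\delta_0 > 0$ with $\psi_1 = \psi_2$ on the open set $G_{\delta_0}$ (a neighborhood of $\set{\psi_1 = 0}$) and $\cl{G_{\delta_0}} \subset U_1 \cap U_2$; since $G_\delta \subset G_{\delta_0}$ for every $0 < \delta \le \delta_0$, the hypotheses of Lemma~\ref{le:patching} also hold with $G = G_\delta$ for each such $\delta$. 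Hence the lemma yields, for every $0 < \delta \le \delta_0$, that $z^\delta := z_1 \one_{U_1 \setminus G_\delta} + z_2 \one_{G_\delta}$ belongs to $CH(\psi_1; U_1)$ and satisfies $\divo z^\delta = \divo z_1 \one_{U_1 \setminus G_\delta} + \divo z_2 \one_{G_\delta}$.

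Next I would pass to the limit $\delta \downarrow 0$. Since $\set{\psi_1 = 0} \subset G_\delta$, the field $z^\delta$ and the candidate $z$ differ only on $G_\delta \setminus \set{\psi_1 = 0} = \set{x \in U_1 \mid 0 < |\psi_1(x)| < \delta}$, a subset of $G_{\delta_0}$ (of finite measure) whose measure tends to $0$ as $\delta \downarrow 0$; combined with the uniform bound $\norm{z^\delta}_\infty \le \max(\norm{z_1}_\infty, \norm{z_2}_\infty)$ this gives $z^\delta - z \to 0$ in $L^1(U_1)$, hence $z^\delta \to z$ in $\mathcal D'(U_1)$, and therefore $\divo z^\delta \to \divo z$ in $\mathcal D'(U_1)$. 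On the other hand $\divo z_1 \in L^2(U_1)$ and $\divo z_2 \in L^2(U_2)$ since $z_i \in CH(\psi_i; U_i) \subset X^2(U_i)$, and
\[
\divo z^\delta - \bigl(\divo z_1 \one_{U_1 \setminus \set{\psi_1=0}} + \divo z_2 \one_{\set{\psi_1=0}}\bigr) = (\divo z_2 - \divo z_1)\,\one_{\set{0 < |\psi_1| < \delta}},
\]
which tends to $0$ in $L^2(U_1)$ by dominated convergence (the square of the right-hand side is dominated by $|\divo z_2 - \divo z_1|^2 \in L^1$ and supported on a set of vanishing measure). Comparing the two limits identifies the distributional divergence of $z$ with the $L^2$ function $\divo z_1 \one_{U_1 \setminus \set{\psi_1=0}} + \divo z_2 \one_{\set{\psi_1=0}}$; in particular $\divo z \in L^2(U_1)$, so $z \in X^2(U_1)$ and the stated formula for $\divo z$ holds.

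Finally I would verify the pointwise constraint $z \in \partial \sigma(\nabla \psi_1)$ a.e. On $U_1 \setminus \set{\psi_1 = 0}$ this is immediate because $z = z_1 \in \partial\sigma(\nabla\psi_1)$ a.e. On $\set{\psi_1 = 0}$ we have $z = z_2$, and since $\psi_1 = \psi_2$ on the open set $G_{\delta_0} \supset \set{\psi_1 = 0}$ their gradients coincide a.e. there, so $z = z_2 \in \partial\sigma(\nabla\psi_2) = \partial\sigma(\nabla\psi_1)$ a.e. on $\set{\psi_1 = 0}$. Together with the previous step this gives $z \in CH(\psi_1; U_1)$.

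The argument is essentially a routine limiting procedure; the only point that deserves care is the second step, namely checking that the distributional limit of $\divo z^\delta$ agrees with its $L^2$ limit. This is exactly where one needs $\divo z_i \in L^2$, a dominated-convergence argument, and the shrinking of $\set{0 < |\psi_1| < \delta}$ already noted before the statement, so I do not expect any genuine obstacle.
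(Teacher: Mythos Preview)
Your proof is correct and follows exactly the approach indicated in the paper, which merely records that the corollary ``follows from $|\set{0 < |\psi_1| < \delta}| \to 0$ as $\delta \to 0$'' without giving further details. You have carefully filled in the limiting argument that the paper leaves implicit; the only cosmetic point is that the parenthetical ``of finite measure'' for $G_{\delta_0}$ is not guaranteed by the hypotheses in general, but your argument only needs local convergence for the distributional limit and an $L^1$ dominating function $|\divo z_2 - \divo z_1|^2 \one_{G_{\delta_0}}$ (which is integrable since $G_{\delta_0} \subset U_1 \cap U_2$), both of which you already use correctly.
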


The above lemma shows that we can isolate $\divo z_{\rm min}$ on a neighborhood of $\set{\psi = 0}$. This is necessary to have some locality of the crystalline mean curvature which allows us to localize the construction of test functions to a given facet.

We conclude this section by an important way of approximating the values $\partial^0 \mathcal E(\psi)$.
Let us now fix the domain $\Tn$ for simplicity. For given $\psi \in L^2(\Tn)$ and $a > 0$, we consider the \emph{resolvent problem}
\begin{align}
\label{resolvent-problem}
v + a \partial \mathcal E(v) \ni \psi
\end{align}
for unknown $v \in L^2(\Tn)$. This can be viewed as the implicit Euler discretization of the gradient flow \eqref{crystallinetf}. It is also the Euler--Lagrange equation of the minimization problem
\begin{align*}
\argmin_{v \in L^2(\Tn)} \frac{\norm{v - \psi}^2_{L^2(\Tn)}}a + \mathcal E(v),
\end{align*}
which appears in an important discrete approximation of the crystalline mean curvature flow, Chambolle's scheme discussed in Section~\ref{AV4}.

We have the following standard existence and approximation result that is valid for any convex proper lower semi-continuous functional like $\E$, see for example \cite{Attouch}.
\begin{proposition}
\label{pr:resolvent-approximation}
For every $\psi \in L^2(\Tn)$ and $a > 0$ the resolvent problem \eqref{resolvent-problem} has a unique solution $\psi_a \in L^2(\Tn)$ and $\psi_a \to \psi$ as $L^2(\Tn)$.

If furthermore $\partial \mathcal E(\psi) \neq \emptyset$, then
\begin{align*}
\frac{\psi_a - \psi}a \to - \partial^0 \mathcal E(\psi) \qquad \text{in $L^2(\Tn)$ as $a \to 0$.}
\end{align*}
\end{proposition}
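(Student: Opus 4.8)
The plan is to follow the classical theory of resolvents of subdifferentials of proper, convex, lower semicontinuous functionals on a Hilbert space; every ingredient is standard (see \cite{Br73}, \cite{Attouch}), so I only indicate how the pieces fit together. \emph{Existence and uniqueness.} For fixed $\psi \in H = L^2(\Tn)$ and $a > 0$ I would work with the functional $\Phi_a(v) := \frac1{2a}\norm{v - \psi}^2 + \E(v)$, which is proper (its domain equals that of $\E$, and $D(\E) \supset C^\infty(\Tn)$), strictly convex, lower semicontinuous and coercive on $H$; by the direct method it attains its infimum at a unique point $\psi_a$, which is precisely the minimizer displayed just before the statement. Since the quadratic term is convex and everywhere finite and continuous, the Moreau--Rockafellar sum rule gives $\partial\Phi_a(v) = \frac1a(v - \psi) + \partial\E(v)$, so the optimality condition $0 \in \partial\Phi_a(\psi_a)$ is exactly the inclusion \eqref{resolvent-problem}. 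Uniqueness may alternatively be deduced directly from monotonicity of $\partial\E$: if $v_1, v_2$ both solve \eqref{resolvent-problem}, subtracting the two inclusions and pairing with $v_1 - v_2$ yields $-\norm{v_1 - v_2}^2 \ge 0$.

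\emph{Convergence $\psi_a \to \psi$.} Here I would compare energies: for any $w$ with $\E(w) < \infty$, minimality of $\psi_a$ gives $\frac1{2a}\norm{\psi_a - \psi}^2 + \E(\psi_a) \le \frac1{2a}\norm{w - \psi}^2 + \E(w)$, and since $\E \ge 0$ this simplifies to $\norm{\psi_a - \psi}^2 \le \norm{w - \psi}^2 + 2a\,\E(w)$. Letting $a \to 0$ gives $\limsup_{a \to 0}\norm{\psi_a - \psi}^2 \le \norm{w - \psi}^2$, and taking the infimum over $w$ in the dense domain $D(\E)$ forces $\psi_a \to \psi$ in $H$.

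\emph{Convergence of the difference quotient.} Assume $\partial\E(\psi) \neq \emptyset$ and set $\xi_a := \frac{\psi - \psi_a}{a}$, so $\xi_a \in \partial\E(\psi_a)$. The crucial a priori bound is $\norm{\xi_a} \le \norm{\partial^0\E(\psi)}$: for any $\eta \in \partial\E(\psi)$, monotonicity gives $0 \le (\xi_a - \eta, \psi_a - \psi) = -a\norm{\xi_a}^2 + a(\eta, \xi_a)$, hence $\norm{\xi_a} \le \norm{\eta}$, and minimizing over $\eta$ gives the claim. Consequently, along any sequence $a_k \to 0$ some subsequence satisfies $\xi_{a_k} \rightharpoonup \xi$ weakly in $H$. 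Passing to the limit in $\E(y) \ge \E(\psi_{a_k}) + (\xi_{a_k}, y - \psi_{a_k})$, using $\psi_{a_k} \to \psi$ strongly (from the previous step), $\xi_{a_k} \rightharpoonup \xi$ weakly, and lower semicontinuity of $\E$, yields $\xi \in \partial\E(\psi)$. Weak lower semicontinuity of the norm gives $\norm{\xi} \le \norm{\partial^0\E(\psi)}$, while $\xi \in \partial\E(\psi)$ gives the reverse inequality by definition of the minimal section; hence $\xi = \partial^0\E(\psi)$ by uniqueness of the minimal-norm element of the closed convex set $\partial\E(\psi)$. Since this limit does not depend on the subsequence, $\xi_a \rightharpoonup \partial^0\E(\psi)$, and combining with $\limsup_{a\to 0}\norm{\xi_a} \le \norm{\partial^0\E(\psi)}$ upgrades weak to strong convergence in $H$. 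Therefore $\frac{\psi_a - \psi}{a} = -\xi_a \to -\partial^0\E(\psi)$ in $H$.

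\emph{Expected main obstacle.} None of the steps is genuinely hard, but the delicate point is the last one: one needs both the uniform bound on $\norm{\xi_a}$ \emph{and} the passage from weak to strong convergence, and both rest on the monotonicity estimate together with the characterization of $\partial^0\E(\psi)$ as the projection of the origin onto $\partial\E(\psi)$. The hypothesis $\partial\E(\psi) \neq \emptyset$ enters only to supply the comparison element $\eta$ — without it $\norm{\xi_a}$ can blow up — and the identification $\xi \in \partial\E(\psi)$ quietly uses both lower semicontinuity of $\E$ and the strong convergence $\psi_a \to \psi$, so the second step is a genuine prerequisite for the third.
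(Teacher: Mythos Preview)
Your proof is correct and follows exactly the standard argument from the theory of maximal monotone operators. The paper does not actually give its own proof of this proposition; it simply states the result as ``standard'' and refers the reader to \cite{Attouch}, so there is nothing to compare against beyond noting that your write-up is precisely the classical proof one finds in \cite{Br73} or \cite{Attouch}.
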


The solutions also satisfy a comparison principle, see \cite{CC} for a proof.

\begin{proposition}
\label{pr:resolvent-comp-principle}
If $\psi^1, \psi^2 \in L^2(\Tn)$ are two right-hand sides with $\psi^1 \leq \psi^2$ and $a > 0$, we have $\psi_a^1 \leq \psi_a^2$ where $\psi^1_a$ and $\psi^2_a$ are the respective solutions of \eqref{resolvent-problem}.
\end{proposition}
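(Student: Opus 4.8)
\emph{Proof proposal.} The plan is to derive the comparison principle from the $T$-monotonicity of the subdifferential $\partial\E$: for any $u_1, u_2 \in L^2(\Tn)$ and any $\xi_i \in \partial\E(u_i)$, $i = 1, 2$,
\[
	\bigl(\xi_1 - \xi_2,\ (u_1 - u_2)^+\bigr) \geq 0,
\]
where $w^+ := \max(w, 0)$ and $(\cdot, \cdot)$ is the $L^2(\Tn)$ inner product. Granting this, the proposition follows by a standard argument. By Proposition~\ref{pr:resolvent-approximation} the solutions $\psi^1_a, \psi^2_a$ exist and are unique, and by definition of \eqref{resolvent-problem} the elements $\xi_i := (\psi^i - \psi^i_a)/a$ belong to $\partial\E(\psi^i_a)$. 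Subtracting the two resolvent equations, taking the inner product with $w := (\psi^1_a - \psi^2_a)^+ \in L^2(\Tn)$, and using $\psi^1 \leq \psi^2$ gives
\[
	\norm{w}^2_{L^2(\Tn)} + a\bigl(\xi_1 - \xi_2,\ w\bigr) = \bigl(\psi^1 - \psi^2,\ w\bigr) \leq 0,
\]
since $(\psi^1_a - \psi^2_a)\,w = w^2$ a.e. The first term on the left is nonnegative, and the second is nonnegative by $T$-monotonicity, so $w = 0$ a.e., i.e.\ $\psi^1_a \leq \psi^2_a$.

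To obtain $T$-monotonicity I would first establish the submodularity of the energy,
\[
	\E(u_1 \vee u_2) + \E(u_1 \wedge u_2) \leq \E(u_1) + \E(u_2) \qquad \text{for all } u_1, u_2 \in L^2(\Tn),
\]
and combine it with the definition of the subdifferential. Indeed, if $\xi_i \in \partial\E(u_i)$ then $u_i \in BV(\Tn) \cap L^2(\Tn)$, hence also $u_1 \wedge u_2, u_1 \vee u_2 \in BV(\Tn) \cap L^2(\Tn)$ and all energies below are finite; applying the subdifferential inequality to the increments $u_1 \wedge u_2 - u_1 = -(u_1 - u_2)^+$ and $u_1 \vee u_2 - u_2 = (u_1 - u_2)^+$ yields
\[
	\E(u_1 \wedge u_2) \geq \E(u_1) - \bigl(\xi_1,\ (u_1 - u_2)^+\bigr) \quad\text{and}\quad \E(u_1 \vee u_2) \geq \E(u_2) + \bigl(\xi_2,\ (u_1 - u_2)^+\bigr).
\]
Adding these two inequalities and subtracting the submodularity inequality gives exactly $\bigl(\xi_1 - \xi_2,\ (u_1 - u_2)^+\bigr) \geq 0$.

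It remains to prove submodularity. For $u_1, u_2 \in W^{1,1}(\Tn) \cap L^2(\Tn)$ it is in fact an equality: since $\nabla u_1 = \nabla u_2$ a.e.\ on $\set{u_1 = u_2}$, one has $\nabla(u_1 \wedge u_2) = \one_{\set{u_1 \leq u_2}}\nabla u_1 + \one_{\set{u_1 > u_2}}\nabla u_2$ and $\nabla(u_1 \vee u_2) = \one_{\set{u_1 > u_2}}\nabla u_1 + \one_{\set{u_1 \leq u_2}}\nabla u_2$ a.e., so splitting the integral of $\sigma(\cdot)$ over $\set{u_1 \leq u_2}$ and $\set{u_1 > u_2}$ gives $J(u_1 \wedge u_2) + J(u_1 \vee u_2) = J(u_1) + J(u_2)$, which is the claim because $\E = J$ on $W^{1,1}(\Tn) \cap L^2(\Tn)$. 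For general $u_1, u_2 \in BV(\Tn) \cap L^2(\Tn)$ (if one has infinite energy there is nothing to prove), I would use that $\E$ is the lower semicontinuous relaxation of $J$: pick recovery sequences $u_i^k \in W^{1,1}(\Tn) \cap L^2(\Tn)$ with $u_i^k \to u_i$ in $L^2(\Tn)$ and $\E(u_i^k) \to \E(u_i)$. Since $\min$ and $\max$ are Lipschitz, $u_1^k \wedge u_2^k \to u_1 \wedge u_2$ and $u_1^k \vee u_2^k \to u_1 \vee u_2$ in $L^2(\Tn)$; then by lower semicontinuity of $\E$, superadditivity of $\varliminf$, and the $W^{1,1}$ identity,
\[
	\E(u_1 \wedge u_2) + \E(u_1 \vee u_2) \leq \varliminf_k \E(u_1^k \wedge u_2^k) + \varliminf_k \E(u_1^k \vee u_2^k) \leq \varliminf_k\bigl[\E(u_1^k) + \E(u_2^k)\bigr] = \E(u_1) + \E(u_2).
\]

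There is no deep obstacle here; the only point requiring genuine care is this last relaxation step, where one must check that recovery sequences are available in $W^{1,1}(\Tn) \cap L^2(\Tn)$ so that the pointwise gradient computation is legitimate, that the truncations $u_1^k \wedge u_2^k$ and $u_1^k \vee u_2^k$ remain in $W^{1,1}(\Tn) \cap L^2(\Tn)$ with finite energy, and that the $\varliminf$ chain is assembled in the correct order. Once submodularity — hence $T$-monotonicity of $\partial\E$ — is established, the conclusion is routine Hilbert-space bookkeeping, which is why this comparison principle is regarded as standard; see \cite{CC}.
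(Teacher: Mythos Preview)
Your proof is correct. The paper does not actually give a proof of this proposition; it simply cites \cite{CC}, and your argument via submodularity of $\E$ and the resulting $T$-monotonicity of $\partial\E$ is precisely the standard route taken there, so there is nothing substantive to compare.
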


\subsection{Calibrability and Cheeger sets} \label{AO2}

As we already briefly mentioned in the introduction, the minimization problem one needs to solve to find the value $\partial^0 \mathcal E(\psi)$ for a given $\psi$ has interesting connections to the so-called Cheeger problem for sets. For a given open set $U \subset \Rn$, define the \emph{Cheeger constant} as
\begin{align*}
h(U) := \inf \set{\frac{P(F)}{\mathcal L^n(F)} : F \text{ Borel}\subset \Rn, \ \mathcal L^n(F) \in (0, \infty)},
\end{align*}
where $P(F) = \mathcal E(\one_F)$ is the anisotropic perimeter of $F$. Usually the isotropic $\sigma(\nu) = 1$ is considered, in which case this is just the usual perimeter equal to $\mathcal H^{n-1}(\partial F)$ for sufficiently regular sets. A set $F \subset U$ such that $\frac{P(F)}{\mathcal L^n(F)} = h(U)$ is called a Cheeger set of $U$. If $U$ itself is a Cheeger set of $U$, it is called \emph{self-Cheeger}. 
Finding the value $h(U)$ or characterizing the Cheeger subsets of $U$ is then often referred to as the Cheeger problem. For a recent review of this topic see \cite{Leonardi}.

In the current note, the question whether a given set $U$ is self-Cheeger is closely related to the questions whether the value of $\partial^0 \mathcal E(\psi)$ is constant on a facet $\cl{U}$ of $\psi$.
If $\partial^\circ \mathcal E(\psi)$ is constant on a given facet, the facet is called \emph{calibrable} or \emph{$\sigma$-calibrable}, see \cite{BNP01c}.

We point out that this notion of calibrability is slightly weaker than the notion used in the context of total variation flows \cite{ACC,Leonardi}. There an open bounded set $U$ is called calibrable if the total variation flow \eqref{crystallinetf} with initial data $\one_U$ has the unique solution $a(t) \one_U$ with $a(t) = \max(1 - \frac{P(U)}{\mathcal L^n(U)}t, 0)$. This therefore implies that $\partial^0 \mathcal E(\one_U)$ is constant on $U$ \emph{and} on $U^\compl$.

We use the former notion of calibrability. The following theorem in a more general setting (but still only in dimension $n = 2$), including non-uniform forcing, was proved in \cite[Th.~6.1]{BNP01c}. See also \cite{ABT} for further developments.

\begin{theorem}
\label{th:self-cheeger}
Let $n = 2$ and let $\sigma$ be an even anisotropy on $\R^2$, $\sigma(p) = \sigma(-p)$.
Suppose that $\psi \in Lip(\R^2)$ such that $CH(\psi; \R^2)$ is nonempty. Let $U$ be a bounded connected component of $\interior\set{\psi = 0}$.
The following are equivalent:
\begin{itemize}
\item[(i)] $U$ is calibrable ($\partial^0 \mathcal E(\psi)$ is constant on $U$)
\item[(ii)] for any $F \subset U$ of finite perimeter
\begin{align}
\label{cheeger-ratio-minimality}
\frac{SP(F)}{\mathcal L^2(F)} \geq \frac{SP(U)}{\mathcal L^2(U)}.
\end{align}
Here $SP(F)$ is the signed perimeter of $F$ defined using the reduced boundary $\partial^* F$ as
\begin{align*}
SP(F) = \int_{\partial^* F_+} \sigma(\nu) \;d\mathcal H^1 -\int_{\partial^* F_-} \sigma(\nu) \;d\mathcal H^1,
\end{align*}
with $\partial^* F_- := \set{x \in \partial^* F \cap \partial^* U : \nu_U(x) \cdot \nabla \psi(x) < 0}$ and $\partial^* F_+ := \partial^* F \setminus \partial^* F_-$.
\end{itemize}
\end{theorem}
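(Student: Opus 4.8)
The plan is to establish the equivalence (i) $\Leftrightarrow$ (ii) by working through the characterization of calibrability provided by Corollary~\ref{co:lip-subdiff-char} and reformulating the constancy of $\partial^0 \mathcal E(\psi)$ on $U$ as a variational inequality for the signed perimeter. First I would observe that, by Lemma~\ref{le:patching} and Corollary~\ref{co:patching-zero}, the value of $\partial^0 \mathcal E(\psi)$ on the facet $\cl U$ depends only on the behavior of $\psi$ near $\cl U$; in particular, since $CH(\psi; \R^2) \neq \emptyset$, there is a Cahn--Hoffman field $z \in X^2(\R^2)$ with $z \in \partial\sigma(\nabla\psi)$ a.e., and near $\partial U$ the sign of $z \cdot \nu_U$ is determined by the sign of $\nabla\psi \cdot \nu_U$ (this is where evenness of $\sigma$ in $n=2$ is used: $\partial\sigma(\nabla\psi)$ on a facet is a segment whose endpoints correspond to the two one-sided slopes, so $z \cdot \nu_U$ takes the boundary value $\pm 1$ according to whether $\psi$ increases or decreases across $\partial U$, i.e.\ exactly the decomposition $\partial^* F_\pm$). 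Then $\divo z$ restricted to $U$ lies in $-\partial \mathcal E(\one_U)$ in an appropriate localized sense, and the minimal-section element $\partial^0 \mathcal E(\psi)|_U$ is the $L^2(U)$-projection of $0$ onto the affine space of such divergences with the prescribed boundary flux.

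Next I would set up the concrete minimization. Writing $\lambda := \partial^0 \mathcal E(\psi)|_U$ when it is constant, integration by parts over $U$ against $z$ with $z\cdot\nu_U = \pm 1$ on $\partial^* U_\pm$ gives $\lambda \,\mathcal L^2(U) = -\int_U \divo z = -SP(U)$ (with the sign conventions of the statement), so the constant must be $\lambda = -SP(U)/\mathcal L^2(U)$; more generally for any $F\subset U$ of finite perimeter, integrating $\divo z$ over $F$ and using $|z| \le 1$ in the anisotropic sense $\sigma^\circ(z)\le 1$ yields $\int_F \divo z \ge -SP(F)$ by the same boundary analysis. The key point is the converse direction: the constancy of the minimal section on $U$ is equivalent to the existence of a vector field $z$ realizing $\divo z \equiv \lambda$ on $U$ with $\sigma^\circ(z) \le 1$ and the correct boundary trace, and by a standard duality argument (the minimal section of a convex set equals the constant $\lambda$ iff $\lambda$ is both the average and cannot be "improved" by any subset) this is precisely the isoperimetric-type inequality \eqref{cheeger-ratio-minimality}. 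For ``(ii) $\Rightarrow$ (i)'' I would construct the calibrating field $z$ by solving an auxiliary obstacle problem (or equivalently invoke the known $n=2$ characterization via the curvature of $\partial U$ from \cite{BNP01c}), using \eqref{cheeger-ratio-minimality} to verify that the constant candidate $\lambda$ is admissible; for ``(i) $\Rightarrow$ (ii)'' I would take the calibrating field for $U$, restrict it to an arbitrary $F$, and read off \eqref{cheeger-ratio-minimality} from the inequality $\lambda \mathcal L^2(F) = \int_F \divo z \ge -SP(F)$ together with $\lambda = -SP(U)/\mathcal L^2(U)$.

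The main obstacle I anticipate is the careful handling of the boundary trace of the Cahn--Hoffman field on $\partial^* U$ and the rigorous version of the integration-by-parts (Anzellotti-type pairing) for fields in $X^2$ against $BV$ functions like $\one_F$ — in particular making precise that $z\cdot\nu_U$ equals $+1$ on $\partial^* F_+$ and $-1$ on $\partial^* F_-$, and that this is forced by $z \in \partial\sigma(\nabla\psi)$ together with the one-sided slopes of $\psi$. The evenness assumption and the restriction to $n=2$ enter exactly here: in the plane, $\sigma$ even guarantees that $\partial\sigma$ at a corner of the Frank diagram is a full edge of the Wulff shape symmetric about the facet normal, so the admissible boundary fluxes are exactly $\pm 1$ rather than a more complicated set; this is what lets the problem collapse to the signed-perimeter inequality. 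Beyond that, the remaining steps — the duality characterization of the minimal section, and the localization via Lemma~\ref{le:patching} — are routine given the machinery already assembled in Section~\ref{AO1}.
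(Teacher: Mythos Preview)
Your outline for (i) $\Rightarrow$ (ii) is the same as the paper's: take a Cahn--Hoffman field $z$ with $\divo z \equiv$ const.\ on $U$, integrate over $U$ to identify the constant as $SP(U)/\mathcal L^2(U)$, then integrate over an arbitrary $F\subset U$ and use $\sigma^\circ(z)\le 1$ on the interior part of $\partial^* F$ to get the inequality. The paper in fact only \emph{illustrates} this direction in a simplified smooth setting and defers the full theorem (both directions) to \cite{BNP01c}; your treatment of (ii) $\Rightarrow$ (i) likewise ends up invoking \cite{BNP01c}, so on that score you are level with the paper.

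There are, however, genuine slips in your sketch that you should fix. First, the boundary flux is $z\cdot\nu_U=\pm\sigma(\nu_U)$, not $\pm 1$; this comes directly from $z\in\partial\sigma(\nabla\psi)$ and the identity $z\cdot p=\sigma(p)$ for $p\neq 0$, and has nothing to do with evenness or with $n=2$. Second, your signs go wrong at the key step: with $\divo z=-\lambda$ constant on $U$ (your convention $\lambda=\partial^0\mathcal E(\psi)$), integrating over $F$ gives $-\lambda\,\mathcal L^2(F)=\int_F\divo z=\int_{\partial^* F}z\cdot\nu\le SP(F)$, so the correct inequality is $\int_F\divo z\le SP(F)$, not $\ge -SP(F)$. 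Your version yields only $-SP(U)/\mathcal L^2(U)\le SP(F)/\mathcal L^2(F)$, which is vacuous when the signed perimeters are positive. Finally, your explanation of where evenness and $n=2$ enter is off: they are not needed to pin down the boundary flux (that is automatic from $z\in\partial\sigma(\nabla\psi)$), but rather enter in the \emph{construction} of the calibrating field for (ii) $\Rightarrow$ (i) in \cite{BNP01c}, which relies on two-dimensional tools (essentially an ODE along $\partial U$) that have no known analogue for $n\ge 3$---this is exactly why the paper remarks that the theorem is only proved for $n=2$.
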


The quantity $\frac{SP(U)}{\mathcal L^n(U)}$ is a generalization of the usual Cheeger ratio $\frac{P(U)}{\mathcal L^n(U)}$ to facets: sets with signed boundary determined by whether the surface at the boundary point is convex or concave in the normal direction of the facet; see also Section~\ref{AO3} for a notion of facet.

To illustrate proof of (i) $\Rightarrow$ (ii) in a simplified setting, consider now a Lipschitz function $\psi \in Lip(\Rn)$ whose $\interior \set{\psi = 0}$ is simply connected bounded open set $U \in \Rn$ with Lipschitz boundary. Let us also for simplicity assume that we can define $\nabla \psi \neq 0$ on $\partial U$ $\mathcal H^{n-1}$-a.e. as the limit of $\nabla \psi$ from $\cl{U}^\compl$. Suppose that there exists vector field $z \in L^\infty(\Rn) \cap C(\Rn)$ with $\divo z \in L^2(\Rn)$ and $z \in \partial \sigma(\nabla \psi)$ a.e. that is sufficiently regular and assume that $\divo z = \lambda$ on $U$ for some $\lambda$. Then the divergence theorem  yields
\begin{align*}
\lambda \mathcal L^n(U) = \int_U \divo z \dx = \int_{\partial U} z \cdot \nu \;\mathcal H^{n-1}.
\end{align*}
We observe that $\nu = \frac{\nabla \psi}{|\nabla \psi|}$ on $\partial U_+$ and $\nu = - \frac{\nabla \psi}{|\nabla \psi|}$ on $\partial U_-$. Since $z \in \partial \sigma(\nabla \psi)$, we have $z \cdot \nu = \pm \sigma(\nu)$ on $\partial U_\pm$. We have
\begin{align*}
\int_{\partial U} z \cdot \nu \;\mathcal H^{n-1}.
 = \int_{\partial U_+} \sigma(\nu) \;d\mathcal H^{n-1} - \int_{\partial U_-} \sigma(\nu) \;d\mathcal H^{n-1} = SP(U).
\end{align*}

In particular, $\lambda = \frac{SP(U)}{\mathcal L^n(U)}$. However, for any smooth subset $F$ of $U$ we have
\begin{align*}
\int_F \divo z \dx &= \int_{\partial F} z \cdot \nu \;d\mathcal H^{n-1} =
\int_{\partial F \setminus \partial U} z \cdot \nu \;d\mathcal H^{n-1} + \int_{\partial F \cap \partial U} z \cdot \nu \;d\mathcal H^{n-1}.
\end{align*}
Using the estimate
\begin{align*}
\int_{\partial F \setminus \partial U} z \cdot \nu \;d\mathcal H^{n-1} \leq \int_{\partial F \setminus \partial U} \sigma^\circ(z) \sigma(\nu) \;d\mathcal H^{n-1} \leq \int_{\partial F \setminus \partial U} \sigma(\nu) \;d\mathcal H^{n-1},
\end{align*}
we deduce that $\frac{SP(F)}{\mathcal L^n(F)} \geq \lambda = \frac{SP(U)}{\mathcal L^n(U)}$. 

However, it seems that the proof of Theorem~\ref{th:self-cheeger} is available only for $n = 2$. We expect it to be valid in arbitrary dimension.

Let us give a well-known example of a facet that breaks immediately in the evolution.

\begin{example}
\begin{figure}[hbtp]
\centering
\begin{tikzpicture}[scale=1]
\draw[<->] (-1, 1.2) -- node[above] {2} (1, 1.2);
\draw[<->] (-1.2, -1) -- node[left] {2} (-1.2, 1);
\draw[<->] (1.2, 0.5) -- node[right] {$\frac 12$} (1.2, 1);
\draw[<->] (-1, -1.2) -- node[below] {1} (0, -1.2);
\draw[thick] (-1,-1) -- (-1,1) --(0,1)--(0,-1) -- cycle;
\draw[thick] (0,0.5) -- (0,1) --(1,1)--(1,0.5) -- cycle;
\draw (-0.5,0) node {$A$};
\draw (0.5,0.75) node {$B$};
\end{tikzpicture}
\caption{}
\label{fig:breaking-facet}
\end{figure}
Let $n = 2$ and $\sigma(p) = \norm{p}_1 = |p_1| + |p_2|$. Consider the set $C = A \cup B$ with $A = [-1, 0] \times [-1, 1]$ and $B = [0, 1] \times [\frac 12, 1]$, see Figure~\ref{fig:breaking-facet}, and let $\psi (x) = \dist(x, C)$. It is well-known that $C$ considered as a facet of $\psi$ is not calibrable and breaks into two facets $A$ and $B$ moving at different speeds. See \cite{BNP99} for the computation in the crystalline flow case and \cite[Sec.~5]{Moll05} for the explicit computation in the anisotropic total variation flow case. \cite{Moll05} shows that the solution of the anisotropic total variation flow \eqref{crystallinetf} with initial data $u_0 = \one_C$ is given as 
\begin{align*}
u(x, t) = \max(1 - 3t, 0) \one_A + \max(1 - 4t, 0) \one_B. 
\end{align*}
Let us set $U = \interior C$.
In terms of Theorem~\ref{th:self-cheeger} note that $SP(U) = P(U) = 8$ and $\mathcal L^2(U) = \frac{5}2$, yielding a Cheeger ratio $SP(U)/\mathcal L^2(U) = \frac{16}{5} = 3 + \frac 15$, while $A$ has a Cheeger ratio $SP(A)/\mathcal L^2(A) = \frac{6}{2} = 3$, violating \eqref{cheeger-ratio-minimality}. $U$ therefore cannot be calibrable.
\end{example}

\subsection{Curvature-like quantity} \label{AO3}

The characterization of the subdifferential of the anisotropic total variation and the localization of the canonical restriction $-\partial^0 \E$ motivates the following definition of the crystalline mean curvature. To allow for a forced mean curvature flow, we need to include the forcing into the definition. We follow the notation in \cite{GP3}.

Suppose that $U \subset \Rn$ is an open set and $\psi \in Lip(U)$. If $CH(\psi; U)$ defined in \eqref{CH} is nonempty we define the $\sigma^\circ$-$(L^2)$ divergence of $\psi$ for any $f \in L^2(U)$ as
\begin{align*}
\Lambda_f[\psi] := \divo z_{\rm min} - f \qquad \text{on } \set{\psi = 0}
\end{align*}
where $z_{\rm min}$ is a minimizer of $\norm{\divo z - f}_L^2(U)$ on $CH(\psi; U)$, that is, $\divo z_{\rm min}$ is the projection of $f$ onto $\divo CH(\psi; U)$. Since $\divo CH(\psi; U)$ is closed convex, the value $\divo z_{\rm min}$ is unique, but $z_{\rm min}$ might not be.

One might wonder whether the value of $\Lambda_f[\psi]$ depends on the choice $U$, but thanks to the patching Lemma~\ref{le:patching} that is not the case. For details see \cite[Prop.~4.10]{GP1}.

\begin{remark}
Note that since $\partial\sigma$ is positively $0$-homogeneous, $\Lambda_f[t\psi]$ does not depend on $t > 0$, and in fact for any Lipschitz function $\theta: \R \to \R$ with $\theta(0) = 0$ and $\theta'(s) > 0$ for a.e. $s$ we have $\Lambda_f[\theta \circ \psi] = \Lambda_f[\psi]$. Indeed, by the chain rule for the Lipschitz functions $\nabla (\theta \circ \psi)(x) = \theta'(\psi(x)) \nabla \psi(x)$ a.e. if we interpret the right-hand side as 0 when $\nabla \psi = 0$. The 0-homogeneity of $\nabla \sigma$ implies that $CH(\psi; U) = CH(\theta \circ \psi; U)$.
\end{remark}

We also note the scaling invariance
\begin{align*}
\Lambda_f[\psi](x) = a^{-1}\Lambda_{af(a\cdot)}[\psi(a\cdot)](ax),
\end{align*}
thanks to which we can always assume that $U \subset (-\frac12,\frac12)^n$.

In general, $\Lambda_0$ is only $BV$ and can be discontinuous as was shown in \cite{BNP01a}, \cite{BNP01b}.  
Finding the value of $\Lambda_f[\psi]$ explicitly in dimensions $n \geq 2$ is in general difficult. However, if $\set{\psi=0}$ has a sufficiently regular boundary and there is a vector field in $CH(\psi; U)$ with constant divergence on $\set{\psi=0}$, then $\Lambda_f[\psi]$ can be found as the ratio of the signed anisotropic perimeter and the volume of the facet. Such facets are referred to as \emph{calibrable}. Even though this is well-known in the literature, we have not found a statement that applies precisely to our setting and therefore we present it here with a proof. 
\begin{lemma} 
\label{le:value-of-Lambda}
Let $U \subset \Rn$ be bounded open set. Suppose that $\psi \in Lip(U)$, $|\psi| > 0$ on $\partial U$ and there exists $\delta_0 > 0$ such that $|\nabla \psi| > 0$ a.e. on $\set{0 < |\psi| < \delta_0}$ and the sets $\set{\psi < \delta}$, $\set{-\psi < \delta}$ are Lipschitz regular for $\delta \in (0, \delta_0)$, and
\begin{align*}
\int_{\partial \set{\pm\psi < \delta}} \sigma(\pm\nu) \;d\mathcal H^{n-1} &\to \int_{\partial \set{\pm\psi < 0}} \sigma(\pm\nu) \;d\mathcal H^{n-1} \qquad \text{as } \delta \to 0,
\end{align*}
where $\nu$ is the outer unit normal to the respective sets.
If there exists $z_C \in CH(\psi; U)$, that satisfies $\divo z_C = C$ a.e. on $\set{\psi = 0}$ for some constant $C \in \R$, then
\begin{align}
\label{div-z-value}
\Lambda_0[\psi] = C = \frac{\int_{\partial\set{\psi \leq 0}} \sigma(\nu) \;d\mathcal H^{n-1} - \int_{\partial\set{\psi \geq 0}} \sigma(-\nu) \;d\mathcal H^{n-1}}{|\set{\psi = 0}|} \qquad \text{a. e. on } \set{\psi = 0}.
\end{align}
\end{lemma}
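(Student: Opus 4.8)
The plan is to prove \eqref{div-z-value} in two stages. Recall that, by definition, $\Lambda_0[\psi]$ is $\divo z_{\rm min}$ restricted to $\set{\psi = 0}$, where $\divo z_{\rm min}$ is the projection of $0$ onto the closed convex set $\divo CH(\psi;U) \subset L^2(U)$. The first, and main, stage is to prove the integration-by-parts identity
\begin{align*}
\int_{\set{\psi = 0}} \divo z \dx = S := \int_{\partial\set{\psi \leq 0}} \sigma(\nu)\,d\mathcal H^{n-1} - \int_{\partial\set{\psi \geq 0}} \sigma(-\nu)\,d\mathcal H^{n-1}
\end{align*}
for \emph{every} $z \in CH(\psi; U)$, where $\nu$ is the outer unit normal of the relevant set. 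The second stage is a short projection argument, based on the patching Corollary~\ref{co:patching-zero}, combining this identity with the existence of $z_C$ to obtain \eqref{div-z-value}.

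For the first stage, note first that since $U$ is bounded, $\psi$ extends continuously to $\cl U$, and $|\psi| \geq c > 0$ on the compact set $\partial U$, we have $\set{|\psi| \leq \delta} \Subset U$ for $\delta < c$. For $0 < \delta < \min\set{c, \delta_0}$ I would integrate $\divo z$ against the Lipschitz cutoff $v_\delta := \rho_\delta \circ \psi$, $\rho_\delta(s) := \max\set{0,\, 1 - |s|/\delta}$; since $v_\delta$ is Lipschitz with compact support in $U$ and $\divo z \in L^2(U)$, integration by parts gives $\int_U v_\delta \divo z \dx = -\int_U z \cdot \nabla v_\delta \dx$. Now $\nabla v_\delta = -\sign(\psi)\,\delta^{-1}\nabla\psi$ a.e.\ on $\set{0 < |\psi| < \delta}$ (and $0$ elsewhere, interpreting the right side as $0$ where $\nabla\psi = 0$), and $z \cdot \nabla\psi = \sigma(\nabla\psi)$ a.e.\ — this last being immediate from $z \in \partial\sigma(\nabla\psi)$ and the one-homogeneity of $\sigma$. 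Since $|\nabla\psi| > 0$ a.e.\ on $\set{0 < |\psi| < \delta} \subset \set{0 < |\psi| < \delta_0}$, the coarea formula then yields
\begin{align*}
\int_U v_\delta \divo z \dx = \frac1\delta \int_0^\delta \big(P(t) - P(-t)\big)\, dt, \qquad P(t) := \int_{\partial\set{\psi < t}} \sigma(\nu)\,d\mathcal H^{n-1},
\end{align*}
using $\sigma(\nabla\psi)/|\nabla\psi| = \sigma(\nu)$ on the level set $\set{\psi = t}$ with outer normal $\nu = \nabla\psi/|\nabla\psi|$ of $\set{\psi < t}$. Letting $\delta \downarrow 0$, the left side tends to $\int_{\set{\psi = 0}}\divo z \dx$ by dominated convergence ($\divo z \in L^1(U)$ and $v_\delta \to \one_{\set{\psi = 0}}$ boundedly), while the right side is the average of $t \mapsto P(t) - P(-t)$ over $(0,\delta)$ and hence tends to $P(0^+) - P(0^-) = S$ by the assumed convergence of the anisotropic perimeters — here one uses that $\set{\psi < \delta} \downarrow \set{\psi \leq 0}$ and $\set{\psi > -\delta} \downarrow \set{\psi \geq 0}$, so the limits are the anisotropic perimeters of the \emph{closed} half-spaces appearing in $S$. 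This proves the identity.

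For the second stage, applying the identity to $z = z_C$ — whose divergence equals the constant $C$ on $\set{\psi = 0}$ — gives $C\,|\set{\psi=0}| = S$, i.e.\ $C = S/|\set{\psi=0}|$, the right-hand side of \eqref{div-z-value}. To see that $\Lambda_0[\psi] = C$: since $\partial\sigma(\nabla\psi)$ is convex a.e., $CH(\psi;U)$ is convex, hence so is $D := \set{(\divo z)|_{\set{\psi=0}} : z \in CH(\psi;U)} \subset L^2(\set{\psi=0})$. By Corollary~\ref{co:patching-zero}, for any $z' \in CH(\psi;U)$ the field $z_{\rm min}\one_{U \setminus \set{\psi=0}} + z'\one_{\set{\psi=0}}$ again lies in $CH(\psi;U)$, and comparing the $L^2(U)$-norm of its divergence with that of $z_{\rm min}$ forces $\int_{\set{\psi=0}}|\divo z_{\rm min}|^2 \leq \int_{\set{\psi=0}}|\divo z'|^2$; so $(\divo z_{\rm min})|_{\set{\psi=0}}$ is the (unique, by strict convexity) element of $D$ of least $L^2(\set{\psi=0})$-norm. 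By the first-stage identity every element of $D$ has the same integral $S$ over $\set{\psi=0}$, so $D$ lies in the affine hyperplane $\set{g : \int_{\set{\psi=0}} g = S}$, whose element of least norm is the constant $S/|\set{\psi=0}| = C$; as $C \in D$ (via $z_C$), it is this least-norm element, whence $\Lambda_0[\psi] = C$ a.e.\ on $\set{\psi=0}$, which is \eqref{div-z-value}.

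The main obstacle is the first stage, and within it the passage $\delta \downarrow 0$: one must show that the approximating level-set anisotropic perimeters $P(\pm\delta)$ converge to the signed anisotropic perimeter of the \emph{closed} facet $\set{\psi = 0}$ — precisely what the Lipschitz-regularity and perimeter-convergence hypotheses are designed to provide — with some care needed over null sets and over the distinction between $\set{\psi < 0}$ and $\set{\psi \leq 0}$. Given the identity, the second stage is routine.
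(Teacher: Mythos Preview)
Your proof is correct and follows essentially the same two-stage structure as the paper's: first establish that $\int_{\set{\psi=0}}\divo z = S$ for every $z\in CH(\psi;U)$, then combine this with the existence of $z_C$ and the patching Corollary~\ref{co:patching-zero} to conclude $\Lambda_0[\psi]=C$. The only technical variation is in the first stage: you integrate $\divo z$ against the Lipschitz cutoff $v_\delta=\rho_\delta\circ\psi$ and invoke the coarea formula, whereas the paper mollifies $z$, applies the divergence theorem directly on $\set{|\psi|<\delta}$, and then passes to the limit in the mollification parameter before sending $\delta\to0$; your route avoids the mollification step but is otherwise equivalent. In the second stage your affine-hyperplane formulation is just a rephrasing of the paper's direct computation $\int_A v^2=\int_A C^2+\int_A(v-C)^2$.
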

If $\psi$ is non-positive and $\sigma=1$, then this number $C$ is the Cheeger ratio of the set $\{ \psi=0 \}$ if the boundary $\partial\{\psi\geq 0\}$ is Lipschitz. Note that we invoke only approximability of surface energy by that of Lipschitz regular set and do not assume Lipschitz regularity of $\partial\{\psi\geq 0\}$ itself.

\begin{proof}
Due to the existence of $z_C$ we know that $\Lambda_0[\psi]$ is well-defined. Let us first prove that for all $z \in CH(\psi; U)$ we have
\begin{align}
\label{int-div-z}
\int_{\set{\psi=  0}} \divo z \dx = \int_{\partial\set{\psi \leq 0}} \sigma(\nu) \;d\mathcal H^{n-1} - \int_{\partial\set{\psi \geq 0}} \sigma(-\nu) \;d\mathcal H^{n-1} = C |\set{\psi = 0}|.
\end{align}
The characterization of $\partial \sigma$ in \eqref{subdiff-polar-char} yields $z \cdot \nabla \psi = \sigma(\nabla \psi)$ a.e. on $\set{0 < |\psi| < \delta_0}$. For $\e > 0$ let $\eta_\e$ be the standard mollifier with radius $\e$ and let $z_\e := z * \eta_\e$, where we extend $z$ by 0 outside $U$. We have
\begin{align*}
z_\e \cdot \nabla \psi &\to \sigma(\nabla \psi) &&\text{a.e. in $U$,}\\
\divo z_\e &\to \divo z && \text{in $L^2(U)$,}
\end{align*}
as $\e \to 0$. The divergence theorem gives
\begin{align*}
\int_{\set{|\psi| < \delta}} \divo z_\e \dx = \int_{\partial \set{|\psi| < \delta}} z_\e \cdot \nu \;d\mathcal H^{n-1}.
\end{align*}
By the coarea formula, $\nu = \frac{\nabla \psi}{|\nabla \psi|}$ $\mathcal H^{n-1}$-a.e. on $\partial\set{\psi < \delta}$ and $\nu = -\frac{\nabla \psi}{|\nabla \psi|}$ $\mathcal H^{n-1}$-a.e. on $\partial\set{\psi > -\delta}$ for a.e. $\delta \in (0, \delta_0)$.
After sending $\e \to 0$, the dominated convergence theorem yields for a.e. $\delta \in (0, \delta_0)$
\begin{align*}
\int_{\set{|\psi| < \delta}} \divo z \dx = \int_{\partial \set{\psi < \delta}} \sigma(\nu) \;d\mathcal H^{n-1} - \int_{\partial \set{\psi > -\delta}} \sigma(-\nu) \;d\mathcal H^{n-1}.
\end{align*}
Sending $\delta \to 0$ along a sequence leads to \eqref{int-div-z}. We recover the second equality in \eqref{int-div-z} by recalling that $z_C \in CH(\psi; U)$ satisfies $\divo z_C = C$ a.e. on $\set{\psi = 0}$.

Let us write $A = \set{\psi = 0}$. For $v = \divo z_{\rm min}$ we have $\int_A v \dx = \int_A C \dx$ by \eqref{int-div-z} and therefore
\begin{align}
\label{l2-rel-div}
\int_A v^2 \dx = \int_A C^2 \dx + \int_A(v - C)^2 \dx \geq \int_A C^2 \dx.
\end{align}
By the Cahn-Hoffman vector field patching Corollary~\ref{co:patching-zero}, the vector field
\begin{align*}
\tilde z = z_C \one_A + z_{\rm min} \one_{U \setminus A}
\end{align*}
is also Cahn-Hoffman with
\begin{align*}
\divo \tilde z = \divo z_C \one_A + \divo z_{\rm min} \one_{U \setminus A}
\qquad \text{a.e. in } U.
\end{align*}
Therefore \eqref{l2-rel-div} implies that $\norm{\divo z_{\rm min}}_{L^2(U)} \geq \norm{\divo \tilde z}_{L^2(U)}$ and we conclude that $\divo \tilde z$ is minimizing. By uniqueness, $\divo z_{\rm min} = C$ a.e. on $A$.
\end{proof}

Let us conclude with a few examples of simple useful facets for which we can compute $\Lambda_0$ explicitly.

\begin{example}\label{ex:wulff-facet}
Wulff facet.

For $r > 0$ consider $\psi(x) := \max(\sigma^\circ(x) - r, 0)$. We have $\set{\psi = 0} = r W_\sigma$.

Take $U$ to be a sufficiently large open ball containing $r W_\sigma$ and consider the vector field
\begin{align*}
z(x) :=
\begin{cases}
\frac{x}{r}, & \sigma^\circ(x) \leq r,\\
\frac{x}{\sigma^\circ(x)}, & \text{otherwise}.
\end{cases}
\end{align*}
Clearly $z \in L^\infty(U)$ and $\divo z \in L^2(U)$ with
\begin{align*}
\divo z = 
\begin{cases}
\frac nr, & \sigma^\circ(x) \leq r,\\
\frac{n-1}{\sigma^\circ(x)}, & \text{otherwise}.
\end{cases}
\end{align*}
It is easy to check that $z \in \partial \sigma(\nabla \psi)$ a.e. Therefore $CH(\psi; U) \neq \emptyset$
and  $\Lambda_0[\psi] = \frac nr$ on $\set{\psi = 0}$ by Lemma~\ref{le:value-of-Lambda}.
\end{example}

\begin{example}
\label{ex:facet-hole}
Facet with a hole; Fig.~\ref{fig:facet-hole}.
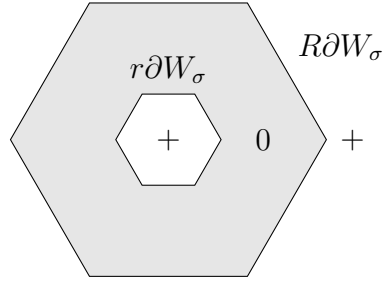
\begin{figure}
\centering
\begin{tikzpicture}[scale=0.7]
\draw[even odd rule,fill=black!10!white] (0:1) -- (60:1) -- node[above] {$r \partial W_\sigma$} (120:1) -- (180:1) -- (240:1) -- (300:1) -- cycle
 (0:3) -- node[above right] {$R \partial W_\sigma$} (60:3) -- (120:3) -- (180:3) -- (240:3) -- (300:3) -- cycle;
\path (0,0) node {$+$} (1.8,0) node {$0$} (3.5,0) node {$+$};
\end{tikzpicture}
\caption{Wulff facet with a hole in Example~\ref{ex:facet-hole}, with sign of $\psi$ indicated.}
\label{fig:facet-hole}
\end{figure}

Suppose that $\sigma^\circ$ is even, i.e., $\sigma^\circ(-x) = \sigma^\circ(x)$ for all $x$. Consider $0 < r < R$ and the function 
\begin{align*}
\psi(x) := \max(r - \sigma^\circ(x), 0, \sigma^\circ(x) - R),
\end{align*}
so that $\set{\psi = 0} = RW_\sigma \setminus \interior r W_\sigma$.
Let us
set
\begin{align*}
a := \frac{R^{n-1} r^{n-1}(R + r)}{R^n - r^n}, \qquad b :=\frac{R^{n-1} + r^{n-1}}{R^n - r^n}
\end{align*}
We claim that the vector field 
\begin{align*}
z(x) :=
\begin{cases}
- \frac{x}{\sigma^\circ(x)}, & \sigma^\circ(x) \leq r,\\
\big(-a (\sigma^\circ(x))^{-n} + b\big) x, & r < \sigma^\circ(x) < R,\\
\frac{x}{\sigma^\circ(x)}, & \sigma^\circ(x) \geq R,\\
\end{cases}
\end{align*}
is a Cahn--Hoffman vector field for $\psi$ on any $U$ away from $x = 0$. To see that, we consider $g(s):= (-as^{-n} +b)s$. We note that $g(r) = -1$ and $g(R) = 1$, and $g$ is increasing on $s > 0$ which yields $-1 < g(s) < 1$ for $r < s <R$. By the assumption that $\sigma^\circ$ is even, we have
\begin{align*}
\sigma^\circ(z(x)) = |g(\sigma^\circ(x))| \leq 1 \qquad r < \sigma^\circ(x) < R.
\end{align*}
This by the characterization of the subdifferential, for example \eqref{subdiff-polar-char}, implies that $z(x) \in \partial \sigma(0) = \partial \sigma(\nabla \psi(0))$ for $r < \sigma^\circ(x) < R$. For other $x$ the inclusion $z(x) \in \partial \sigma(\nabla \psi(x))$ a.e.{} is obvious.

We also see that $z$ is in fact Lipschitz continuous away from $x = 0$. Therefore $\divo z \in L^2(U)$ for any $U$ away from $x = 0$ and hence $z \in CH(\psi; U)$.

A direct computation using $x \cdot \nabla \sigma^\circ(x) = \sigma^\circ(x)$ yields that almost everywhere
\begin{align*}
\divo z = 
\begin{cases}
- \frac{n-1}{\sigma^\circ(x)}, & \sigma^\circ(x) \leq r,\\
n b, & r < \sigma^\circ(x) < R,\\
\frac{n-1}{\sigma^\circ(x)}, & \sigma^\circ(x) \geq R.\\
\end{cases}
\end{align*}
In particular, $\Lambda_0[\psi] = nb = n \frac{R^{n-1} + r^{n-1}}{R^n - r^n}$ by Lemma~\ref{le:value-of-Lambda}, matching the formula \eqref{div-z-value}.
\end{example}

\begin{example}
\label{ex:convex-concave}
Convex-concave facet; Fig.~\ref{fig:facet-hole} with negative sign in the hole.

Consider $0 < r < R$ and the function 
\begin{align*}
\psi(x) := \min(\sigma^\circ(x)-r, \max(0, \sigma^\circ(x) - R)),
\end{align*}
so that again 
$\set{\psi = 0} = RW_\sigma \setminus \interior r W_\sigma$, but this time $\psi < 0$ in $r W_\sigma$.
The vector field 
\begin{align*}
z(x) :=
\frac{x}{\sigma^\circ(x)}
\end{align*}
is a Cahn--Hoffman vector field for $\psi$ on any $U$ away from $0$.

But we can be more precise as in Example~\ref{ex:facet-hole}. Let us
set
\begin{align*}
a := \frac{R^{n-1} r^{n-1}(R - r)}{R^n - r^n}, \qquad b :=\frac{R^{n-1} - r^{n-1}}{R^n - r^n}
\end{align*}
and consider the vector field
\begin{align*}
z(x) :=
\begin{cases}
\frac{x}{\sigma^\circ(x)}, & \sigma^\circ(x) \leq r,\\
\big(a (\sigma^\circ(x))^{-n} + b\big) x, & r < \sigma^\circ(x) < R,\\
\frac{x}{\sigma^\circ(x)}, & \sigma^\circ(x) \geq R.\\
\end{cases}
\end{align*}
This is a Lipschitz continuous vector field away from $x = 0$. Moreover, $\divo z = n b$ almost everywhere for $r < \sigma^\circ(x) < R$.

Let us check that it is a Cahn--Hoffman vector field. The inclusion $z(x) \in \partial \sigma(\nabla \psi(x))$ is clear for $\sigma^\circ(x) < r$ and $R < \sigma^\circ(x)$. Since $\psi(x) \equiv 0$ for $r < \sigma^\circ(x) < R$, we only need to check that $\sigma^\circ(z) \leq 1$ by \eqref{subdiff-polar-char}.

The function $g(s) := (a s^{-n} + b) s$ is convex on $s > 0$ with minimum at $\hat s = (\frac{b}{(n-1)a})^{-1/n}$ with value $g(\hat s) = \frac{n}{n-1} b \hat s > 0$. Therefore $a s^{-n} + b > 0$ for $s > 0$ and we have
\begin{align*}
\sigma^\circ(z(x)) = g(\sigma^\circ(x)) \qquad r < \sigma^\circ(x) < R.
\end{align*}
Since $g(r) = g(R) = 1$, by convexity of $g$ we conclude that $\sigma^\circ(z(x)) \leq 1$ for all $x \neq 0$. Therefore $z$ is a Cahn--Hoffman vector field with constant divergence on the facet $\set{\psi = 0}$ and hence by Lemma~\ref{le:value-of-Lambda} we have $\Lambda_0[\psi] = n b$.
\end{example}

\subsection{Comparison and approximation}\label{AO4}

We start with the comparison principle for the $\sigma^\circ$-($L^2$) divergence.
Here $\sign s = -1, 0, 1$ if $s < 0$, $s =0$, or $s > 0$ respectively.

\begin{proposition} \label{pr:independence-min-div}
Let $\sigma$ be convex, positively one-homogeneous function on $\Rn$ that is positive away from 0. Let $U$ be an open subset of $\Rn$ and let $\psi_i \in Lip(U)$ with $\set{\psi_i = 0}$ compact subsets of $U$. Let $f_i \in L^2(U)$. If $CH(\psi_i; U) \neq \emptyset$ and 
\begin{align*}
\sign \psi_1 \leq \sign \psi_2, \qquad f_1 \geq f_2
\end{align*}
then
\begin{align*}
\Lambda_{f_1}[\psi_1] \leq \Lambda_{f_2}[\psi_2] \qquad \text{a.e. on } \set{\psi_1 = 0} \cap \set{\psi_2 = 0}.
\end{align*}
\end{proposition}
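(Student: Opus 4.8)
The plan is a competitor argument at the level of the $L^2(U)$-projection that defines $\Lambda$. By definition $\Lambda_{f_i}[\psi_i] = v_i - f_i$ on $\set{\psi_i = 0}$, where $v_i := \divo z^i_{\rm min}$ is the orthogonal projection in $L^2(U)$ of $f_i$ onto the closed convex set $K_i := \divo CH(\psi_i; U)$, which is nonempty by hypothesis; equivalently $v_i \in K_i$ and
\begin{align}
\label{pr-vi}
\int_U (f_i - v_i)(w - v_i) \dx \le 0 \qquad \text{for all } w \in K_i.
\end{align}
I would argue by contradiction: suppose
\begin{align*}
E := \set{x \in \set{\psi_1 = 0} \cap \set{\psi_2 = 0} \mid v_1(x) - f_1(x) > v_2(x) - f_2(x)}
\end{align*}
has positive Lebesgue measure. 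Since $\set{\psi_i = 0}$ is compact in $U$, so is $\cl E \subset \set{\psi_1 = 0} \cap \set{\psi_2 = 0}$; also $\nabla \psi_i = 0$ a.e.\ on $\set{\psi_i = 0}$, and since $\partial \sigma(q) \subseteq \partial \sigma(0)$ for every $q$, any $z \in CH(\psi_i; U)$ satisfies $z \in \partial \sigma(0)$ a.e.

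The heart of the argument is a ``swap'' of Cahn--Hoffman fields across the common facet. With $z_1 \in CH(\psi_1; U)$, $z_2 \in CH(\psi_2; U)$ the minimizers (so $\divo z_i = v_i$), set
\begin{align*}
\tilde z_1 := z_1 \one_{U \setminus E} + z_2 \one_E, \qquad \tilde z_2 := z_2 \one_{U \setminus E} + z_1 \one_E.
\end{align*}
Because $\nabla \psi_1 = \nabla \psi_2 = 0$ a.e.\ on $E$ and both fields take values in $\partial \sigma(0)$ a.e., the pointwise inclusions $\tilde z_1 \in \partial \sigma(\nabla \psi_1)$ and $\tilde z_2 \in \partial \sigma(\nabla \psi_2)$ a.e.\ are automatic. \textbf{The main obstacle is to show that $\tilde z_i \in X^2(U)$ with $\divo \tilde z_1 = v_1 \one_{U \setminus E} + v_2 \one_E$ and $\divo \tilde z_2 = v_2 \one_{U \setminus E} + v_1 \one_E$}, i.e.\ that the exchange produces no divergence concentrated on $\partial E$. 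For sets of the form $\set{|\psi_1| < \delta}$ this is exactly the patching Lemma~\ref{le:patching}, whose mechanism is that $(z_2 - z_1) \cdot \nu$ vanishes on $\partial \set{|\psi_1| < \delta}$ for a.e.\ $\delta$ because there $\nu \parallel \nabla \psi_1$ and $z_i \cdot \nu = \sigma(\nu)$ for both $i$. For the present set $E$, sitting inside $\set{\psi_1 = 0} \cap \set{\psi_2 = 0}$, the plan is to slice by the level sets of $\psi_1$ and, imitating the mollification argument in the proof of Lemma~\ref{le:value-of-Lambda}, reduce the claim to the $\mathcal H^{n-1}$-a.e.\ vanishing of the normal-trace jump of $z_2 - z_1$ along $\partial^* E$. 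It is precisely here that the hypothesis $\sign \psi_1 \le \sign \psi_2$ enters: it forces $\psi_2 \ge 0$ on $\set{\psi_1 = 0}$ and $\psi_1 \le 0$ on $\set{\psi_2 = 0}$, and controls the signs of $\psi_1$, $\psi_2$ on both sides of $\partial^* E$ so that the relevant normal traces of $z_1$ and $z_2$ coincide. That the hypothesis is indispensable can be read off the Wulff-facet example $\psi_i(x) = \max(\sigma^\circ(x) - r_i, 0)$: there $\Lambda_0[\psi_i] = n/r_i$ on the facet $\set{\psi_i = 0} = r_i W_\sigma$ (Example~\ref{ex:wulff-facet}), so on the common facet $\min(r_1, r_2) W_\sigma$ the inequality $\Lambda_0[\psi_1] \le \Lambda_0[\psi_2]$ holds iff $r_1 \ge r_2$, which is exactly $\sign \psi_1 \le \sign \psi_2$.

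Granting the swap, $\divo \tilde z_1 \in K_1$ and $\divo \tilde z_2 \in K_2$, and since $\divo \tilde z_i - v_i$ is supported in $E$, testing \eqref{pr-vi} with $w = \divo \tilde z_1$ for $i = 1$ and $w = \divo \tilde z_2$ for $i = 2$ gives
\begin{align*}
\int_E (v_1 - f_1)(v_2 - v_1) \dx \ge 0, \qquad \int_E (v_2 - f_2)(v_2 - v_1) \dx \le 0.
\end{align*}
Subtracting and setting $g_i := v_i - f_i = \Lambda_{f_i}[\psi_i]$ on $E$, I obtain $\int_E (g_1 - g_2)(v_2 - v_1) \dx \ge 0$. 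But on $E$ one has $g_1 > g_2$ by definition, while $f_1 \ge f_2$ gives $v_1 - v_2 = (g_1 - g_2) + (f_1 - f_2) > 0$; hence $(g_1 - g_2)(v_2 - v_1) < 0$ a.e.\ on $E$, and since $|E| > 0$ the integral is strictly negative --- a contradiction. Therefore $|E| = 0$, which is the assertion. All of the analytic difficulty lies in the patching/swap step; the rest is bookkeeping with the projection inequality.
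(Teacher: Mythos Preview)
Your argument has a genuine gap at exactly the point you flag as ``the main obstacle'': the swap $\tilde z_1 := z_1 \one_{U\setminus E} + z_2 \one_E$ need not lie in $X^2(U)$. The distributional divergence of $\tilde z_1$ is $v_1 \one_{U\setminus E} + v_2 \one_E$ \emph{plus} a singular measure supported on $\partial^* E$ given by the normal-trace jump $[(z_2 - z_1)\cdot\nu_E]\,\mathcal H^{n-1}$. The mechanism behind Lemma~\ref{le:patching} is that on $\partial\{|\psi_1|<\delta\}$ one has $\nu \parallel \nabla\psi_1 \neq 0$ and the constraint $z_i \in \partial\sigma(\nabla\psi_1)$ pins down $z_i\cdot\nu = \sigma(\nu)$ for \emph{both} $i$, killing the jump. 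But your set $E$ lies entirely inside $\{\psi_1 = 0\}\cap\{\psi_2 = 0\}$, so $\partial^* E$ is not a level set of either $\psi_i$; on it $\nabla\psi_1 = \nabla\psi_2 = 0$ (Lebesgue-a.e., and you have no $\mathcal H^{n-1}$-a.e.\ control), the constraint degenerates to $z_i \in \partial\sigma(0) = W_\sigma$, and the normal components of $z_1, z_2$ are completely unconstrained. Your proposed ``slicing by level sets of $\psi_1$'' cannot help, since all of $E$ sits on the single level $\{\psi_1 = 0\}$. The sign hypothesis $\sign\psi_1 \le \sign\psi_2$ tells you nothing about $\partial^* E$ in the interior of the common facet. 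So as written the swap is unjustified, and without it the variational-inequality bookkeeping never gets off the ground.

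The paper's proof avoids vector-field patching entirely. It first reduces to $\Tn$ and, via Lemma~\ref{le:Lipschitz-ordering} and the $0$-homogeneity of $\partial\sigma$, upgrades $\sign\psi_1 \le \sign\psi_2$ to a genuine pointwise ordering $\psi_1 \le \psi_2$ without changing $CH(\psi_i)$. Then it identifies $\Lambda_0[\psi_i] = -\partial^0\mathcal E(\psi_i)$ on the zero sets and uses the comparison principle for the \emph{resolvent problem} (Proposition~\ref{pr:resolvent-comp-principle}): $\psi_1 \le \psi_2$ implies $\psi_{1,a} \le \psi_{2,a}$, hence $(\psi_{1,a}-\psi_1)/a \le (\psi_{2,a}-\psi_2)/a$ on $\{\psi_1=0\}\cap\{\psi_2=0\}$, and one passes to the limit $a\to 0$ via Proposition~\ref{pr:resolvent-approximation}. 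The comparison happens at the level of scalar functions (solutions of a monotone implicit Euler step), not by surgery on Cahn--Hoffman fields; this is what makes the argument go through without any regularity of the ``bad'' set.
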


Before we proceed with the proof, we recall here a technical lemma proved in \cite[Lemma~4.13]{GP1}. It is a variant of a result for continuous functions established in \cite{CGG,ES}; 
see also \cite[Lemma~4.2.9]{Giga_Book}.
\begin{lemma}
\label{le:Lipschitz-ordering}
Suppose that $\psi$ and $\varphi$ are two nonnegative periodic Lipschitz functions on $\Rn$, such that
$\set{\psi = 0} \subset \set{\varphi = 0}$.
Then there exists a Lipschitz continuous function $\theta: [0, \infty) \to [0, \infty)$ such that
$\theta(0) = 0$, $\theta(s) > 0$ for $s > 0$ and $\theta'(s) > 0$ for almost every $s > 0$
and we have
\begin{align*}
\theta \circ \varphi \leq \psi \qquad \text{on $\Rn$.}
\end{align*}
\end{lemma}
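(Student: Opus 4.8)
The plan is to reduce the pointwise inequality $\theta\circ\varphi\le\psi$ to a one-variable domination problem and then build $\theta$ by an explicit dyadic construction. First, using periodicity I would pass to a period cell (equivalently, view $\psi,\varphi$ as continuous functions on the compact torus $\Tn$); if $\varphi\equiv 0$ one may take $\theta=\mathrm{id}$, so assume $M:=\max\varphi>0$.

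The key auxiliary object is the nondecreasing ``modulus''
\begin{align*}
\mu(s):=\min\set{\psi(x)\mid \varphi(x)\ge s},\qquad 0<s\le M.
\end{align*}
I would first check that $\mu$ is well defined, finite and strictly positive on $(0,M]$: the set $\set{\varphi\ge s}$ is nonempty and compact, and $\set{\varphi\ge s}\cap\set{\psi=0}=\emptyset$ because $\set{\psi=0}\subseteq\set{\varphi=0}$ and $s>0$, so $\psi>0$ there (recall $\psi\ge0$); monotonicity of $\mu$ is immediate from $\set{\varphi\ge s'}\subseteq\set{\varphi\ge s}$ for $s\le s'$, and $\mu(s)\le\mu(M)\le\max\psi<\infty$. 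Since $\psi(x)\ge\mu(\varphi(x))$ whenever $\varphi(x)>0$ and $\psi\ge0$ when $\varphi(x)=0$, it then suffices to produce $\theta\colon[0,\infty)\to[0,\infty)$ that is Lipschitz, satisfies $\theta(0)=0$, has $\theta>0$ and $\theta'>0$ a.e.\ on $(0,\infty)$, and obeys $\theta(s)\le\mu(s)$ for all $s\in(0,M]$.

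For the construction I would take $s_k:=M2^{-k}$ ($k\ge0$), so the intervals $I_k:=(s_{k+1},s_k]$ partition $(0,M]$ with $|I_k|=s_{k+1}$, set $\rho:=\mu(s_{k+1})/M$ on $I_k$, define $\theta(s):=\int_0^s\rho(\tau)\, d\tau$ on $[0,M]$, and extend $\theta$ affinely with slope $1/M$ for $s>M$. Then $0<\rho\le\mu(M)/M$ gives the Lipschitz bound and $\theta'=\rho>0$ a.e., while $\theta(0)=0$ and $\theta>0$ on $(0,\infty)$ are clear. The one computation that has to be carried out is the domination: for $s\in I_k$,
\begin{align*}
\theta(s)\le\theta(s_k)&=\sum_{j\ge k}\frac{\mu(s_{j+1})}{M}\,|I_j|=\sum_{j\ge k}\mu(s_{j+1})2^{-(j+1)}\\
&\le\mu(s_{k+1})\sum_{j\ge k}2^{-(j+1)}=2^{-k}\mu(s_{k+1})\le\mu(s),
\end{align*}
using $\mu(s_{j+1})\le\mu(s_{k+1})$ for $j\ge k$, then $2^{-k}\le1$, and finally $\mu(s)\ge\mu(s_{k+1})$ for $s>s_{k+1}$. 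Combined with the reduction of the previous paragraph (and $\theta(0)=0\le\psi$ on $\set{\varphi=0}$), this gives $\theta\circ\varphi\le\psi$ and finishes the proof.

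\textbf{Main obstacle.} The only real difficulty is near $s=0$, where one must simultaneously keep $\theta$ Lipschitz, force $\theta'>0$ to hold a.e., and stay below $\mu$ — and $\mu$ is only known to be nondecreasing and positive on $(0,M]$, possibly decaying arbitrarily fast or jumping at $0$. The dyadic choice of $\rho$ is tuned precisely so that the partial integrals of $\rho$ at the breakpoints $s_k$ stay under $\mu(s_{k+1})$ with the safety factor $2^{-k}\le1$ to spare. An alternative route — taking $\theta$ to be the largest $1$-Lipschitz minorant of $s\mapsto\min(s,\mu(s))$ via inf-convolution with $|s-t|$ — yields Lipschitzness and domination almost for free, but then an extra perturbation is needed to guarantee $\theta'>0$ a.e.\ on the intervals where $\mu$ is locally constant, so the explicit construction above is the cleaner option.
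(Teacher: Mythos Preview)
Your proof is correct. The paper does not actually give a proof of this lemma: it is merely stated and attributed to \cite[Lemma~4.13]{GP1}, with the remark that it is a variant of a result for continuous functions in \cite{CGG,ES} (see also \cite[Lemma~4.2.9]{Giga_Book}). So there is no in-paper argument to compare against.

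Your approach---defining the nondecreasing modulus $\mu(s)=\min\{\psi(x):\varphi(x)\ge s\}$ on $(0,M]$ and then constructing a Lipschitz, strictly increasing minorant $\theta$ of $\mu$---is the standard route for this kind of statement, and the dyadic construction you give is clean: the bound $\theta(s_k)=\sum_{j\ge k}\mu(s_{j+1})2^{-(j+1)}\le 2^{-k}\mu(s_{k+1})\le\mu(s_{k+1})\le\mu(s)$ is exactly right, and positivity of $\rho$ on each $I_k$ gives both $\theta'>0$ a.e.\ and $\theta>0$ on $(0,\infty)$. The only cosmetic point is that the extension for $s>M$ is irrelevant to the conclusion (since $\varphi\le M$), so any increasing Lipschitz extension works; your affine choice is fine.
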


\begin{proof}[Proof of Proposition~\ref{pr:independence-min-div}]
For simplicity, we assume $f_1 = f_2 = 0$.
We can assume that $U$ is connected and by scaling assume that $U \subset (-\frac12, \frac12)$. By making $U$ smaller if necessary, we may assume that $\min_{\partial U} |\psi_i| > 0$.

We can modify $\psi_i$ away from $\set{\psi_i = 0}$ to make it nonzero constant near $\partial U$ and then extend it using this constant periodically so that $\psi_i \in Lip(\Tn)$ and $CH(\psi_i; \Tn) \neq \emptyset$ without changing the value of $\Lambda_{0}[\psi_i]$ on $\set{\psi_i = 0}$. If $\dimension = 1$ we might have to do an even extension first if the sign differs on $\inf U$ and $\sup U$. 

By Lemma~\ref{le:Lipschitz-ordering}, we can also find $\theta_1, \theta_2 \in Lip(\R)$ with $\theta_i(0) = 0$ and $\theta_i' > 0$ a.e. so that $\theta_1 \circ \psi_1 \leq \theta_2 \circ \psi_2$ everywhere. Since $CH(\theta_i \circ \psi_i; \Tn) = CH(\psi_i; \Tn)$, we can replace $\psi_i$ with $\theta_i \circ \psi_i$ and assume that $\psi_1 \leq \psi_2$ on $\Tn$. 

We then have $\Lambda_{0}[\psi_i] = -\partial^0 \E(\psi_i)$ on $\set{\psi_i = 0}$ by the characterization of the subdifferential in Corollary~\ref{co:lip-subdiff-char}.  
Due to the comparison Proposition~\ref{pr:resolvent-comp-principle}, we have $\psi_1 \leq \psi_2$ implies $\psi_{1,a} \leq \psi_{2,a}$ for the solutions of the resolvent problem \eqref{resolvent-problem}. On $\set{\psi_1 = 0} \cap \set{\psi_2 = 0}$ we have
\begin{align*}
\frac{\psi_{1,a} - \psi_1}a \leq \frac{\psi_{2,a} - \psi_2}a,
\end{align*}
which using the convergence in Proposition~\ref{pr:resolvent-approximation} and sending $a \to 0$ implies $-\partial^0 \E(\psi_1) \leq -\partial^0 \E(\psi_2)$ a.e. on $\set{\psi_1 = 0} \cap \set{\psi_2 = 0}$.
\end{proof}

\noindent
\textbf{Abstract facets.} The comparison principle for $\Lambda_f[\psi]$ implies that the value on $\set{\psi = 0}$ depends only on $f$ and $\sign \psi$. We define the relation $\sim$ on $\mathcal F = \set{\xi \mid \xi: \Rn \to \R}$ the set of all real-valued function on $\Rn$ as
\begin{align*}
\xi_1 \sim \xi_2 \qquad \Leftrightarrow \qquad \sign \xi_1 = \sign \xi_2.
\end{align*}
This relation is an equivalence relation on $\mathcal F$. We refer to its equivalence classes $[\xi] := \set{\psi \mid \psi\sim\xi} \subset \mathcal F$ as \emph{(abstract) facets}. We write $[\xi_1] \preceq [\xi_2]$ when $\sign \xi_1 \leq \sign \xi_2$ and this relation defines a partial order on the set of all facets $\mathcal F /\sim := \set{[\xi] \mid \xi \in \mathcal F}$.

\noindent
\textbf{Cahn--Hoffman facet.} We say that a facet $[\xi]$ is a \emph{$\sigma^\circ$-$(L^2)$ Cahn--Hoffman facet} if $\set{\xi = 0}$ is compact and there are an open set $U \subset \Rn$, $\set{\xi = 0} \subset U$ and a Lipschitz function $\psi \in [\xi]$ such that $CH(\psi; U) \neq \emptyset$. The facets in Examples~\ref{ex:wulff-facet}--\ref{ex:convex-concave} are Cahn--Hoffman.

\begin{proposition}
\label{pr:lambda-comparison}
For $\sigma^\circ$-$(L^2)$ Cahn--Hoffman facets $[\chi_1]$ and $[\chi_2]$ and functions $f_i \in L^2(\set{\chi_1 = 0} \cup \set{\chi_2 = 0})$ we have
\begin{align*}
[\chi_1] \preceq [\chi_2],\qquad f_1 \geq f_2 \text{ a.e.}
\end{align*}
implies
\begin{align*}
\Lambda_{f_1}[\chi_1] \leq \Lambda_{f_2}[\chi_2] \text{ a.e. on } \set{\chi_1 = 0} \cap \set{\chi_2 = 0}.
\end{align*}
\end{proposition}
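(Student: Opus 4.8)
The plan is to reduce the assertion to the comparison Proposition~\ref{pr:independence-min-div}, which already treats the $\sigma^\circ$-$(L^2)$ divergences of two ordered Lipschitz functions \emph{on one common domain}; the two gaps to bridge are that the Cahn--Hoffman representatives of $[\chi_1]$ and $[\chi_2]$ a priori live on different open sets, and that the forcings $f_i$ are prescribed only on the two facet sets $\set{\chi_i = 0}$.

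First I would unwind the definition of a $\sigma^\circ$-$(L^2)$ Cahn--Hoffman facet: for $i = 1, 2$ fix a Lipschitz representative $\psi_i \in [\chi_i]$, an open set $U_i$ with $\set{\chi_i = 0} = \set{\psi_i = 0} \subset U_i$, and a vector field in $CH(\psi_i; U_i)$. Since $K := \set{\chi_1 = 0} \cup \set{\chi_2 = 0}$ is compact, after a translation and dilation (legitimate by the scaling invariance of $\Lambda$) I may assume $K \subset (-\tfrac12, \tfrac12)^n$. Following verbatim the reduction in the proof of Proposition~\ref{pr:independence-min-div}, I would modify each $\psi_i$ away from a neighborhood of $\set{\chi_i = 0}$, staying inside $U_i$ and inside the cube and using the patching Lemma~\ref{le:patching} to carry along its Cahn--Hoffman field, so that $\psi_i$ becomes a nonzero constant near $\partial(-\tfrac12,\tfrac12)^n$, and then extend it periodically. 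This produces $\tilde\psi_i \in Lip(\Tn)$ with $\set{\tilde\psi_i = 0} = \set{\chi_i = 0}$ and $CH(\tilde\psi_i; \Tn) \neq \emptyset$, and, by the locality of the $\sigma^\circ$-$(L^2)$ divergence (patching Lemma~\ref{le:patching}; cf.\ \cite[Prop.~4.10]{GP1}), $\Lambda_{f_i}[\tilde\psi_i] = \Lambda_{f_i}[\chi_i]$ on $\set{\chi_i = 0}$. Extending $f_i$ to $L^2(\Tn)$ by $f_1 = f_2 = 0$ on $\Tn \setminus K$ preserves $f_1 \geq f_2$ a.e.\ and, by the same locality, leaves these values unchanged.

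Next I would recover the literal ordering. From $[\chi_1] \preceq [\chi_2]$ one has $\sign \tilde\psi_1 \leq \sign \tilde\psi_2$ on $\Tn$, hence $\set{\tilde\psi_2^+ = 0} \subset \set{\tilde\psi_1^+ = 0}$ and $\set{\tilde\psi_1^- = 0} \subset \set{\tilde\psi_2^- = 0}$. Applying Lemma~\ref{le:Lipschitz-ordering} to the pairs $(\tilde\psi_2^+, \tilde\psi_1^+)$ and $(\tilde\psi_1^-, \tilde\psi_2^-)$ of nonnegative periodic Lipschitz functions — preceded by an even extension if $n = 1$, as in the proof of Proposition~\ref{pr:independence-min-div} — and assembling the resulting profiles on $[0,\infty)$ and $(-\infty,0]$, I obtain strictly increasing Lipschitz maps $\theta_i: \R \to \R$ with $\theta_i(0) = 0$ and $\theta_i' > 0$ a.e.\ such that $\theta_1 \circ \tilde\psi_1 \leq \theta_2 \circ \tilde\psi_2$ on $\Tn$. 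By the Remark following the definition of $\Lambda_f$ one has $CH(\theta_i \circ \tilde\psi_i; \Tn) = CH(\tilde\psi_i; \Tn)$ and $\Lambda_{f_i}[\theta_i \circ \tilde\psi_i] = \Lambda_{f_i}[\tilde\psi_i]$, while $\set{\theta_i \circ \tilde\psi_i = 0} = \set{\chi_i = 0}$ since $\theta_i$ is strictly increasing. Restricting everything to the open cube $(-\tfrac12, \tfrac12)^n$, the triple $\theta_i \circ \tilde\psi_i$, $f_i$, $U = (-\tfrac12, \tfrac12)^n$ satisfies all hypotheses of Proposition~\ref{pr:independence-min-div}, which gives
\[
\Lambda_{f_1}[\theta_1 \circ \tilde\psi_1] \leq \Lambda_{f_2}[\theta_2 \circ \tilde\psi_2] \qquad \text{a.e.\ on } \set{\chi_1 = 0} \cap \set{\chi_2 = 0},
\]
and chasing the equalities $\Lambda_{f_i}[\theta_i \circ \tilde\psi_i] = \Lambda_{f_i}[\tilde\psi_i] = \Lambda_{f_i}[\chi_i]$ on $\set{\chi_i = 0}$ yields the claim.

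The step I expect to be the main obstacle is the domain matching in the second paragraph: building the modified representatives $\tilde\psi_i$ on the single domain $\Tn$ while keeping $CH(\tilde\psi_i; \Tn)$ nonempty and, crucially, leaving the facet curvature values on $\set{\chi_i = 0}$ untouched. This hinges on the patching Lemma~\ref{le:patching} — used to splice the given Cahn--Hoffman field near the zero set onto a globally well-behaved one — together with the compactness of $\set{\chi_i = 0}$, which guarantees that its complement has a single unbounded component, on which the representative has constant sign, so that the reshaping is possible without changing the sign pattern. Once the facets sit on a common domain, the remaining steps (recovering $\theta_1 \circ \tilde\psi_1 \leq \theta_2 \circ \tilde\psi_2$ via Lemma~\ref{le:Lipschitz-ordering} and the chain-rule invariance of $\Lambda$, then invoking Proposition~\ref{pr:independence-min-div}) are routine; one only needs the additional, easy observation — again a consequence of the patching lemma — that the extension of $f_i$ off $K$ does not affect $\Lambda_{f_i}[\tilde\psi_i]$ on $\set{\chi_i = 0}$.
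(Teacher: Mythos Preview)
Your proposal is correct and follows essentially the same route the paper intends: the paper does not give a separate proof of Proposition~\ref{pr:lambda-comparison} but presents it as the facet-language restatement of Proposition~\ref{pr:independence-min-div}, and your reduction (move both representatives to a common domain via patching/locality, then invoke \ref{pr:independence-min-div}) is exactly the intended bridge. If anything, you are re-deriving steps already contained in the proof of \ref{pr:independence-min-div} itself (the periodic extension and the use of Lemma~\ref{le:Lipschitz-ordering}); once you have Lipschitz representatives $\psi_i$ with $CH(\psi_i; U) \neq \emptyset$ on a single common open $U$ containing both compact zero sets---which the locality result \cite[Prop.~4.10]{GP1} guarantees is harmless---Proposition~\ref{pr:independence-min-div} applies verbatim and you are done.
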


We will use $\sigma^\circ$-$(L^2)$ Cahn-Hoffman facets to build test functions for viscosity solutions of the crystalline mean curvature flow and so we need to make sure there are enough of them. In fact, any facet with bounded zero set can be approximated by $\sigma^\circ$-$(L^2)$ Cahn-Hoffman facets monotonically arbitrarily close in the Hausdorff distance.
The following theorem was proven in \cite{MGP1} for $\sigma$ the Euclidean norm, and in \cite{GP2} in full generality.
\begin{theorem}
  \label{th:facet-density}
  Let $\chi$ be an $n$-dimensional facet with $\set{\chi = 0}$ bounded and $\sigma$ an anisotropy.  Given $\rho > 0$ there exists a $\sigma^\circ$-$(L^2)$ Cahn-Hoffman facet $\tilde \chi$ such that $\chi(x) \leq \tilde \chi(x) \leq \sup_{\abs{x - y} \leq \rho}\chi(y)$ for $x \in \Rn$.
\end{theorem}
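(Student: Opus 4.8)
The plan is to keep the sign pattern of $\tilde\chi$ squeezed between that of $\chi$ and that of its $\rho$-sup-dilation, and to replace the zero set $\set{\chi=0}$ by a geometrically regular compact set $\tilde K$ that is sandwiched within Hausdorff distance $\rho$ of $\set{\chi=0}$ and that is built from Wulff shapes so that it carries an explicit Cahn--Hoffman vector field. Two observations make this clean and decouple the ``quantitative'' and ``regularity'' parts. First, being a $\sigma^\circ$-$(L^2)$ Cahn--Hoffman facet is a property of the equivalence class $[\tilde\chi]$: it only requires $\set{\tilde\chi=0}$ compact and the existence of \emph{one} Lipschitz representative admitting a Cahn--Hoffman field on a neighbourhood of its zero set. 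Second, we are completely free to choose the magnitude of $\tilde\chi$ off $\set{\tilde\chi=0}$. Hence it suffices to (i) choose the triple of sets $(\set{\tilde\chi<0},\tilde K,\set{\tilde\chi>0})$ so that the bounds can be realized by \emph{some} choice of values, and, independently, (ii) exhibit a single Lipschitz function with exactly that sign pattern that admits a Cahn--Hoffman field. After a harmless reduction (replace $\set{\chi=0}$ by its closure, absorbing a small enlargement into $\rho$, so that we may assume it compact; for continuous $\chi$ this stays inside $\set{\chi\le0}$) we carry out both steps.

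\textbf{Step (i): sign pattern and magnitude.} Write $L(x):=\sup_{\abs{x-y}\le\rho}\chi(y)$, so that $L\ge\chi$ pointwise, $\set{L>0}=\set{\chi>0}+\bar B_\rho$ and $\set{L\ge0}=\set{\chi\ge0}+\bar B_\rho$. Fix $r\in(0,\rho)$ and choose open sets $U_+\supset\set{\chi>0}$ and $U_-\supset \Rn\setminus\set{L\ge0}$ with $U_+\cap U_-=\emptyset$, $U_+\subset\set{L>0}$, $U_-\subset\set{\chi<0}$, and such that $\tilde K:=\Rn\setminus(U_+\cup U_-)$ is a \emph{bounded} closed set within Hausdorff distance $\rho$ of $\set{\chi=0}$; concretely one takes $U_+$ to be a small ($\ll\rho$) neighbourhood of $\set{\chi>0}$ together with the unbounded part of $\Rn$ far from $\set{\chi=0}$ on which $L>0$, and $U_-$ a slightly shrunken version of $\set{\chi<0}$. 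Then the assignment $\tilde\chi:=L$ on $U_+$, $\tilde\chi:=0$ on $\tilde K$, $\tilde\chi:=\chi$ on $U_-$ has sign pattern exactly $(U_-,\tilde K,U_+)$ and satisfies $\chi\le\tilde\chi\le L$ everywhere, because $L>0$ on $U_+$, $\chi\le0\le L$ on $\tilde K\subset\set{\chi\le0}\cap\set{L\ge0}$, and $\chi\le\chi\le L$ on $U_-$. (The sets $U_\pm$, $\tilde K$ are adjusted once more in Step (ii) at scale $\ll\rho$, which preserves all these inclusions.)

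\textbf{Step (ii): regularizing $\tilde K$ and building the field.} Here we replace $\tilde K$ by a set on which a Cahn--Hoffman field with $L^2$ divergence can actually be written down: a finite union of scaled and translated Wulff shapes when $\sigma$ is crystalline (respectively a set with $C^{1,1}$, or generic $C^\infty$, boundary when $\sigma=\abs{\cdot}$, which is the simpler case treated in \cite{MGP1}), arranged so that the union is $\sigma$-admissible --- every boundary facet has a normal among the facet normals of $W_\sigma$ and adjacent boundary facets are adjacent on $\partial W_\sigma$. Such a set is obtained by covering the bounded set $\set{\chi=0}+\bar B_{r/2}$ by finitely many Wulff shapes of $\sigma^\circ$-radius $\ll r$, keeping those needed to separate $U_+$ from $U_-$, and redefining $U_\pm$ and $\tilde K$ accordingly; all inclusions of Step (i) survive because the Wulff radii are $\ll\rho$. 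On this $\tilde K$ one constructs the field locally exactly as in Examples~\ref{ex:wulff-facet}--\ref{ex:convex-concave}: on each Wulff cell use the radial field $x\mapsto (x-c_i)/r_i$ in its interior and $x\mapsto (x-c_i)/\sigma^\circ(x-c_i)$ just outside, and then glue the pieces together using the patching Lemma~\ref{le:patching} and Corollary~\ref{co:patching-zero}. Admissibility of $\tilde K$ is precisely what makes the glued field admissible (the sets $\partial\sigma(\cdot)$ overlap along the interfaces between cells and along $\partial\tilde K$, leaving room to interpolate inside $W_\sigma$) and keeps $\divo z\in L^2$ by not creating a spurious jump set. Since any function with the resulting sign pattern is then a $\sigma^\circ$-$(L^2)$ Cahn--Hoffman facet, the $\tilde\chi$ of Step (i) (with this updated sign pattern) is the desired facet, and $\chi\le\tilde\chi\le L$ still holds.

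\textbf{Main obstacle.} The delicate part is entirely Step (ii): arranging the approximating Wulff cells so that their union $\tilde K$ is $\sigma$-admissible --- so that a globally defined Cahn--Hoffman field with $L^2$ divergence exists --- while remaining within distance $\rho$ of $\set{\chi=0}$, and then carrying out the patching across the many interfaces between ``interior'' and ``exterior'' pieces and near the (anisotropic) corners and edges of $\tilde K$, where one must interpolate continuously inside $W_\sigma$ between the facet-values prescribed on neighbouring faces. When $\sigma$ is the Euclidean norm one can instead smooth $\tilde K$ to a $C^{1,1}$ hypersurface and the construction is essentially classical; the crystalline case, where $\partial\sigma$ is genuinely set-valued and the combinatorics of $\tilde K$ must be matched to that of $W_\sigma$, is where the real work of \cite{GP2} lies.
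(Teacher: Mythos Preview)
The paper does not prove this theorem in-text; it is quoted from \cite{MGP1} (Euclidean $\sigma$) and \cite{GP2} (general $\sigma$), so there is no self-contained proof here to compare against. Your two-step decomposition --- (i) pin down the sign pattern of $\tilde\chi$ sandwiched between $\chi$ and its sup-dilation $L$, then (ii) regularize the zero set so that some Lipschitz representative carries a Cahn--Hoffman field --- is the correct architecture and is indeed how \cite{GP2} proceeds.

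That said, your Step~(ii) contains a genuine gap, and you essentially acknowledge it. The patching Lemma~\ref{le:patching} and Corollary~\ref{co:patching-zero} do \emph{not} glue local radial fields from different Wulff cells. Read the hypotheses: they take two functions $\psi_1,\psi_2$ that agree on the whole sublevel region $G=\{|\psi_1|<\delta\}$ and swap the field on $G$ for the one coming from $\psi_2$. This is a localization device --- it lets you change the field on a neighborhood of the entire zero set without affecting the rest --- not a mechanism for stitching together pieces $(x-c_i)/r_i$ defined on individual cells of a Wulff cover of $\tilde K$. Two such radial fields disagree on overlaps, and nothing in Lemma~\ref{le:patching} prevents the divergence from acquiring a singular part along the interfaces where you switch. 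Your appeal to ``$\sigma$-admissibility'' of the union (adjacent boundary facets adjacent on $\partial W_\sigma$) is a planar notion from Section~\ref{PF}; in dimension $n\ge3$ the junctions where three or more Wulff cells meet have much richer combinatorics, and it is not at all clear that a finite cover by translates of $W_\sigma$ can be arranged to avoid bad lower-dimensional strata while still $\rho$-approximating an arbitrary bounded $\{\chi=0\}$.

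So what you have is an outline with the hard step deferred to \cite{GP2}, which you say explicitly. The construction there does not proceed by gluing explicit cell-by-cell fields via Lemma~\ref{le:patching}; rather, one regularizes the sets $\{\chi\le0\}$ and $\{\chi\ge0\}$ by anisotropic opening/closing with small Wulff shapes to produce sets satisfying uniform interior and exterior $W_\sigma$-ball conditions, and then builds the Cahn--Hoffman field globally from the $\sigma^\circ$-distance structure, where the ball condition translates directly into an $L^\infty$ bound on the divergence near the boundary. If you want a self-contained argument, that is the route to flesh out.
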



\section{Approach by the theory of viscosity solutions}  \label{AV}

In this section we introduce a notion of \emph{viscosity} solutions for nonlinear partial differential equations that include the very singular term $\divo \nabla \sigma(\nabla u) - f$ that represents an anisotropic curvature with forcing.

For the definition of the anisotropic mean curvature we use the quantity $\Lambda_f$ that was introduced in Section~\ref{AO3}.
It is important to note that if $f$ depends on $x$, the term $\divo \nabla \sigma(\nabla u) - f$ must be carefully defined together and $f$ cannot be added separately. Heuristically, the anisotropic mean curvature flow prefers flat facets in the singular directions of $\sigma$ even in the presence of nonuniform forcing, and so the full quantity $\divo \nabla \sigma(\nabla u) - f$ should be constant on facets. If we considered the forcing $f$ separately in the definition of a viscosity solution, the comparison principle would still be valid however we would have a problem with stability in the approximation by regularized problems and ultimately we could not establish existence of solutions.
For a counterexample to existence see \cite[Sec.~6]{GP3}.

\subsection{Definition of viscosity solutions}

If $\sigma \in C^2(\Rn \setminus \set0)$, it only has a singularity at $p = 0$ and we have everything we need to define the viscosity solution for \eqref{pde}. The following is the notion of the viscosity solution introduced in \cite{MGP1,MGP2} assuming that $F$ does not depend on $x$ and $t$ and there is no forcing term.

\begin{definition}
\label{de:visc-sol-tv-flow}
An upper semicontinuous function $u$ on $\Rn \times (0,\infty)$  is a \emph{viscosity subsolution} of 
\begin{align}
\label{pde-restricted}
u_t + F(\nabla u, \divo \nabla \sigma(\nabla u)) = 0
\end{align}
if the following two conditions hold:
\begin{itemize}
\item[(i)] (conventional test) If $\varphi \in C^2$ near $(\hat x, \hat t)$, $\nabla \varphi(\hat x, \hat t) \neq 0$ and $u - \varphi$ has a local maximum at $(\hat x, \hat t)$, then
\begin{align}
\label{restricted-visc-subs}
\varphi_t(\hat x, \hat t) + F(\nabla \varphi(\hat x, \hat t), \divo \nabla \sigma(\nabla \varphi) (\hat x, \hat t)) \leq 0.
\end{align}
\item[(ii)] (faceted test) If $\varphi(x,t) = \psi(x) + g(t)$ with $g \in C^1(\R)$ and $\psi \in Lip(\Rn)$ so that $[\psi]$ is a $\sigma^\circ$-$(L^2)$ Cahn-Hoffman facet, $\hat x \in \interior \set{\psi = 0}$, $u - \varphi(\cdot - h)$ has a global maximum at $(\hat x, \hat t)$ for all $|h|$ small, then there exists $\delta > 0$ such that
\begin{align}
\label{tv-sing-subsol}
g'(\hat t) + F(0, \essinf_{B_{\delta}(\hat x)}\Lambda_0[\psi]) \leq 0.
\end{align}
\end{itemize}

A lower semi-continuous function is a \emph{viscosity supersolution} if it satisfies the above two conditions with maximum, $\leq$ and $\essinf\Lambda$ replaced by minimum, $\geq$ and $\esssup\Lambda$, respectively.
\end{definition}

Let us remark that in \cite{MGP1,MGP2} the facet test was restricted to test functions where the facet $[\psi]$ has a smooth boundary. However, this is not essential as was observed in later papers.

As you can see, we need to reduce the class of test functions testing at points where $\nabla u = 0$ to be even able to define a reasonable value of $\divo \nabla \sigma(\nabla \varphi)$. 

To include a forcing term $f$ that depends on the $x$ variable, we can follow \cite{GP3} to modify the above definition.
We introduce
\begin{align*}
\underline\Lambda_f[\xi](x) := \lim_{\delta \to 0+} \essinf_{B_\delta(x)} \Lambda_f[\xi], \qquad
\overline\Lambda_f[\xi](x) := \lim_{\delta \to 0+} \esssup_{B_\delta(x)} \Lambda_f[\xi],
\end{align*}
on the interior of $\set{\xi = 0}$, which are well-defined and finite by the comparison principle with Wulff facets in Example~\ref{ex:wulff-facet} as long as $f$ is locally bounded. In fact, in this case $\underline\Lambda_f[\xi]$ is lower semi-continuous while $\overline\Lambda_f[\xi]$ is upper semi-continuous.

Then we can define a viscosity subsolution of the PDE
\begin{align}
\label{pde}
u_t + F(x, t, \nabla u, \divo \nabla \sigma(\nabla u) - f) = 0
\end{align}
following the above definition, but replacing \eqref{restricted-visc-subs} with
\begin{align*}
\varphi_t(\hat x, \hat t) + F(\hat x, \hat t, \nabla \varphi(\hat x, \hat t), \divo \nabla \sigma(\nabla \varphi) (\hat x, \hat t) - f(\hat x, \hat t)) \leq 0.
\end{align*}
and \eqref{tv-sing-subsol} with
\begin{align}
\label{tv-sing-subsol-f}
g'(\hat t) + F(\hat x, \hat t, 0, \underline\Lambda_f[\psi](\hat x)) \leq 0.
\end{align}

This latter condition is slightly weaker than \eqref{tv-sing-subsol} used in \cite{MGP1,MGP2}, and allows for the proof of stability to handle non-constant driving force $f$. 

If the anisotropy $\sigma$ has singularities other than at $p = 0$, the faceted test has to be extended to those gradients of the solution. However, the singular set of $\sigma$ might be in general very complicated and it is not clear how to define a viscosity solution for a general convex anisotropy $\sigma$ (or a convex function $\sigma$) except in one dimension. 

Therefore we restrict our attention to crystalline anisotropies: $\sigma$ is called crystalline if it is a maximum of a finite number of linear functions. In this case, the structure of singularities of $\sigma$ is relatively simple. The ``kind'' of singularity is determined by the dimension of the subdifferential $\partial \sigma(p)$, which corresponds to the expected dimension of the facet in the direction $p$. We introduce the following orthogonal decomposition of the space $\Rn$. For a fixed gradient $\hat p \in \Rn$, define $Z$ to be the linear subspace of $\Rn$ parallel to the affine hull of $\partial \sigma(\hat p)$, see Figure~\ref{fig:slicing}.
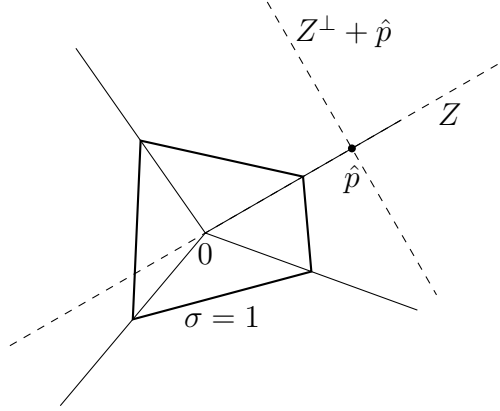
\begin{figure}
\centering
\begin{tikzpicture}[scale=1.5]
\draw[thick] (30:1) -- (125:1) --  (230:1) -- node[below] {$\sigma = 1$}(340:1) -- cycle;
\draw (0,0) -- (30:2) (0,0) -- (125:2) (0,0) --  (230:2)  (0,0) -- (340:2);
\draw[dashed] (30:1.5)   ++ (120:1.5) -- node[right,pos=0.1] {$Z^\perp + \hat p$} ++ (120:-3);
\draw[dashed] (30:-2) -- node[pos=0.9,below] {$Z$} (30:3);
\fill (30:1.5) circle[radius=1pt] node[below=2pt] {$\hat p$};
\path (0,0) node[below] {0};
\end{tikzpicture}
\caption{An illustration of the orthogonal decomposition $\R^2 = Z \oplus Z^\perp$ at $\hat p$ that lies on a ``one-dimensional'' singularity of $\sigma$. The thick polygon is the boundary $\partial F_\sigma = \set{\sigma = 1}$, and the solid rays from the origin indicate where $\dim \partial \sigma = 1$.}
\label{fig:slicing}
\end{figure}
In other words, $Z$ is the smallest linear subspace such that $\partial \sigma(\hat p) \subset Z + \xi$ for some $\xi \in \Rn$. Set $k := \dim \partial \sigma(\hat p) := \dim Z$. We have an orthogonal decomposition $\Rn = Z \oplus Z^\perp$. We fix orthonormal bases of $Z$, $Z^\perp$ which give two linear isometries $\TT: \R^k \to Z$ and $\TT_\perp : \R^{n-k} \to Z^\perp$. This allows us to write any $x \in \Rn$ uniquely as $x = \TT x' + \TT_\perp x''$ for some $x' \in \R^k$ and $x'' \in \R^{n-k}$. For $k = 0$, $k = n$ we take $x = x''$ and $x = x'$, respectively. If we denote the adjoint of $\TT$ as $\TT^*$ and of $\TT_\perp$ as $\TT_\perp^*$, we have $x' = \TT^* x$ and $x'' = \TT^*_\perp x$.

Using the above decomposition, we can ``slice'' the anisotropy $\sigma$ to extract only the part that contains the singularity by introducing
\begin{align*}
\sigma_{\hat p}^{\rm sl}(w) := \lim_{\lambda \to 0+} \frac{\sigma(\hat p + \lambda \TT w) - \sigma(\hat p)}\lambda, \qquad w \in \R^k.
\end{align*}
This sliced function is again positively one-homogeneous and so we can introduce a curvature-like quantity $\Lambda_{\hat p, f}[\psi]$ for $(\sigma_{\hat p}^{\rm sl})^\circ$-$(L^2)$ Cahn--Hoffman facets $[\psi]$ on $\R^k$ and $f \in \R^k$.

Let us give the definition of viscosity solution assuming that $f \equiv 0$ that appeared in \cite[Def.~4.7]{GP2} with $F$ independent of $x$ and $t$ and $\sigma$ purely crystalline.

\begin{definition}
\label{de:visc-sol-crystalline}
An upper semicontinuous function $u$ on $\Rn \times (0,\infty)$  is a \emph{viscosity subsolution} of 
\begin{align}
\label{pde-no-f}
u_t + F(\nabla u, \divo \nabla \sigma(\nabla u)) = 0
\end{align}
if whenever $\hat p \in \Rn$, $\hat x \in \Rn$, $\hat t \in (0, T)$ and $\varphi$ is \emph{stratified} test function $\varphi(x,t) = \psi(x') + \theta(x'') + \hat p \cdot x + g(t)$ with $g \in C^1(\R)$, $\theta \in C^1(\R^{n-k})$ satisfying $\nabla \theta(\hat x'') = 0$, and $\psi \in Lip(\R^k)$ so that $[\psi]$ is a $(\sigma_{\hat p}^{\rm sl})^\circ$-$(L^2)$ Cahn-Hoffman facet, $\hat x' \in \interior \set{\psi = 0}$, and $u - \varphi(\cdot - h)$ has a global maximum at $(\hat x, \hat t)$ for all $h = \TT h'$ with $h'$ small, then 
\begin{align}
\label{crystalline-visc-condition}
g'(\hat t) + F(\hat p, \essinf_{B_\delta(\hat x)} \Lambda_{\hat p,  0}[\psi]) \leq 0.
\end{align}

A lower semi-continuous function is a \emph{viscosity supersolution} if it satisfies the above two conditions with maximum, $\leq$ and $\essinf$ replaced by minimum, $\geq$ and $\esssup$, respectively.
\end{definition}

When a forcing $f$ that depends on the $x$ variable is involved, the condition \eqref{crystalline-visc-condition} has to be weakened as in \eqref{tv-sing-subsol-f}, replacing $\essinf \Lambda_{\hat p, 0}$ by $\underline\Lambda_{\hat p, \hat f}$, $\hat f(w) = f(\hat x + \TT w)$, for the stability with respect to an approximation by regularized problems to hold. See \cite[Def.~2.7]{GP3} for more details.

\subsection{Comparison principle}
\label{se:comparison}

In this section we review the comparison principle for the PDE \eqref{pde}. There are a few different versions available depending on the assumptions on $F$, $\sigma$ and $f$. 

Let us first suppose that $f \equiv 0$.
The comparison theorem was first proved in \cite{MGP1,MGP2} in the setting of an anisotropic total variation flow with smooth anisotropy $\sigma \in C^2(\Rn \setminus \set0)$ with $\sigma^2$ strictly convex and $F$ independent of the $x$ and $t$ variables in the sense of Definition~\ref{de:visc-sol-tv-flow} on the torus $\Tn = \Rn / \Z^n$.
We follow \cite[Th.~4.1]{MGP2}.

\begin{theorem}
Let $\sigma \in C^2(\Rn \setminus \set0$ be an anisotropy and $F\in C(\Rn \times \R)$ be nonincreasing in the second variable.
Let $u$ and $v$ be respectively a bounded viscosity subsolution and a viscosity supersolution of \eqref{pde-no-f} in the sense of Definition~\ref{de:visc-sol-tv-flow} on $\Tn \times [0, T]$. If $u \leq v$ at $t = 0$ then $u \leq v$ on $\Tn \times (0, T)$.
\end{theorem}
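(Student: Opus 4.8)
The plan is a proof by contradiction based on the doubling-of-variables method, with the faceted test functions of Definition~\ref{de:visc-sol-tv-flow} taking over at the contact points where the gradient degenerates. First I would reduce to a strict subsolution: assuming $M := \sup_{\Tn \times [0,T]}(u-v) > 0$, replace $u$ by $u_\lambda(x,t) := u(x,t) - \lambda/(T-t)$ for small $\lambda > 0$; then $\sup(u_\lambda - v) > 0$ is still positive and, since $u \leq v$ at $t = 0$, is attained at some $t^* \in (0,T)$ by compactness of $\Tn$ and semicontinuity, while $u_\lambda$ is a strict subsolution in the sense that the right-hand sides of \eqref{restricted-visc-subs} and \eqref{tv-sing-subsol} may be replaced by $-\eta$ with $\eta := \lambda/T^2 > 0$. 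Then I would double the variables: for $\e > 0$ maximize $\Phi_\e(x,y,t) := u_\lambda(x,t) - v(y,t) - |x-y|^2/(2\e)$ over $\Tn \times \Tn \times [0,T]$ at a point $(x_\e, y_\e, t_\e)$, so that along a subsequence $|x_\e - y_\e|^2/\e \to 0$, $\Phi_\e(x_\e,y_\e,t_\e) \to M$, and $t_\e \to t^* \in (0,T)$.

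If $x_\e \neq y_\e$ along a subsequence, then $p_\e := (x_\e - y_\e)/\e \neq 0$ and the penalization is smooth with nonvanishing gradient near the contact, so the conventional test~(i) applies. By the parabolic theorem on sums (Crandall--Ishii) one obtains $\tau_\e \in \R$ and symmetric matrices $X_\e \leq Y_\e$ with $(\tau_\e, p_\e, X_\e)$ a parabolic superjet of $u_\lambda$ at $(x_\e, t_\e)$ and $(\tau_\e, p_\e, Y_\e)$ a parabolic subjet of $v$ at $(y_\e, t_\e)$, the time slopes agreeing because the penalization is $t$-independent. Since $\divo \nabla \sigma(\nabla \varphi) = \trace[\nabla^2\sigma(\nabla\varphi)\,\nabla^2\varphi]$, the strict sub- and supersolution inequalities give
\[
\tau_\e + F(p_\e, \trace[\nabla^2\sigma(p_\e)X_\e]) \leq -\eta \qquad\text{and}\qquad \tau_\e + F(p_\e, \trace[\nabla^2\sigma(p_\e)Y_\e]) \geq 0 .
\]
Because $\nabla^2\sigma(p_\e) \geq 0$ by convexity of $\sigma$ and $X_\e \leq Y_\e$, we have $\trace[\nabla^2\sigma(p_\e)X_\e] \leq \trace[\nabla^2\sigma(p_\e)Y_\e]$, and since $F$ is nonincreasing in the second variable the first $F$-term is $\geq$ the second; subtracting the two inequalities yields $0 \leq -\eta$, a contradiction.

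The substantive case is $x_\e = y_\e =: \hat x_\e$ for all small $\e$. Here the frozen penalization is smooth but critical at $\hat x_\e$, the conventional test is unavailable, and the singularity of $\sigma$ at the origin genuinely enters: $u_\lambda$ is touched from above and $v$ from below by translates of paraboloids, whose zero level sets are single points rather than facets. The task is to replace these by admissible faceted test functions $\varphi_u = \psi_u + g$, $\varphi_v = \psi_v + g$ with $[\psi_u]$, $[\psi_v]$ being $\sigma^\circ$-$(L^2)$ Cahn--Hoffman facets, $\hat x \in \interior\set{\psi_u = 0} \cap \interior\set{\psi_v = 0}$ (where $\hat x = \lim \hat x_\e$), the same $g$, and the sign ordering $\sign \psi_u \leq \sign \psi_v$ encoding $u_\lambda - M \leq v$ at the contact. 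Producing such facets — approximating the possibly irregular contact facets from outside and inside by Cahn--Hoffman facets while preserving the ordering, and patching the resulting vector fields — is exactly the role of the facet-density Theorem~\ref{th:facet-density} and the patching Lemma~\ref{le:patching}. Once this is arranged, the faceted test gives, for some $\delta > 0$,
\[
g'(\hat t) + F\bigl(0,\, \essinf_{B_\delta(\hat x)}\Lambda_0[\psi_u]\bigr) \leq -\eta \qquad\text{and}\qquad g'(\hat t) + F\bigl(0,\, \esssup_{B_\delta(\hat x)}\Lambda_0[\psi_v]\bigr) \geq 0 ,
\]
while Proposition~\ref{pr:independence-min-div} yields $\Lambda_0[\psi_u] \leq \Lambda_0[\psi_v]$ a.e.\ on $\set{\psi_u = 0} \cap \set{\psi_v = 0}$; letting $\delta \to 0$ and using the monotonicity of $F$, we once more reach $0 \leq -\eta$, a contradiction. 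Therefore $M \leq 0$, i.e.\ $u \leq v$ on $\Tn \times (0,T)$.

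I expect the main obstacle to be precisely this faceted-contact case: a smooth penalization never supplies an admissible faceted test function, so the classical second-order theory cannot simply be quoted there. One must verify that the contact of $u_\lambda$ and $v$ at a degenerate gradient can be probed by genuine $\sigma^\circ$-$(L^2)$ Cahn--Hoffman facets with the correct sign ordering, and that the nonlocal quantity $\Lambda_0$ behaves monotonically under the necessary outer/inner approximation of the facets — this is where Theorem~\ref{th:facet-density}, Lemma~\ref{le:patching} and the comparison Proposition~\ref{pr:independence-min-div} carry the argument, and it is the step that makes the proof genuinely harder than the smooth-anisotropy curvature theory.
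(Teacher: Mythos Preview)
Your overall architecture is right, and the nondegenerate case via Crandall--Ishii is fine. The gap is in the degenerate case, and it is not a technicality: the faceted test~(ii) in Definition~\ref{de:visc-sol-tv-flow} requires that $u-\varphi(\cdot-h)$ have a global maximum at $(\hat x,\hat t)$ for \emph{all} small shifts $|h|$, not just $h=0$. Your doubling $\Phi_\e(x,y,t)=u_\lambda(x,t)-v(y,t)-|x-y|^2/(2\e)$, even when the maximizers satisfy $x_\e=y_\e$ for every small $\e$, tells you only that $u_\lambda-v$ has a maximum at a single point $(\hat x,\hat t)$. From this you cannot manufacture facets $\psi_u,\psi_v$ with $\hat x$ in the \emph{interior} of both zero sets, you cannot verify the shift-stability of the contact, and you cannot produce the pointwise ordering $\sign\psi_u\le\sign\psi_v$ with the ``room'' that Theorem~\ref{th:facet-density} needs to squeeze Cahn--Hoffman facets in between. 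All three obstructions have the same source: a single contact point gives no flat neighborhood to work with.

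The device the paper uses (illustrated in the stationary model Theorem~\ref{th:visc-comparison-principle}) is to add a shift parameter $\zeta$ to the penalization, $\Phi_\zeta(x,y)=u(x)-v(y)-|x-y-\zeta|^2/(2\e)$, and study $\ell(\zeta)=\sup\Phi_\zeta$. The dichotomy is then not ``$x_\e\ne y_\e$ versus $x_\e=y_\e$'' but rather ``there exists some $\zeta$ in a fixed ball with a maximizer whose gradient $(x-y-\zeta)/\e$ is nonzero'' versus ``for every $\zeta$ in the ball the gradient vanishes.'' In the second alternative the Constancy Lemma forces $\ell$ to be constant on the ball, which upgrades the single contact to the uniform ordering $u(x)-v(y)\le\ell(0)$ for all $|x-y|\le\lambda$. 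This $\lambda$-thickening is exactly what lets you define $\eta_u=\sign(u-u(\hat x))$, $\eta_v=\sign(v-v(\hat x))$, obtain $\sup_{B_{\lambda/2}(x)}\eta_u\le\inf_{B_{\lambda/2}(x)}\eta_v$, apply Theorem~\ref{th:facet-density} to slot ordered Cahn--Hoffman facets in between, and then check both the interior condition and the shift condition automatically. Without this step the faceted test simply cannot be invoked.
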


When $\sigma$ is crystalline while there is still no forcing, $f \equiv 0$, and $F$ does not depend on $x$ and $t$, the following comparison principle for viscosity solutions in the sense of Definition~\ref{se:comparison} was proved in \cite{GP1,GP2}. We follow the statement in \cite[Th.~1.4]{GP2}. Instead of a torus, the solutions are assumed to be constant outside of a ball.

\begin{theorem}
Let $\sigma$ be a crystalline anisotropy and $F\in C(\Rn \times \R)$ be nonincreasing in the second variable, $F(0, 0) = 0$. Suppose that $u$ is a viscosity subsolution and $v$ is a viscosity supersolution of \eqref{pde-no-f} on $\Rn \times (0, T)$ in the sense of Definition~\ref{de:visc-sol-crystalline} and that there are constants $R > 0$, $a \leq b$ such that $u = a$ and $v = b$ on $(\Rn \setminus B_R(0)) \times (0, T)$. Then if $u \leq v$ at $t = 0$, then $u \leq v$ on $\Rn \times (0, T)$.
\end{theorem}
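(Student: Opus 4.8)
The plan is to argue by contradiction with the doubling-of-variables method of Crandall--Ishii, the new ingredient being that contact points carrying a \emph{singular} gradient of $\sigma$ are handled not by a second-order test but by the comparison of abstract facets (Proposition~\ref{pr:lambda-comparison}) together with the density of Cahn--Hoffman facets (Theorem~\ref{th:facet-density}). First I would reduce to strict inequalities: replace $u$ by $u_\eta(x,t) := u(x,t) - \frac{\eta}{T-t}$, which, using $F(0,0) = 0$ and monotonicity of $F$ in the second variable, is a viscosity subsolution of $u_t + F(\nabla u, \divo\nabla\sigma(\nabla u)) = -\gamma$ for some $\gamma = \gamma(\eta) > 0$ on $\Rn \times (0, T')$ with $T' < T$; it then suffices to prove $u_\eta \le v$ and send $\eta \to 0$. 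Since $u = a \le b = v$ outside $B_R(0)$, $u, v$ are bounded, and $u_\eta \to -\infty$ as $t \to T$, the supremum $m := \sup(u_\eta - v)$ is attained at some $(x_0, t_0)$; the case $t_0 = 0$ is excluded by $u \le v$ at $t = 0$, so if $m > 0$ then $t_0 \in (0, T)$, and we seek a contradiction.

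Assume $m > 0$ and double the variables: for $\e > 0$ maximize
\[
\Phi_\e(x, y, t, s) := u_\eta(x, t) - v(y, s) - \frac{\abs{x - y}^2}{2\e} - \frac{\abs{t - s}^2}{2\e}
\]
over a fixed compact set (no spatial cutoff is needed since $u_\eta, v$ are constant outside $B_R(0)$); along a subsequence the maximizers obey $\hat x, \hat y \to x_0$, $\hat t, \hat s \to t_0$, $\frac{\abs{\hat x - \hat y}^2}{\e} \to 0$, $\frac{\abs{\hat t - \hat s}^2}{\e} \to 0$, $\Phi_\e \to m$, and the natural test functions are paraboloids with common gradient $\hat p := \frac{\hat x - \hat y}{\e}$ and time-slope $\tau := \frac{\hat t - \hat s}{\e}$. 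As $\sigma$ is crystalline there are only finitely many strata, so along a further subsequence $k := \dim\partial\sigma(\hat p)$ is constant. If $k = 0$, then $\hat p$ lies in a cone where $\sigma$ is linear, hence $\divo\nabla\sigma(\nabla\varphi) \equiv 0$ for the paraboloid tests; the conventional tests at the two contact points then give $\tau + F(\hat p, 0) \le -\gamma$ and $\tau + F(\hat p, 0) \ge 0$, an immediate contradiction.

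If $k \geq 1$ we are in the faceted regime, which is the heart of the argument. Using the orthogonal decomposition $\Rn = Z \oplus Z^\perp$ and the sliced anisotropy $\sigma_{\hat p}^{\rm sl}$ attached to $\hat p$, the transversal $Z^\perp$-variables and time are treated exactly as in the $k=0$ case (via $C^1$ functions $\theta_i(x'')$ with $\nabla\theta_i(\hat x'') = 0$), so the nonlocal difficulty is confined to the $k$-dimensional slice. The task is to build a $(\sigma_{\hat p}^{\rm sl})^\circ$-$(L^2)$ Cahn--Hoffman facet $[\psi_1]$ on $\R^k$ with $\hat x' \in \interior\set{\psi_1 = 0}$ and an associated stratified test function touching $u_\eta$ from above, and a facet $[\psi_2]$ touching $v$ from below, arranged so that $[\psi_2] \preceq [\psi_1]$; Theorem~\ref{th:facet-density} supplies enough such facets, obtained by approximating a small ball $\overline{B_\rho(\hat x')}$ playing the role of the contact set. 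Given such a pair, the faceted conditions \eqref{crystalline-visc-condition} read
\[
\tau + F\big(\hat p, \essinf_{B_\delta(\hat x')}\Lambda_{\hat p,0}[\psi_1]\big) \le -\gamma, \qquad \tau + F\big(\hat p, \esssup_{B_\delta(\hat x')}\Lambda_{\hat p,0}[\psi_2]\big) \ge 0,
\]
and subtracting them, using that $F$ is nonincreasing in the second variable and that $\Lambda_{\hat p,0}[\psi_1] \le \Lambda_{\hat p,0}[\psi_2]$ on the common facet by Proposition~\ref{pr:lambda-comparison}, yields $0 \le -\gamma$, a contradiction.

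The main obstacle is exactly the construction just sketched. At a doubled-variable contact point the natural test function is a strictly convex paraboloid whose zero set is a single point --- it is \emph{not} faceted --- so when $\hat p$ is a singular gradient neither the conventional test (which would need $\divo\nabla\sigma(\nabla\varphi)$ to be defined) nor the faceted test applies directly, and one must replace the paraboloid by a genuinely faceted Cahn--Hoffman test function while preserving the one-sided contact and the ordering $[\psi_2] \preceq [\psi_1]$. Making this work requires showing that $u_\eta$ (resp.\ $v$) is sufficiently flat near $\hat x$ in the $Z$-directions to be touched by such a facet, which forces a careful choice of test functions and a delicate interplay between the doubling parameter $\e$ and the facet-approximation parameter $\rho$; controlling the semicontinuous envelopes $\underline{\Lambda}_{\hat p,0}$, $\overline{\Lambda}_{\hat p,0}$ of the curvature quantity through this process and keeping the contact point inside the interior of the facet are the remaining delicate points. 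By contrast the $k = 0$ case and the $Z^\perp$/time directions are routine Crandall--Ishii theory.
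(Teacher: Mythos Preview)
Your overall strategy is right, and you correctly isolate the crux: at a singular gradient $\hat p$ the paraboloid contact from standard doubling is a single point, while Definition~\ref{de:visc-sol-crystalline} demands that $u - \varphi(\cdot - h)$ have a maximum at $(\hat x,\hat t)$ for \emph{all} small shifts $h \in Z$. This forces genuine flatness of $u$ (and of $v$) over a $Z$-neighborhood of the contact point, and your proposal supplies no mechanism to obtain it --- the phrase ``a delicate interplay between $\e$ and $\rho$'' is exactly where the missing idea should go. Approximating a single contact point by shrinking balls $\overline{B_\rho(\hat x')}$ cannot manufacture this room; as $\rho \to 0$ you lose the shift-stability, while for fixed $\rho>0$ nothing guarantees that the paraboloid-touched $u_\eta$ stays below a test function flat on $B_\rho$.

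The paper's device is to insert a shift parameter $\zeta$ into the doubling penalty, $\Phi_\zeta(x,y) = u(x) - v(y) - \tfrac{|x-y-\zeta|^2}{2\e}$, and to study $\ell(\zeta) = \sup_{x,y}\Phi_\zeta$. One then splits according to whether the gradient $\frac{\hat x-\hat y-\zeta}{\e}$ can be made nonsingular for some small $\zeta$. If so, one is thrown back to the regular case. If not, a short \emph{Constancy lemma} (originally \cite[Lemma~7.5]{GG1}) shows that $\ell$ is constant on a ball of positive radius $\lambda$; this yields $u(x)-v(y)\le u(\hat x)-v(\hat x)$ whenever $|x-y|\le\lambda$, which is precisely the ordering $\sup_{\overline{B_{\lambda/2}(x)}}\eta_u \le \inf_{\overline{B_{\lambda/2}(x)}}\eta_v$ for the sign functions $\eta_u=\sign(u-u(\hat x))$, $\eta_v=\sign(v-v(\hat x))$. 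Now Theorem~\ref{th:facet-density} produces ordered Cahn--Hoffman facets $\chi_u\le\chi_v$ both vanishing on $B_{\lambda/4}(\hat x)$, and after rescaling their Lipschitz representatives one has $u\le\varphi_u(\cdot-h)+u(\hat x)$ for all $|h|<\lambda/8$: the shift-stability required by the faceted test comes for free from the $\lambda$-room won by the Constancy lemma. This is the ingredient your plan is missing.
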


If we consider a forcing term $f$ that depends on $x$ and $t$ and solutions of \eqref{pde}, an analogous comparison principle was proven in \cite[Th.~3.1]{GP3}. However, in this case at least one of $u$ or $v$ must be continuous, and either it is also Lipschitz, or further regularity of $F$ must be assumed. For details see \cite{GP3}.

For simplicity of exposition, we present here the proof of a comparison theorem in a simplified setting of a stationary problem. We consider the stationary equation with $\sigma(p) = |p|$, $\nabla \sigma(p) = \frac p{|p|}$, that reads
\begin{align}
\label{stationary-problem}
u - \divo \pth{\frac{\nabla u}{|\nabla u|}} = f,
\end{align}
where $f \in C(\Rn)$ is given.
This equation is of elliptic type.
The definition of viscosity solution is naturally modified to the following.

An upper semicontinuous function $u$ is a viscosity subsolution of \eqref{stationary-problem} if
\begin{itemize}
\item If $\varphi \in C^2$ near $\hat x$, $\nabla \varphi(\hat x) \neq 0$ and $u - \varphi$ has a local maximum at $\hat x$, then
\begin{align*}
u(\hat x) - \divo \pth{\frac{\nabla \varphi}{|\nabla\varphi|}} (\hat x) \leq f(\hat x).
\end{align*}
\item If $\varphi \in Lip$ so that $[\varphi]$ is a $\sigma^\circ$-$(L^2)$ Cahn-Hoffman facet, $\hat x \in \interior \set{\varphi = 0}$, $u - \varphi(\cdot - h)$ has a global maximum at $\hat x$ for all $|h|$ small, then
\begin{align*}
u(\hat x) - \underline\Lambda_f[\varphi](\hat x) \leq 0.
\end{align*}
\end{itemize}
A definition of a supersolution and a solution can be modified analogously.

Recall that
\begin{align*}
\divo \pth{\frac{\nabla \varphi}{|\nabla\varphi|}} =
\frac1{|\nabla\varphi|} \trace \bra{\pth{I - \frac{\nabla\varphi \otimes \nabla\varphi}{|\nabla\varphi|^2}} \nabla^2 \varphi}.
\end{align*}

\begin{remark}
Note that the proof in this section applies with a small modification to general problems
\begin{align*}
F\pth{u, \nabla u, \divo \pth{\tfrac{\nabla u}{|\nabla u|}}} = 0,
\end{align*}
where $F: \R \times \Rn \times \R \to \R$ is a continuous function, satisfying monotonicity
\begin{align*}
F(r, p, \xi) &\leq F(r, p, \eta) \qquad \text{for $\xi \geq \eta$},\\
F(r, p, \xi) &\leq F(s, p, \xi) - \mu (s - r) \qquad \text{for $r < s$},
\end{align*}
where $\mu > 0$ is a constant.
We write the proof for $F(r, p, \xi) := r - \xi$.
\end{remark}

\begin{theorem}
\label{th:visc-comparison-principle}
Suppose that $u$ and $v$ are a viscosity subsolution and a viscosity supersolution on $\Rn$, respectively. Furthermore, assume that $u$ and $v$ are bounded.
If there exist constants $R > 0$ and $a \leq b$ such that $u \equiv a$ and $v \equiv b$ on $\Rn \setminus B_R(0)$ then $u \leq v$ everywhere.
\end{theorem}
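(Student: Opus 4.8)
The plan is to use the doubling-of-variables technique adapted to the singular/nonlocal setting, following the strategy of \cite{GP1,GP2} but in the simpler stationary, isotropic ($\sigma(p)=|p|$) case. Suppose for contradiction that $M := \sup_{\Rn}(u - v) > 0$. Since $u \equiv a \le b \equiv v$ outside $B_R(0)$, the supremum is attained at some point in $\cl{B_R(0)}$, and in particular $M = \sup_{\cl{B_R}}(u-v)$. First I would reduce to the case where the maximum is attained at a point where one can test: the standard obstacle is that at a maximum point of $u - v$ the common ``gradient'' may vanish, and then neither the conventional $C^2$ test nor the faceted test may apply directly to a single point. The key idea, as in the level-set theory for crystalline flow, is to \emph{localize the faceted test}: if the maximum is attained on a set where $u$ and $v$ are both ``flat'' (locally constant after subtracting a linear function), one builds a Cahn--Hoffman facet test function matching that flat set, using the facet density Theorem~\ref{th:facet-density} to approximate the (possibly irregular) maximum set by a $\sigma^\circ$-$(L^2)$ Cahn--Hoffman facet from outside.

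The core of the argument proceeds as follows. Consider the doubled function $\Phi_\e(x,y) = u(x) - v(y) - \frac{1}{2\e}|x-y|^2$ on $\cl{B_R} \times \cl{B_R}$, with maximizer $(x_\e, y_\e)$; standard arguments give $\frac{1}{\e}|x_\e - y_\e|^2 \to 0$ and $\Phi_\e(x_\e,y_\e) \to M$. If along a subsequence $x_\e \ne y_\e$, then $\varphi(x) = v(y_\e) + \frac{1}{2\e}|x-y_\e|^2$ is a $C^2$ test function for $u$ with $\nabla\varphi(x_\e) = \frac{x_\e-y_\e}{\e} \ne 0$, and symmetrically for $v$; subtracting the sub/supersolution inequalities and using that $\divo(\nabla\varphi/|\nabla\varphi|)$ evaluated at $x_\e$, $y_\e$ can be controlled (the second-derivative terms carry favorable signs from the maximum condition, exactly as in the classical comparison proof for $V=\kappa$ via the matrix inequality of the theorem on sums), one gets $u(x_\e) - v(y_\e) \le$ (small), forcing $M \le 0$, a contradiction. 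The genuinely new case is when $x_\e = y_\e =: \hat x$ for all small $\e$, i.e.\ the penalization does not separate the points. Then $u - v$ has a \emph{strict-type} local maximum at $\hat x$ with vanishing gradient, and one must use the faceted tests. Here I would argue: the set $F = \set{x : u(x) - v(x) = M}$ is compact in $B_R$; using Theorem~\ref{th:facet-density}, approximate $\one_F$-type data to produce a Cahn--Hoffman facet $[\psi]$ with $\set{\psi=0} \supset F$ and $\set{\psi = 0}$ arbitrarily close to $F$ in Hausdorff distance, translate so that appropriate sliding holds, and use $\psi + \text{const}$ as a faceted test function for $u$ from above and for $v$ from below. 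Applying the faceted sub/supersolution conditions at a common interior point $\hat x \in \interior\set{\psi=0}$ gives $u(\hat x) \le \underline\Lambda_f[\psi](\hat x)$ and $v(\hat x) \ge \overline\Lambda_f[\psi](\hat x)$; since $\underline\Lambda_f \le \overline\Lambda_f$, subtracting yields $u(\hat x) - v(\hat x) \le 0$, i.e.\ $M \le 0$, contradiction.

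The main obstacle I anticipate is precisely the construction of a single faceted test function that simultaneously works from above for $u$ and from below for $v$: the faceted test in Definition~\ref{de:visc-sol-tv-flow}(ii) requires that $u - \varphi(\cdot - h)$ have a \emph{global} maximum for all small $h$ (a ``flat'' profile adapted to a facet), and one must verify the sliding condition holds for both $u$ and $v$ with compatible facets. The resolution, as in \cite{GP1,GP2}, is to first modify $u$ and $v$ away from $F$ (sup-/inf-convolution or adding small bumps) so that the maximum of $u - v$ is attained exactly on a set with the right geometry, then invoke Theorem~\ref{th:facet-density} to get a Cahn--Hoffman facet squeezed between $F$ and a small neighborhood, and finally use the comparison Proposition~\ref{pr:lambda-comparison} (monotonicity of $\Lambda_f$ in the facet and in $f$) to absorb the discrepancy between the facet $\set{\psi = 0}$ and the true coincidence set $F$. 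A secondary technical point is handling the case where the coincidence set touches $\set{\nabla u \ne 0}$ or where $u$ (resp.\ $v$) fails to be flat on $F$, which is dealt with by the patching Lemma~\ref{le:patching} to localize $\divo z_{\min}$ near $\set{\psi = 0}$ and thereby reduce to the two cases above. Once these are in place the contradiction $M \le 0$ closes the argument.
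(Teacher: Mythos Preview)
Your overall strategy is right in spirit, but there is a genuine gap in the faceted case that the paper resolves with a device you have not invoked. The difficulty you correctly flag---verifying the \emph{sliding} condition that $u-\varphi(\cdot-h)$ has a global maximum at $\hat x$ for \emph{all} small $h$---cannot be handled just by approximating the coincidence set $F=\{u-v=M\}$ with a Cahn--Hoffman facet. The set $F$ may well be a single point, and then no facet $[\psi]$ with $\hat x\in\interior\{\psi=0\}$ can serve as a test function for $u$ (or $v$) with the required translation stability: if $u$ has a ``sharp'' maximum at $\hat x$ in some directions, shifting $\varphi$ will move the contact point off $\hat x$. Your proposed fix of ``modifying $u$ and $v$ away from $F$'' does not create the needed wiggle room.

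What the paper does instead is to add a shift parameter $\zeta$ in the doubling, setting $\Phi_\zeta(x,y)=u(x)-v(y)-\tfrac{1}{2\e}|x-y-\zeta|^2$, and to study $\ell(\zeta)=\sup\Phi_\zeta$. The dichotomy is then: either the ``gradient'' $p=(x-y-\zeta)/\e$ is nonzero at some maximizer for some small $\e,\zeta$ (your Case~1, handled classically), or it vanishes for \emph{every} $|\zeta|\le\sqrt{m_0\e}$. In the latter case the \emph{Constancy Lemma} (Lemma~7.5 in \cite{GG1}, restated in the paper) forces $\ell$ to be constant on the ball $|\zeta|\le\lambda:=\sqrt{m_0\e}$, which yields the key inequality $u(x)-v(y)\le u(\hat x)-v(\hat x)$ for \emph{all} $|x-y|\le\lambda$. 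This is exactly the ``room'' needed: it gives $\sup_{\cl B_{\lambda/2}(x)}\operatorname{sign}(u-u(\hat x))\le\inf_{\cl B_{\lambda/2}(x)}\operatorname{sign}(v-v(\hat x))$, from which one builds \emph{two} ordered Cahn--Hoffman facets $[\chi_u]\preceq[\chi_v]$ (not a single one), each containing a $\lambda/4$-ball around $\hat x$ in its zero set. The sliding condition for the associated test functions $\varphi_u,\varphi_v$ then holds automatically for $|h|<\lambda/8$, and the contradiction is closed via the monotonicity $\underline\Lambda_f[\chi_u](\hat x)\le\overline\Lambda_f[\chi_v](\hat x)$ from Proposition~\ref{pr:lambda-comparison}. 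Without the shift parameter and the Constancy Lemma you have no mechanism to manufacture this positive-width flat region, and the argument does not close.
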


To show this theorem, we assume that the conclusion does not hold and
\begin{align*}
m_0 := \sup(u - v) = \max (u - v) > 0.
\end{align*}
To reach a contradiction, we double variables and for $\zeta \in \Rn$ we consider the function
\begin{align*}
\Phi_{\zeta}(x, y) = u(x) - v(y) - \frac{|x - y - \zeta|^2}{2\e}.
\end{align*}
We consider the maximum of $\Phi_\zeta$ as a function of $\zeta$, that is,
\begin{align*}
\ell(\zeta) = \sup_{x,y} \Phi_\zeta.
\end{align*}
It is convenient to introduce the set of points of maxima
\begin{align*}
\mathcal A(\zeta) := \argmax \Phi_\zeta := \set{(x, y) \mid \Phi_\zeta(x, y) = \ell(\zeta)}
\end{align*}
and the set of gradients of $\frac{|x - y - \zeta|^2}{2\e}$ at these points
\begin{align*}
\mathcal B(\zeta) := \set{\frac{x-y-\zeta}\e\mid (x, y) \in \mathcal A(\zeta)}.
\end{align*}


The parameter $\e > 0$ determines how much we penalize $x \neq y$. We have the following standard estimate on $|x - y|$; see \cite{GG1}. We give a proof for completeness.
\begin{lemma}
\label{le:doubling-penalty}
There is $C > 0$ such that for all $\e > 0$, $|\zeta| \leq \sqrt{m_0 \e}$ we have
\begin{align}
\label{xy-distance}
|x - y| \leq C\sqrt{\e} \qquad \text{for all $(x, y) \in \mathcal A(\zeta)$}.
\end{align}
Furthermore
\begin{align*}
\ell(\zeta) \geq \frac{m_0}2.
\end{align*}
\end{lemma}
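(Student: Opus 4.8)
The plan is to read off both assertions directly from the boundedness of $u$ and $v$ together with the fact that the maximum $m_0 = \max(u-v)$ is attained; no PDE information (sub/supersolution property) is needed for this lemma, only the structural hypotheses of Theorem~\ref{th:visc-comparison-principle}.

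First I would prove the lower bound on $\ell(\zeta)$. Choose $\hat z \in \Rn$ with $u(\hat z) - v(\hat z) = m_0$, which exists by assumption, and test $\Phi_\zeta$ at the diagonal point $x = y = \hat z$. This gives $\ell(\zeta) \geq \Phi_\zeta(\hat z, \hat z) = m_0 - |\zeta|^2/(2\e) \geq m_0/2$, the last step using the hypothesis $|\zeta| \leq \sqrt{m_0 \e}$. This is already the second claim, and it also feeds into the first.

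For the distance estimate I would take any $(x, y) \in \mathcal A(\zeta)$, so that $\Phi_\zeta(x, y) = \ell(\zeta) \geq m_0/2$, and rearrange to obtain $|x - y - \zeta|^2/(2\e) = u(x) - v(y) - \ell(\zeta) \leq M$, where $M := \sup u - \inf v - m_0/2 < \infty$ is a finite constant independent of $\e$ and $\zeta$ (this is where boundedness of $u$ and $v$ enters). Hence $|x - y - \zeta| \leq \sqrt{2M\e}$, and the triangle inequality combined with $|\zeta| \leq \sqrt{m_0\e}$ gives $|x - y| \leq (\sqrt{2M} + \sqrt{m_0})\sqrt\e$, so one may take $C := \sqrt{2M} + \sqrt{m_0}$, uniform in $\e$ and $\zeta$.

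The only point that needs a little care — though it is implicit in the notation $\mathcal A(\zeta)$ — is that the supremum defining $\ell(\zeta)$ is actually attained, so $\mathcal A(\zeta) \neq \emptyset$. For a maximizing sequence $(x_k, y_k)$ one first notes, as in the computation above, that $|x_k - y_k|$ stays bounded; if in addition $|x_k| \to \infty$ then also $|y_k| \to \infty$, so eventually $u(x_k) = a$ and $v(y_k) = b$, forcing $\Phi_\zeta(x_k, y_k) \leq a - b \leq 0 < m_0/2 \leq \ell(\zeta)$, a contradiction. Thus $(x_k)$ is bounded, and upper semicontinuity of $u$ and of $-v$ (hence of $\Phi_\zeta$) produces a maximizer. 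I do not expect any genuine obstacle: the whole statement is a soft consequence of boundedness and the localization hypothesis $u \equiv a$, $v \equiv b$ outside $B_R(0)$.
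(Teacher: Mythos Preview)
Your proof is correct and follows essentially the same route as the paper: test $\Phi_\zeta$ on the diagonal to get $\ell(\zeta)\geq m_0/2$, then bound the penalty term using boundedness of $u-v$ and apply the triangle inequality with $|\zeta|\leq\sqrt{m_0\e}$. The extra paragraph verifying $\mathcal A(\zeta)\neq\emptyset$ via the constancy of $u,v$ outside $B_R(0)$ is a useful addition that the paper leaves implicit.
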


\begin{proof}
First note that
\begin{align*}
\ell(\zeta) \geq \sup_x(u(x) - v(x)) - \frac{|\zeta|^2}{2\e} \geq m_0 - \frac{m_0}2 = \frac{m_0}2.
\end{align*}
Let $M$ be a bound on $u(x) - v(y)$. Then for $(x,y) \in \mathcal A(\zeta)$ we obtain
\begin{align*}
M - \frac{|x - y - \zeta|^2}{2\e} \geq u(x) - v(y) - \frac{|x - y - \zeta|^2}{2\e} = \ell(\zeta) \geq \frac{m_0}2 > 0.
\end{align*}
Hence
\begin{align*}
|x- y| \leq \sqrt{2M\e} + |\zeta| \leq \sqrt{2M\e} + \sqrt{m_0 \e}.
\end{align*}
\end{proof}

Let $\omega_f$ be the modulus of continuity of $f$, that is, $\omega: [0,\infty) \to [0, \infty)$, $\omega(0) = 0$, $\omega$ nondecreasing such that $|f(x) - f(y)| \leq \omega_f(|x - y|)$ for all $x, y$. Let $\e_0 > 0$ be such that $\omega_f(C\sqrt{\e_0}) < \frac{m_0}4$, where $C$ is from \eqref{xy-distance}.
We consider two possible cases:
\begin{itemize}
\item[1.] There exist $0 < \e < \e_0$ and $|\zeta| \leq \sqrt{m_0 \e}$ such that $\mathcal B(\zeta) \neq \set0$.
\item[2.] $\mathcal B(\zeta) = \set0$ for all $|\zeta| \leq \sqrt{m_0 \e}$, $0 < \e < \e_0$.
\end{itemize}

\medskip
\textbf{Case 1.} We can fix $\e$, $\zeta$ and $(x, y)\in \mathcal A(\zeta)$ such that $0 < \e < \e_0$, $|\zeta| \leq \sqrt{m_0 \e}$ and $p := \frac{x - y - \zeta}\e \neq 0$. Note that this choice together with Lemma~\ref{le:doubling-penalty} implies
\begin{align}
\label{fx-fy}
|f(x) - f(y)| \leq \omega_f(|x - y|) \leq \omega_f(C\sqrt{\e}) \leq \omega_f(C\sqrt{\e_0}) \leq \frac{m_0}4,
\end{align}
and
\begin{align*}
u(x) - v(y) \geq u(x) - v(y) - \frac{|x - y - \zeta|^2}{2\e} = \ell(\zeta) \geq \frac{m_0}2.
\end{align*}

Since the operator is smooth near $\nabla u \neq 0$, we are well within the classical viscosity solution framework for continuous operators.  In particular, we can use the now standard maximum principle for semicontinuous functions, see the exposition in \cite{CIL} or \cite{Giga_Book}.
Therefore there exist symmetric matrices $X, Y$ with $X \leq Y$, sequences $x_n \to x$, $y_n \to y$ and sequences of $C^2$ functions $\varphi_n$, $\psi_n$ such that $u - \varphi_n$ has a local maximum at $x_n$, $v - \psi_n$ has a local minimum at $y_n$, and
\begin{align*}
(x_n, u(x_n), \nabla\varphi_n(x_n), \nabla^2\varphi_n(x_n)) &\to (x, u(x), p, X),\\
(y_n, v(y_n), \nabla\psi_n(y_n), \nabla^2\psi_n(y_n)) &\to (y, u(y), p, Y).
\end{align*}
From the definition of viscosity solution we deduce
\begin{align*}
u - \frac1{|\nabla\varphi_n|} \trace \bra{\pth{I - \frac{\nabla\varphi_n \otimes \nabla\varphi_n}{|\nabla\varphi_n|^2}} \nabla^2 \varphi_n} \leq f\qquad \text{at $x_n$}\\
v - \frac1{|\nabla\psi_n|} \trace \bra{\pth{I - \frac{\nabla\psi_n \otimes \nabla\psi_n}{|\nabla\psi_n|^2}} \nabla^2 \psi_n} \geq f\qquad \text{at $y_n$}.
\end{align*}
In the limit $n\to \infty$, continuity and \eqref{fx-fy} yield
\begin{align*}
u(x) &- \frac1{|p|} \trace \bra{\pth{I - \frac{p \otimes p}{|p|^2}} X} \leq f(x) \\&\leq
f(y) + \frac{m_0}4  \leq v(y) - \frac1{|p|} \trace \bra{\pth{I - \frac{p \otimes p}{|p|^2}} Y} + \frac{m_0}4.
\end{align*}
On the other hand, $u(x) \geq v(y) + \frac{m_0}2$ and $X \leq Y$ imply
\begin{align*}
v(y) - \frac1{|p|} \trace \bra{\pth{I - \frac{p \otimes p}{|p|^2}} Y} + \frac{m_0}4  <
u(x) - \frac1{|p|} \trace \bra{\pth{I - \frac{p \otimes p}{|p|^2}} X}.
\end{align*}
We reach a contradiction.

\medskip
\textbf{Case 2.}
Since we cannot find any maximum of $\Phi_\zeta$ at which the ``gradient'' of $u$ nor $v$ is nonzero, we need to construct admissible faceted test functions for the faceted test in the definition of viscosity solution.

The extra parameter $|\zeta|$ provides a little bit of space to construct these faceted test functions. The following ``constancy'' lemma was proven in a more general settings in \cite[Lemma~7.5]{GG1}. We include the proof in our simple setting for completeness.

\begin{lemma}[Constancy]
Let $G \subset \Rn$ be a closed ball. If for all $\zeta \in G$ there exists $(x,y) \in \mathcal A(\zeta)$ such that $x - y - \zeta = 0$ then $\ell(\zeta)$ is constant on $G$.
\end{lemma}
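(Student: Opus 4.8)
The plan is to extract from the hypothesis a quadratic modulus of continuity for $\ell$ along line segments in $G$, and then to conclude by an elementary telescoping argument. No appeal to semiconvexity, Rademacher's theorem, or the maximum principle is needed.

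First I would record the basic inequality coming from the hypothesis. Fix $\zeta \in G$ and choose, as the hypothesis permits, a point $(x_\zeta, y_\zeta) \in \mathcal A(\zeta)$ with $x_\zeta - y_\zeta - \zeta = 0$. Since the penalty term vanishes at such a point,
\[
\ell(\zeta) = \Phi_\zeta(x_\zeta, y_\zeta) = u(x_\zeta) - v(y_\zeta).
\]
On the other hand $(x_\zeta, y_\zeta)$ is an admissible competitor in the supremum defining $\ell(\zeta')$ for \emph{every} $\zeta' \in \Rn$, so that
\[
\ell(\zeta') \ge \Phi_{\zeta'}(x_\zeta, y_\zeta) = u(x_\zeta) - v(y_\zeta) - \frac{|x_\zeta - y_\zeta - \zeta'|^2}{2\e} = \ell(\zeta) - \frac{|\zeta - \zeta'|^2}{2\e}
\]
for all $\zeta' \in \Rn$. (Equivalently, the downward paraboloid $\zeta' \mapsto \ell(\zeta) - |\zeta - \zeta'|^2/(2\e)$ touches $\ell$ from below at $\zeta' = \zeta$, but I will not need this language.)

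Next I would fix two arbitrary points $\zeta_0, \zeta_1 \in G$. Using that the closed ball $G$ is convex, the segment $\zeta_t := (1-t)\zeta_0 + t\zeta_1$ lies in $G$ for $t \in [0,1]$; set $\phi(t) := \ell(\zeta_t)$. Applying the displayed inequality with $\zeta = \zeta_t$ and $\zeta' = \zeta_s$, and then once more with the roles of $s$ and $t$ exchanged, gives the two-sided bound
\[
|\phi(s) - \phi(t)| \le \frac{|\zeta_s - \zeta_t|^2}{2\e} = \frac{|\zeta_1 - \zeta_0|^2}{2\e}\,|s - t|^2 \qquad \text{for all } s, t \in [0,1].
\]
Finally, telescoping over the uniform partition $t_k = k/N$ of $[0,1]$ yields $|\phi(1) - \phi(0)| \le \sum_{k=0}^{N-1} |\phi(t_{k+1}) - \phi(t_k)| \le |\zeta_1 - \zeta_0|^2/(2\e N)$, and letting $N \to \infty$ forces $\ell(\zeta_0) = \phi(0) = \phi(1) = \ell(\zeta_1)$. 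Since $\zeta_0, \zeta_1 \in G$ were arbitrary, $\ell$ is constant on $G$.

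The argument is short and has no real obstacle; the one point that deserves care is that the hypothesis must be invoked at \emph{both} endpoints of every pair $(s,t)$ — once at $\zeta_t$ tested against $\zeta_s$, once at $\zeta_s$ tested against $\zeta_t$ — so that the resulting estimate is a genuine quadratic modulus of continuity rather than merely a one-sided inequality; a one-sided bound alone would not be enough to run the telescoping.
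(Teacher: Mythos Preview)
Your proof is correct and follows essentially the same route as the paper: both derive the two-sided estimate $|\ell(\zeta)-\ell(\mu)|\le |\zeta-\mu|^2/(2\e)$ for all $\zeta,\mu\in G$ by testing the maximizer of $\Phi_\zeta$ in $\Phi_\mu$ and then swapping roles. The only difference is that you spell out the telescoping step over a partition of the segment, whereas the paper simply asserts the constancy conclusion from the quadratic modulus; your version is more explicit but not a genuinely different argument.
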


\begin{proof}
Take $\zeta, \mu \in G$ and $(x, y) \in \mathcal A(\zeta)$ with $x - y -\zeta = 0$. In particular, $\ell(\zeta) = u(x) - v(y)$.
From the definition of $\ell$,
\begin{align*}
\ell(\mu) \geq u(x) - v(y) - \frac{|x - y - \mu|^2}{2\e} = \ell(\zeta)- \frac{|x - y - \mu|^2}{2\e}.
\end{align*}
Since $x - y - \zeta = 0$, we have
\begin{align*}
|x - y - \mu|^2 = |x - y - \zeta + \zeta - \mu|^2 = |\zeta - \mu|^2,
\end{align*}
yielding
\begin{align*}
\ell(\mu) - \ell(\zeta) \geq - \frac{|\zeta - \mu|^2}{2\e},
\end{align*}
and, by symmetry,
\begin{align*}
|\ell(\mu) - \ell(\zeta)| \leq \frac{|\zeta - \mu|^2}{2\e} \qquad \text{for all } \zeta, \mu \in G.
\end{align*}
We conclude that $\ell(\zeta) = \ell(\mu)$ for all $\zeta, \mu \in G$.
\end{proof}

Since in Case 2 we have $\mathcal B(\zeta) = \set0$ for all $\zeta \in G := \cl B_{\sqrt{m_0 \e}}$, we can choose $(\hat x, \hat y) \in \mathcal A(0)$ with $\hat x - \hat y = 0$ and the above lemma yields
\begin{align}
\label{lambda-order}
u(x) - v(y) \leq \ell(x - y) = \ell(0) = u(\hat x) - v(\hat x) \qquad \text{for } |x - y| \leq \sqrt{m_0 \e}.
\end{align}
Let us set $\lambda = \sqrt{m_0 \e}$. Defining
\begin{align*}
\eta_u = \sign (u - u(\hat x)) \qquad
\eta_v = \sign (v - v(\hat x)),
\end{align*}
the inequality \eqref{lambda-order} yields
\begin{align*}
\sup_{\cl B_{\lambda/2}(x)} \eta_u \leq \inf_{\cl B_{\lambda/2}(x)} \eta_v \qquad \text{for all $x$}.
\end{align*}

By the density result Theorem~\ref{th:facet-density}, there are admissible facets $\chi_u$ and $\chi_v$ satisfying
\begin{align*}
\sup_{\cl B_{\lambda/4}(x)} \eta_u \leq \chi_u(x) \leq \sup_{\cl B_{\lambda/2}(x)} \eta_u \leq \inf_{\cl B_{\lambda/2}(x)} \eta_v(x) \leq \chi_v \leq  \inf_{\cl B_{\lambda/4}(x)} \eta_v.
\end{align*}
Clearly $\chi_u = \chi_v = 0$ on $B_{\lambda/4}(\hat x)$.
By the comparison principle for the curvature operator, Proposition~\ref{pr:lambda-comparison},
\begin{align*}
\essinf_{B_{\lambda/4}(\hat x)} \Lambda_f[\chi_u] \leq \esssup_{B_{\lambda/4}(\hat x)}\Lambda_f[\chi_v],
\end{align*}
which implies
\begin{align}
\label{curvature-order}
\underline\Lambda_f[\chi_u](\hat x) \leq \overline\Lambda_f[\chi_v](\hat x).
\end{align}
Let us choose an admissible support function $\varphi_u \in Lip \cap [\chi_u]$.
Since $u$ is bounded and upper semicontinuous, we can multiply the positive part of $\varphi_u$ by a large positive constant, and the negative part of $\varphi_u$ by a small positive constant, if necessary, to guarantee that
\begin{align*}
u \leq \varphi_u(\cdot - h) + u(\hat x) \qquad \text{for $|h| < \tfrac \lambda8$.}
\end{align*}
Note that the equality is attained at $\hat x$ as $\varphi_u = 0$ on $B_{\lambda/4}(\hat x)$.
Therefore $\varphi_u$ is an admissible faceted test function for the viscosity solution test and 
\begin{align*}
u(\hat x) - \underline\Lambda_f[\varphi_u](\hat x) \leq 0.
\end{align*}
Similarly, we can find $\varphi_v \in Lip \cap [\chi_v]$ with
\begin{align*}
v(\hat x) - \overline\Lambda_f[\varphi_v](\hat x) \geq 0.
\end{align*}
Thus, recalling \eqref{curvature-order}, we have 
\begin{align*}
u(\hat x) \leq \underline\Lambda_f[\varphi_u](\hat x) \leq
\overline\Lambda_f[\varphi_v] \leq v(\hat x) \leq u(\hat x) - m_0 < u(\hat x),
\end{align*}
a contradiction. This finishes the proof of Theorem~\ref{th:visc-comparison-principle}.

%

\subsection{Existence of solutions}

The existence of viscosity solutions is usually established using Perron's method: the largest subsolution of the problem is automatically a solution. However, the operation of taking a supremum of a class of viscosity solutions requires a stability property whose validity is unclear for the viscosity solutions considered here in dimensions $n \geq 2$.  In one dimension, Perron's method was used to construct viscosity solutions for \eqref{pde} in \cite{GGN}. This however requires a careful treatment of the nonlocal anisotropic curvature.

The main issue with the stability required for the supremum of subsolutions to be subsolution is the discontinuity of the value of $\Lambda_f[\psi]$ when a facet bends or breaks. In the standard proof of this stability, it is crucial to localize by replacing a test function $\varphi$ by another so that $u - \varphi$ can be assumed to have a \emph{strict} local maximum (or minimum). Due to the discontinuity of our operator $\Lambda_f$ with respect to such bending, this tool is not available.

The approach that was taken in \cite{MGP1,GP1} is via stability with respect to approximation by problems with regularized $\sigma$. 
In particular, we consider two ways of approximating crystalline $\sigma$:
\begin{itemize}
\item[(a)] $\sigma_m \in C^2(\Rn)$ with $a_m^{-1} I \leq \nabla^2 \sigma_m \leq a_mI$ for some $a_m > 0$, $\sigma_m$ is a decreasing sequence with $\sigma_m \to \sigma$ locally uniformly.
\item[(b)] $\sigma_m$ are anisotropies with $\sigma_m \in C^2(\Rn \setminus \set0)$ such that $\sigma_m^2$ is strictly convex and $\sigma_m \to \sigma$ locally uniformly.
\end{itemize}

However, for various reasons related to the regularity of the solutions of the approximating problems, we need to assume that $F$ does not depend on the $x$ variable, and consider solutions of the regularized problems
\begin{align}
\label{pde-m}
u_t + F(t, \divo \nabla \sigma_m(\nabla u) - f) = 0.
\end{align}
Since $\sigma_m$ are $C^2$ and convex, the classical theory of viscosity solutions applies, including the unique existence of solutions for given bounded continuous initial data.

We have the following stability result when approximating using (a), see \cite[Th.~4.1]{GP3} or \cite[Th.~8.1]{GP1}, which resembles the usual stability of viscosity solutions in the classical theory.
 Let $\limsup^*$ (resp.\ $\liminf_*$) denote the relaxed upper limit (lower) limit defined by 
\begin{align*}
	\textstyle(\limsup^* u_m)(x,t) &= \limsup_{m\to\infty} \left\{ u_k(y,s) \bigm| |x-y|+|t-s|+1/k<1/m \right\} \\
	\textstyle(\liminf_* u_m)(x,t) &= -\left(\limsup(-u_m)\right)(x,t).
\end{align*}
for a sequence of functions $\{u_m\}$ on $\mathbf{R}^n\times[0,\infty)$.

\begin{theorem}
\label{th:stability}
Let $\sigma$ be a crystalline anisotropy and assume that $F$ does not depend on the $x$ variable and $f \in C(\Rn \times \R)$ is Lipschitz continuous in space, uniformly in time. If $\set{u_m}$ is a locally bounded sequence of viscosity solutions of \eqref{pde-m} with $\sigma_m$ as in (a) above, then $\limsup^*_{m\to \infty} u_m$ is a viscosity subsolution of \eqref{pde}, and $\liminf_{*m\to\infty} u_m$ is a viscosity supersolution of \eqref{pde}.  
\end{theorem}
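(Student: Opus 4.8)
The plan is to run the standard ``relaxed limits'' stability argument for viscosity solutions, with the usual step of perturbing a test function to a strict maximum replaced by the regularization index $m$. Write $\bar u:=\limsup^* u_m$ and $\underline u:=\liminf_* u_m$; these are respectively upper and lower semicontinuous and locally bounded. We must show $\bar u$ is a viscosity subsolution of \eqref{pde} in the $f$-adapted sense of Definition~\ref{de:visc-sol-crystalline}; the claim for $\underline u$ is symmetric. Fix $\hat p\in\Rn$, put $k:=\dim\partial\sigma(\hat p)$, and let $\varphi(x,t)=\psi(x')+\theta(x'')+\hat p\cdot x+g(t)$ be a stratified test function ($g\in C^1(\R)$, $\theta\in C^1(\R^{n-k})$, $\nabla\theta(\hat x'')=0$, $[\psi]$ a $(\sigma^{\rm sl}_{\hat p})^\circ$-$(L^2)$ Cahn--Hoffman facet, $\hat x'\in\interior\set{\psi=0}$) such that $\bar u-\varphi(\cdot-\TT h')$ has a global maximum at $(\hat x,\hat t)$ for all small $h'$. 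Suppose, for contradiction, that $g'(\hat t)+F(\hat p,\underline\Lambda_{\hat p,\hat f}[\psi](\hat x'))=:\eta>0$, with $\hat f(w):=f(\hat x+\TT w,\hat t)$. Replacing $\theta$ by $\theta+\abs{x''-\hat x''}^4$ and $g$ by $g+(t-\hat t)^4$ changes neither the facet $[\psi]$, nor $g'(\hat t)$, nor the inequality, but makes the maximum strict in the $(x'',t)$-directions; the invariance under the facet translations $h'=\TT^* h$ keeps it ``flat'' along the facet directions $x'$.

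The heart of the proof is the construction, for every large $m$, of a \emph{smooth} test function for the regularized problem \eqref{pde-m}. Using the patching Lemma~\ref{le:patching}, the facet density Theorem~\ref{th:facet-density}, and the variational description of $\Lambda$ via the minimal section of $\partial\E$ and its stability under the regularization $\sigma\mapsto\sigma_m$ (cf.\ Section~\ref{AO1}), one produces, for a small $\rho>0$, a near-minimizing Cahn--Hoffman vector field $z$ for $\psi$ whose divergence is, near $\hat x'$, within $\eta$ of the minimal-section value, and then a $C^2$ function $\Psi_m$ on a neighborhood $N\supset\set{\psi=0}$ in $\R^k$ with $\TT\Psi_m\to\psi$ uniformly on $N$ (with the correct sign off the facet) and with $\nabla\sigma_m(\hat p+\TT\nabla\Psi_m)$ close to $z$ on $N$ --- this uses that uniform convexity and $C^2$ regularity of $\sigma_m$ make $\nabla\sigma_m$ a diffeomorphism whose image exhausts the interior of the sliced Wulff shape as $m\to\infty$. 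Set $\varphi_m(x,t):=\Psi_m(x')+\theta(x'')+\hat p\cdot x+g(t)$, which is $C^2$ near $(\hat x,\hat t)$ with $\nabla\varphi_m\to\hat p\ne0$ and
\[ \big(\divo\nabla\sigma_m(\nabla\varphi_m)-f\big)(x,t)\ \le\ \underline\Lambda_{\hat p,\hat f}[\psi](\hat x')+o(1)\qquad\text{for }(x,t)\text{ near }(\hat x,\hat t). \]

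Now pass to the limit. The strict maximum in $(x'',t)$, the flatness along the facet, and $\varphi_m\to\varphi$ force $u_m-\varphi_m$ to have a local maximum at some $(\hat x_m,\hat t_m)\to(\hat x,\hat t)$ with $u_m(\hat x_m,\hat t_m)\to\bar u(\hat x,\hat t)$. Since $\sigma_m\in C^2$ the ordinary viscosity subsolution inequality holds for $u_m$ at $(\hat x_m,\hat t_m)$:
\[ g'(\hat t_m)+F\Big(\hat x_m,\hat t_m,\nabla\varphi_m(\hat x_m,\hat t_m),\big(\divo\nabla\sigma_m(\nabla\varphi_m)-f\big)(\hat x_m,\hat t_m)\Big)\le0. \]
Combining the displayed curvature bound with the monotonicity of $F$ in its last argument, the continuity of $F$ and $f$, the $x$-independence of $F$, and the spatial Lipschitz bound on $f$ (uniform in $t$) --- the last to control $\hat x_m\to\hat x$ uniformly in $m$ --- and letting $m\to\infty$ and then $\rho\to0$, we obtain $g'(\hat t)+F(\hat p,\underline\Lambda_{\hat p,\hat f}[\psi](\hat x'))\le0$, contradicting $\eta>0$. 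In the ``smooth'' case $k=0$, i.e.\ $\hat p$ a point of differentiability of $\sigma$, the facet part is trivial, $\sigma$ is linear near $\hat p$ so the curvature term vanishes, and one only needs $\divo\nabla\sigma_m(\nabla\varphi)\to0$ near such $\hat p$, which reduces the argument to textbook viscosity stability.

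The main obstacle is the construction in the second paragraph. As stressed before the theorem, one is \emph{not} allowed to perturb $\varphi$ to make $\bar u-\varphi$ strictly maximal, because any bending of $\psi$ can change $\Lambda_{\hat p,\hat f}$ discontinuously; the perturbation must be the parameter $m$, and the full estimate must survive $m\to\infty$. Concretely one must (i) invert $\nabla\sigma_m$ to match a near-minimizer $z$ even though $z$ may touch the boundary of the sliced Wulff shape near $\partial\set{\psi=0}$ and $\divo z$ is only $BV$, which forces a careful cutoff/relaxation preserving the upper bound $\underline\Lambda+o(1)$, and (ii) arrange that $\varphi_m$ lies above $u_m$ on the whole translated-facet cylinder with maxima converging to that of $\bar u-\varphi$. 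These are exactly what the $\sigma^\circ$-$(L^2)$ Cahn--Hoffman machinery, the patching Lemma~\ref{le:patching} and Theorem~\ref{th:facet-density} are designed to deliver: they reduce the construction to the smooth, uniformly convex $\sigma_m$, where standard barrier constructions are available, while keeping all constants uniform as the contact points drift with $m$.
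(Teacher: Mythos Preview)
Your overall strategy---Evans' perturbed test function method applied to the relaxed limits, with the regularization index $m$ playing the role of the perturbation parameter---is exactly the approach the paper takes. The gap is in how you build the perturbed test functions $\Psi_m$.

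You propose to fix a near-minimizing Cahn--Hoffman vector field $z$ for $\psi$ and then produce a $C^2$ function $\Psi_m$ with $\nabla\sigma_m(\hat p+\TT\nabla\Psi_m)$ close to $z$. But this asks that $(\nabla\sigma_m)^{-1}(z)$ be (close to) a gradient, and there is no reason it should be: the minimizing Cahn--Hoffman field is characterized by minimizing $\norm{\divo z - f}_{L^2}$ under the pointwise constraint $z\in\partial\sigma(\nabla\psi)$, which imposes nothing on the curl of its $\nabla\sigma_m$-preimage. In dimension $k\geq 2$ this integrability obstruction is genuine. Neither the patching Lemma~\ref{le:patching} nor the facet density Theorem~\ref{th:facet-density} helps here---both operate on vector fields or on facets, not on producing scalar potentials with prescribed $\nabla\sigma_m$-image. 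Your item (i) flags the boundary and $BV$ issues but misses this obstruction, which is the actual difficulty; the fact that $\nabla\sigma_m$ is a diffeomorphism onto the interior of the Wulff shape only gives pointwise invertibility.

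The paper bypasses this by constructing the perturbed test function as the solution $\psi_{a,m}$ of the \emph{resolvent problem} for the regularized energy,
\[
\psi_{a,m}+a\,\partial\mathcal E_m(\psi_{a,m})\ni\psi,
\]
with $a>0$ a second small parameter. This is automatically a scalar function; it is $C^2$ by elliptic regularity since $\sigma_m$ is uniformly convex and smooth, and Lipschitz uniformly in $a,m$ by comparison and translation invariance. Mosco convergence $\mathcal E_m\to\mathcal E$ yields $\psi_{a,m}\to\psi_a$ (uniformly, by the Lipschitz bound), and Proposition~\ref{pr:resolvent-approximation} gives $(\psi_a-\psi)/a\to-\partial^0\mathcal E(\psi)$ in $L^2$. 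Since $\divo\nabla\sigma_m(\nabla\psi_{a,m})=(\psi_{a,m}-\psi)/a$, this delivers precisely the curvature control you need after sending $m\to\infty$ and then $a\to0$. The point is that the resolvent picks the scalar function and, implicitly, a compatible vector field \emph{simultaneously}, so the curl-free issue never arises.
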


The main idea of the proof of Theorem~\ref{th:stability} is inspired by the perturbed test function method due to Evans \cite{E}. Let us for simplicity assume that $f \equiv 0$. The crystalline mean curvature, or specifically the operator $\Lambda_0[\psi]$ is nonlocal on the facets of $\psi$. In contrast, the elliptic operators $\divo \nabla \sigma_m(\nabla \psi)$ are local and they are in fact zero on the facets of $\psi$. To recover the nonlocal information in the limit $m \to \infty$, we \emph{perturb} the test function $\psi$ using a sequence $\psi_m$ of uniformly converging $C^2$ functions $\psi_m \to \psi$, such that $\divo \nabla \sigma_m(\nabla \psi_m)$ approximates the value of $\Lambda_0[\psi]$ in a suitable sense at the contact point.

Such approximation is available via the resolvent problem for the regularized energy $\mathcal E_m$, with $\sigma$ replaced by $\sigma_m$ in \eqref{tv-energy}. For given $a > 0$ and $\psi \in L^2(\Tn)$, there exists a unique solution $\psi_{a,m} \in L^2(\Tn)$ of
\begin{align*}
\psi_{a,m} + a \partial \mathcal E_m(\psi_{a,m}) \ni \psi.
\end{align*}
If $\psi \in Lip(\Tn)$, then $\psi_{a,m}$ is Lipschitz uniformly in $a$ and $m$ by the comparison principle like Proposition~\ref{pr:resolvent-comp-principle} and translation invariance of the operator, and in fact it is $C^2$ by the elliptic regularity theory. Since $\mathcal E_m$ Mosco-converges to $\mathcal E$ (see \eqref{mosco} for the definition), we have a convergence of the resolvent solutions $\psi_{a, m} \to \psi_a$ in $L^2(\Tn)$, see \cite{Attouch}, and hence uniformly by the uniform Lipschitz continuity. Using Proposition~\ref{pr:resolvent-approximation}, we can deduce that $\psi_{a, m}$ uniformly approximate $\psi$ as $a \to 0$ and then $m \to \infty$. Functions $\psi_{a, m}$ are used to build test functions for the regularized problem, and allow us to deduce that $\limsup^*_{m\to \infty} u_m$ is a viscosity subsolution of \eqref{pde}, and $\liminf_{*m\to\infty} u_m$ is a viscosity supersolution of \eqref{pde}.  

Approximation using (b) is relevant when considering the crystalline mean curvature flow as a limit of a smooth anisotropic mean curvature flow. To prove the stability for (b), we use the stability Theorem~\ref{th:stability} to approximate each $\sigma_m$ by a sequence of $C^2$ functions $\sigma_{m,\delta}$ and therefore we need to know that a given solution $u_m$ can be approximated by a sequence of solutions $u_{m, \delta}$ with this anisotropy. This is known for example when $u_m$ have continuous bounded initial data. We have the following stability result, \cite[Th.~4.4]{GP3}.

\begin{theorem}
\label{th:stability-b}
Let $\sigma$, $F$ and $f$ be as in Theorem~\ref{th:stability}. Let $T > 0$ and let $u_m$ be a locally bounded sequence of viscosity solutions of \eqref{pde-m} on $\Rn \times (0, T)$ with $\sigma_m$ as in (b) with initial data $u_m(\cdot, 0) = u_{0, m}$, where $u_{0, m} \in C(\Rn)$ are uniformly bounded. Then $\limsup^*_{m\to \infty} u_m$ is a viscosity subsolution of \eqref{pde}, and $\liminf_{*m\to\infty} u_m$ is a viscosity supersolution of \eqref{pde}.  
\end{theorem}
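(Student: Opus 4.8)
The plan is to deduce Theorem~\ref{th:stability-b} from the already established stability result for scheme~(a), Theorem~\ref{th:stability}, by interposing, between the crystalline $\sigma$ and the solutions $u_m$, a family of scheme-(a) problems. The key observation is that each $\sigma_m$ in scheme~(b) --- an anisotropy of class $C^2(\Rn \setminus \set0)$ with $\sigma_m^2$ strictly convex --- is itself within reach of the classical viscosity solution theory for geometric singular parabolic equations (comparison, unique existence for bounded continuous initial data, stability under regularization), and it can in turn be approximated by globally $C^2$, uniformly convex functions of scheme~(a). So the crystalline stability for~(b) is obtained from the crystalline stability for~(a) plus an \emph{intra}-smooth approximation step.

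First I would fix $m$ and choose a sequence $(\sigma_{m,\delta})_{\delta>0}$ of scheme-(a) anisotropies --- a standard regularization of $\sigma_m$ near the origin with a small uniformly convex term such as $\delta\abs{p}^2$ added --- such that $\sigma_{m,\delta} \to \sigma_m$ locally uniformly as $\delta \to 0$. Let $u_{m,\delta}$ be the unique viscosity solution of \eqref{pde-m} with $\sigma_{m,\delta}$ in place of $\sigma_m$ and the same initial data $u_{0,m}$; it exists and is unique since $\sigma_{m,\delta}$ is $C^2$ and convex. Because $F$ does not depend on $x$ and $\sigma_{m,\delta} \to \sigma_m$ locally uniformly, the classical stability of viscosity solutions combined with the comparison principle for the smooth-anisotropy problem \eqref{pde-m} with the fixed $\sigma_m$ --- this is precisely where continuity and boundedness of $u_{0,m}$ enter, to pin down the Barles--Perthame relaxed limit uniquely --- gives $u_{m,\delta} \to u_m$ locally uniformly as $\delta \to 0$; this is known, see \cite{GP3}. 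Next, fixing an exhaustion $(K_j)_j$ of $\Rn \times (0,T)$ by compacts, I would pick for each $m$ a $\delta(m)>0$ so small that $\norm{u_{m,\delta(m)} - u_m}_{L^\infty(K_m)} \le 1/m$ and $\sup_{\abs{p}\le m}\abs{\sigma_{m,\delta(m)}(p) - \sigma_m(p)} \le 1/m$, and set $v_m := u_{m,\delta(m)}$, $\tilde\sigma_m := \sigma_{m,\delta(m)}$. Then $v_m-u_m\to 0$ and $\tilde\sigma_m-\sigma_m\to 0$ locally uniformly, so $\{v_m\}$ is locally bounded, $v_m$ solves \eqref{pde-m} with $\tilde\sigma_m$, $\tilde\sigma_m \to \sigma$ locally uniformly (using $\sigma_m\to\sigma$), and, since a locally uniformly vanishing perturbation leaves the relaxed semilimits unchanged,
\[
	\limsup^*_{m\to\infty} v_m = \limsup^*_{m\to\infty} u_m, \qquad \liminf_{*m\to\infty} v_m = \liminf_{*m\to\infty} u_m.
\]
If needed, passing to a further subsequence and adding vanishing uniformly convex corrections makes $\{\tilde\sigma_m\}$ a legitimate decreasing scheme-(a) sequence; alternatively one notes that the proof of Theorem~\ref{th:stability} really only uses the locally uniform (hence Mosco) convergence of the energies.

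It then remains to apply Theorem~\ref{th:stability} to $\{v_m\}$ with the scheme-(a) anisotropies $\{\tilde\sigma_m\}$, which yields that $\limsup^*_{m\to\infty} v_m$ is a viscosity subsolution of \eqref{pde} and $\liminf_{*m\to\infty} v_m$ a viscosity supersolution of \eqref{pde}; by the two identities above these are exactly $\limsup^*_{m\to\infty} u_m$ and $\liminf_{*m\to\infty} u_m$, which is the assertion. The main obstacle is the locally uniform convergence $u_{m,\delta}\to u_m$ as $\delta\to 0$ used in the second paragraph: for $\sigma=\abs{\cdot}$ this is the classical stability of the (anisotropic) mean curvature level-set flow under regularization of the anisotropy, and here one needs its anisotropic analogue for \eqref{pde-m} with a fixed smooth $\sigma_m$, which rests on the comparison principle for that intermediate problem --- whence the hypothesis that the $u_m$ start from continuous bounded data, anchoring the Cauchy problem so that the relaxed-limit argument collapses to genuine convergence. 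The remaining ingredients --- the diagonalization, the bookkeeping of monotonicity/ellipticity required by scheme~(a), and the invariance of the relaxed semilimits under locally uniformly vanishing perturbations --- are routine.
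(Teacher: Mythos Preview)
Your proposal is correct and follows essentially the same approach as the paper: approximate each scheme-(b) anisotropy $\sigma_m$ by a family $\sigma_{m,\delta}$ of scheme-(a) type, use the classical comparison/stability theory for the intermediate smooth problem (available precisely because $u_{0,m}$ is bounded and continuous) to get $u_{m,\delta}\to u_m$, diagonalize, and then invoke Theorem~\ref{th:stability}. The paper sketches exactly this outline in the paragraph preceding the theorem and refers to \cite[Th.~4.4]{GP3} for the details; your remark about arranging monotonicity of $\tilde\sigma_m$ (or relying only on Mosco convergence) correctly identifies the one bookkeeping point that needs attention.
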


Now with the stability with respect to approximation by the regularized problems established, and the comparison principle discussed in Section~\ref{se:comparison}, we can follow the standard idea to show existence of \eqref{pde} for given initial data when the operator $F$ does not depend on the $x$ variable.
For given bounded uniformly continuous initial data, we take $u_m$ solutions of the regularized problem with initial data $u_0$ from Theorem~\ref{th:stability-b}. By using barriers at $t = 0$, we can show that the limits satisfy 
\begin{align*}
\mathop{\liminf\nolimits_*}\limits_{m\to\infty} u_m\big|_{t =0} \geq u_0, \qquad \mathop{\limsup\nolimits^*}\limits_{m\to \infty} u_m\big|_{t =0} \leq u_0.
\end{align*}
From the comparison principle for \eqref{pde} we immediately have 
\begin{align*}
\mathop{\limsup\nolimits^*}\limits_{m\to \infty} u_m\leq \mathop{\liminf\nolimits_*}\limits_{m\to\infty}.
\end{align*}
This implies that both limits are equal, the convergence is locally uniform, and the limit is a viscosity solution of \eqref{pde}.

If the forcing $f$ depends on $x$, there is an additional difficulty that the comparison principle for semi-continuous solutions is not available, see \cite[Sec.~3]{GP3}. The comparison principle established in \cite{GP3} requires that at least one of the solutions is continuous. Fortunately, for operators $F$ that come from the level set formulation of geometric motions one can prove uniform Lipschitz bounds in space and uniform H\"older bounds in time on the approximating sequence $u_m$ for Lipschitz initial data $u_0$, see \cite[Sec.~5]{GP3}. Therefore the convergence $u_m$ is locally uniform for subsequences and the limits are a priori continuous. In particular, the restricted comparison principle applies and existence of solutions can be established. We have the following existence theorem, \cite[Th.~1.1]{GP3}.

\begin{theorem}
\label{th:existence-with-f}
Assume that $g \in C(\mathcal S^{n-1} \times \R)$ is Lipschitz continuous in the second variable uniformly in the first variable and non-decreasing in the second variable, $\sigma$ is a crystalline anisotropy and $f \in C(\Rn \times \R)$ is Lipschitz continuous in space uniformly in time. Then there is a unique global-in-time level set flow to
\begin{align*}
V = g(\nu, \kappa_\sigma + f(x,t))
\end{align*}
when the initial hypersurface is compact.
\end{theorem}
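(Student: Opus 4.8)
The plan is to realize $\set{\Gamma_t}$ as the family of zero level sets of a viscosity solution of a level-set PDE of the form \eqref{pde}, obtained as a limit of smooth anisotropic mean curvature flows, with uniqueness supplied by the comparison principle of Section~\ref{se:comparison}. First I would rewrite $V = g(\nu, \kappa_\sigma + f)$ in level-set form: taking $u$ with $\set{u(\cdot,t) < 0}$ the evolving region, $\nu = \nabla u/\abs{\nabla u}$ and $V = -u_t/\abs{\nabla u}$, and representing $\kappa_\sigma + f$ on facets by $-(\divo \nabla\sigma(\nabla u) - f)$, i.e.\ by (minus) the quantity $\Lambda_f$ of Section~\ref{AO3}, the motion becomes \eqref{pde} with, up to the orientation conventions of Section~\ref{AV},
\[
	F(x,t,p,\xi) := \abs{p}\, g\!\pth{\tfrac p{\abs p},\, -\xi}.
\]
Since $g$ is nondecreasing in its second slot, $F$ is nonincreasing in $\xi$ and the equation is degenerate parabolic; since $g$ is Lipschitz there, so is $F$; $F$ does not depend on $x$, the spatial inhomogeneity entering only through $f$ inside the $\Lambda_f$ term; and $F$ is geometric, i.e.\ invariant under $u \mapsto \theta\circ u$ for increasing $\theta$, so zero level sets are meaningful. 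Because $\sigma$ is crystalline, the stratified (faceted) test functions of Definition~\ref{de:visc-sol-crystalline}, with the forcing modification \eqref{tv-sing-subsol-f}, are available. I would then fix a bounded Lipschitz function $u_0$ on $\Rn$ with $\set{u_0 = 0} = \Gamma_0$, negative inside and positive outside, equal to $\pm a$ outside a ball $B_R(0)$; such $u_0$ exists since $\Gamma_0$ is compact.

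Next, approximate $\sigma$ by smooth anisotropies $\sigma_m$ as in (b) and let $u_m$ be the unique bounded continuous viscosity solutions of the regularized problems \eqref{pde-m} with initial data $u_0$; these exist by the classical theory since $\sigma_m \in C^2(\Rn \setminus \set0)$, and by barrier arguments each $u_m$ equals $\pm a$ outside a ball whose radius is bounded uniformly in $m$ and locally uniformly in $t$. The decisive analytic input is a \emph{uniform} modulus of continuity for $\set{u_m}$: exploiting that $F$ comes from a geometric motion and that $u_0$ is Lipschitz, one constructs barriers yielding a Lipschitz bound in space and a $1/2$-Hölder bound in time, uniform in $m$ --- this is the content of \cite[Sec.~5]{GP3}. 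Consequently $\set{u_m}$ is locally equibounded and equicontinuous, so the relaxed limits $\overline u := \limsup^* u_m$ and $\underline u := \liminf_* u_m$ are finite, continuous, and coincide with ordinary uniform limits along subsequences.

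By the stability Theorem~\ref{th:stability-b}, $\overline u$ is a viscosity subsolution and $\underline u$ a viscosity supersolution of \eqref{pde}. Comparison with the $t = 0$ barriers gives $\overline u(\cdot,0) \leq u_0 \leq \underline u(\cdot,0)$, and in particular $\overline u(\cdot,0) \leq \underline u(\cdot,0)$. Since both relaxed limits are continuous --- exactly what the previous step secured --- the comparison principle of \cite[Th.~3.1]{GP3} yields $\overline u \leq \underline u$ on $\Rn \times (0,T)$; as $\underline u \leq \overline u$ always, $u := \overline u = \underline u$ is a continuous viscosity solution of \eqref{pde} with $u(\cdot,0) = u_0$ and $u_m \to u$ locally uniformly. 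That same comparison principle gives uniqueness of the solution of \eqref{pde} for given continuous initial data. Finally I set $\Gamma_t := \set{x \mid u(x,t) = 0}$; comparing $u$ with the solution $\tilde u$ coming from any other admissible defining function of $\Gamma_0$ with the same sign pattern gives $\set{u(\cdot,t) = 0} = \set{\tilde u(\cdot,t) = 0}$, so $\Gamma_t$ depends only on $\Gamma_0$, and uniqueness of the level-set flow follows from uniqueness of $u$.

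The main obstacle is the second step: the uniform-in-$m$ Lipschitz-in-space and Hölder-in-time estimates on $u_m$. Without them the comparison principle of \cite{GP3} --- which is available only when one of the two competitors is continuous, since for $x$-dependent $f$ comparison genuinely fails for merely semicontinuous sub/supersolutions --- cannot be invoked, and the passage to the limit does not close. Producing these bounds requires barriers adapted simultaneously to the singular crystalline $\sigma$ and to the merely Lipschitz forcing $f$, and it is precisely here that the Lipschitz hypothesis on $f$ and the geometric structure of $F$ are used in an essential way. Once equicontinuity is in hand, the stability theorem, the comparison principle, and the relabeling argument are routine.
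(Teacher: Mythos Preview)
Your proposal is correct and follows essentially the same route the paper outlines before stating the theorem: approximate the crystalline $\sigma$ by smooth anisotropies, invoke the uniform-in-$m$ Lipschitz/H\"older estimates of \cite[Sec.~5]{GP3} to force continuity of the relaxed limits, apply the stability Theorem~\ref{th:stability-b} and the restricted comparison principle of \cite[Th.~3.1]{GP3}, and finish with the standard relabeling argument (cf.\ Remark~\ref{GGLE}). You have also correctly isolated the genuine analytic obstacle --- the need for a priori equicontinuity of the $u_m$, without which the $x$-dependent comparison principle cannot be invoked.
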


\begin{remark} \label{GGLE} 
If $f$ is constant, then the global Lipschitz continuity of $F$ is unnecessary \cite{GP1}, \cite{GP2}. In particular, it applies to \eqref{2EXP}. In the case $n=2$, it applies to a general anisotropy under a slightly different definition of a solution \cite{GG4}. Note that the level set equation for $V=\kappa_\sigma$ is
\[
	u_t = |\nabla u| \operatorname{div}\nabla\sigma(\nabla u)
\]
so that each level set of $u$ moves by $V=\kappa_\sigma$. The level set flow is a level set of a viscosity solution $u$. Its uniqueness (up to fattening) is guaranteed by the comparison principle and an invariance under a change of the depended variable $u$ (representing its level sets) together with Lemma~\ref{le:Lipschitz-ordering}. This procedure is standard for a level set flow; see e.g.\ \cite{G06}. The terminology of the level set flow here is different from that in Section \ref{AD}.
\end{remark}

\subsection{Convergence of various approximations} \label{AV4}

It is well-known that the solution of the mean curvature flow equation is approximated by that of the Allen-Cahn equation;
 see \cite{DSch}, \cite{BrK}, \cite{XChen}, \cite{ESS}.
 Anisotropic version of the Allen-Cahn equation is introduced by \cite{MWBCS}, which is an $L^2$-gradient flow of
\[
	F_\varepsilon(v) = \int_{\mathbb{R}^n} \left\{ \frac{1}{2} \sigma (\nabla v)^2
	+ \frac{1}{\varepsilon^2} \left( W(v) - \varepsilon\lambda F(v) \right) \right\} dx. 
\]
Here, $W(v)$ is a double-well potential typically $W(v)=(v^2-1)^2/2$ and $F(v)=Cv$ with constant $C$ for simplicity.
 The parameter $\lambda>0$ should be chosen in a suitable way.
 In an explicit form, the anisotropic Allen-Cahn equation reads
\begin{align}
\label{allen-cahn}
	\beta(\nabla v)v_t - \operatorname{div} \left( \sigma(\nabla v) \zeta(\nabla v) \right)
	+ \frac{1}{\varepsilon^2} \left( W'(v) - \varepsilon\lambda C \right) = 0 
\end{align}
with some kinetic coefficient $\beta>0$ which is positively one-homogeneous;
 here $\zeta(p)=\nabla_p\sigma(p)$.
 For a given closed interface $\Gamma_0$, we consider a function $v^\varepsilon_0$ which converges to $-1$ in an open set surrounded by $\Gamma_0$ and to $1$ outside the closure of the open set.
 The way of convergence is taken in a suitable way.
 It is expected that the solution of the anisotropic Allen-Cahn equation with initial data $v^\varepsilon_0$ converges to $1$ inside an open set surrounded by $\Gamma_t$ and $-1$ outside $\Gamma_t$ and this open set, where $\Gamma_t$ is a (generalized) solution to the interface equation
\[
	\beta(\mathbf{n})V = \sigma(\mathbf{n}) (\kappa_\sigma - C).
\]
(Here $\lambda$ should be taken as $\lambda=2/3$ if $W(v)=(v^2-1)^2/2$.) 
 Formal asymptotic analysis is carried out by \cite{MWBCS}, \cite{WS} and \cite{BP95}, which derives the interface equation.
 For smooth anisotropy with $\beta \equiv 1$, the convergence is established by \cite{ElS1} when the solution of the interface equation is smooth,
 here $W$ is taken as double-obstacle type, for example, $W(v)=1-v^2$ in $|v| \leq 1$ and $W(v)=\infty$ for $|v|>1$.
 This result is extended when $\Gamma_t$ is a generalized solution (a level-set solution allowing fattening).
 In \cite{GOS} it is shown that such convergence is uniform in $\sigma$ provided that the Frank diagram $F_\sigma$ is bounded by a ball both from inside and outside.
 It does not depend on regularity of $\sigma$.

For crystalline $\sigma$ under $\beta \equiv 1$, the convergence with some rate is established for planar crystalline flow \cite{BGN}.
 It is somewhat extended to higher dimension for a special class of solutions of the interface equation;
 its existence is not clear \cite{BN}.
 Several explicit examples of convergence are given by \cite{TC}.
 One of the reasons why $\beta\equiv 1$ is assumed is that the notion of solutions for the Allen-Cahn equation is unclear.
 Maybe a viscosity approach will resolve this issue.

Since our solution for the interface equation for crystalline $\sigma$ is obtained as a limit of smoother problems as in the previous subsection, combining uniform convergence with respect to $\sigma$ we are able to prove the convergence as $\varepsilon\to 0$ by approximating $\beta$ and $\sigma$ by smooth function;
 see \cite[Theorem 2.4]{GOS}.
 Note that in two dimensional case, the stability was proved in \cite{GG4}.

Another typical way to approximate a solution is what is called Chambolle's scheme introduced by \cite{Cha}.
 We here give its anisotropic version \cite{CC}, \cite{CN07}.
 We consider
\[
	V = M (\nu) \kappa_\sigma.
\]
We set the support function of the polar of $1/M$ (Frank diagram of $M$) by $M^0$, i.e.,
\[
	M^0 (x) :=\sup \left\{ x \cdot p \bigm|
	|p| \leq 1/M \left( p/|p| \right) \right\}.
\]
Here $M$ is assumed to be positive on $S^{n-1}$.
 The function $M^0$ is convex, positively $1$-homogeneous in $\mathbb{R}^n$ and it is positive outside the origin.
 However, it may not satisfy the symmetry $M(x)=M(-x)$ so that $\operatorname{dist}_{M_0}(x,y)=M^0(x-y)$ is a non-symmetric distance.
 For a given bounded set $E_0$ in $\mathbb{R}^n$, let $d_{M^0}(x,E_0)$ denote its anisotropic signed distance, i.e.,
\[
	d_{M^0} (x,E_0) := \operatorname{dist}_{M^0}(x,E_0) - \operatorname{dist}_{M^0}(x,E^\compl_0), \quad
	x \in \mathbb{R}^n,
\]
where
\[
	\operatorname{dist}_{M^0}(x,E_0) := \inf_{y \in E_0} \operatorname{dist}_{M^0}(x,y).
\]
We next consider an energy functional of the form
\[
	J_h (v, E_0) = \int_\Omega \left\{ \sigma (\nabla v)
	+ \frac{1}{2h} | v-d_{M^0}|^2 \right\} dx
\]
for a domain $\Omega$ containing $E_0$ with a small parameter $h>0$.
 This value is finite in $L^2(\Omega)\cap BV(\Omega)$ so we regard $J_h$ as a lower semicontinuous convex functional on $L^2(\Omega)$ by interpreting its value equal to $\infty$ on $L^2(\Omega)\backslash BV(\Omega)$.
 It admits a unique minimizer $w=\operatorname{argmin} J_h$.
 We introduce the operator $T_h$ as
 \[
	T_h(E_0) = \left\{ x \in \mathbb{R}^n \mid w(x) \leq 0 \right\}.
\]
 An approximate flow is defined by applying the above step iteratively as
 \begin{align}
 \label{discrete-flow}
	E^h(t) = T^{\lfloor t/h\rfloor}_h (E_0),
 \end{align}
where $\lfloor s \rfloor$ denotes its integral part of $s>0$.
 We expect that $E^h$ converges to the level-set solution of $V=M(\nu)\kappa_\sigma$ as $h\to 0$, for example, in the Hausdorff distance sense uniformly in $t\in[0,T]$ with finite $T$.
 Let us give a very heuristic argument.
 We consider the isotropic case $V=\kappa$ so that $M=1$ and $\sigma(p)=|p|$.
 Then the minimizer $w$ satisfies the resolvent equation
\[
	\frac{w-d}{h} - \divo \frac{\nabla w}{|\nabla w|} = 0,
\]
where $d$ denotes the Euclidean signed distance of $E_0$.
 This is the implicit Euler scheme for the total variation flow.
  The signed distance function satisfies $|\nabla d| = 1$ on the interface $\Gamma_t$ so $V \approx \frac{w - d}h$ and it is expected that the zero level of $w$ approximates the solution $\Gamma_t$.

The isotropic case of this scheme was first introduced in \cite{Cha}, which gives a monotone way to realize the time discrete scheme proposed by \cite{ATW};
 see also \cite{LS}.
 In \cite{Cha} $L^1$ convergence: $E^h(t)\to E(t)$ on $[0,T]$, where $E(t)$ is the level set solution of $V=\kappa$ (starting from a closed set $E_0$ with $E_0=\overline{\operatorname{int} E_0}$) was established provided that no fattening phenomena occur.
 Its anisotropic extension is done by \cite{CC} in the case when $E_0$ is convex and compact under the assumption that $\sigma/M$ is constant on $S^{n-1}$;
 see \cite{CN07} for non-convex initial data;
 here anisotropy is assumed to be smooth.
 In \cite{BCCN} for a non-smooth $\sigma$ including crystalline, a unique solution for $V=\sigma\kappa_\sigma$ is constructed when $E_0$ is convex and compact by defining a solution by the distance function.
 For smooth anisotropy for a bounded nonconvex initial data, the Hausdorff convergence is proved in \cite{EGI}, where they prove locally uniform convergence of an associated function
\[
	u^h (x,t) = \left( S^{[t/h]}_h u_0 \right) (x)
\]
with
\[
	(S^h u_0) (x) = \sup \left\{ \mu\in\mathbb{R} \bigm|
	x \in T_h \left( \left\{ x \in \mathbb{R}^n \mid 
	u_0(x) \geq \mu \right\} \right) \right\}. 
\]
Although it is remarked in \cite{EGI} and \cite{CN07}, the case when $\sigma$ and $M$ are unrelated is not discussed in detail.
 In \cite{Ik} a proof based on the distance function is given for several choices of $\sigma$ and $M$ and general initial data not necessarily compact mostly for smooth case.
 However, it is also shown in \cite{Ik} that if the solution of crystalline anisotropy has a stability property we are able to prove the convergence of Chambolle's scheme by approximating $M$ and $\sigma$.
 Since at that time, the stability is only available in two dimensional case \cite{GG4}, convergence result in \cite{Ik} looks limited but it applies to general dimension at least for purely crystalline anisotropy since the stability holds for general dimension as discussed in the previous subsection.
 The reason why $M$ and $\sigma$ are approximated by a smoother one in Chambolle's scheme in \cite{Ik} seems to avoid analysis for the resolvent equation for non-smooth $M$ and $\sigma$, so it seems that it is not substantial.

In the next section we discuss a notion of solutions based on distance functions to the evolving surface that can be showed to be the limits of the discrete evolutions \eqref{discrete-flow} given by Chambolle's scheme, see Theorem~\ref{th:atw-approximation}.

\section{Approach by distance functions} \label{AD}

In this section we discuss an alternative approach to defining a notion of solutions of the crystalline mean curvature flow that appeared in a series of papers by Chambolle, Morini, Novaga and Ponsiglione \cite{CMP,CMNP,CMNP_APDE}. 
The main idea is to require that the distance function to an evolving set is a sub/supersolution of a related partial differential equation in the sense of distributions.

This approach applies to a form of the crystalline mean curvature flow that is \emph{linear} in the curvature term:
\begin{align}
\label{cmnp-flow}
V = M(\nu) (\kappa_\sigma - f).
\end{align}
However, both $\sigma$ and $M$ can be arbitrary anisotropies, not necessarily crystalline. For simplicity, we will assume that both $\sigma$ and $M$ are even, that is, $\sigma(p) = \sigma(-p)$ and $M(p) = M(-p)$ for all $p \in \Rn$. This restriction however does not appear to be essential. Moreover, the initial data $E^0$ can be an unbounded closed set, and the forcing term needs to be only $f \in L^\infty(\Rn \times (0, T))$ with $f(\cdot, t)$ Lipschitz uniformly in $t$.

The distance function must be adapted to the mobility $M$. As in \cite{CMNP} for any norm $\eta$ we denote
\begin{align*}
\dist^\eta(x, E) := \inf_{y \in E} \eta(x - y), \qquad E \subset \Rn.
\end{align*}
Note that $\dist^\eta(x, \emptyset) = +\infty$.

Let $E_n \subset \Rn$ be a sequence of closed sets and $E \subset \Rn$ a closed set. We say that $E_n$ converges to $E$ in \emph{Kuratowski sense}, and write $E_n \stackrel{\mathcal{K}}\to E$, if
$\dist^\eta(\cdot, E_n) \to \dist^\eta(\cdot, E)$ locally uniformly in $\Rn$ for some norm $\eta$. It is easy to see that if this converges for one norm, it converges for all norms.

The following definition appeared in \cite{CMNP}.

\begin{definition}
\label{de:flow}
Let $E^0 \subseteq \Rn$ be a closed set. Let $E$ be a closed set in $\Rn \times [0, +\infty)$ and for each $t \geq 0$ define $E(t):= \set{x \in \Rn: (x,t) \in E}$. We say that $E$ is a \emph{superflow} of \eqref{cmnp-flow} with initial datum $E^0$ if:
\begin{itemize}
\item[(a)] $E(0) \subseteq E^0$,
\item[(b)] $E(s) \stackrel{\mathcal{K}}{\to} E(t)$ as $s \nearrow t$ for all $t > 0$,
\item[(c)] If $E(t) = \emptyset$ for some $t \geq 0$, then $E(s) = \emptyset$ for all $s > t$.
\item[(d)] Set $T^* := \inf\set{t > 0: E(s) = \emptyset \text{ for } s \geq t}$, and
\begin{align*}
d(x,t) := \dist^{M^\circ}(x, E(t)) \qquad \text{for all } (x, t) \in \Rn \times (0, T^*) \setminus E.
\end{align*}
Then there exists $K > 0$ such that the inequality
\begin{align}
\label{dist-flow-inequality}
d_t \geq \divo z + f - K d
\end{align}
holds in the distributional sense in $\Rn \times (0, T^*) \setminus E$ for a suitable $z \in L^\infty(\Rn \times (0, T^*))$ such that $z \in \partial \sigma(\nabla d)$ a.e., $\divo z$ is a Radon measure in $\Rn \times(0, T^*)\setminus E$, and
\begin{align*}
(\divo z)^+ \in L^\infty(\set{(x, t) \in \Rn \times(0, T^*): d(x,t) \geq \delta}) \qquad \text{for every $\delta \in (0, 1)$}.
\end{align*}
\end{itemize}

\medskip

An open set $A \subset \Rn \times [0, +\infty)$ is a \emph{subflow} of \eqref{cmnp-flow} with initial datum $E^0$ if $A^\compl$ is a superflow of \eqref{cmnp-flow} with $f$ replaced by $-f$ and with initial datum $(\interior E^\circ)^\compl$.

A closed set $E \subset \Rn \times [0, +\infty)$ is a \emph{solution} of \eqref{cmnp-flow} with initial datum $E^0$ if it is a superflow and if $\interior E$ is a subflow, both with initial datum $E^0$.
\end{definition}

The condition (b) is meant to prevent a possibility that $E$ expands discontinuously, for example a bubble closing up, which cannot be ruled out by \eqref{dist-flow-inequality}.

Note that $K$ is related to the Lipschitz constant of $f$ with respect to the distance induced by $M$.
In fact, in the smooth case $\sigma$, $M$, $M^\circ \in C^2(\Rn \setminus \set0)$, $f$ continuous, then $E$ is a superflow in the sense of Definition~\ref{de:flow} if and only if $-\one_E$ is a viscosity supersolution of the level set equation
\begin{align*}
u_t = M(\nabla u)(\divo \nabla\sigma(\nabla u) + f),
\end{align*}
in $\Rn \times (0, T^*]$; see \cite[Lemma~2.6]{CMNP}. For viscosity supersolution $-\one_E$ we can take $K = Lip(f)$ in \eqref{dist-flow-inequality}.

We cannot in general expect uniqueness of a solution in the sense of Definition~\ref{de:flow} since there may occur fattening phenomena.
The comparison principle between superflows and subflows requires a positive distance between initial data and therefore by itself does not provide uniqueness. The following theorem appeared in \cite{CMNP}.

\begin{theorem}[{c.f. \cite[Theorem~2.7]{CMNP}}]
\label{th:flow-comparison}
Let $E$ be a superflow with initial datum $E^0$ and $F$ be a subflow with initial datum $F^0$ in the sense of Definition~\ref{de:flow}. If $\dist^{M^\circ}(E^0, (F^0)^\compl) =: \delta > 0$, then
\begin{align*}
\dist^{M^\circ}(E(t), F(t)^\compl) \geq \delta e^{-Kt} \qquad \text{for all } t\geq0,
\end{align*}
where $K > 0$ is the constant in \eqref{dist-flow-inequality} for both $E$ and $F$.
\end{theorem}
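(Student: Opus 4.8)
The plan is to reduce the comparison of a superflow $E$ and a subflow $F$ to a comparison of two superflows, and then prove that statement directly by a distance-function/maximum-principle argument applied to the distributional differential inequality \eqref{dist-flow-inequality}. By Definition~\ref{de:flow}, $F$ being a subflow means that $G := F^\compl$ is a superflow of \eqref{cmnp-flow} with $f$ replaced by $-f$ and with initial datum $(\interior F^0)^\compl \supseteq (F^0)^\compl$. Thus we have two superflows $E$ and $G$, and the quantity we must estimate from below is
\begin{align*}
\rho(t) := \dist^{M^\circ}(E(t), G(t)) = \inf_{x \in E(t),\ y \in G(t)} M^\circ(x - y),
\end{align*}
with $\rho(0) \geq \delta$ by hypothesis. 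The claim is then $\rho(t) \geq \delta e^{-Kt}$.

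First I would record the two distance functions $d_E(x,t) := \dist^{M^\circ}(x, E(t))$ and $d_G(x,t) := \dist^{M^\circ}(x, G(t))$ and their associated vector fields $z_E, z_G \in L^\infty$, with $z_E \in \partial\sigma(\nabla d_E)$, $z_G \in \partial\sigma(\nabla d_G)$ a.e., and the one-sided $L^\infty$ bounds on $(\divo z_E)^+$, $(\divo z_G)^+$ away from the respective evolving sets. The inequalities read $\partial_t d_E \geq \divo z_E + f - K d_E$ and, using that for $G$ the forcing is $-f$, $\partial_t d_G \geq \divo z_G - f - K d_G$, both in the distributional sense off $E$ and off $G$ respectively. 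Next I would introduce, for a small parameter, the function $w(x, x', t) := d_E(x,t) + d_G(x', t)$ on $(\Rn \times \Rn \times (0, T^*)) \setminus (E \cup G)$ and observe that $\rho(t)$ is controlled from below by $\inf\{ w(x,x',t) : M^\circ(x - x') \text{ small or } w \text{ near its infimum}\}$; more precisely $\rho(t) = \inf\{ d_E(x,t) + d_G(x,t) : x \in \Rn \}$ (since $d_E + d_G \geq M^\circ$-distance between the sets by the triangle inequality, with equality on the $M^\circ$-geodesic segment joining the nearest points). So it suffices to show $m(t) := \inf_x (d_E(x,t) + d_G(x,t))$ satisfies $m' \geq -K m$ in the viscosity/barrier sense, whence $m(t) \geq m(0) e^{-Kt} \geq \delta e^{-Kt}$.

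The core of the argument is to turn the two distributional differential inequalities into a differential inequality for $m(t)$. Since $\divo z_E$ and $\divo z_G$ are only measures and $d_E, d_G$ are only Lipschitz, one cannot test pointwise; instead I would mollify. Fix a time $t_0$, let $x_0$ be a point where $d_E(\cdot, t_0) + d_G(\cdot, t_0)$ is near its infimum $m(t_0)$, and note that at such a point $d_E$ and $d_G$ have opposite gradients ($\nabla d_E(x_0) = - \nabla d_G(x_0)$, both of $M^\circ$-norm one, this is where the geodesic hits), so by the $1$-homogeneity/evenness of $\sigma$ and the structure of $\partial \sigma$, the vector fields satisfy $z_E(x_0) + z_G(x_0) \in \partial\sigma(\nabla d_E) + \partial\sigma(-\nabla d_E)$, and — crucially — $\divo(z_E + z_G)$ must be $\leq 0$ in an averaged sense near $x_0$ because $x_0$ is (close to) a minimum of $d_E + d_G$: the curvature of $E$ as seen from the contact point and the curvature of $G$ as seen from the same point have opposite signs (one set is "outside" the geodesic, the other "inside"). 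Adding the two mollified inequalities, integrating against a mollifier concentrated near $x_0$, the $\divo z_E + \divo z_G$ terms drop out (or contribute with the right sign) and one is left, after sending the mollification parameter to zero and using the one-sided bounds plus $f + (-f) = 0$, with $\partial_t(d_E + d_G)(x_0, t) \geq -K(d_E + d_G)(x_0, t)$ in a barrier sense; combined with the Kuratowski continuity hypothesis (b), which guarantees $m(t)$ is (lower semi-)continuous and prevents instantaneous jumps, Gronwall gives $m(t) \geq \delta e^{-Kt}$.

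The main obstacle I expect is precisely the step where the measures $\divo z_E$, $\divo z_G$ must be shown to combine with a favorable sign at the contact point. This is delicate because these are only Radon measures with merely one-sided $L^\infty$ control off the evolving sets, and $d_E$, $d_G$ are only Lipschitz, so "$x_0$ is a minimum of $d_E + d_G$" must be exploited through a careful doubling-of-variables and mollification scheme rather than a naive second-derivative test — this is the technical heart of \cite[Theorem~2.7]{CMNP}. One must also handle the moving domain of definition $\Rn \times (0,T^*) \setminus E$ (and $\setminus G$): the inequality only holds off the sets, so one must ensure the relevant infimum is attained (or nearly attained) at points with $d_E, d_G > 0$, which is where the hypothesis $\delta > 0$ is indispensable — it propagates forward by exactly this estimate, in a bootstrap/continuity-in-time argument, so that $E(t)$ and $G(t)$ stay a positive distance apart and the contact analysis always takes place in the region where the PDE inequalities are valid. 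Finally, one needs to check the argument is consistent with $T^*$ being finite (one of the sets becoming empty), which is handled trivially since $\dist^{M^\circ}(\cdot, \emptyset) = +\infty$ makes the inequality vacuous there.
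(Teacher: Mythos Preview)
The paper does not contain a proof of this theorem; it merely states the result and attributes it to \cite[Theorem~2.7]{CMNP}. So there is no ``paper's own proof'' to compare against, and your sketch should be judged on its own merits against the argument in \cite{CMNP}.

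Your overall architecture is the right one and matches \cite{CMNP}: reduce the subflow to a second superflow $G = F^\compl$, set $m(t) = \inf_x\bigl(d_E(x,t) + d_G(x,t)\bigr) = \dist^{M^\circ}(E(t),G(t))$, add the two distributional inequalities (the forcing terms $f$ and $-f$ cancel), and aim for $m'(t) \geq -K m(t)$ so that Gronwall gives the claim. You also correctly flag the technical issues (distributional inequalities, Lipschitz-only distance functions, contact point only in the region where the inequalities hold) and the role of hypothesis~(b).

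There is, however, a genuine sign/mechanism confusion at the key step. You write that $\divo(z_E+z_G) \leq 0$ near the contact point because ``one set is outside the geodesic, the other inside'' so the curvatures have \emph{opposite} signs. That is backwards. At a (formal) spatial minimum $x_0$ of $d_E + d_G$ one has $\nabla d_E(x_0) = -\nabla d_G(x_0) =: p$ and $\nabla^2(d_E+d_G)(x_0) \geq 0$. Since $\sigma$ is even, $\nabla^2\sigma$ is even, and in the smooth case
\[
\divo(z_E+z_G)(x_0) = \trace\bigl[\nabla^2\sigma(p)\,\nabla^2(d_E+d_G)(x_0)\bigr] \geq 0,
\]
a product of positive semidefinite matrices. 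This \emph{nonnegative} sign is exactly what the inequality $\partial_t(d_E+d_G) \geq \divo(z_E+z_G) - K(d_E+d_G)$ needs in order to drop the divergence term and conclude $m' \geq -Km$. The geometric picture is not ``opposite curvatures cancel'' but rather ``both distance functions bend the same (convex) way at the contact point, and convexity of $\sigma$ turns that into a favorable sign.'' In the nonsmooth setting this heuristic is what the mollification/doubling argument in \cite{CMNP} makes rigorous; your sketch of that part is otherwise on target.
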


To obtain uniqueness, \cite{CMNP} introduce the associated level-set flow.

\begin{definition}
\label{de:level-set-flow}
Let $u^0$ be a uniformly continuous function on $\Rn$. We say that a lower semicontinuous function $u : \Rn \times[0, \infty) \to \R$ is a \emph{level-set supersolution} corresponding to \eqref{cmnp-flow} with initial datum $u^0$ if $u(\cdot, 0) \geq u^0$ and if for a.e. $\lambda \in \R$ the closed sublevel set $\set{u \leq \lambda}$ is a superflow of \ref{de:flow} in the sense of Definition~\ref{de:flow} with initial datum $\set{u_0 \leq \lambda}$.

Similarly, an upper semicontinuous function $u: \Rn \times [0, \infty) \to \R$ is a \emph{level-set subsolution} corresponding to \eqref{cmnp-flow} with initial datum $u^0$ if $-u$ is a level-set supersolution in the previous sense, with initial datum $-u_0$ and with $f$ replaced by $-f$.

A continuous function $u: \Rn \times [0, \infty) \to \R$ is a \emph{level-set solution} corresponding to \eqref{cmnp-flow} with initial datum $u^0$ if it is both a level-set supersolution and level-set subsolution with the same initial datum. 
\end{definition}
Our terminology here is different from that in \cite[Chapter 5]{G06}.
 A superflow here is called a set-theoretic supersolution in \cite{G06}.
 A level set supersolution in \cite{G06} is a superflow given by sublevel set of a continuous level-set supersolution.
 
The following comparison theorem was proven in \cite{CMNP}.

\begin{theorem}[{c.f. \cite[Theorem~2.5]{CMNP}}]
\label{th:level-set-comparison}
Let $u^0$, $v^0$ be uniformly continuous functions on $\Rn$ and let $u$, $v$ be respectively a level-set subsolution with initial datum $u^0$ and a level-set supersolution with initial datum $v^0$, in the sense of Definition~\ref{de:level-set-flow}. If $u^0\leq v^0$ then $u \leq v$.
\end{theorem}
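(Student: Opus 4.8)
The plan is to reduce the assertion to the comparison principle between super- and subflows, Theorem~\ref{th:flow-comparison}, by the standard level-set argument (cf.\ \cite{CGG}): the ordering $u\le v$ of the two semicontinuous functions is equivalent to the inclusion of their time slices of sublevel sets, and such an inclusion follows from Theorem~\ref{th:flow-comparison} once a strictly positive $M^\circ$-distance between the corresponding \emph{initial} sublevel sets is produced. All the genuine analytic content sits in Theorem~\ref{th:flow-comparison}; what remains is a reduction plus bookkeeping with the definitions.

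First I would record which sublevel sets of $u$ and $v$ are flows. By Definition~\ref{de:level-set-flow} there is a co-null set $\Lambda_v\subset\R$ such that $\set{v\le\lambda}$ is a superflow of \eqref{cmnp-flow} with initial datum $\set{v^0\le\lambda}$ for every $\lambda\in\Lambda_v$; and, since $-u$ is by definition a level-set supersolution with forcing $-f$, there is a co-null set $\Lambda_u\subset\R$ such that $\set{u\ge\mu}$ is a superflow of the equation with forcing $-f$ and initial datum $\set{u^0\ge\mu}$ for every $\mu\in\Lambda_u$, that is (Definition~\ref{de:flow}), the open set $\set{u<\mu}$ is a subflow of \eqref{cmnp-flow} with initial datum $\set{u^0<\mu}$. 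The initial conditions demanded in Definition~\ref{de:flow} are consistent here because $v(\cdot,0)\ge v^0$ and $u(\cdot,0)\le u^0$.

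Then I would exhibit the gap and pass to the limit. Fix $\lambda\in\Lambda_v$ and $\mu\in\Lambda_u$ with $\mu>\lambda$; assuming both sets below are nonempty (otherwise the distance is $+\infty$), if $v^0(x)\le\lambda$ and $u^0(y)\ge\mu$ then $u^0\le v^0$ gives $u^0(y)-u^0(x)\ge\mu-\lambda>0$, so the uniform continuity of $u^0$ forces $\abs{x-y}\ge r$ for some $r=r(\mu-\lambda)>0$ independent of $x,y$, whence $\dist^{M^\circ}\!\bigl(\set{v^0\le\lambda},(\set{u^0<\mu})^\compl\bigr)\ge\delta>0$. Applying Theorem~\ref{th:flow-comparison} with $E=\set{v\le\lambda}$, $F=\set{u<\mu}$ (taking for $K$ the larger of the two constants attached to $E$ and $F$, which is still admissible in \eqref{dist-flow-inequality} since $d\ge0$) yields $\dist^{M^\circ}(\set{v\le\lambda}(t),\set{u<\mu}(t)^\compl)\ge\delta e^{-Kt}>0$, hence $\set{v\le\lambda}(t)\subseteq\set{u<\mu}(t)$ for all $t\ge0$; equivalently $v(x,t)\le\lambda\Rightarrow u(x,t)<\mu$ whenever $\lambda\in\Lambda_v$, $\mu\in\Lambda_u$, $\mu>\lambda$. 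Fixing $(x,t)$ and writing $\lambda_0:=v(x,t)$, for each $\lambda\in\Lambda_v\cap(\lambda_0,\infty)$ we get $u(x,t)<\mu$ for all $\mu\in\Lambda_u\cap(\lambda,\infty)$, so $u(x,t)\le\lambda$; letting $\lambda\downarrow\lambda_0$ along $\Lambda_v$ gives $u(x,t)\le v(x,t)$, which is the claim.

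The hard part is not the comparison step — that is immediate from Theorem~\ref{th:flow-comparison} — but the preparatory bookkeeping: one must unwind the complement / $f\mapsto-f$ convention in the definition of a subflow and match up the initial data correctly, and one must make sure that the constant $K$ in \eqref{dist-flow-inequality} can be taken independently of the levels $\lambda,\mu$ (it can, since it is controlled by the Lipschitz constant of $f$ with respect to $M^\circ$, uniformly in $t$). One should also keep in mind the degenerate cases where a sublevel set is empty or all of $\Rn$, for which all inclusions are trivial.
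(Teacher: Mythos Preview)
Your proposal is correct and follows essentially the same approach that the paper sketches: exploit uniform continuity of the initial data to separate the sublevel sets $\set{v^0\le\lambda}$ and $\set{u^0\ge\mu}$ for $\mu>\lambda$ by a positive $M^\circ$-distance, apply Theorem~\ref{th:flow-comparison}, and then pass to the limit over the co-null level sets. The paper gives only a one-sentence indication of this idea (the paragraph following the theorem), so your write-up is in fact considerably more detailed than what appears there.
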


The main idea of going from Theorem~\ref{th:flow-comparison} is that due to the uniform continuity, the superflow $\set{u \geq \lambda_1}$ and the superflow $\set{v \leq \lambda_2}$ for $\lambda_1 > \lambda_2$ are initially separated by a positive distance so that Theorem~\ref{de:level-set-flow} applies.

It remains to establish the existence of the level-set solutions. In the smooth case, the notion in the sense of Definition~\ref{de:level-set-flow} is equivalent to the standard notion of viscosity solutions. In general, an approximation by a sequence of smooth anisotropies $M_n$, $\sigma_n$ and a stability result established in \cite[Theorem~2.8]{CMNP_APDE} allows to construct a level-set solution as the limit of viscosity solutions. However, the stability result requires that the approximating sequence $M_n$ is \emph{uniformly $\sigma_n$ regular}, that is, it is required that there exists $\e_0 > 0$ such that
\begin{align*}
M_n = M_{0,n} + \e_0 \sigma_n
\end{align*}
for all $n$ for some convex functions $M_{0, n}$. Or equivalently, the Wulff shapes $W_{M_n}$ is must satisfy interior $W_\sigma$ condition uniformly in $n$.
Intuitively, if $M$ is $\sigma$ regular the level sets of $d := \dist^{M^\circ}(\cdot, E)$ have $\sigma$-curvature bounded by $C/d$ for some constant $C >0$.

In particular, this stability result is only able to construct level-set solutions in the sense of Definition~\ref{de:level-set-flow} if $M$ is $\sigma$-regular. Therefore the authors of \cite{CMNP_APDE} propose a definition of a solution via approximation.

\begin{definition}[{c.f. \cite[Definition~3.6]{CMNP_APDE}}]
\label{de:sol-via-approximation}
A continuous function $u: \Rn \times [0, \infty) \to \R$ is a \emph{solution via approximation} to the level set flow corresponding to \ref{de:flow} with initial datum $u^0$ if there exists a sequence $\set{M_n}$ of $\sigma$-regular mobilities such that $M_n \to M$ and, denoting $u_n$ the unique level-set solution of \ref{de:flow} with mobility $M_n$ and initial datum $u^0$, we have $u_n \to u$ locally uniformly in $\Rn \times [0, \infty)$.
\end{definition}

Such a solution always exists and is independent of the approximating sequence $\set{M_n}$.

\begin{theorem}[{c.f. \cite[Theorem~3.7]{CMNP_APDE}}]
\label{th:sol-by-approximation-existence}
Let $u^0$ be a uniformly continuous function on $\Rn$.
There exists a unique solution $u$ in the sense of Definition~\ref{de:sol-via-approximation} with initial datum $u^0$.
\end{theorem}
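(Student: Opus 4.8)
The plan is to realize the solution as a limit of level-set solutions associated with $\sigma$-regular mobilities, to recognize any such limit as a level-set solution of Definition~\ref{de:level-set-flow} for the original mobility $M$, and to obtain uniqueness from the comparison principle in Theorem~\ref{th:level-set-comparison}. For the given anisotropy $M$ I would simply take $M_n := M + \tfrac1n \sigma$: each $M_n$ is an anisotropy and is $\sigma$-regular (with $M_{0,n} = M$ and $\e_0 = 1/n$), so Definition~\ref{de:sol-via-approximation} applies once the unique level-set solution $u_n$ of \eqref{cmnp-flow} with mobility $M_n$ and datum $u^0$ is available. To produce $u_n$, fix $n$ and approximate $\sigma$ by smooth anisotropies $\sigma_k \to \sigma$ and $M$ by smooth anisotropies $M^{(k)} \to M$, and set $M_{n,k} := M^{(k)} + \tfrac1n \sigma_k$; for fixed $n$ the family $\set{M_{n,k}}_k$ is \emph{uniformly} $\sigma_k$-regular, the pairs $(M_{n,k},\sigma_k)$ are smooth so the classical viscosity theory gives solutions, and the stability result \cite[Th.~2.8]{CMNP_APDE} lets me pass to the limit $k \to \infty$ to obtain $u_n$; its uniqueness is Theorem~\ref{th:level-set-comparison}.

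\textbf{Uniform estimates and compactness.} Comparing $u_n$ with its spatial translates via Theorem~\ref{th:level-set-comparison} shows that $u_n(\cdot,t)$ keeps the modulus of continuity of $u^0$, uniformly in $n$ and $t$. Comparing $u_n$ at a point with large shrinking $M_n^\circ$-Wulff balls as barriers -- whose normal speed stays bounded because $M_n \to M$ and $f \in L^\infty$ -- gives a uniform-in-$n$ modulus of continuity in $t$ together with the initial-layer estimate $u_n(\cdot,0) \to u^0$. Hence $\set{u_n}$ is locally equibounded and equicontinuous, and by Arzel\`a--Ascoli every subsequence has a further subsequence converging locally uniformly to some continuous $u$ with $u(\cdot,0) = u^0$.

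\textbf{Identifying the limit.} Fix such a limit $u$. For a.e.\ $\lambda \in \R$ the sublevel sets $\set{u_n \le \lambda}$ are superflows with mobility $M_n$, and I claim $\set{u \le \lambda}$ is a superflow with mobility $M$. Local uniform convergence yields, for a.e.\ $\lambda$, the Kuratowski convergence $\set{u_n \le \lambda}(t) \stackrel{\mathcal K}{\to} \set{u \le \lambda}(t)$ for every $t$, and since $M_n \to M$ forces $M_n^\circ \to M^\circ$ uniformly, the distances $d_n(\cdot,t) := \dist^{M_n^\circ}(\cdot,\set{u_n \le \lambda}(t))$ converge locally uniformly to $d := \dist^{M^\circ}(\cdot,\set{u \le \lambda}(t))$. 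The fields $z_n \in \partial \sigma(\nabla d_n)$ are bounded in $L^\infty$, so along a subsequence $z_n \towstar z$ with $\divo z_n \to \divo z$ in $\mathcal D'$ and $z \in \partial \sigma(\nabla d)$ a.e.\ -- here it is essential that $\sigma$ is \emph{fixed}, so that passing to the limit in $\sigma(q) \ge \sigma(\nabla d_n) + z_n \cdot (q - \nabla d_n)$ (using $\nabla d_n \to \nabla d$ a.e.) is legitimate. Passing to the limit in \eqref{dist-flow-inequality}, with $K_n$ bounded since it is essentially $\operatorname{Lip}(f)$, gives $d_t \ge \divo z + f - K d$, provided the one-sided bounds $(\divo z_n)^+ \le C(\delta)$ on $\set{d_n \ge \delta}$ are uniform in $n$. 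Conditions (a)--(c) of Definition~\ref{de:flow}, and the analogous statement for $\interior \set{u < \lambda}$ obtained by working with $-u_n$ and $-f$, pass to the limit from the continuity and level-set structure of $u$. Thus $u$ is simultaneously a level-set sub- and supersolution of \eqref{cmnp-flow} for $M$ with datum $u^0$, and Theorem~\ref{th:level-set-comparison} shows there is exactly one such $u$. Consequently the full sequence $u_n$ converges locally uniformly, the limit does not depend on the approximating sequence $\set{M_n}$, and $u$ is the unique solution via approximation with initial datum $u^0$.

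\textbf{Main obstacle.} The delicate point is the stability argument of the previous paragraph: since $M_n = M + \tfrac1n \sigma$ is $\sigma$-regular only with $\e_0 = 1/n \to 0$, the sequence is \emph{not} uniformly $\sigma$-regular, so \cite[Th.~2.8]{CMNP_APDE} cannot be quoted verbatim and one must show by hand that the bounds $(\divo z_n)^+ \le C(\delta)$ on $\set{d_n \ge \delta}$ are uniform in $n$. This amounts to a curvature estimate $\kappa_\sigma \le C/d$ for the level sets of the $M_n^\circ$-distance function that survives the limit $M_n \to M$, and it is precisely here that the relative geometry of $M$ and $\sigma$ enters.
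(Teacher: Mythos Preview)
The survey does not give its own proof of this theorem; it only records the statement and cites \cite[Theorem~3.7]{CMNP_APDE}. So the comparison below is with the argument of \cite{CMNP_APDE} as the paper describes it.

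Your compactness step is fine, but the ``identifying the limit'' step contains a genuine gap, and it is exactly the one you flag in your final paragraph. You are trying to show that the limit $u$ is a level-set solution for the \emph{original} mobility $M$ in the sense of Definition~\ref{de:level-set-flow}. If that were possible in general, the notion ``solution via approximation'' would be superfluous: level-set solutions would already exist for arbitrary $M$, and the paper is explicit that the stability result only produces them when $M$ is $\sigma$-regular. The obstruction is precisely the uniform bound $(\divo z_n)^+ \leq C(\delta)$ on $\set{d_n \geq \delta}$ that you hope for. For $\sigma$-regular mobilities this bound comes from an interior $W_\sigma$-ball condition on the level sets of $\dist^{M_n^\circ}$, with radius proportional to $\e_0 d$. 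Since your $M_n = M + \tfrac1n \sigma$ are $\sigma$-regular only with $\e_0 = 1/n \to 0$, the constant blows up and there is in general no replacement: for a mobility $M$ unrelated to $\sigma$ (say crystalline with a different Wulff shape than $W_\sigma$) the level sets of $\dist^{M^\circ}$ need not have bounded $\sigma$-curvature at all, so the limiting $d$ cannot carry a Cahn--Hoffman field with the required $L^\infty$ control on $(\divo z)^+$. This route is blocked, not merely technically delicate; your own closing sentence is an accurate diagnosis but not a resolution.

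The argument in \cite{CMNP_APDE} avoids this by never identifying the limit as a level-set solution for $M$. Instead one shows directly that the family $(u_n)$ is Cauchy and that the limit is independent of the approximating sequence. The mechanism is a comparison \emph{between different $\sigma$-regular mobilities}: if $M'$ and $M''$ are both $\sigma$-regular and $(1-\eta)M' \leq M'' \leq (1+\eta)M'$, then a superflow for $M'$ is, up to a controlled error in the forcing or a small time rescaling, a superflow for $M''$, and the quantitative comparison of Theorem~\ref{th:flow-comparison} together with the uniform modulus of $u^0$ yields $\norm{u_n - \tilde u_m}_\infty \to 0$ whenever $M_n, \tilde M_m \to M$. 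Uniqueness then falls out without ever invoking Theorem~\ref{th:level-set-comparison} at the limiting mobility. You should reorganize your argument around such a direct comparison of $u_n$ with $u_m$ (or with $\tilde u_m$ for a second approximating sequence), rather than around a level-set structure for $M$ that may not exist.
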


Alternatively, the level-set flow solutions in Definition~\ref{de:level-set-flow} and the solutions via approximation Definition~\ref{de:sol-via-approximation} can be constructed using a minimizing movement scheme; see \cite{CMNP} and the discussion in Section~\ref{AV4}. To be more precise, for given initial data $u^0$ one can define the level set discrete evolution $u_h: \Rn \times \R \to \R$ as
\begin{align*}
u_h(x, t) := \inf\set{\lambda \in \R: x \in E_{\lambda,h}(t)},
\end{align*}
where $E_{\lambda, h}(t)$ is the discrete evolution given by Chambolle's scheme in \eqref{discrete-flow} with $E_0 := \set{u_0 \leq \lambda}$. The following result was proved in \cite[Th.~5.7]{CMNP}.

\begin{theorem}
\label{th:atw-approximation}
Let $u^0$ be a uniformly continuous function on $\Rn$. The unique solution of \eqref{cmnp-flow} in Theorem~\ref{th:sol-by-approximation-existence} is the locally uniform limit in $\Rn \times [0, +\infty)$ as $h \to 0^+$ of the level set minimizing movements $u_h$.
\end{theorem}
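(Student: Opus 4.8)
\noindent
The plan is to prove the convergence in three stages: (1) show that the family $\{u_h\}_{h>0}$ of level-set minimizing movements is precompact in the topology of locally uniform convergence; (2) identify, when $M$ is $\sigma$-regular, every subsequential limit as the unique level-set solution of Definition~\ref{de:level-set-flow}, which in that case coincides with the solution via approximation of Definition~\ref{de:sol-via-approximation}; and (3) pass to a general mobility $M$ by approximating it with $\sigma$-regular mobilities and interchanging the limits $h\to 0$ and $n\to\infty$.

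For stage (1), the structural fact is the comparison principle for the resolvent problem, Proposition~\ref{pr:resolvent-comp-principle}, which shows that Chambolle's operator $T_h$ is inclusion-preserving; hence the discrete evolutions $E_{\lambda,h}(t)$ are monotone in $\lambda$, the level-set function $u_h$ is well defined, and it inherits a modulus of continuity from $u_0$ in a quantitative way as soon as uniform barriers are available. These are furnished by the homothetic Wulff-shape sub- and supersolutions of Section~\ref{ES} and Example~\ref{ex:wulff-facet}: comparing a sublevel set $\set{u_0 \leq \lambda}$ from inside and outside with large and small dilated Wulff shapes gives a uniform $L^\infty$ bound on $u_h$, finite speed of propagation, and an explicit modulus of continuity in $t$, while translation invariance of the functional $J_h$ together with the comparison principle yields equicontinuity in $x$. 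Arzel\`a--Ascoli then produces a locally uniformly convergent subsequence $u_{h_k} \to u$ with $u$ continuous.

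Stage (2) is the heart of the matter and I assume here that $M$ is $\sigma$-regular. Fix $\lambda$, write $E^h(t) := \set{u_h(\cdot,t)\leq\lambda}$ for the discrete flow \eqref{discrete-flow} started from $\set{u_0\leq\lambda}$, and $E(t):=\set{u(\cdot,t)\leq\lambda}$. The minimizer $w$ of $J_h(\cdot, E_0)$ satisfies the Euler--Lagrange equation $\tfrac{w-\dist^{M^\circ}(\cdot,E_0)}{h}=\divo z$ with $z\in\partial\sigma(\nabla w)$, i.e.\ $w$ solves the anisotropic resolvent problem with parameter $h$ and right-hand side the $M^\circ$-signed distance to $E_0$; comparison with curved Wulff barriers, which is where $\sigma$-regularity of $M$ enters, bounds $(\divo z)^+$ by $C/d$ on $\set{d\geq\delta}$ uniformly in $h$, where $d(x,t)=\dist^{M^\circ}(x,E(t))$. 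Interpolating the discrete evolution affinely in time and sending $h_k\to0$ via Proposition~\ref{pr:resolvent-approximation} and lower semicontinuity of the anisotropic total variation, one shows that $d$ satisfies the distributional inequality \eqref{dist-flow-inequality} with the required integrability of $(\divo z)^+$; conditions (a)--(c) of Definition~\ref{de:flow} follow from the Kuratowski convergence $E^{h_k}(t)\to E(t)$ and the continuity of $u$. Thus $\set{u\leq\lambda}$ is a superflow for a.e.\ $\lambda$, and, repeating the argument with the outer distance and complements, $\set{u\geq\lambda}$ generates a subflow, so $u$ is a level-set solution; by Theorem~\ref{th:level-set-comparison} it is unique, hence the whole family $u_h$ converges to it, and this limit is the solution via approximation (take $M_n\equiv M$).

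For stage (3), let $M$ be arbitrary and choose $\sigma$-regular mobilities $M_n\to M$ with level-set solutions $u_n$, so that $u_n\to u$, the solution via approximation, by Definition~\ref{de:sol-via-approximation}. Denoting $u_h^{(n)}$ the scheme \eqref{discrete-flow} run with $M_n$, stage (2) gives $u_h^{(n)}\to u_n$ as $h\to0$ for each $n$; on the other hand the functionals $J_h$ built from $M_n$ converge (through $\dist^{M_n^\circ}\to\dist^{M^\circ}$ locally uniformly) with convergence of minimizers and with estimates uniform in $h$, again thanks to the common Wulff-type barriers, so that $u_h^{(n)}\to u_h$ as $n\to\infty$ uniformly in $h$ on compact sets. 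A three-$\e$ interchange of limits then yields $u_h\to u$. The step I expect to be the main obstacle is the distributional passage to the limit in \eqref{dist-flow-inequality} inside stage (2): because $\divo z$ is only a Radon measure and Chambolle's scheme is only first-order consistent in $h$, one must control the time-interpolation error carefully and, above all, establish the uniform-in-$h$ one-sided bound on $(\divo z)^+$ away from the interface, which is exactly the point where $\sigma$-regularity of the mobility and comparison with curved Wulff barriers become indispensable.
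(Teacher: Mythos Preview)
The paper does not prove this theorem; it is quoted from \cite[Th.~5.7]{CMNP} without argument, so there is no in-paper proof to compare against directly. Your three-stage architecture (compactness of $\{u_h\}$, identification of limits as level-set solutions when $M$ is $\sigma$-regular, passage to general $M$ by approximation) is indeed the strategy of the cited reference.

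Two remarks on the proposal itself. First, a minor point in Stage~(1): your claim of a ``uniform $L^\infty$ bound on $u_h$'' is wrong as stated, since $u^0$ is only uniformly continuous on $\Rn$ and may be unbounded (take $u^0(x)=|x|$). What you need, and essentially have, is spatial equicontinuity from translation invariance plus the comparison principle, together with temporal equicontinuity from Wulff barriers applied level-by-level; local boundedness then follows from $u_h(\cdot,0)=u^0$.

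Second, and more substantively, your Stage~(3) hinges on the assertion that ``$u_h^{(n)}\to u_h$ as $n\to\infty$ uniformly in $h$ on compact sets'', which you do not justify. Locally uniform convergence of $\dist^{M_n^\circ}$ to $\dist^{M^\circ}$ does not by itself yield convergence of the minimizers of $J_h$ with a rate independent of $h$, because the problems degenerate as $h\to 0$. The route actually taken in \cite{CMNP} avoids this: one approximates $M$ from above and below by $\sigma$-regular mobilities $M_n^\pm$, uses a monotonicity of the discrete scheme in the mobility to sandwich $u_h$ between the corresponding discrete evolutions, applies the Stage~(1) compactness directly to $u_h$, and then traps any subsequential limit between the level-set solutions $u_n^\pm$, both of which converge to the solution via approximation. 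This sidesteps the uniform-in-$h$ estimate entirely and is the step you should replace.
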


Here are the types of solutions that are currently available if velocity law is linear in curvature, i.e., of the form \eqref{cmnp-flow}, and the initial data $u^0$ is constant outside of a bounded ball:

\begin{itemize}
\item $\sigma$ smooth, $M$ arbitrary: classical viscosity solutions \cite{CGG}
\item $\sigma$ purely crystalline, $M$ arbitrary: crystalline viscosity solutions \cite{GP3}
\item $\sigma$ arbitrary, $M$ is $\sigma$-regular: level-set solutions \cite{CMNP,CMNP_APDE} 
\item $M$, $\sigma$ arbitrary: solutions via approximation \cite{CMNP,CMNP_APDE} 
\end{itemize}

If the velocity law is not linear in curvature, only the viscosity solutions are currently available. On the other hand, the latter two notions apply also to general uniformly continuous initial data.

If the law is linear in the curvature, $\sigma$ is purely crystalline and $u^0$ is constant outside of a large ball, so that the notions of crystalline viscosity solutions and solutions via approximation both apply, they also give the same solutions. This can be seen by applying stability properties under the approximation of $\sigma$ by smooth $\sigma_n$. 

\begin{center}
\begin{tabular}{|l|c|c|}
\hline
Notion of solutions & $\sigma$ & $M$\\
\hline
classical viscosity solutions \cite{CGG} & $C^2$ & any+\\
crystalline viscosity solutions \cite{GP3}& purely crystalline& any+\\
level-set solutions  \cite{CMNP,CMNP_APDE}&any& $\sigma$-regular\\ 
solutions via approximation \cite{CMNP,CMNP_APDE}& any & any\\ 
\hline
\end{tabular}
\end{center}
any+: allows any nonnegative function, not just anisotropies.

\section{Some numerics} \label{SN}

The study of the crystalline mean curvature flow using numerical methods goes back to the seminal work of J.~E.~Taylor, who developed the \emph{crystalline algorithm} based on the polygonal flow in Section~\ref{PF} in both two and three dimensions \cite{T0,Taylor3Dsim}, including spiral growth in two dimensions and observation of possible facet breaking in three dimensions. 
Examples of facet breaking were further numerically investigated in \cite{NP}.

In higher dimension, the crystalline algorithm is limited to evolutions in which topological changes or facet breaking do not occur, or the result of facet breaking can be computed and produces facets with somewhat simple topology.
In a more general situation, the level set method is popular to track the evolution past singularities. However, the level set equation for the crystalline mean curvature is rather singular and so its direct use is limited.

An anisotropic version of the Allen--Cahn equation was used to approximate the crystalline mean curvature flow in three dimensions in \cite{PP}. In particular, an example of facet bending was demonstrated.

A.~Chambolle reformulated the minimizing movements scheme of \cite{ATW} and \cite{LS} for anisotropic mean curvature flow in terms of the signed distance function as the level set function and proposed a numerical method to solve the resulting minimization problem in \cite{Cha} (see Section~\ref{AV4} for more details). In \cite{OOTT} it was observed that the minimization problem in Chambolle's scheme can be solved efficiently using the split-Bregman method for the total variation minimization \cite{GO}, and presented computational results for two dimensional crystalline mean curvature flow. However, the method easily generalizes to any dimension; see \cite{Po} computational results for three dimensional evolutions.

It is also possible to regularize the crystalline anisotropy and consider the almost-crystalline but smooth anisotropic mean curvature flow, with many numerical methods available. 
One way to approximate the smooth anisotropic mean curvature flow numerically is using the Allen--Cahn equation \eqref{allen-cahn} with double obstacle potentials (see \S \ref{AV4})  \cite{BGN_NM,BGN_IFB,BGN_ADV}. 
For estimates of the Allen--Cahn approximation see for example \cite{ElPS}.
Another possibility is to track the evolving surface explicitly using a parametric approach \cite{Dziuk, BGN_ZAMM,BGN_IMA}. 

For an extensive review of the early numerical approaches see \cite{DDE}.

\section{Volume-preserving and fourth-order problems} \label{FO}

\subsection{Volume preserving flow} \label{FO1}

In many applications it is important to impose that the volume of the set surrounded by the evolving surface is preserved. Examples include crystal growth, droplet motion and bubbles. A common way to achieve this for the mean curvature flow is to add a Lagrange multiplier to the velocity law. Consider a family of hypersurfaces $\set{\Gamma_t}$ with $\Gamma_t = \partial \Omega_t$ for some evolving set $\set{\Omega_t}$ that evolves with the velocity law
\begin{align*}
V = g(\nu, \kappa_\sigma + \lambda)\qquad \text{on $\Gamma_t = \partial \Omega_t$.}
\end{align*}
Here the forcing term $\lambda = \lambda(t)$ is chosen so that
\begin{align*}
|\Omega_t| = |\Omega_0| \qquad t \geq 0.
\end{align*}
If $\set{\partial \Omega_t}$ is sufficiently smooth, we have
\begin{align*}
\frac{d}{dt} |\Omega_t| = \int_{\partial \Omega_t} V \;d\mathcal{H}^{n-1},
\end{align*}
and $\lambda(t)$ must be chosen so that
\begin{align*}
\int_{\partial \Omega_t} g(\nu, \kappa_\sigma + \lambda(t)) \;d\mathcal{H}^{n-1} = 0, \qquad t \geq 0.
\end{align*}
In general, the regularity of $\lambda$ is not clear.

The problem has been studied in the case of linear dependence on $\kappa_\sigma$,
\begin{align*}
V = M(\nu)(\kappa_\sigma + \lambda).
\end{align*}
For convex initial data, the existence of solutions and convergence to the Wulff shape $W_\sigma$ was shown in \cite{And01} for smooth $\sigma$, and in \cite{BCCN09} for nonsmooth $\sigma$, generalizing the classical result for the isotropic mean curvature flow of \cite{Hui}. 
For a planar crystalline flow, a similar result has been proved by \cite{Ya02}.
 Moreover, it approximates corresponding smooth problems as proved in \cite{UYa}.

For general initial data, the existence of solutions still remains mostly open. In the isotropic case, global existence results are available under a certain energy convergence assumption \cite{MSS16,LS17}. 

One can also consider initial data for which topological changes do not occur like star-shaped sets in the isotropic case \cite{KK20}  or sets that satisfy a certain reflection symmetry property in the anisotropic case including some crystalline flow \cite{KKP}.

\subsection{Fourth-order problem} \label{FO2}

We begin with a fourth-order model to describe a relaxation process of a crystal surface by surface diffusion under the roughening temperature, which is proposed by \cite{Sp} as mentioned in Section \ref{SM}.
 It is explicitly written as
\[
	w_t = -\Delta \big( \operatorname{div}\left(\nabla w/|\nabla w| \right) + \beta \operatorname{div} \left(|\nabla w| \nabla w \right) \big)
\]
with $\beta>0$, where $w(x,t)$ represents the height of a crystal at $x$ and at time $t$.
 Fortunately, this can be handled by the theory of maximal monotone operators \cite{GG10}, \cite{GK}.
 Let $H^1_{\mathrm{av}}(\mathbb{T}^n)$ denote the space of average-free $H^1$ functions equipped with the inner product
\[
	(f, g)_1 := \sum^n_{i=1} \int_{\mathbb{T}^n} \partial_{x_i} f \partial_{x_i} g \; dx.
\]
In other words,
\[
	H^1_{\mathrm{av}}(\mathbb{T}^n) = \left\{ f \in L^2(\mathbb{T}^n) \Bigm| 
	\|f\|_{H^1} = (f,f)^{1/2}_{H^1} < \infty,\ 
	\int_{\mathbb{T}^n} f dx = 0 \right\}.
\]
It is of course a Hilbert space.
 This space is densely embedded in
\[
	L^2_{\mathrm{av}}(\mathbb{T}^n) = \left\{ f \in L^2(\mathbb{T}^n) \Bigm| 
	\int_{\mathbb{T}^n} f dx = 0 \right\}.
\]
The dual space of $H^1_{\mathrm{av}}$ (under $L^2$ pairing) is denoted by $H^{-1}_{\mathrm{av}}$.
 The canonical isomorphism from $H^1_{\mathrm{av}}$ to $H^{-1}_{\mathrm{av}}$ is denoted by $-\Delta$ and it agrees with the usual minus Laplacian for distributions.
 The space $H^{-1}_{\mathrm{av}}(\mathbb{T}^n)$ is a Hilbert space equipped with the inner product
\[
	(f,g)_{-1} := \left\langle(-\Delta)^{-1} f, g \right\rangle,
\]
where $\langle\ ,\ \rangle$ denotes a canonical pairing of $H^1_{\mathrm{av}}$ and $H^{-1}_{\mathrm{av}}$.
 This $H^{-1}_{\mathrm{av}}(\mathbb{T}^n)$ is our basic Hilbert space.
 We set energy
\[
	\mathcal{E}_{\beta,p}(w) := \int_{\mathbb{T}^n} |\nabla w| + \frac{\beta}{p} \int_{\mathbb{T}^n} |\nabla w|^p dx
\]
with $p>1$, $\beta\geq 0$.
 We consider the gradient flow of $\mathcal{E}_{\beta,p}$ in $H^{-1}_{\mathrm{av}}(\mathbb{T}^n)$, i.e.,
\begin{equation} \label{4GF}
	w_t \in -\partial\mathcal{E}_{\beta,p}(w).
\end{equation}
Formally, this is an equation
\[
	w_t = -\Delta \Big( \operatorname{div}\left( \nabla w/|\nabla w|\right) + \beta \operatorname{div}\left( |\nabla w|^{p-2} \nabla w \right)\Big). 
\]
If $\beta=0$, this is nothing but the fourth-order total variation flow.
 A general theory guarantees the global-in-time existence of a solution to \eqref{4GF} with $\beta\geq 0$, $p>1$ for any initial data $w_0 \in H^{-1}_{\mathrm{av}}(\mathbb{T}^n)$ since $\mathcal{E}_{\beta,p}$ is a lower semicontinuous convex functional on $H^{-1}_{\mathrm{av}}(\mathbb{T}^n)$.
 The important difference between second-order and fourth-order is that in the latter the comparison principle fails.
 Here is an example for the case $\beta=0$, which implies that the comparison principle should not hold.
\begin{theorem}[\cite{GG10}] \label{9DIS}
For the fourth-order total variation flow \eqref{4GF} ($\beta=0$), the solution may become discontinuous in space even if the initial data is Lipschitz continuous.
\end{theorem}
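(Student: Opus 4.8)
It suffices to produce one Lipschitz initial datum whose solution develops a spatial discontinuity, and I would do this in one space dimension, on $\mathbb{T}^1 = \mathbb{R}/\mathbb{Z}$, where $\mathcal{E}(w) = \int_{\mathbb{T}^1}|w_x|$ and the underlying Hilbert space is $H^{-1}_{\mathrm{av}}(\mathbb{T}^1)$. The first task is to make the speed $-\partial^0\mathcal{E}(w)$ explicit. For $f\in H^{-1}_{\mathrm{av}}$ set $\phi := (-\Delta)^{-1}f\in H^1_{\mathrm{av}}$, so that $f=-\Delta\phi$ and $(f,v-w)_{-1} = \int_{\mathbb{T}^1}\phi\,(v-w)\,dx$ for $v,w\in BV$; hence $f\in\partial\mathcal{E}(w)$ iff $\phi$ belongs to the $L^2$-subdifferential of the total variation at $w$. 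By Corollary~\ref{co:lip-subdiff-char} with $\sigma(p)=|p|$ this means $\phi=-\partial_x z$ for a Cahn--Hoffman field $z\in L^\infty(\mathbb{T}^1)$ with $|z|\le1$, $z=\operatorname{sgn} w_x$ a.e.\ on $\{w_x\neq0\}$, and in addition $\phi\in H^1$, i.e.\ $z\in H^2$. Since $\|f\|_{-1}^2 = \|\phi_x\|_{L^2}^2 = \|z_{xx}\|_{L^2}^2$, the minimal section is, up to a sign convention, $-\partial^0\mathcal{E}(w) = \partial_x^3 z_{\mathrm{min}}$, where $z_{\mathrm{min}}$ minimizes $\int_{\mathbb{T}^1}|z_{xx}|^2\,dx$ among the admissible Cahn--Hoffman fields for $w$ --- a fourth-order obstacle problem. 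This is the structural reason the theorem can hold: whereas in the second-order ($L^2$) flow the minimal section is $\partial_x z_{\mathrm{min}}$, a bounded $BV$ function, so that Lipschitz data stays Lipschitz, in the fourth-order flow the speed carries \emph{two more spatial derivatives} of a field that the obstacle problem controls only up to $C^{1,1}$-type regularity, so it a priori lies only in $H^{-1}(\mathbb{T}^1)$.

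Next I would exhibit a concrete piecewise-linear Lipschitz profile $w_0$ together with the solution it generates on a short time interval (allowing the discontinuity to appear either at $t=0^+$ or at a positive time $t_1$). The profile is built from flat pieces (facets) joined by linear segments of nonzero slope, chosen so that at each moment the obstacle problem above is solved by a finite assembly of constant pieces (on the linear wings, where $z$ is pinned to $\pm1$) and cubic-Hermite pieces (on the facets), and so that the resulting motion of facets forces a steep segment to collapse: the candidate solution is the piecewise-linear evolution that carries, at the relevant interface $x_0$, a jump of height $h(t)$ with $h(0)=0$ and $\dot h\neq0$. The verification that this ansatz is indeed the solution would proceed time-slice by time-slice: for each $t$ one re-solves the obstacle problem for the (now jumped) $w(\cdot,t)$, using that across a jump the admissible $z$ is pinned to $\pm1$ at $x_0$ --- the condition that replaces the $C^1$-matching valid on the smooth side --- and checks that $\partial_x^3 z_{\mathrm{min}}$ reproduces $w_t(\cdot,t)$ and that the piecewise-linear combinatorial structure persists. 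Uniqueness from the theory of maximal monotone operators (Brezis) then identifies the ansatz with the solution of the fourth-order total variation flow \eqref{4GF} with $\beta=0$, and it is discontinuous in $x$ for $t>0$ although $w_0$ is Lipschitz.

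The main obstacle is this second step, not the first. The delicate point is the self-consistency of the jumping profile: one must choose the geometry so that along the collapsing interface $-\partial^0\mathcal{E}$ is genuinely a \emph{bounded discontinuous function} --- not a Radon measure with an atom, which would instantly give $w(t)$ a Dirac mass, force $\mathcal{E}(w(t))=+\infty$, and contradict the energy monotonicity $\mathcal{E}(w(t))\le\mathcal{E}(w_0)$. This pins down exactly which borderline configurations of facets and slopes are admissible and demands a careful regularity analysis of the fourth-order obstacle problem near the free boundary and near the jump set (one needs $z_{\mathrm{min}}\in C^{2,1}$ there, not merely $C^{1,1}$). Once that balance is identified, verifying that the jump location and the facet pattern survive on a positive time interval rather than degenerating instantaneously is the remaining bookkeeping. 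Finally, because the $H^{-1}$ metric does not detect pointwise jumps, the naive expansion $w(t)\approx w_0 - t\,\partial^0\mathcal{E}(w_0)$ is only heuristic; writing the solution down explicitly and invoking uniqueness is precisely what makes the appearance of the discontinuity rigorous.
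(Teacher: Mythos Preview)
Your broad strategy---work in one dimension, characterize $\partial^{0}\mathcal E$ as (a sign of) $\partial_x^{3}z_{\min}$ where $z_{\min}$ minimizes $\int_{\mathbb T^1}|z_{xx}|^{2}$ over Cahn--Hoffman fields in $H^{2}$, and then exhibit an explicit piecewise-linear $w_0$---is exactly the route the paper indicates (the proof is an explicit $n=1$ example from \cite{GG10}). Your identification of the variational problem for $z_{\min}$ is correct up to a harmless sign.

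The substantive gap is that you have the mechanism backwards. You insist on choosing the geometry so that the minimal section is a \emph{bounded} discontinuous function rather than a measure with atoms, arguing that a $\delta$ in the speed would force $w(t)$ to carry a Dirac mass and hence leave $BV$. But the paper states explicitly, just below the theorem, that ``the value of $\partial^{0}\mathcal E_{\beta,p}$ may contain $\delta$-type function ($n=1$), which yields instant discontinuity of a solution''---the $\delta$ is the mechanism, not something to be avoided. Your objection conflates the $H^{-1}$ right-derivative statement $\|w(t)-w_0+t\,\partial^{0}\mathcal E(w_0)\|_{H^{-1}}=o(t)$ with a pointwise Taylor expansion: the general theory already guarantees $w(t)\in BV$ for $t>0$ (energy is nonincreasing), so the atom in the speed manifests as an instantly created \emph{jump} in $w(t)$, not as a Dirac mass. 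Concretely, take a piecewise-linear $w_0$ with a maximum plateau $[a,b]$ flanked by nonzero slopes; then $z_{\min}$ is pinned to $\pm1$ on the slopes (so $z_x=z_{xx}=0$ there) and is the cubic Hermite interpolant on $[a,b]$ matching $z(a)=1$, $z(b)=-1$, $z_x(a)=z_x(b)=0$. One checks $|z_{\min}|\le1$ on $[a,b]$, while $z_{\min,xx}$ has nonzero one-sided limits at $a$ and $b$, so $z_{\min,xxx}$ carries point masses there. That is the content of the cited example.

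By contrast, the self-consistent ``bounded-discontinuous-speed'' evolution you sketch runs into an immediate obstruction: once the plateau drops and a downward jump forms at $x=a$, the Cahn--Hoffman constraint at the jump forces $z(a)=-1$, while on the adjacent rising slope $z\equiv+1$; continuity of $z\in H^{2}$ makes these incompatible. So the configuration you are trying to engineer is exactly the one for which $\partial\mathcal E$ threatens to be empty, and the ``remaining bookkeeping'' you defer is where the proposal would fail. The actual work in \cite{GG10} is to compute the solution explicitly for the profile above and read off the jump---embracing, not avoiding, the $\delta$ in $\partial^{0}\mathcal E(w_0)$.
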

In \cite{GG10}, this is proved by giving an explicit example for $n=1$, which works for general $n$.
 For the second-order problem, the comparison principle yields Lipschitz preserving property.
 Indeed, if the initial data $w_0$ is $L$-Lipschitz, then
\[
	w_0(x) \leq w_0(x+h) + Lh =: w_{0h}.
\]
The solution starting with $w_{0h}$ is $w(x+h,t)+Lh$.
 If the comparison principle were valid, we would have
\[
	w(x,t) \leq w(x+h,t) + Lh.
\]
Similarly,
\[
	w(x,t) \geq w(x+h, t) - Lh,
\]
so we would have $\left|w(x,t) - w(x+h,t)\right|\leq Lh$.
 Theorem \ref{9DIS} shows that the comparison principle fails for \eqref{4GF} with $\beta=0$. 

Note that for $\beta>0$, $w(\cdot,t)$ is spatially continuous for $n=1$ since $\mathcal{E}_{\beta,p}(w)<\infty$ implies continuity.

There is a characterization of the subdifferential $\partial \mathcal{E}_{\beta,p}$ in $H^{-1}_{\mathrm{av}}(\mathbb{T}^n)$ or  similar space see \cite{Ka1}, \cite{Ka2} for $\beta>0$ and \cite{GK} for $\beta=0$.
 The minimal section is also calculated in \cite{Ka1} and \cite{GG10} in the case $n=1$;
 for radial case with $\beta>0$, see \cite{Ka2}.
 There are a few differences between second-order and fourth-order problem.
 First, the value of $\partial^\circ \mathcal{E}_{\beta,p}$ on a facet is not determined in a neighborhood of a facet in fourth-order problem.
 This is in some sense expected because of a ``nonlocal property'' of a norm on $H^{-1}_{\mathrm{av}}$.
 Second, the value of $\partial^\circ \mathcal{E}_{\beta,p}$ may contain $\delta$-type function ($n=1$), which yields instant discontinuity of a solution in Theorem \ref{9DIS}.

Of course, there are several common properties between second-order and fourth-order problems.
 For example, the solution will stop to move in finite time.
 In fourth-order problems, it is only known for $n=1,2,3,4$.
 Let $T_*(w_0)$ be the extinction time of the solution of \eqref{4GF}, i.e.,
\[
	T_*(w_0) = \sup \left\{ t \in \mathbb{R} \mid w(x,t) \not\equiv 0 \right\}.
\] 
\begin{theorem}[\cite{GK}] \label{9FEX}
Let $w$ be the solution of \eqref{4GF} with initial data $w_0 \in H^{-1}_{\mathrm{av}}$.
 There exists a constant $C$ depending only on $\omega_i$ and $n$ ($\mathbb{T}^n=\Pi^n_{i=1}(\mathbb{R}/\omega_i \mathbb{Z})$) (independent of dilation) such that
\begin{align*}
	&T_*(w_0) \leq C\|w_0\|_{H^{-1}_{\mathrm{av}}} \quad\text{for}\quad n=4 \\
	&T_*(w_0) \leq \frac{\|w_0\|_X}{a} \left( \left(1+\frac{a \|w_0\|^\alpha_{H^{-1}_{\mathrm{av}}}}{C\|w_0\|^\alpha_X} \right)^{1/\alpha} -1 \right)
	\quad\text{for}\quad 1\leq n \leq 4,\ 1\leq p\leq\infty 
\end{align*}
with $\theta\in\left(\frac{1}{2},1\right]$ satisfying $1+\frac{n}{2}=\theta(n-1)+(1-\theta)(3+n/p)$, where $a=(\omega_1\cdots\omega_N)^{1/p}$, $\alpha=2-1/p$ and $\|w_0\|_X=\left\|(-\Delta)^{-1} w_0\right\|_{\dot{W}^{-1,p}}$.
\end{theorem}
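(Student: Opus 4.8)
The plan is to reduce the statement to a scalar differential inequality for $y(t) := \norm{w(t)}_{H^{-1}_{\mathrm{av}}}$ and then integrate it. For $\beta = 0$ the energy $\mathcal E_{0,p} = E$ does not depend on $p$, is positively one-homogeneous, and vanishes at $0$; testing the subgradient inequality for $-w_t \in \partial E(w(t))$ in $H = H^{-1}_{\mathrm{av}}(\mathbb{T}^n)$ against $h = -w(t)$ gives the dissipation estimate
\begin{align*}
\tfrac{1}{2}\tfrac{d}{dt}\, y(t)^2 = (w_t, w)_{-1} \le -E[w(t)] = -\int_{\mathbb{T}^n}|\nabla w(t)|.
\end{align*}
One works with right derivatives and uses the smoothing effect $w(t)\in BV(\mathbb{T}^n)$ for $t>0$, so that the right-hand side is finite; the passage $t\to 0^+$ and the value at $t=0$ are recovered from the resolvent approximation in Proposition~\ref{pr:resolvent-approximation}. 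Everything then reduces to bounding $\int_{\mathbb{T}^n}|\nabla w|$ from below in terms of $y$.

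For the $n=4$ bound, the key is the scale-critical Sobolev inequality $\norm{w}_{H^{-1}_{\mathrm{av}}} \le C\,|Dw|(\mathbb{T}^4)$ for average-free $w$. I would prove it by writing $\norm{w}_{H^{-1}_{\mathrm{av}}}^2 = \int_{\mathbb{T}^4}\bigl((-\Delta)^{-1}w\bigr)\,w \le \norm{(-\Delta)^{-1}w}_{L^4}\norm{w}_{L^{4/3}}$ and then chaining the Sobolev--Poincar\'e inequality $\norm{w}_{L^{4/3}}\le C\,|Dw|(\mathbb{T}^4)$ (here $4/3 = 4/(4-1)$), the Calder\'on--Zygmund estimate $(-\Delta)^{-1}w\in W^{2,4/3}(\mathbb{T}^4)$, and the critical embedding $W^{2,4/3}(\mathbb{T}^4)\hookrightarrow L^4(\mathbb{T}^4)$. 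Together with the dissipation estimate this gives $y\,y' \le -E[w] \le -y/C$, hence $y'\le -1/C$ while $y>0$, so the flow extinguishes by time $T_*(w_0)\le C\,y(0) = C\norm{w_0}_{H^{-1}_{\mathrm{av}}}$.

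For $1\le n\le 4$, the Sobolev chain just used is exactly what forces the dimension restriction: for $n>4$ the embedding $W^{2,n/(n-1)}(\mathbb{T}^n)\hookrightarrow L^n(\mathbb{T}^n)$ fails. For $n\le 4$ I would interpolate the critical inequality against a weaker negative norm. For $1\le p\le\infty$ and $\theta\in(\tfrac12,1]$ determined by the scaling identity $1+\tfrac n2 = \theta(n-1)+(1-\theta)(3+n/p)$, one has the Gagliardo--Nirenberg inequality
\begin{align*}
\norm{w}_{H^{-1}_{\mathrm{av}}} \le C\,E[w]^{\theta}\,\norm{w}_X^{1-\theta}, \qquad \norm{w}_X := \norm{(-\Delta)^{-1}w}_{\dot W^{-1,p}},
\end{align*}
for average-free $w$, the cut-off $\theta>\tfrac12$ being the validity threshold of such an interpolation with an $L^1$ endpoint. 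Next one controls $\xi(t):=\norm{w(t)}_X$ along the flow: in the equivalent form $w_t = -\Delta\,\divo z$ with $z\in L^\infty(\Rn;\Rn)$, $\norm{z}_{L^\infty}\le 1$ (the characterization of $\partial E$ in $H^{-1}_{\mathrm{av}}$ differs from the $L^2$ one of Corollary~\ref{co:lip-subdiff-char} by a factor $-\Delta$; see \cite{GK}), we have $(-\Delta)^{-1}w_t = \divo z$, whence $\norm{w_t}_X = \norm{\divo z}_{\dot W^{-1,p}} \le \norm{z}_{L^p(\mathbb{T}^n)} \le \norm{z}_{L^\infty}\,\mathcal L^n(\mathbb{T}^n)^{1/p} \le a$, with $a = (\omega_1\cdots\omega_n)^{1/p}$. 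Thus $\xi$ is $a$-Lipschitz and $\xi(t)\le \norm{w_0}_X + a t$. Inserting the Gagliardo--Nirenberg bound (solved for $E[w]$) and this estimate into $y\,y' \le -E[w]$ yields a separable scalar inequality of the form $y'(t)\le -c\,y(t)^{1/\theta-1}\bigl(\norm{w_0}_X+at\bigr)^{-(1-\theta)/\theta}$, whose exponents lie in the range that forces $y$ to vanish in finite time; integrating it and solving for the first zero of $y$ produces the estimate stated in the theorem.

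I expect the main obstacle to be the functional-analytic ingredient — establishing the Gagliardo--Nirenberg inequality with the sharp range of $\theta$, and especially its $\theta=1$ endpoint $\norm{w}_{H^{-1}_{\mathrm{av}}}\le C\,|Dw|(\mathbb{T}^n)$, which is exactly what breaks down once $n>4$ and which this scheme cannot circumvent. The remaining points are more technical: making the scalar ODE argument rigorous when $y$ is only right-differentiable, and handling data $w_0\in H^{-1}_{\mathrm{av}}\setminus BV$, both via the regularizing estimates for the gradient flow; and an extension to $\beta>0$, $p>1$ would further require carrying the $p$-Dirichlet part of the subgradient through the computation, which is why the fourth-order total variation flow is the cleanest case to present.
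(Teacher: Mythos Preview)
Your proposal is correct and follows essentially the same route as the paper: the dissipation identity \eqref{DISE}, the critical Sobolev/interpolation inequality $\|w\|_{H^{-1}_{\mathrm{av}}} \leq C\,E[w]^\theta \|w\|_X^{1-\theta}$, the linear growth bound $\frac{d}{dt}\|w\|_X \leq a$ coming from $(-\Delta)^{-1}w_t = \divo z$ with $\|z\|_{L^\infty}\leq 1$, and then an ODE argument. The only cosmetic difference is in the $n=4$ endpoint: the paper reaches $\|w\|_{H^{-1}_{\mathrm{av}}} \leq C\int|\nabla w|$ via $\|(-\Delta)^{-1/2}w\|_{L^2} \leq C\|w\|_{L^{4/3}}$ (one-step Riesz potential/Calder\'on--Zygmund), whereas you pass through $\|(-\Delta)^{-1}w\|_{L^4}\leq C\|w\|_{L^{4/3}}$ and a duality pairing; both are equivalent instances of the same Sobolev chain.
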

%
Here, $\dot{W}^{-1,p}$ is the dual of the homogeneous Sobolev space $\dot{W}^{1,p}$, i.e.,
\[
	\| f \|_{\dot{W}^{-1,p}} = \sup \left\{ \int_{\mathbb{T}^n} f\varphi \; dx \Bigm|
	\varphi \in C^\infty(\mathbb{T}^n),\ \| \nabla\varphi \|_{L^{p'}} \leq 1 \right\}, \quad
	1/p+1/p'=1.
\]

The proof for $n=4$ is easy, so we give it here for $\beta=0$;
 the case $\beta>0$ can be proved essentially in the same way.
 We multiply the equation
\[
	w_t = (-\Delta) \operatorname{div} \left( \nabla w/|\nabla w| \right) 
\]
with $(-\Delta)^{-1} w$ and integrate in space to get a dissipation identity
\begin{equation} \label{DISE}
	\frac{1}{2} \frac{d}{dt} \| w \|^2_{H^{-1}_{\mathrm{av}}}
	= \int_{\mathbb{T}^n} | \nabla w |
\end{equation}
since $(u,v)_{-1} = \left\langle (-\Delta)^{-1} u,v \right\rangle$.
 In the case $n=4$ and $\theta=1$, by the Sobolev and the Calder\'on-Zygmund inequality for $\nabla(-\Delta)^{-1/2}$, we have
\[
	\| w \|_{H^{-1}_{\mathrm{av}}} = \left\| (-\Delta)^{-1/2} w \right\|_{L^2}
	\leq A' \left\| (-\Delta)^{-1/2} w \right\|_{L^p}
	\leq A_p \| w \|_{L^p}, \quad
	 1/2=1/p-1/4 
\]
for some constants $A'$ and $A_p$.
 Again by the Sobolev inequality, there is a constant $S$ satisfying
\[
	\| w \|_{L^{4/3}} \leq S \int_{\mathbb{T}^n} | \nabla w |.
\]
We now conclude that
\[
	\| w \|_{H^{-1}_{\mathrm{av}}} \leq A_{4/3} S \int_{\mathbb{T}^n} | \nabla w |.
\]
Thus we conclude
\[
	\frac{1}{2} \frac{d}{dt} \| w \|^2_{H^{-1}_{\mathrm{av}}} 
	\leq -(A_{4/3} S)^{-1}  \| w \|_{H^{-1}_{\mathrm{av}}}, 
\]
which yields $T_*(w_0) \leq C\|w_0\|_{H^{-1}_{\mathrm{av}}}$ with $C=A_{4/3}S$.
 For general case, we establish an interpolation inequality
\[
	\| w \|_{H^{-1}_{\mathrm{av}}} \leq C \left\| (-\Delta)^{-1} w \right\|^{1-\theta}_{\dot{W}^{-1,p}}
	\left( \int_{\mathbb{T}^n} |\nabla w| \right)^\theta
\]
and a rough growth estimate for a weaker norm
\[
	\frac{d}{dt} \left\| (-\Delta)^{-1} w \right\|_{\dot{W}^{-1,p}}
	\leq a^{1/p}. 
\]
We then apply these inequalities to the dissipation identity \eqref{DISE} to get the desired estimate.
 For details, see \cite{GK}, \cite{GKM}.
 Combining a dissipation identity, an interpolation inequality and a growth of a weaker norm is also a key idea to estimate the coarsening rate in a surface diffusion flow as studied in \cite{KO}. 

There are several numerical studies for the above fourth-order singular diffusion equations.
 A numerical computation for $\beta>0$, $p=3$ is done by \cite{KV}.
 Their numerical scheme regularizes the singularity.
 A duality based numerical scheme which applies the forward-backward splitting has been proposed in \cite{GMR}.
 A Bregman method is adjusted to the fourth-order problem by \cite{GU}, where the singularity at $\nabla w=0$ is not regularized.

We are interested in a polygonal flow by surface diffusion.
 Formally, a typical example is $V=-\Delta\kappa_\sigma$ when $\sigma$ is crystalline.
 In \cite{CRCT} evolution by polygonal flow is proposed and there are several numerical tests.
 However, there is no general notion for a solution of closed curves.
 It is not clear what class of polygonal flows is preserved during evolution.
 Recently, in \cite{GG21} it is shown that there is a special class of periodic piecewise linear graph-like curves which is preserved under the evolution provided that the problem is written as a gradient flow of a lower semicontinuous convex function.

If the dependence on $\kappa_\sigma$ is nonlinear like in \eqref{4EXP}, no notion of a general solution is known. By studying  a special solution of \eqref{4EXP}, a new phenomenon is found in \cite{LLMM} with discussion on a relation with a step motion. There is numerical work to calculate \eqref{4EXP} in \cite{CLLMW}.


\begin{thebibliography}{[99]}
\normalsize
\baselineskip=17pt

\bibitem[AT]{AT}
F.\ Almgren and J.\ E.\ Taylor, 
Flat flow is motion by crystalline curvature for curves with crystalline energies. 
\emph{J.\ Differential Geom.}\ 42 (1995), 1--22.
%
\bibitem[ATW]{ATW}
F.\ Almgren, J.\ E.\ Taylor and L.\ Wang, 
Curvature-driven flows: a variational approach. 
\emph{SIAM J.\ Control Optim.}\ 31 (1993), 387--438.
%
\bibitem[ACC]{ACC}
F.\ Alter, V.\ Caselles and A.\ Chambolle,
A characterization of convex calibrable sets in $R^N$.
\emph{Math. Ann.} 332 (2005), 329--366.
%
\bibitem[AGLM]{AGLM}
L.\ Alvarez, F.\ Guichard, P.-L. Lions and J.-M. Morel,  
Axioms and fundamental equations of image processing. 
\emph{Arch.\ Ration.\ Mech.\ Anal.}\ 123 (1993), 199--257.
%
\bibitem[ABT]{ABT}
S. Amato, G. Bellettini, L. Tealdi,
Anisotropic mean curvature on facets and relations with capillarity. 
\emph{Geom. Flows} 1 (2015), no. 1, 80--110. 
%
\bibitem[AB]{AmarBellettini}
M. Amar, G. Bellettini,
A notion of total variation depending on a metric with
   discontinuous coefficients. \emph{Ann.\ Inst.\ H.\ Poincar\'e Anal.\ Non Lin\'eaire} 11 (1994), 91--133.
%
\bibitem[ACM]{ACM}
F.\ Andreu-Vaillo, V.\ Caselles and J.\ M.\ Maz\'on,  
Parabolic quasilinear equations minimizing linear growth functionals. Progress in Mathematics, 223. 
\emph{Birkh\"auser Verlag, Basel,} 2004.
%
\bibitem[A1]{And01}
B. Andrews,
Volume-preserving anisotropic mean curvature flow. \emph{Indiana Univ. Math. J.} 50, 783--827 (2001)
%
\bibitem[A2]{A}
B.\ Andrews, 
Singularities in crystalline curvature flows. 
\emph{Asian J.\ Math.} 6 (2002), 101--121.
%
\bibitem[AG]{AG}
S.\ Angenent and M.\ E.\ Gurtin, 
Multiphase thermomechanics with interfacial structure.\ I\!I. Evolution of an isothermal interface.
\emph{Arch.\ Rational Mech.\ Anal.}\ 108 (1989), 323--391.
%
\bibitem[An]{Anzellotti}
G. Anzellotti,
Pairings between measures and bounded functions and compensated
   compactness. \emph{Ann.\ Mat.\ Pura Appl.} 135 (1983), 293--318.
%
\bibitem[At]{Attouch}
H. Attouch,
Variational convergence for functions and operators. Applicable Mathematics Series, \emph{Pitman (Advanced Publishing Program), Boston, MA}, 1984
%
\bibitem[BGNNM]{BGN_NM}
J. W. Barrett, H. Garcke, R. N\"urnberg,
A variational formulation of anisotropic geometric evolution
   equations in higher dimensions. \emph{Numer.\ Math.} 109 (2008), 1--44.

\bibitem[BGNIFB]{BGN_IFB}
J. W. Barrett, H. Garcke, R. N\"urnberg,
Parametric approximation of surface clusters driven by isotropic
   and anisotropic surface energies. \emph{Interfaces Free Bound.} 12 (2010), 187--234.

\bibitem[BGNADV]{BGN_ADV}
J. W. Barrett, H. Garcke, R. N\"urnberg,
Finite-element approximation of one-sided Stefan problems with
   anisotropic, approximately crystalline, Gibbs-Thomson law. \emph{Adv. Differential Equations} 18 (2013), 383--432.

\bibitem[BGNZAMM]{BGN_ZAMM}
J. W. Barrett, H. Garcke, R. N\"urnberg,
On the stable discretization of strongly anisotropic phase field
   models with applications to crystal growth. \emph{ZAMM Z.\ Angew.\ Math.\ Mech.} 93 (2013), 719--732.

\bibitem[BGNIMA]{BGN_IMA}
J. W. Barrett, H. Garcke, R. N\"urnberg,
Stable phase field approximations of anisotropic solidification. \emph{IMA J.\ Numer.\ Anal.} 34 (2014), 1289--1327.

\bibitem[BCCN]{BCCN}
G.\ Bellettini, V.\ Caselles, A.\ Chambolle and M.\ Novaga, 
Crystalline mean curvature flow of convex sets. 
\emph{Arch.\ Ration.\ Mech.\ Anal.}\ 179 (2006), 109--152.
%
\bibitem[BCCN09]{BCCN09}
G. Bellettini, V. Caselles, A. Chambolle and M. Novaga,
The volume preserving crystalline mean curvature flow of convex
   sets in $\Bbb R^N$. \emph{J. Math. Pures Appl.} 92, 499--527 (2009)
%
\bibitem[BCherN]{BCherN}
G. Bellettini, M. Chermisi and M. Novaga,
Crystalline curvature flow of planar networks. 
\emph{Interfaces Free Bound.}\ 8 (2006), 481--521.
%
\bibitem[BGeN]{BGeN}
G.\ Bellettini, C.\ Geldhauser and M.\ Novaga, 
Convergence of a semidiscrete scheme for a forward-backward parabolic equation. 
\emph{Adv.\ Differential Equations}\ 18 (2013), 495--522.
%
\bibitem[BGN]{BGN}
G.\ Bellettini, R.\ Goglione and M.\ Novaga, 
Approximation to driven motion by crystalline curvature in two dimensions. 
\emph{Adv.\ Math.\ Sci.\ Appl.}\ 10 (2000), 467--493.
%
\bibitem[BN]{BN}
G.\ Bellettini and M.\ Novaga, 
Approximation and comparison for nonsmooth anisotropic motion by mean curvature in $\mathbf{R}^N$. 
\emph{Math.\ Models Methods Appl.\ Sci.}\ 10 (2000), 1--10.
%
\bibitem[BNP99]{BNP99}
G.\ Bellettini, M.\ Novaga and M.\ Paolini, 
Facet-breaking for three-dimensional crystals evolving by mean curvature. 
\emph{Interfaces Free Bound.}\ 1 (1999), 39--55.
%
\bibitem[BNP01c]{BNP01c}
G.\ Bellettini, M.\ Novaga and M.\ Paolini, 
Characterization of facet breaking for nonsmooth mean curvature flow in the convex case. 
\emph{Interfaces Free Bound.}\ 3 (2001), 415--446.
%
\bibitem[BNP01a]{BNP01a}
G.\ Bellettini, M.\ Novaga and M.\ Paolini, 
On a crystalline variational problem. I. First variation and global $L^\infty$ regularity. 
\emph{Arch.\ Ration.\ Mech.\ Anal.}\ 157 (2001), 165--191.
%
\bibitem[BNP01b]{BNP01b}
G.\ Bellettini, M.\ Novaga and M.\ Paolini, 
On a crystalline variational problem. I\!I. $BV$ regularity and structure of minimizers on facets. 
\emph{Arch.\ Ration.\ Mech.\ Anal.}\ 157 (2001), 193--217.
%
\bibitem[BP95]{BP95}
G.\ Bellettini and M.\ Paolini, 
Quasi-optimal error estimates for the mean curvature flow with a forcing term. 
\emph{Differential Integral Equations} 8 (1995), 735--752.
%
\bibitem[BP96]{BP96}
G.\ Bellettini and M.\ Paolini, 
Anisotropic motion by mean curvature in the context of Finsler geometry. 
\emph{Hokkaido Math. J.}\ 25 (1996), 537--566.
%
\bibitem[Br73]{Br73}
H.\ Brezis,
Op\'erateurs maximaux monotones et semi-groupes de contractions dans les espaces de Hilbert. 
North-Holland Mathematics Studies, No.~5. Notas de Matem\'atica (50). 
\emph{North-Holland Publishing Co., Amsterdam-London; American Elsevier Publishing Co., Inc., New York,} 1973.
%
\bibitem[BP70]{BrezisPazy70}
H. Brezis, A. Pazy,
Semigroups of nonlinear contractions on convex sets. \emph{J. Functional Analysis} 6 (1970), 237--281.
%
\bibitem[BP]{BP} 
H.\ Brezis and A.\ Pazy, 
Convergence and approximation of semigroups of nonlinear operators in Banach spaces. 
\emph{J.\ Functional Analysis} 9 (1972), 63--74.
%
\bibitem[BrK]{BrK}
L.\ Bronsard and R.\ V.\ Kohn, 
Motion by mean curvature as the singular limit of Ginzburg-Landau dynamics.
\emph{J.\ Differential Equations} 90 (1991), 211--237.
%
\bibitem[CT94]{CT94}
J.\ W.\ Cahn and J.\ E.\ Taylor, 
Surface motion by surface diffusion,
\emph{Acta Metal} 42 (1994), 1045--1063. 
%
\bibitem[Ca]{Ca}
D.\ Campbell, 
A first glance at crystal motion, 
Master's thesis, 
\emph{Rutgers University, New Brunswick, NJ,} 2002.
%
\bibitem[CRCT]{CRCT}
W.\ C.\ Carter, A.\ R.\ Roosen, J.\ W.\ Cahn and J.\ E.\ Taylor, 
Shape evolution by surface diffusion and surface attachment limited kinetics on completely faceted surfaces. 
\emph{Acta Metall.\ Mater.} 43 (1995), 4309--4323.
%
\bibitem[CC]{CC}
V.\ Caselles and A.\ Chambolle, 
Anisotropic curvature-driven flow of convex sets. 
\emph{Nonlinear Anal.}\ 65 (2006), 1547--1577. 
%
\bibitem[Cha]{Cha}
A.\ Chambolle, 
An algorithm for mean curvature motion. 
\emph{Interfaces Free Bound.}\ 6 (2004), 195--218.
%
\bibitem[CMNP1]{CMNP}
A.\ Chambolle, M.\ Morini, M.\ Novaga and M.\ Ponsiglione, 
Existence and uniqueness for anisotropic and crystalline mean curvature flows. 
\emph{J.\ Amer.\ Math.\ Soc.}\ 32 (2019), 779--824.
%
\bibitem[CMNP2]{CMNP_APDE}
A. Chambolle, M. Morini, M. Novaga and M. Ponsiglione,
Generalized crystalline evolutions as limits of flows with smooth
   anisotropies. \emph{Anal.\ PDE} 12 (2019), 789--813.
%
\bibitem[CMP]{CMP}
A.\ Chambolle, M.\ Morini and M.\ Ponsiglione, 
Existence and uniqueness for a crystalline mean curvature flow. 
\emph{Comm.\ Pure Appl.\ Math.}\ 70 (2017), 1084--1114. 
%
\bibitem[CN07]{CN07}
A.\ Chambolle and M.\ Novaga, 
Approximation of the anisotropic mean curvature flow. 
\emph{Math.\ Models Methods Appl.\ Sci.}\ 17 (2007), 833--844.
%
\bibitem[XChen]{XChen}
X.\ Chen, 
Generation and propagation of interfaces for reaction-diffusion equations. 
\emph{J.\ Differential Equations} 96 (1992), 116--141.
%
\bibitem[CGG]{CGG}
Y.\ G.\ Chen, Y.\ Giga and S.\ Goto, 
Uniqueness and existence of viscosity solutions of generalized mean curvature flow equations. 
\emph{J.\ Differential Geom.}\ 33 (1991), 749--786.
%
\bibitem[CLLMW]{CLLMW}
K.\ Craig, J.-G.\ Liu, J.\ Lu, J.\ L.\ Marzuola and L.\ Wang, 
A proximal-gradient algorithm for crystal surface evolution, 
arXiv: 2006.12528, 
\emph{Numerische Mathematik,} to appear.

%
\bibitem[CIL]{CIL}
M.\ G.\ Crandall, H.\ Ishii and P.-L.\ Lions, 
User's guide to viscosity solutions of second order partial differential equations.
\emph{Bull.\ Amer.\ Math.\ Soc.\ (N.S.)} 27 (1992), 1--67.
%
\bibitem[DDE]{DDE}
K. Deckelnick, G. Dziuk, C. M. Elliott,
Computation of geometric partial differential equations and mean
   curvature flow. \emph{Acta Numer.} 14, 139--232 (2005)
%
\bibitem[DG]{DG}
C.\ Dohmen and Y.\ Giga, 
Selfsimilar shrinking curves for anisotropic curvature flow equations. 
\emph{Proc.\ Japan Acad.\ Ser.\ A Math.\ Sci.}\ 70 (1994), 252--255.
%
\bibitem[DGM]{DGM}
C.\ Dohmen, Y.\ Giga and N.\ Mizoguchi, 
Existence of selfsimilar shrinking curves for anisotropic curvature flow equations. 
\emph{Calc.\ Var.\ Partial Differential Equations} 4 (1996), 103--119.
%
\bibitem[Dz]{Dziuk}
G. Dziuk,
An algorithm for evolutionary surfaces. \emph{Numer. Math.} 58, 603--611 (1991)
%
\bibitem[EGS]{EGS}
C.\ M.\ Elliott, A.\ R.\ Gardiner and R.\ Sch\"atzle,  
Crystalline curvature flow of a graph in a variational setting. 
\emph{Adv.\ Math.\ Sci.\ Appl.}\ 8 (1998), 425--460.
%
\bibitem[ElS]{ElS}
C.\ M.\ Elliott and S.\ A.\ Smitheman, 
Analysis of the TV regularization and $H^{-1}$ fidelity model for decomposing an image into cartoon plus texture. \emph{Commun.\ Pure Appl.\ Anal.}\ 6 (2007), 917--936.
%
\bibitem[ElPS]{ElPS}
C.\ M.\ Elliott, M.\ Paolini and R. Sch\"atzle, 
Interface estimates for the fully anisotropic Allen-Cahn equation and anisotropic mean-curvature flow. 
\emph{Math.\ Models Methods Appl.\ Sci.}\ 6 (1996), 1103--1118.
%
\bibitem[ElS1]{ElS1}
C.\ M.\ Elliott and R. Sch\"atzle, 
The limit of the anisotropic double-obstacle Allen-Cahn equation. 
\emph{Proc.\ Roy.\ Soc.\ Edinburgh Sect.\ A} 126 (1996), 1217--1234.
%
\bibitem[ElS2]{ElS2}
C.\ M.\ Elliott and R. Sch\"atzle, 
The limit of the fully anisotropic double-obstacle Allen-Cahn equation in the nonsmooth case. 
\emph{SIAM J.\ Math.\ Anal.}\ 28 (1997), 274--303.
%
\bibitem[EGI]{EGI}
T.\ Eto, Y.\ Giga and K.\ Ishii,
An area minimizing scheme for anisotropic mean-curvature flow. 
\emph{Adv.\ Differential Equations} \textbf{17} (2012), 1031--1084.
%
\bibitem[E]{E}
L.\ C.\ Evans, 
The perturbed test function method for viscosity solutions of nonlinear PDE,
\emph{Proc.\ Roy.\ Soc.\ Edinburgh Sect.\ A} 111, (1989), 359--375.
%
\bibitem[ESS]{ESS}
L.\ C.\ Evans, H.\ M.\ Soner and P.\ E.\ Souganidis, 
Phase transitions and generalized motion by mean curvature. 
\emph{Comm.\ Pure Appl.\ Math.}\ 45 (1992), 1097--1123.
%
\bibitem[ES]{ES}
L.\ C.\ Evans and J.\ Spruck,  
Motion of level sets by mean curvature. 
\emph{I.\ J.\ Differential Geom.}\ 33 (1991), 635--681.
%
\bibitem[FiMP]{FiMP}
A.\ Figalli, F.\ Maggi and A.\ Pratelli,  
A mass transportation approach to quantitative isoperimetric inequalities. 
\emph{Invent.\ Math.}\ 182 (2010), 167--211.
%
\bibitem[FM]{FM}
I.\ Fonseca and S.\ M\"uller,  
A uniqueness proof for the Wulff Theorem. 
\emph{Proc.\ Roy.\ Soc.\ Edinburgh Sect.\ A: Math.}\ 119 (1991), 125--136.
%
\bibitem[FG]{FG}
T.\ Fukui and Y.\ Giga, 
Motion of a graph by nonsmooth weighted curvature. 
\emph{World Congress of Nonlinear Analysts '92, Vol.~I--I\!V (Tampa, FL, 1992),} 47--56, 
\emph{de Gruyter, Berlin,} 1996.
%
\bibitem[Ga93]{Ga93}
M.\ E.\ Gage, 
Evolving plane curves by curvature in relative geometries. 
\emph{Duke Math.\ J.}\ 72 (1993), 441--466.
%
\bibitem[GaL94]{GaL94}
M.\ E.\ Gage and Y.\ Li, 
Evolving plane curves by curvature in relative geometries. I\!I. 
\emph{Duke Math.\ J.}\ 75 (1994), 79--98.
%
\bibitem[GeT]{GeT}
R.\ G\'erard and H.\ Tahara, 
Singular nonlinear partial differential equations. Aspects of Mathematics. 
\emph{Friedr.\ Vieweg \& Sohn, Braunschweig,} 1996. viii+269 pp.
%
\bibitem[G]{Giga_Book}
Y. Giga,
Surface evolution equations. A level set approach. Monographs in Mathematics, vol. 99, \emph{Birkh\"auser Verlag, Basel}, 2006
%
\bibitem[GG96]{GG96}
M.-H.\ Giga and Y.\ Giga, 
Consistency in evolutions by crystalline curvature, 
Free boundary problems, theory and applications  (Zakopane, 1995). 
\emph{Pitman Res.\ Notes Math.\ Ser.}\ 363, Longman, Harlow (1996), 186--202.
%
\bibitem[GG]{GG}
M.-H.\ Giga and Y.\ Giga, 
Geometric evolution by nonsmooth interfacial energy. 
Proc.\ of Banach Center Minisemester, ``Nonlinear Analysis and Applications", Gakuto (1996), 125--140. 
%
\bibitem[GG98]{GG98}
M.-H.\ Giga and Y.\ Giga, 
A subdifferential interpretation of crystalline motion under nonuniform driving force.
Dynamical systems and differential equations, Vol.~I (Springfield, MO, 1996). 
\emph{Discrete Contin.\ Dynam.\ Systems} 1998, Added Volume~I, 276--287.
%
\bibitem[GG1]{GG1}
M.-H.\ Giga and Y.\ Giga, 
Evolving graphs by singular weighted curvature. 
\emph{Arch.\ Rational Mech.\ Anal.}\ 141 (1998), 117--198.
%
\bibitem[GG2]{GG2}
M.-H.\ Giga and Y.\ Giga, 
Stability for evolving graphs by nonlocal weighted curvature. 
\emph{Comm.\ Partial Differential Equations} 24 (1999), 109--184.
%
\bibitem[GG3]{GG3}
M.-H.\ Giga and Y.\ Giga, 
Crystalline and level set flow -- convergence of a crystalline algorithm for a general anisotropic curvature flow in the plane. 
\emph{Free boundary problems: theory and applications, I (Chiba, 1999),} 64--79,
GAKUTO Internat.\ Ser.\ Math.\ Sci.\ Appl., 13, 
\emph{Gakk$\bar{o}$tosho, Tokyo,} 2000.
%
\bibitem[GG4]{GG4}
M.-H.\ Giga and Y.\ Giga, 
Generalized motion by nonlocal curvature in the plane. 
\emph{Arch.\ Ration.\ Mech.\ Anal.}\ 159 (2001), 295--333.
%
\bibitem[GG10]{GG10}
M.-H.\ Giga and Y.\ Giga, 
Very singular diffusion equations: second and fourth order problems. 
\emph{Jpn.\ J.\ Ind.\ Appl.\ Math.}\ 27 (2010), 323--345.
%
\bibitem[GG13]{GG13}
M.-H.\ Giga and Y.\ Giga, 
On the role of kinetic and interfacial anisotropy in the crystal growth theory. 
\emph{Interfaces Free Bound.}\ 15 (2013), 429--450.
%
\bibitem[GG21]{GG21}
M.-H.\ Giga and Y.\ Giga, 
Crystalline surface diffusion flow for graph-like funtions. 
in preparation.
%
\bibitem[GGH]{GGH}
M.-H.\ Giga, Y.\ Giga and H.\ Hontani, 
Self-similar expanding solutions in a sector for a crystalline flow. 
\emph{SIAM J.\ Math.\ Anal.}\ 37 (2005), 1207--1226.
%
\bibitem[GGKO]{GGKO}
M.-H. Giga, Y. Giga, R. Kuroda and Y. Ochiai, 
Crystalline flow starting from a general polygon. 
\emph{Discrete Contin.\ Dyn.\ Syst.,} to appear.
%
\bibitem[GGN]{GGN}
M.-H.~Giga, Y.~Giga, A.~Nakayasu,
On general existence results for one-dimensional singular diffusion equations with spatially inhomogeneous driving force.
Geometric Partial Differential Equations proceedings, pp. 145--170,
\emph{Scuola Normale Superiore, Pisa,} 2013.
%
\bibitem[MGP2]{MGP2}
M.-H.\ Giga, Y.\ Giga and N.\ Po\v{z}\'ar, 
Anisotropic total variation flow of non-divergence type on a higher dimensional torus. 
\emph{Adv.\ Math.\ Sci.\ Appl.}\ 23 (2013), 235--266.
%
\bibitem[MGP1]{MGP1}
M.-H.\ Giga, Y.\ Giga and N.\ Po\v{z}\'ar, 
Periodic total variation flow of non-divergence type in $\mathbb{R}^n$. 
\emph{J.\ Math.\ Pures Appl.}\ (9) 102 (2014), 203--233.
%
\bibitem[GHK]{GHK}
Y.\ Giga, 
Motion of a graph by convexified energy. 
\emph{Hokkaido Math.\ J.}\ 23 (1994), 185--212.
%
\bibitem[G06]{G06}
Y.\ Giga, 
Surface evolution equations. A level set approach. 
Monographs in Mathematics, 99. {Birkh\"auser Verlag, Basel,} 2006.
%
\bibitem[GGo]{GGo}
Y.\ Giga and S.\ Goto, 
Geometric evolution of phase-boundaries. 
\emph{On the evolution of phase boundaries (Minneapolis, MN, 1990–91),} 51--65,
IMA Vol.\ Math.\ Appl., 43, \emph{Springer, New York,} 1992.
%
\bibitem[GGu]{GGu}
Y.\ Giga and M.\ E.\ Gurtin, 
A comparison theorem for crystalline evolution in the plane. 
\emph{Quart.\ Appl.\ Math.}\ 54 (1996), 727--737.
%
\bibitem[GK]{GK}
Y.\ Giga and R.\ V.\ Kohn, 
Scale-invariant extinction time estimates for some singular diffusion equations. 
\emph{Discrete Contin.\ Dyn.\ Syst.}\ 30 (2011), 509--535.
%
\bibitem[GKM]{GKM}
Y.\ Giga, H.\ Kuroda and H.\ Matsuoka, 
Fourth-order total variation flow with Dirichlet condition: Characterization of evolution and extinction time estimates. 
\emph{Adv.\ Math.\ Sci.\ Appl.}\ 24 (2014), 499--534.
%
\bibitem[GMR]{GMR}
Y.\ Giga, M.\ Muszkieta and P.\ Rybka, 
A duality based approach to the minimizing total variation flow in the space $H^{-s}$. 
\emph{Jpn.\ J.\ Ind.\ Appl.\ Math.}\ 36 (2019), 261--286.
%
\bibitem[GOS]{GOS}
Y.\ Giga, T.\ Ohtsuka and R. Sch\"atzle, 
On a uniform approximation of motion by anisotropic curvature by the Allen-Cahn equations. 
\emph{Interfaces Free Bound.}\ 8 (2006), 317--348.
%
\bibitem[GP1]{GP1}
Y.\ Giga and N.\ Po\v{z}\'{a}r, 
A level set crystalline mean curvature flow of surfaces. 
\emph{Adv.\ Differential Equations} 21 (2016), 631--698.
%
\bibitem[GP2]{GP2}
Y.\ Giga and N.\ Po\v{z}\'{a}r, 
Approximation of general facets by regular facets with respect to anisotropic total variation energies and its application to crystalline mean curvature flow. 
\emph{Comm.\ Pure Appl.\ Math.}\ 71 (2018), 1461--1491.
%
\bibitem[GP3]{GP3}
Y.\ Giga and N.\ Po\v{z}\'{a}r, 
Viscosity solutions for the crystalline mean curvature flow with a nonuniform driving force term. 
\emph{SN Partial Differ.\ Equ.\ Appl.}\ (2020), 1:39.
%
\bibitem[GU]{GU}
Y.\ Giga and Y.\ Ueda, 
Numerical computations of split Bregman method for fourth order total variation flow. 
\emph{J.\ Comput.\ Phys.}\ 405 (2020), 109114.
%
\bibitem[Gir]{Gir}
P.\ M.\ Gir\~ao,  
Convergence of a crystalline algorithm for the motion of a simple closed convex curve by weighted curvature. 
\emph{SIAM J.\ Numer.\ Anal.}\ 32 (1995), 886--899.
%
\bibitem[GirK]{GirK}
P.\ M.\ Gir\~ao and R.\ V.\ Kohn, 
Convergence of a crystalline algorithm for the heat equation in one dimension and for the motion of a graph by weighted curvature. 
\emph{Numer.\ Math.}\ 67 (1994), 41--70.
%
\bibitem[GO]{GO}
T. Goldstein, S. Osher,
The split Bregman method for $L1$-regularized problems. \emph{SIAM J. Imaging Sci.} 2 (2009), 323--343.

\bibitem[Gr]{Gr}
M.\ A.\ Grayson, 
The heat equation shrinks embedded plane curves to round points. 
\emph{J.\ Differential Geom.}\ 26 (1987), 285--314.
%
\bibitem[Gu]{Gu}
M.\ E.\ Gurtin,  
Thermomechanics of evolving phase boundaries in the plane. Oxford Mathematical Monographs. 
\emph{The Clarendon Press, Oxford University Press, New York,} 1993.
%
\bibitem[GSS]{GSS}
M.\ E.\ Gurtin, H.\ M.\ Soner and P.\ E.\ Souganidis, 
Anisotropic motion of an interface relaxed by the formation of infinitesimal wrinkles.
\emph{J.\ Differential Equations} 119 (1995), 54--108.
%
\bibitem[H]{H}
N.\ Hamamuki, 
Asymptotically self-similar solutions to curvature flow equations with prescribed contact angle and their applications to groove profiles due to evaporation-condensation. 
\emph{Adv.\ Differential Equations} 19 (2014), 317--358.
%
\bibitem[HZ]{HZ}
R.\ Hardt and X.\ Zhou,  
An evolution problem for linear growth functionals. 
\emph{Comm.\ Partial Differential Equations} 19 (1994), 1879--1907.
%
\bibitem[HGGD]{HGGD}
H.\ Hontani, M.-H.\ Giga, Y.\ Giga and K.\ Deguchi, 
Expanding selfsimilar solutions of a crystalline flow with applications to contour figure analysis. 
\emph{Discrete Appl. Math.}\ 147 (2005), 265--285.
%
\bibitem[Hui]{Hui}
G. Huisken,
The volume preserving mean curvature flow. \emph{J. Reine Angew. Math.} 382, 35--48 (1987)
%
\bibitem[HI]{HI}
G.\ Huisken and T.\ Ilmanen, 
The inverse mean curvature flow and the Riemannian Penrose inequality. 
\emph{J.\ Differential Geom.}\ 59 (2001), 353--437.
%
\bibitem[Ik]{Ik}
K.\ Ishii, 
An approximation scheme for the anisotropic and nonlocal mean curvature flow. 
\emph{NoDEA Nonlinear Differential Equations Appl.}\ 21 (2014), 219--252.
%
\bibitem[IS]{IS}
K.\ Ishii and H.\ M.\ Soner,
Regularity and convergence of crystalline motion. 
\emph{SIAM J.\ Math.\ Anal.}\ 30 (1999), 19--37.
%
\bibitem[I08]{I08}
T.\ Ishiwata, 
Motion of non-convex polygons by crystalline curvature and almost convexity phenomena. 
\emph{Japan J.\ Indust.\ Appl.\ Math.}\ 25 (2008), 233--253.
%
\bibitem[IJ]{IJ}
T.\ Ishiwata, 
On crystalline movement (Japanese). 
\emph{Tohoku University, Mathematical Instutute Lecture Note Series} (2008).
%
\bibitem[I11a]{I11a}
T.\ Ishiwata, 
Motion of polygonal curved fronts by crystalline motion: V-shaped solutions and eventual monotonicity. 
\emph{Discrete Contin.\ Dyn.\ Syst.}\ 2011, Dynamical systems, differential equations and applications. 8th AIMS Conference.\ Suppl.\ Vol.~I, 717--726.
%
\bibitem[I11b]{I11b}
T.\ Ishiwata, 
On the motion of polygonal curves with asymptotic lines by crystalline curvature flow with bulk effect. 
\emph{Discrete Contin.\ Dyn.\ Syst.\ Ser.\ S} 4 (2011), 865--873.
%
\bibitem[I14]{I14}
T.\ Ishiwata, 
Crystalline motion of spiral-shaped polygonal curves with a tip motion. 
\emph{Discrete Contin.\ Dyn.\ Syst.\ Ser. S} 7 (2014) 53--62.
%
\bibitem[IO1]{IO1}
T.\ Ishiwata and T.\ Ohtsuka, 
Evolution of a spiral-shaped polygonal curve by the crystalline curvature flow with a pinned tip. 
\emph{Discrete Contin.\ Dyn.\ Syst.\ Ser.\ B} 24 (2019), 5261--5295.
%
\bibitem[IO2]{IO2}
T.\ Ishiwata and T.\ Ohtsuka, 
Numerical analysis of an ODE and a level set methods for evolving spirals by crystalline eikonal-curvature flow.
\emph{Discrete Contin.\ Dyn.\ Syst.\ Ser.\ S}, 
doi: 10.3934/dcdss.2020390
%
\bibitem[IUYY]{IUYY}
T.\ Ishiwata, T.\ K.\ Ushijima, H.\ Yagisita and S.\ Yazaki,  
Two examples of nonconvex self-similar solution curves for a crystalline curvature flow. 
\emph{Proc.\ Japan Acad.\ Ser.\ A Math.\ Sci.}\ 80 (2004), 151--154.
%
\bibitem[Ka1]{Ka1}
Y.\ Kashima, 
A subdifferential formulation of fourth order singular diffusion equations. 
\emph{Adv.\ Math.\ Sci.\ Appl.}\ 14 (2004), 49--74.
%
\bibitem[Ka2]{Ka2}
Y.\ Kashima, 
Characterization of subdifferentials of a singular convex functional in Sobolev spaces of order minus one. 
\emph{J.\ Funct.\ Anal.}\ 262 (2012), 2833--2860.
%
\bibitem[KK20]{KK20}
I. Kim, D. Kwon,
Volume preserving mean curvature flow for star-shaped sets. \emph{Calc. Var. Partial Differential Equations} 59, Paper No. 81, 40 (2020)
%
\bibitem[KKP]{KKP}
I. Kim, D. Kwon, N. Po\v{z}\'ar,
On volume-preserving crystalline mean curvature flow. \emph{https://arxiv.org/abs/2012.13839}
%
\bibitem[Koh]{Koh}
R.\ V.\ Kohn, 
Surface relaxation below the roughening temperature: some recent progress and open questions. 
\emph{Nonlinear partial differential equations} 207--221, Abel Symp.\ 7, \emph{Springer, Heidelberg,} 2012.
%
\bibitem[KO]{KO}
R.\ V.\ Kohn and F.\ Otto, 
Upper bounds on coarsening rates. 
\emph{Comm.\ Math.\ Phys.}\ 229 (2002), 375--395.
%
\bibitem[KV]{KV}
R.\ V.\ Kohn and H.\ M.\ Versieux, 
Numerical analysis of a steepest-descent PDE model for surface relaxation below the roughening temperature. 
\emph{SIAM J.\ Numer.\ Anal.}\ 48 (2010), 1781--1800.
%
\bibitem[Ko]{Ko}
Y.\ K\=omura, 
Nonlinear semi-groups in Hilbert space.
\emph{J.\ Math.\ Soc.\ Japan} 19 (1967), 493--507.
%
\bibitem[K]{K}
R.\ Kuroda,  
Facet-creation between two facets moved by crystalline flow or similar equations. 
Bachelor's thesis, 
\emph{The University of Tokyo, Tokyo,} 2019. 
%
\bibitem[LS17]{LS17}
T. Laux, D. Swartz,
Convergence of thresholding schemes incorporating bulk effects. \emph{Interfaces Free Bound.} 19, 273--304 (2017)
%
\bibitem[L]{Leonardi} 
G.\ P.\ Leonardi,
An overview on the Cheeger problem, New trends in shape optimization, Internat. Ser. Numer. Math., 166, \emph{Birkhäuser/Springer}, \emph{Cham}, 2015, pp. 117--139
%
\bibitem[LLMM]{LLMM}
J.-G.\ Liu, J.\ Lu, D.\ Margetis and J.\ L.\ Marzuola, 
Asymmetry in crystal facet dynamics of homoepitaxy by a continuum model. 
\emph{Phys. D} 393 (2019), 54--67. 

%
\bibitem[LS]{LS}
S.\ Luckhaus and T.\ Sturzenhecker, 
Implicit time discretization for the mean curvature flow equation. 
\emph{Calc.\ Var.\ Partial Differential Equations} 3 (1995), 253--271.
%
\bibitem[MWBCS]{MWBCS}
G.\ B.\ McFadden, A.\ A.\ Wheeler, R.\ J.\ Braun, S.\ R. Coriell and R.\ F.\ Sekerka, 
Phase-field models for anisotropic interfaces. 
\emph{Phys.\ Rev.\ E (3)} 48 (1993), 2016--2024. 
%
\bibitem[Moll05]{Moll05}
J. S. Moll,
The anisotropic total variation flow. \emph{Math. Ann.} 332, 177--218 (2005)

\bibitem[DSch]{DSch}
P.\ de Mottoni and M.\ Schatzman, 
Geometrical evolution of developed interfaces. 
\emph{Trans.\ Amer.\ Math.\ Soc.}\ 347 (1995), 1533--1589.
 announcement: \'Evolution g\'eom\'etrique d'interfaces. 
\emph{C.\ R.\ Acad.\ Sci.\ Paris S\'er.\ I Math.}\ 309 (1989), 453--458.
%
\bibitem[Mu]{Mu}
P.\ B.\ Mucha,  
Regular solutions to a monodimensional model with discontinuous elliptic operator. 
\emph{Interfaces Free Bound.}\ 14 (2012), 145--152.
%
\bibitem[MuR1]{MuR1}
P.\ B.\ Mucha and P.\ Rybka, 
A note on a model system with sudden directional diffusion. 
\emph{J. Stat. Phys.}\ 146 (2012), 975--988.
%
\bibitem[MuR2]{MuR2}
P.\ B.\ Mucha and P.\ Rybka, 
Well posedness of sudden directional diffusion equations. 
\emph{Math.\ Methods Appl.\ Sci.}\ 36 (2013), 2359--2370.
%
\bibitem[MSS16]{MSS16}
L. Mugnai, C. Seis, E. Spadaro,
Global solutions to the volume-preserving mean-curvature flow. \emph{Calc. Var. Partial Differential Equations} 55, Art. 18, 23 (2016)

\bibitem[Mu56]{Mu56}
W.\ W.\ Mullins, 
Two-dimensional motion of idealized grain boundaries. 
\emph{J.\ Appl.\ Phys.}\ 27 (1956), 900--904.
%
\bibitem[Mu57]{Mu57}
W.\ W.\ Mullins, 
Theory of thermal grooving. 
\emph{J.\ Appl.\ Phys.}\ 28 (1957), 333--339.
%
\bibitem[NP1]{NP}
M.\ Novaga, E.\ Paolini,
A computational approach to fractures in crystal growth. \emph{Atti Accad. Naz.
Lincei Rend. Ci. Sci. Mat. Fis. Natur.}\ 10 (1999), 47--56.
%
\bibitem[NP2]{NP_MMMAS}
M. Novaga, E. Paolini,
Stability of crystalline evolutions. \emph{Math. Models Methods Appl. Sci.} 15 (2005), 921--937.

\bibitem[OOTT]{OOTT}
A.\ Oberman, S.\ Osher, R.\ Takei and R.\ Tsai, 
Richard Numerical methods for anisotropic mean curvature flow based on a discrete time variational formulation. 
\emph{Commun.\ Math.\ Sci.}\ 9 (2011), 637--662.
%
\bibitem[O]{O}
Y.\ Ochiai, 
Facet-creation between two facets moved by crystalline curvature. 
Master's thesis, 
\emph{The University of Tokyo, Tokyo,} 2009. 
%
\bibitem[Od]{Od}
I.\ V.\ Odisharia, 
Simulation and analysis of the relaxation of a crystalline surface.
PhD thesis, 
\emph{New York University, New York,} 2006. 
%
\bibitem[OJK]{OJK} 
T.\ Ohta, D.\ Jasnow and K.\ Kawasaki, 
Universal scaling in the motion of a random interface. 
\emph{Phys.\ Rev.\ Lett.}\ 49 (1982), 1223-1226.
%
\bibitem[OF]{OF} 
S.\ Osher and R.\ Fedkiw, 
Level set methods and dynamic implicit surfaces. Applied Mathematical Sciences, 153. 
\emph{Springer-Verlag, New York,} 2003.
%
\bibitem[OS]{OS}
S.\ Osher and J.\ A.\ Sethian, 
Fronts propagating with curvature-dependent speed: algorithms based on Hamilton-Jacobi formulations. 
\emph{J.\ Comput. Phys.}\ 79 (1988), 12--49.
%
\bibitem[OSV]{OSV}
S.\ Osher, A.\ Sol\'e and L.\ Vese, 
Image decomposition and restoration using total variation minimization and the $H^{-1}$ norm. 
\emph{Multiscale Model.\ Simul.}\ 1 (2003), 349--370.
%
\bibitem[PP]{PP}
M.\ Paolini, F.\ Pasquarelli,
Numerical simulation of crystalline curvature flow in 3D by interface diffusion. 
\emph{Free boundary problems: theory and applications, II (Chiba, 1999),} 376--389,
GAKUTO Internat.\ Ser.\ Math.\ Sci.\ Appl., 14, 
\emph{Gakk$\bar{o}$tosho, Tokyo,} 2000.
%
\bibitem[Po]{Po}
N. Po\v{z}\'{a}r,
On the self-similar solutions of the crystalline mean curvature flow in three dimensions,
arXiv:1806.02482.
%
\bibitem[Se]{Se}
J.\ A.\ Sethian, 
Level set methods and fast marching methods. Evolving interfaces in computational geometry, fluid mechanics, computer vision, and materials science. Second edition. Cambridge Monographs on Applied and Computational Mathematics, 3. 
\emph{Cambridge University Press, Cambridge,} 1999.
%
\bibitem[So]{So} 
H.\ M.\ Soner, 
Motion of a set by the curvature of its boundary. 
\emph{J.\ Differential Equations} 101 (1993), 313--372.
%
\bibitem[Sp]{Sp} 
H.\ Spohn, 
Surface dynamics below the roughening temperature. 
\emph{J.\ Phys.\ I.\ France} 3 (1993), 69--81.
%
\bibitem[S1]{S1}
A.\ Stancu, 
Uniqueness of self-similar solutions for a crystalline flow. 
\emph{Indiana Univ.\ Math.\ J.}\ 45 (1996), 1157--1174.
%
\bibitem[S2]{S2}
A.\ Stancu, 
Asymptotic behavior of solutions to a crystalline flow. 
\emph{Hokkaido Math.\ J.}\ 27 (1998), 303--320.
%
\bibitem[T78]{T78}
J.\ E.\ Taylor, 
Crystalline variational problems. 
\emph{Bull.\ Amer.\ Math.\ Soc.}\ 84 (1978), 568--588.
%
\bibitem[T1]{T1}
J.\ E.\ Taylor, 
Constructions and conjectures in crystalline nondifferential geometry. 
\emph{Differential geometry,} 321--336, Pitman Monogr.\ Surveys Pure Appl.\ Math., 52, 
\emph{Longman Sci.\ Tech., Harlow,} 1991.
%
\bibitem[T0]{T0}
J.\ E.\ Taylor, 
Motion by crystalline curvature, in \emph{Computing Optimal Geometries Videotape}, J.\ E.\ Taylor, ed., 
Selected Lectures in Mathematics, \emph{Amer.\ Math.\ Soc.}\ (1991), 63--65 plus video.
%
\bibitem[T2]{T2}
J.\ E.\ Taylor, 
Mean curvature and weighted mean curvature. 
\emph{Acta Metall.\ Mater.}\ 40 (1992), 1475--1485.
%
\bibitem[T3D]{Taylor3Dsim}
J.\ E.\ Taylor, 
Geometric Crystal Growth in 3D via Faceted Interfaces, in \emph{Computational Crystal
Growers Workshop} (Jean E. Taylor, ed.), Selected Lectures in Mathematics, \emph{Amer.\ Math.\ Soc.}
\ (1992), 111--113 plus video.
%
\bibitem[T3]{T3}
J.\ E.\ Taylor, 
Motion of curves by crystalline curvature, including triple junctions and boundary points. 
\emph{Differential geometry: partial differential equations on manifolds (Los Angeles, CA, 1990),} 417--438, Proc.\ Sympos.\ Pure Math., 54, Part 1, 
\emph{Amer.\ Math.\ Soc., Providence, RI,} 1993.
%
\bibitem[TC]{TC}
J.\ E.\ Taylor and J.\ W.\ Cahn,
Diffuse interfaces with sharp corners and facets: phase field models with strongly anisotropic surfaces. 
\emph{Phys.\ D} 112 (1998), 381--411.
%
\bibitem[TCH]{TCH}
J.\ E.\ Taylor, J.\ W.\ Cahn and A.\ C.\ Handwerker, 
Geometric models of crystal growth. 
\emph{Acta.\ Metal.}\ 40 (1992), 1443--1474.  
%
\bibitem[UY]{UY}
T.\ K.\ Ushijima and H.\ Yagisita,  
Convergence of a three-dimensional crystalline motion to Gauss curvature flow. 
\emph{Jpn.\ J.\ Ind.\ Appl.\ Math.}\ 22 (2005), 443--459.
%
\bibitem[UYa]{UYa}
T.\ K.\ Ushijima and S.\ Yazaki,  
Convergence of a crystalline approximation for an area-preserving motion. 
\emph{J.\ Comput.\ Appl.\ Math.}\ 166 (2004), 427--452.
%
\bibitem[Wa]{Wa}
J.\ Watanabe, 
Approximation of nonlinear problems of a certain type.
\emph{Numerical analysis of evolution equations (Kyoto, 1978),} 147--163,
Lecture Notes Numer.\ Appl.\ Anal., 1, 
\emph{Kinokuniya Book Store, Tokyo,} 1979.
%
\bibitem[WS]{WS}
A.\ A.\ Wheeler and G.\ B.\ McFadden, 
A $\xi$-vector formulation of anisotropic phase-field models: $3$D asymptotics. 
\emph{European J.\ Appl.\ Math.}\ 7 (1996), 367--381.
%
\bibitem[W]{W}
G.\ Wulff, 
Zur Frage der Geschwindigkeit des Wachsthums und der Aufl\"osung der Krystallfl\"achen. 
\emph{Zeitschrift f\"ur Kristallographie} 34, 449--530. 
%
\bibitem[Ya02]{Ya02}
S.\ Yazaki, 
On an area-preserving crystalline motion. 
\emph{Calc.\ Var.\ Partial Differential Equations} 14 (2002), 85--105.
%
\bibitem[Ya]{Ya}
S.\ Yazaki, 
Motion of nonadmissible convex polygons by crystalline curvature. 
\emph{Publ.\ Res.\ Inst.\ Math.\ Sci.}\ 43 (2007), 155--170.
%
\end{thebibliography}
\end{document}